\documentclass[11pt]{amsart}
\usepackage[active]{srcltx}
\usepackage[all]{xypic}
\usepackage[dvipsnames]{xcolor}
\usepackage{xifthen}
\usepackage{amsthm}
\usepackage{hyperref}
\usepackage{enumitem}
\setlist[enumerate]{label = $(\alph*)$, leftmargin = *}
\usepackage{amssymb}
\usepackage{amsmath}
\usepackage{stmaryrd}
\usepackage[mathcal]{euscript}
\usepackage[margin=1.3in]{geometry}
\newtheorem{theorem}{Theorem}[section]
\newtheorem{cor}[theorem]{Corollary}
\newtheorem{lemma}[theorem]{Lemma}
\newtheorem{prop}[theorem]{Proposition}
\newtheorem{notacao}[theorem]{Notation}
\theoremstyle{remark}
\newtheorem{rem}[theorem]{Remark}
\newtheorem{ex}[theorem]{Example}
\makeatletter
\def\namedlabel#1#2{\begingroup
  \def\@currentlabel{#2}%
  \label{#1}\endgroup
}

\usepackage{epic,eepic}
\newcount\brickleft
\newcount\brickright
\newcount\brickbase
\newcount\brickheight
\brickheight=800
\newcount\brickbackup
\brickbackup=-382
\newcount\brickforward
\brickforward=48
\newcount\brickbaseup
\brickbaseup=240
\setlength{\unitlength}{0.0073mm}
\def\brick(#1,#2,#3,#4,#5)(#6,#7){\nullfont
  \multiput(#1,#2)(#3,0)2{\line(0,1){\brickheight}}
  \multiput(#1,#2)(0,\brickheight)2{\line(1,0){#3}}
  \brickbase=#2
  \advance\brickbase by \brickbaseup
  \brickleft=#1
  \advance\brickleft by \brickforward
  \advance\brickleft by #4
  \put(\brickleft,\brickbase){$#6$}
  \brickright=#1
  \advance\brickright by #3
  \advance\brickright by \brickbackup
  \advance\brickright by -#5
  \put(\brickright,\brickbase){$#7$}
}

\def\brickstart(#1,#2,#3,#4,#5)(#6,#7){\nullfont
  \multiput(#1,#2)(#3,0)1{\line(0,1){\brickheight}}
  \multiput(#1,#2)(0,\brickheight)2{\line(1,0){#3}}
  \brickbase=#2
  \advance\brickbase by \brickbaseup
  \brickleft=#1
  \advance\brickleft by \brickforward
  \advance\brickleft by #4
  \put(\brickleft,\brickbase){$#6$}
  \brickright=#1
  \advance\brickright by #3
  \advance\brickright by \brickbackup
  \advance\brickright by -#5
}

\def\brickend(#1,#2,#3,#4,#5)(#6,#7){\nullfont
  \newcount\bricktemp
  \bricktemp#1\relax\advance\bricktemp#3\relax
  \put(\bricktemp,#2){\line(0,1){\brickheight}}
  \multiput(#1,#2)(0,\brickheight)2{\line(1,0){#3}}
  \brickbase=#2
  \advance\brickbase by \brickbaseup
  \brickleft=#1
  \advance\brickleft by \brickforward
  \advance\brickleft by #4
  \brickright=#1
  \advance\brickright by #3
  \advance\brickright by \brickbackup
  \advance\brickright by -#5
  \put(\brickright,\brickbase){$#7$}
}

\def\sbrick(#1,#2,#3){\nullfont
  \multiput(#1,#2)(#3,0)2{\line(0,1){\brickheight}}
  \multiput(#1,#2)(0,\brickheight)2{\line(1,0){#3}}
}

\def\dvert(#1,#2){\nullfont
  \multiput(#1,#2)(1,0)1{\multiput(0,0)(0,200){5}{\line(0,1){55}}}
}
\def\llvert(#1,#2){\nullfont
  \multiput(#1,#2)(1,0)1{\line(0,1){1000}}
}
\def\dbrick(#1,#2,#3){\nullfont
  \multiput(#1,#2)(0,1000)2{\multiput(0,0)(200,0){#3}{\line(1,0){50}}}
}
\newcommand{\DRH}{{\sf DRH}}
\newcommand{\h}{{\sf H}}
\newcommand{\W}{{\sf W}}
\newcommand{\V}{{\sf V}}
\newcommand{\G}{{\sf G}}
\newcommand{\R}{{\sf R}}
\newcommand{\LL}{{\sf L}}
\newcommand{\Sl}{{{\sf Sl}}}
\newcommand{\s}{{{\sf S}}}
\newcommand{\Ab}{{{\sf Ab}}}
\newcommand{\DO}{{{\sf DO}}}

\newcommand{\pseudo}[2]{\overline{\Omega}_{#1} {\sf{#2}}}
\newcommand{\pseudok}[2]{\Omega^{\kappa}_{#1} {\sf{#2}}}
\newcommand{\pseudosig}[2]{\Omega^{\sigma}_{#1} {\sf{#2}}}

\newcommand{\nn}{\mathbb{N}}
\newcommand{\lbf}[2][]{{\sf{lbf}}\ifthenelse{\isempty{#1}}{}{_{#1}}(#2)}
\newcommand{\cum}[1]{\vec{c}(#1)}
\newcommand{\card}[1]{\left\lvert#1\right\rvert}
\newcommand\malcev{\mathop{\raise0.5pt\hbox{\footnotesize$\bigcirc$\kern-8pt\raise0.5pt\hbox{\scriptsize$m$}\kern2pt}}}
\newcommand{\var}[1]{\llbracket#1\rrbracket}
\newcommand{\ev}{{\sf{ev}}}
\newcommand{\dom}{{\sf{Dom}}}
\newcommand{\im}{{\sf{Im}}}
\newcommand{\dirr}{{\sf{right}}}
\newcommand{\esq}{{\sf{left}}}
\newcommand{\Thetap}{{\sf{prod}} \circ \Theta}
\newcommand{\leng}[1]{\left\lceil#1\right\rceil}
\newcommand{\just}[2]{\stackrel{#2}{#1}}

\newcommand{\Req}{\mathrel{\mathcal R}}
\newcommand{\Deq}{\mathrel{\mathcal D}}
\newcommand{\Heq}{\mathrel{\mathcal H}}

\newcommand{\tl}{{\sf{l}}}
\newcommand{\tr}{{\sf{r}}}

\newcommand{\f}{\overline{f}}
\newcommand{\xx}{\overline{x}}
\newcommand{\y}{\overline{y}}
\newcommand{\z}{\overline{z}}

\newcommand{\vs}{\vec{s}}
\newcommand{\vt}{\vec{t}}

\newcommand{\iC}{\mathcal{C}}
\newcommand{\iE}{\mathcal{E}}
\newcommand{\iS}{\mathcal{S}}
\newcommand{\iB}{\mathcal{B}}
\newcommand{\iM}{\mathcal{M}}
\newcommand{\iX}{\mathcal{X}}
\begin{document}
\title{Complete $\kappa$-reducibility of pseudovarieties of the
  form~$\DRH$}
\author{Jorge Almeida and C\'elia Borlido}
\address{Centro de Matem\'atica e Departamento de Matem\'atica, Faculdade de Ci\^encias,
Universidade do Porto, Rua do Campo Alegre, 687, 4169-007 Porto,
Portugal}
\email{jalmeida@fc.up.pt,cborlido@fc.up.pt}
\thanks{2010 Mathematics Subject Classification. Primary 20M07, Secondary 20M05.\\
Keywords and phrases: pseudovariety, free profinite semigroup, $\Req$-class, pseudoequation,
implicit signature, complete reducibility.
}
\maketitle
\begin{abstract}
  We denote by $\kappa$ the implicit signature that contains the
  multiplication and the $(\omega-1)$-power.
  It is proved that for any completely $\kappa$-reducible pseudovariety
  of groups $\h$, the pseudovariety $\DRH$ of all finite semigroups
  whose regular $\Req$-classes are groups in $\h$ is completely
  $\kappa$-reducible as well. The converse also holds. The tools
  used by Almeida, Costa, and Zeitoun for proving that the
  pseudovariety of all finite $\Req$-trivial monoids is
  completely $\kappa$-reducible are adapted for the
  general setting of a pseudovariety of the form~$\DRH$.
\end{abstract}
\section{Introduction}

The study of finite semigroups goes back to the beginning of the
1950's, 
having its roots in Theoretical Computer Science.
It was strongly motivated and developed by Eilenberg in collaboration
with Sch\"utzenberger and Tilson in the mid 1970's \cite{MR0530382,
  MR0530383}.
In particular,
Eilenberg \cite[Chapter
VII]{MR0530383} established a
correspondence between varieties of
rational languages and pseudovarieties of semigroups,
which has made possible to study combinatorial properties of the
former through the study of algebraic properties of
the latter.
As a result, it became of interest to
study the decidability of the \emph{membership problem} for
pseudovarieties. That means
to prove either that there exists an algorithm deciding whether a given finite
semigroup belongs to a certain pseudovariety, in which case the
pseudovariety is said to be \emph{decidable}; or to prove that such an
algorithm does not exist, being thus in the presence of an
\emph{undecidable} pseudovariety.
Considering some natural operators on pseudovarieties $\V$ and $\W$,
such as the join $\V \vee \W$,
the semidirect product $\V * \W$, the two-sided semidirect
product $\V **\, \W$, or the Mal'cev product $\V
\malcev \W$, it is also
relevant to decide the membership problem for the resulting
pseudovariety. It turns out that none of these operators preserves
decidability \cite{MR1150933, MR1723477}.
Aiming to guarantee the decidability of pseudovarieties obtained
through the application of $*$, from a stronger property
for the involved pseudovarieties, Almeida \cite{hyperdecidable}
introduced the notion of hyperdecidability. This property consists of a
generalization of inevitability for finite groups
introduced by Ash in \cite{MR1232670}. Since then, other notions like
tameness and reducibility and some other variants were also considered
\cite{unified_theory}.

On the other hand, Brzozowski and Fich~\cite{MR753707}
conjectured that $\Sl * \LL
= {\sf GLT}$
and established the inclusion $\Sl * \LL \subseteq {\sf
  GLT}$.
Motivated by this problem, Almeida and Weil~\cite{drh} considered the dual of
the pseudovariety $\LL$, the pseudovariety $\R$ of $\Req$-trivial
finite semigroups, and described the structure
of the free pro-$\R$ semigroup. Later on, it was proved
by Almeida and Silva~\cite{MR1819091} that the pseudovariety $\R$ is
$SC$-hyperdecidable for the canonical implicit signature $\kappa$, and
by Almeida, Costa and Zeitoun~\cite{JA} that $\R$ is 
completely $\kappa$-reducible. In this paper, we generalize
the results obtained in \cite{JA} for pseudovarieties of the form
$\DRH$, where $\h$ is a pseudovariety of groups and $\DRH$ is the
pseudovariety of semigroups whose regular $\Req$-classes are groups
lying in~$\h$. More precisely, we prove that~$\DRH$ is a completely
$\kappa$-reducible pseudovariety if and only if the pseudovariety of
groups~$\h$ is completely $\kappa$-reducible as well.
Of course, the latter condition holds for every locally finite pseudovariety~$\h$.
However, so far, the
unique known instance of  a completely $\kappa$-reducible non-locally
finite pseudovariety is $\Ab$, the pseudovariety of abelian
groups~\cite{MR2142087}.
Hence, the
pseudovariety ${\sf DRAb}$ is completely $\kappa$-reducible.
On the contrary, since neither the pseudovarieties $\G$ and $\G_p$
(respectively, of all finite
groups, and of all finite $p$-groups, for a prime~$p$) nor
proper non-locally finite subpseudovarieties of $\Ab$ are completely
$\kappa$-reducible~\cite{MR1485465,MR0179239,MR2364777},
we obtain a family of pseudovarieties of the form $\DRH$ that are not
completely $\kappa$-reducible.

In Section~\ref{sec:4} we introduce the basic concepts and set up
the notation used later. Section~\ref{section3} is devoted to general
facts on the structure of the free pro-$\DRH$ semigroup~$\pseudo
A{DRH}$ already known from~\cite{drh}. In particular, we describe
members of~$\pseudo A{DRH}$ by means of certain decorated reduced
$A$-labeled
ordinals.
Section~\ref{sec:500} contains a generalization of a periodicity
phenomenon over pseudovarieties of the form~$\DRH$
that was proved for~$\R$ in~\cite{JA}.
Some 
simplifications concerning the class of systems of equations that we must
consider in order to achieve complete $\kappa$-reducibility of~$\DRH$
are introduced in Section~\ref{section4}, while in
Sections~\ref{section5} and~\ref{section6} we redefine the tools used
in~\cite{JA}, adapting
them for the context of the pseudovarieties $\DRH$. Finally, in
Section~\ref{section7} we prove the main theorem, that is, we prove
that~$\DRH$
is
completely $\kappa$-reducible provided so is $\h$, whose converse
amounts to a simple observation.
\section{General definitions and notation}\label{sec:4}

For the basic concepts and results on (pro)finite semigroups
the reader is referred to \cite{livro,profinite}. The
required topological tools may be found in \cite{topologia}.

The symbols $\Req$, $\le_{\Req}$, $\Deq$, and $\Heq$ denote some of
Green's relations.
Given a semigroup $S$, we denote by $S^I$
the monoid whose underlying set is $S \uplus \{I\}$, where $S$ is a
subsemigroup and $I$
plays the role of a neutral element.
Given $n$ elements $s_1, \ldots, s_n$ of a semigroup $S$, we use the
notation $\prod_{i = 1}^n s_i$ for the product $s_1 s_2
\cdots s_n$.
Given a sequence $(s_n)_{n \ge 1}$ of a semigroup $S$ we call
\emph{infinite product} the sequence $\left(\prod_{i = 1}^n
  s_i\right)_{n \ge 1}$.

If nothing else is said, then we use $\V$ and $\W$ for denoting arbitrary
pseudovarieties of semigroups. Some pseudovarieties referred in this paper
are $\s$, the pseudovariety of all finite semigroups; $\Sl$, the
pseudovariety of all finite semilattices; $\G$, the pseudovariety
of all finite groups; $\G_p$, the pseudovariety of all $p$-groups (for
a prime number $p$); and $\Ab$, the pseudovariety of all finite
Abelian groups. We denote arbitrary subpseudovarieties of $\G$
by $\h$. Our main focus are the pseudovarieties of the form $\DRH$,
that is, the class of all finite semigroups whose regular
$\Req$-classes are groups lying in $\h$, and hence, are also
$\Heq$-classes.
If $\h$ is the trivial pseudovariety of groups ${\sf I} = \var{x=y}$, then $\DRH =
{\sf{DRI}}$ is the pseudovariety $\R$ of all finite $\Req$-trivial
semigroups.

We reserve the letter~$A$ to denote a finite alphabet. Then, $\pseudo
AV$ is the free $A$-generated pro-$\V$ semigroup.
If the pseudovariety $\V$ contains at least one non-trivial semigroup,
then the generating mapping $\iota: A \to \pseudo AV$
is injective. So, we often identify the elements of~$A$ with their
images under~$\iota$.
In the monoid $(\pseudo AV)^I$, we sometimes call $I$ the empty
(pseudo)word.
Also, if $B \subseteq A$, then the inclusion mapping induces an
injective continuous
homomorphism  $\pseudo
BV \to \pseudo AV$. Hence, we look at $\pseudo BV$ as a subsemigroup of
$\pseudo AV$.
On the other hand, if $\W$ is another pseudovariety contained in
$\V$, then $\rho_{\V,\W}$ represents the \emph{natural projection of
  $\pseudo AV$ onto 
  $\pseudo AW$}. We shall write~$\rho_\W$
when~$\V$ is clear from the context. In the case where $\W = \Sl$ we
denote $\rho_\Sl$ by $c$ and call it the \emph{content function}.

Given a pro-$\V$
semigroup $S$ and $u \in \pseudo AV$,
we denote by $u_S : S^A
\to S$ the interpretation in $S$ of the implicit operation induced by
$u$.
An \emph{implicit signature}, usually denoted~$\sigma$, is a set of
implicit operations on $\s$ containing the multiplication.
Of course, every implicit signature $\sigma$ endows $\pseudo AV$
with  a structure
of $\sigma$-algebra under the interpretation of  each one
of its symbols.
We denote
by $\pseudosig AV$ the $\sigma$-subalgebra of $\pseudo AV$
generated by $A$. The implicit signature $\kappa = \{\_\cdot\_,
\_^{\omega-1}\}$ is the \emph{canonical implicit signature}, where
$x^{\omega-1} = \lim_{n \ge 1} x^{n!-1}$.
Elements of $\pseudo AS$ are called \emph{pseudowords}, while elements
of $\pseudosig AS$  are
\emph{$\sigma$-words}

A formal equality $u = v$, with
$u, v \in \pseudo AS$ is called a \emph{pseudoidentity}.
Expressions like \emph{$\V$ satisfies $u = v$}, \emph{$u = v$ holds modulo
  $\V$}, and \emph{$u = v$ holds in $\V$}
mean that the interpretations of $u$ and $v$ coincide on every semigroup $S \in
\V$.
If that is the case, then we may write $u =_\V v$. We have $u =_\V v$
if and only if $\rho_\V(u) = \rho_\V(v)$.

Let $X$ be a finite set of \emph{variables} and $P$ a finite set of
\emph{parameters}, disjoint from $X$. A \emph{pseudoequation} is a
formal expression $u = v$ with $u, v \in \pseudo {X \cup P} S$. If
$u,v \in \pseudosig {X \cup P}S$, then $u = v$ is said to be a
\emph{$\sigma$-equation}, and if $u,v \in (X \cup P)^+$, then it is
called a
\emph{word equation}. A \emph{finite system of pseudoequations}
(respectively, \emph{$\sigma$-equations}, \emph{word equations}) is a
finite set
\begin{equation}
  \label{eq:1}
  \{u_i = v_i \colon i = 1, \ldots, n\},
\end{equation}
where each $u_i = v_i$ is a pseudoequation (respectively,
$\sigma$-equation, word equation). For each
variable $x \in X$, we consider a \emph{constraint} given by a
pair $(\varphi,\nu)$, where $\varphi:\pseudo AS \to S$ is a continuous
homomorphism into a finite semigroup $S$, and $\nu:X \to S$ is a
function.
The \emph{evaluation of the
  parameters} in $P$ is given by a map $\ev: P \to \pseudo AS$. A
\emph{solution modulo~$\V$} of the system~\eqref{eq:1}
\emph{satisfying the given constraints and subject to the evaluation
  of the parameters} is a continuous homomorphism $\delta:
\pseudo {X \cup P} S \to \pseudo AS$ such that the following conditions are
satisfied:
\begin{enumerate} [label = $(S.\arabic*)$]
\item \label{s1} $\delta(u_i) =_\V
  \delta(v_i)$, for $i = 1,\ldots, n$;
\item \label{s2} $\varphi(\delta(x)) = \nu(x)$, for $x \in X$;
\item \label{s3} $\delta(p) = \ev(p)$, for $p \in
  P$.
\end{enumerate}
Without loss of generality, we assume from now on that the semigroup~$S$ has a
content function (see \cite[Proposition~2.1]{MR1834943}).
  
If $\delta(X \cup P) \subseteq \pseudosig AS$, then we say that
$\delta$ is a solution modulo $\V$ of \eqref{eq:1} \emph{in
$\sigma$-words}. In particular, the existence of a solution in
$\sigma$-words implies, by \ref{s3}, that $\ev$ evaluates the parameters
in $\sigma$-words as well. Let $\iC$ be a class of finite systems of
$\sigma$-equations. We say that $\V$ is
\emph{$\sigma$-reducible} with respect to $\iC$ if any system of $\iC$
which has a solution modulo~$\V$ also has a solution modulo $\V$ in
$\sigma$-words. The pseudovariety $\V$ is said to be \emph{completely
  $\sigma$-reducible} if it is $\sigma$-reducible with respect to the
class of all finite systems of $\sigma$-equations.

\section{Structural aspects of the free pro-$\DRH$ semigroup}\label{section3}
\subsection{Preliminaries}

Before describing how to represent pseudowords over $\DRH$
conveniently, we need to introduce a few concepts.

Suppose that $\Sl \subseteq \V$ and let $u \in \pseudo AV$.
A \emph{left basic factorization} of $u$ is a factorization of the form $u =
u_\ell a u_r$, where $u_\ell, u_r \in (\pseudo AV)^I$ and $c(u) =
c(u_\ell) \uplus \{a\}$.
For certain pseudovarieties such a factorization always exists and is unique.
\begin{prop}
  [{\cite{drh,palavra}}]\label{p:1}
  Let $\V \in \{\DRH, \s\}$.
  Then, every element $u \in \pseudo AV$ admits a unique factorization
  of the form $u = u_\ell a u_r$ such that $a \notin c(u_\ell)$
  and $c(u_\ell a) = c(u)$.
\end{prop}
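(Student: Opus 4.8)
The plan is to prove existence and uniqueness separately: existence will be a routine compactness argument, while uniqueness — the real content — rests on a comparability property of the prefixes of~$u$ together with the fact that reading ``the first letter after a prefix'' is well defined.

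For existence, I would first note that $\Sl\subseteq\R\subseteq\V$ for $\V\in\{\DRH,\s\}$, so the content function $c\colon\pseudo AV\to\pseudo A\Sl$ is a well-defined continuous homomorphism and, for every $a\in A$, the set $\{w\in\pseudo AV:a\notin c(w)\}$ is clopen, being the preimage under~$c$ of a clopen subset of the finite semilattice $\pseudo A\Sl$. Choose a sequence of words $w_n\in A^+$ with $w_n\to u$; by continuity of~$c$ we may assume $c(w_n)=c(u)$ for all~$n$. Each finite word admits an evident and unique left basic factorization $w_n=y_n\,a_n\,z_n$ (read $w_n$ from the left until its content is completed, and let $a_n$ be the completing letter). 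Since~$A$ is finite and $\pseudo AV$ is compact, after passing to a subsequence we may assume that $a_n$ equals a fixed letter~$a$ and that $y_n\to y$ and $z_n\to z$ in $\pseudo AV$. Passing to the limit in $w_n=y_n\,a\,z_n$ gives $u=y\,a\,z$; since $\{w:a\notin c(w)\}$ is closed, $a\notin c(y)$; and since $c(y_n a)=c(w_n)=c(u)$ for all~$n$, continuity of~$c$ yields $c(ya)=c(u)$. This settles existence.

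For uniqueness, suppose $u=x\,a\,y=x'\,a'\,y'$ with $a\notin c(x)$, $c(xa)=c(u)$, and likewise for the primed data, and put $B=c(u)$, so that $c(x)=B\setminus\{a\}$ and $c(x')=B\setminus\{a'\}$. At this point I would invoke the structure of $\pseudo AV$ for $\V\in\{\DRH,\s\}$, as recalled in the later sections from~\cite{drh}: the prefixes of a fixed element of $\pseudo AV$ are linearly ordered by the prefix relation (up to $\Req$), and for a prefix~$p$ of~$u$ the first letter of any~$w$ with $u=pw$ is independent of the choice of~$w$. Since~$x$ and~$x'$ are prefixes of~$u$, by the first property I may assume $x'$ is a prefix of~$x$, say $x=x'\,t$ with $t\in(\pseudo AV)^I$, so that $c(x')\subseteq c(x)=B\setminus\{a\}$ and hence $a\notin c(x')$; if $a\ne a'$ this contradicts $a\in B=c(x')\cup\{a'\}$, so $a=a'$. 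Then $c(t)\subseteq c(x)=B\setminus\{a\}$; using $u=x'\,t\,a\,y=x'\,a\,y'$ and the second property to read off the first letter after~$x'$, that letter would, if $t\ne I$, be at once the first letter of~$t$ (an element of $c(t)$) and~$a\notin c(t)$ — impossible — so $t=I$ and $x=x'$. Finally $u=x\,a\,u_r=x\,a\,u_r'$ with $c(xa)=B$, and $u_r=u_r'$ follows in the same spirit from the structure of $\pseudo AV$, the prefix $xa$ being left-cancellable at~$u$ once it already carries the full content.

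The main obstacle, then, is not the factorization argument itself but the structural facts about $\pseudo AV$ that it consumes — linearity of the prefix order, well-definedness of the first letter following a prefix, and the corresponding cancellation property used for~$u_r$. These are exactly the features of the free pro-$\DRH$ (and pro-$\s$) semigroup developed from~\cite{drh} in the sections that follow, so I expect the bulk of the work to lie in setting up that description, as in the references, after which the proof above goes through.
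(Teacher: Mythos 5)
The paper offers no proof of this proposition: it is quoted from the references \cite{drh,palavra}, so there is no in-paper argument to compare against. Your existence argument is correct and is the standard one — approximate $u$ by words of the same content, take the (evident, unique) left basic factorization of each word, and pass to a convergent subsequence, using that $\{w : a\notin c(w)\}$ is clopen.

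The uniqueness half, however, is where the entire content of the proposition lies, and there your argument has a genuine gap. The structural facts you propose to consume are derived, both in \cite{drh} and in Section~\ref{section3} of this paper, \emph{from} the left basic factorization: the ordinal representation of $\pseudo A{DRH}$, the linear ordering of end-marked prefixes, and the cancellation statements (Corollaries~\ref{c:3}, \ref{c:6}, \ref{c:7}) are all obtained by iterating the very factorization whose uniqueness is at stake, so for $\V=\DRH$ the argument is circular. Moreover, one of the two facts you invoke is false as stated: for a general prefix $p$ of $u$, the first letter of a $w$ with $u=pw$ is \emph{not} well defined — for instance $(ab)^\omega=(ab)^\omega a^\omega=(ab)^\omega b^\omega$ in $\pseudo A{DRH}$ by Corollary~\ref{c:2}, with first letters $a$ and $b$. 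The property holds only for end-marked prefixes, and ``end-marked'' is defined via the cumulative content, i.e., again via iterated left basic factorizations. For $\V=\s$ the situation is worse: comparability of the prefixes of a fixed pseudoword is essentially the equidivisibility of $\pseudo AS$, a nontrivial theorem established neither here nor in the cited references, and left cancellation by an infinite prefix fails in $\pseudo AS$ in general (e.g.\ $a^\omega\cdot a=a^\omega\cdot a^{\omega+1}$ while $a\neq a^{\omega+1}$), so your final step ``$u_r=u_r'$ by cancellability of $xa$'' is also not free. A non-circular proof must instead recognize the factorization by finite members of $\V$: for example, $a=a'$ follows by mapping $u$ into the free left regular band on $A$ (a finite $\Req$-trivial monoid, hence in $\R\subseteq\V$, which records the order of first occurrences of letters and forces $a$ to be the last letter in that order), while identifying $u_\ell$ with $u_\ell'$ and $u_r$ with $u_r'$ as elements of $\pseudo AV$ requires a marked-product construction over arbitrary semigroups of $\V$ — and that is the actual work carried out in the references.
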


Applying inductively Proposition~\ref{p:1} to the leftmost factor of
the left basic factorization of a pseudoword over $\V \in \{\DRH, \s\}$, we obtain
the following result.
\begin{cor}
  \label{c:5} Let $\V \in \{\DRH, \s\}$ and $u$ be a pseudoword over~$\V$. Then, there exists
  a unique factorization $u = a_1u_1 a_2u_2\cdots  a_nu_n$ such that
  $a_i \notin c(a_1u_1 \cdots a_{i-1}u_{i-1})$, for  every $i = 2, \ldots,
  n$, and $c(u) = \{a_1, \ldots, a_n\}$.
\end{cor}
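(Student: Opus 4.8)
The plan is to obtain Corollary~\ref{c:5} by iterating the left basic factorization supplied by Proposition~\ref{p:1}. First I would apply Proposition~\ref{p:1} to $u$ itself, writing $u = u_\ell^{(1)} a_1 u_1$ with $a_1 \notin c(u_\ell^{(1)})$ and $c(u_\ell^{(1)} a_1) = c(u)$; then, if $u_\ell^{(1)} \ne I$, apply it again to $u_\ell^{(1)}$, and so on. Since $\Sl \subseteq \V$ for $\V \in \{\DRH, \s\}$, Proposition~\ref{p:1} is available at each stage. At step $k$ we peel off the rightmost new-letter marker $a_k$ together with its tail $u_k$, and we must check that the process terminates. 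Termination follows because at each step the content of the remaining left factor strictly decreases: if $u_\ell^{(k)} = u_\ell^{(k+1)} a_{k+1} u_{k+1}$ with $a_{k+1} \notin c(u_\ell^{(k+1)})$ and $c(u_\ell^{(k+1)} a_{k+1}) = c(u_\ell^{(k)})$, then $a_{k+1} \in c(u_\ell^{(k)}) \setminus c(u_\ell^{(k+1)})$, so $\card{c(u_\ell^{(k+1)})} < \card{c(u_\ell^{(k)})}$. As the alphabet is finite, after at most $\card{c(u)}$ steps the left factor becomes empty, and reading the peeled pieces from left to right yields a factorization $u = a_1 u_1 a_2 u_2 \cdots a_n u_n$.

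Next I would verify that this factorization satisfies the stated content conditions. By construction $u_\ell^{(k)} = a_1 u_1 \cdots a_{k-1} u_{k-1}$ (with $u_\ell^{(n)} = I$), and the condition $c(u_\ell^{(k-1)} a_{k-1}) = c(u_\ell^{(k-2)})$ telescopes to give $c(a_1 u_1 \cdots a_k u_k) = c(u_\ell^{(k-1)}) $ wait—more carefully, from $a_k \notin c(u_\ell^{(k)}) = c(a_1u_1\cdots a_{k-1}u_{k-1})$ we get exactly the requirement $a_i \notin c(a_1 u_1 \cdots a_{i-1} u_{i-1})$ for $i = 2, \dots, n$, and since $u_\ell^{(1)} a_1$ has the same content as $u$ and each subsequent peeling preserves total content, $c(u) = \{a_1, \ldots, a_n\}$.

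For uniqueness, I would argue that any factorization $u = a_1 u_1 \cdots a_n u_n$ meeting the conditions must coincide with the one produced above. The key point is that such a factorization forces $a_n u_n$ to be the rightmost block whose leading letter is ``new'', and $a_1 u_1 \cdots a_{n-1} u_{n-1}$ to be a left factor not containing $a_n$ whose content together with $a_n$ is all of $c(u)$; this is precisely the defining property of the left basic factorization of $u$, so by the uniqueness clause in Proposition~\ref{p:1} the pair $(a_1 u_1 \cdots a_{n-1} u_{n-1}, \; a_n, \; u_n)$ is uniquely determined. An induction on $\card{c(u)}$ applied to the left factor $a_1 u_1 \cdots a_{n-1} u_{n-1}$ then finishes the uniqueness. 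The only mild subtlety—the ``hard part'', such as it is—is making the bookkeeping of indices clean when reversing the order of the peeled blocks and confirming that the number $n$ of blocks is itself forced (it equals $\card{c(u)}$), so that two factorizations of possibly different lengths cannot both satisfy the conditions; this is handled by the content-decrease observation used for termination, now run in reverse.
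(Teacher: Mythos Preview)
Your approach is exactly the paper's: the paper derives this corollary in a single clause, ``Applying inductively Proposition~\ref{p:1} to the leftmost factor of the left basic factorization,'' and you have simply unpacked that sentence with the termination and uniqueness details. One cosmetic point: your indices are inconsistent (the block you peel off first is the \emph{rightmost} one, i.e.\ $a_n u_n$ in the final numbering, so the claim ``$u_\ell^{(k)} = a_1 u_1 \cdots a_{k-1} u_{k-1}$'' does not match the order in which you introduced the $a_k$); you noticed this yourself mid-paragraph, and fixing it is purely bookkeeping---either label the peeled blocks $a_n u_n, a_{n-1}u_{n-1},\ldots$ from the start, or relabel once at the end.
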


We refer to the factorization described in Corollary~\ref{c:5} as the \emph{first-occurrences
  factorization} of $u$.

For a pseudoword $u$ over $\V \in \{\DRH, \s\}$, we may also iterate the left basic factorization
of $u$ to the right as follows. We set $u'_0 = u$ and, for each $k \ge 1$,
whenever $u'_{k-1} \neq I$, we let $u'_{k-1} = u_ka_ku_k'$ be the left basic factorization of $u'_{k-1}$. Then, for every such~$k$, the equality $u = u_1a_1 \cdot
u_2a_2\cdots u_ka_k\cdot u_k'$ holds.
Moreover, the content of each
factor $u_ka_k$ decreases as $k$ increases. Since the alphabet $A$ is
finite, the
sequence of contents $(c(u_ka_k))_{k \ge 1}$ is either finite or it stabilizes. The
\emph{cumulative content} of $u$ is the empty set if the sequence is
finite, and is the set $c(u_ma_m)$ if $c(u_ma_m) = c(u_ka_k)$ for every
$k \ge m$. We denote the cumulative content of a pseudoword $u$ by
$\cum u$.
If~$\cum u \neq \emptyset$ and $m$ is the least integer such that
$\cum u = c(u_{m+1}a_{m+1})$, then we say that~$u_m'$ is the \emph{regular
  part of~$u$}.
It may be proved that an element $u$ of~$\pseudo A{DRH}$ is regular if
and only if its content coincides with its cumulative
content~\cite[Corollary~6.1.5]{drh}, that is, if $u$ is its own regular part.
If $\cum u = \emptyset$, then we set $\leng u = k$
if $u_k' =
I$. Otherwise, we set $\leng u = \infty$. We also write $\lbf[\infty]
u$
for the sequence $(u_1a_1, \ldots, u_{\leng u}a_{\leng u},
I,I,\ldots)$ if $\cum u = \emptyset$, and for the sequence $(u_ka_k)_{k
\ge 1}$ otherwise. We denote the $k$-th element of $\lbf[\infty]u$ by
$\lbf[k] u$.
\begin{rem}
  \label{sec:9}
  It is not hard to check that if $\V \in \{\DRH, \s\}$ satisfies the pseudoidentity $uu_0
  = u$, then $\lbf[\infty]{uu_0} = \lbf[\infty]u$ and
  $c(u_0)\subseteq\cum u$.
  Conversely, if $c(u_0) \subseteq \cum u$, then the
  equality $\lbf[\infty] u = \lbf[\infty]{uu_0}$ holds modulo $\V$.
\end{rem}

Suppose that the iteration of the left basic factorization of $u \in
\pseudo A{DRH}$ to the right runs forever. Since $\pseudo A{DRH}$ is a compact
monoid, the infinite product $(\lbf[1] u \cdots \lbf[k] u)_{k \ge 1}$ has,
at least, one accumulation point. Plus, any two
  accumulation points are 
$\Req$-equivalent (cf.\ \cite[Lemma 2.1.1]{drh}). If, in addition,
$u$ is
regular, then the $\Req$-class containing the accumulation points of
the mentioned sequence is regular
\cite[Proposition~2.1.4]{drh} and hence, it is a group.
In that case, we may define the \emph{idempotent designated} by the
infinite product  $(\lbf[1] u \cdots \lbf[k] u)_{k \ge 1}$ to be the
identity of the group to where its accumulation points belong.
It further happens that each regular $\Req$-class of $\pseudo A{DRH}$ is
homeomorphic to a free pro-$\h$ semigroup. This claim consists of a
particular case of the next proposition, which is behind  the results on the representation
of elements of~$\pseudo A{DRH}$ presented in~\cite{drh}, some of which we
state later.
We use $\DO$ and $\overline \h$ to denote the pseudovarieties
consisting, respectively, of all finite
semigroups whose regular $\Deq$-classes are orthodox semigroups, and
of all finite semigroups whose subgroups belong to $\h$.
\begin{prop}
  [{\cite[Proposition 5.1.2]{drh}}]
  \label{p:3}
  Let $\V$ be a pseudovariety such that the inclusions $\h\subseteq \V
  \subseteq \DO \cap \overline \h$ hold. If $e$ is
  an idempotent of $\pseudo AV$ and $H_e$ is its $\Heq$-class, then
  letting $\psi_e(a) = eae$ for each $a \in c(e)$ defines a unique
  homeomorphism $\psi_e:\pseudo{c(e)}H \to H_e$ whose inverse is the
  restriction of $\rho_{\h}$ to $H_e$.
\end{prop}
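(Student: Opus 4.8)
The plan is to verify, in order, that the assignment $\psi_e(a) = eae$ extends to a well-defined continuous homomorphism from $\pseudo{c(e)}H$ to $H_e$, that this map is surjective, that $\rho_{\h}$ restricted to $H_e$ is a two-sided inverse, and finally that both maps are continuous bijections on compact Hausdorff spaces, hence homeomorphisms. The starting point is the classical observation that, since $e$ is idempotent, the $\Heq$-class $H_e$ is a subgroup of $\pseudo AV$ with identity $e$, and the \emph{localized} map $s \mapsto ese$ sends the closed subsemigroup $e(\pseudo AV)e$ onto a monoid with identity~$e$; because $V \subseteq \DO \cap \overline{\h}$, this monoid has all its subgroups in $\h$, and one checks that $H_e$ is precisely the (maximal) subgroup at $e$ inside $e(\pseudo AV)e$. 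The content hypothesis $a \in c(e)$ guarantees that $eae$ is $\Heq$-equivalent to $e$ (otherwise $eae <_{\Heq} e$ and multiplying back would lose the letter $a$ from the content, contradicting $a \in c(e) = c(eae)$ via stability of content under the localized product inside a regular $\Deq$-class), so $\psi_e$ does land in $H_e$.

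First I would show $\psi_e$ extends to a continuous homomorphism. The finite alphabet $c(e)$ freely generates $\pseudo{c(e)}H$ as a pro-$\h$ semigroup, and $H_e$ is a compact group all of whose finite continuous quotients lie in $\h$ (by $V \subseteq \overline{\h}$, every subgroup of every finite quotient of $\pseudo AV$ is in $\h$, and $H_e$ maps onto subgroups of such quotients). Hence the map $a \mapsto eae$ from $c(e)$ into the profinite group $H_e$ extends uniquely to a continuous homomorphism $\psi_e \colon \pseudo{c(e)}H \to H_e$ by the universal property of the free pro-$\h$ semigroup. For surjectivity, I would argue that the image $\psi_e(\pseudo{c(e)}H)$ is a closed subsemigroup of $H_e$ containing $e$ and all the generators $eae$; since $H_e$ is topologically generated over $\DRH$-type structure by $\{eae : a \in c(e)\}$ — concretely, any element $h \in H_e$ has $h = ehe$ and $\rho_{\h}(h)$ is a product of letters from $c(h) = c(e)$, so $h = \psi_e(\rho_{\h}(h))$ — surjectivity drops out of the inverse computation. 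So the efficient route is to compute the inverse directly: show $\rho_{\h}|_{H_e} \colon H_e \to \pseudo{c(e)}H$ and $\psi_e$ are mutually inverse. One direction, $\rho_{\h} \circ \psi_e = \mathrm{id}$, follows because $\rho_{\h}(eae) = \rho_{\h}(e)\rho_{\h}(a)\rho_{\h}(e) = a$ (as $\rho_{\h}(e)$ is the identity of $\pseudo{c(e)}H$, using that $c(e)$ is the content and $\h$ a pseudovariety of groups) and both sides are continuous homomorphisms agreeing on generators. The other direction, $\psi_e \circ \rho_{\h}|_{H_e} = \mathrm{id}_{H_e}$, is the substantive point and is exactly where $V \subseteq \DO$ enters.

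The main obstacle, and the step I would spend the most care on, is proving $\psi_e(\rho_{\h}(h)) = h$ for all $h \in H_e$, equivalently that $h$ is determined inside $H_e$ by its image in $\pseudo{c(e)}H$. Here is the strategy: both $h \mapsto \psi_e(\rho_{\h}(h))$ and the identity are continuous, so by density it suffices to treat $h$ in a generating set, or to reduce to finite quotients. Passing to an arbitrary finite continuous quotient $\varphi \colon \pseudo AV \to T$ with $T \in \DO \cap \overline{\h}$, write $f = \varphi(e)$, an idempotent of $T$, and $H_f$ its group $\Heq$-class. The claim becomes: in $T$, every element of $H_f$ equals the evaluation at $fae \mapsto faf$ of its image under the projection $H_f \to H_f/(\text{stuff killed by } \rho_{\h})$. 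Since $T \in \DO$, the regular $\Deq$-class $D_f$ containing $f$ is an orthodox (indeed, by $T\in\overline\h$, a completely simple-over-$\h$) semigroup, and in such a semigroup the Rees matrix coordinates together with the $\h$-component of the sandwich matrix recover elements; the map $a \mapsto faf$ realizes the group $H_f$ as the $\h$-free object on $c(f)$ modulo the relations imposed by $T$, and $\rho_{\h}$ is precisely the quotient recording those coordinates. I would make this precise using the structure theory of $\DO$ (the local structure of regular $\Deq$-classes as orthodox completely $0$-simple semigroups) to get, in each finite quotient, that $faf$-products generate $H_f$ and that the only relations among them are the ones that survive in $\pseudo{c(f)}H$. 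Finally, both $\psi_e$ and $\rho_{\h}|_{H_e}$ are continuous bijections between the compact Hausdorff spaces $\pseudo{c(e)}H$ and $H_e$, hence homeomorphisms, and uniqueness of $\psi_e$ is immediate since a continuous homomorphism out of $\pseudo{c(e)}H$ is determined by its values on the generating set $c(e)$.
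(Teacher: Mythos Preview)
The paper does not contain a proof of this proposition: it is quoted, without argument, from \cite[Proposition~5.1.2]{drh}, so there is no in-paper proof to compare your attempt against.

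As for your sketch itself: the overall architecture is the standard one, and the easy direction is fine. That $H_e$ is a pro-$\h$ group (from $\V\subseteq\overline\h$), that the assignment $a\mapsto eae$ extends uniquely to a continuous homomorphism $\psi_e$ by the universal property of $\pseudo{c(e)}H$, and that $\rho_\h\circ\psi_e=\mathrm{id}$ because $\rho_\h(e)$ is an idempotent in a group and hence trivial, are all correct (your content argument for $eae\Heq e$ is a little loose but repairable). The substantive step, as you correctly isolate, is $\psi_e\circ\rho_\h|_{H_e}=\mathrm{id}_{H_e}$, and here your argument has a genuine gap. After passing to a finite quotient $T\in\DO\cap\overline\h$ with idempotent $f=\varphi(e)$, you assert that ``the map $a\mapsto faf$ realizes the group $H_f$ as the $\h$-free object on $c(f)$ modulo the relations imposed by $T$''; but that is a restatement of the conclusion at the finite level, not a proof of it. What must actually be shown is either that $\{eae:a\in c(e)\}$ topologically generates $H_e$ (surjectivity of $\psi_e$), or that $\rho_\h|_{H_e}$ is injective, and it is precisely here that the $\DO$ hypothesis does real work: in a finite $\DO$-semigroup one uses orthodoxy of the regular $\Deq$-class to show that the local group $H_f$ is generated as a semigroup by $\{faf:a\in c(f)\}$. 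Your Rees-matrix remark gestures toward this, but you never actually exhibit, for an arbitrary $h\in H_f$, an expression for $h$ as a product of elements $faf$, nor do you explain why two such expressions with the same image in $\h$ must coincide in $H_f$. That is the content of the cited result, and your sketch stops just short of it.
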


The following is an important consequence of Proposition~\ref{p:3} which we use later on.
\begin{cor}
  \label{c:2}
  Let $u$ be a pseudoword and $v,w \in (\pseudo AS)^I$ be such that
  $c(v)\cup c(w) \subseteq \cum u$ and $v =_\h w$. Then, the
  pseudovariety $\DRH$ satisfies $uv = uw$.
\end{cor}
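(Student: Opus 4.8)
The plan is to reduce to the case where $u$ is regular and then combine Proposition~\ref{p:3} with the description of $\Req$ on $\pseudo A{DRH}$ by means of left basic factorizations. If $\cum u=\emptyset$ there is nothing to do: then $c(v)\cup c(w)\subseteq\emptyset$, so $v=w=I$ (no nonempty pseudoword has empty content) and $uv=u=uw$. So assume $\cum u\neq\emptyset$ and write $u=u_1a_1\cdots u_ma_m\cdot u_m'$, where $u_m'$ is the regular part of~$u$. The sequence of left basic factorizations of~$u_m'$ consists of those of~$u$ from index $m+1$ on, so $c(u_m')=\cum{u_m'}=\cum u\supseteq c(v)\cup c(w)$; and since $uv=(u_1a_1\cdots u_ma_m)(u_m'v)$, and similarly for~$w$, it suffices to prove the statement with~$u_m'$ replacing~$u$. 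Hence I may assume $u$ regular, i.e.\ $c(u)=\cum u$.

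Next I would work inside $\pseudo A{DRH}$, writing $\bar x=\rho_{\DRH}(x)$. As $\rho_{\DRH}$ carries left basic factorizations to left basic factorizations, $\bar u$ is again regular, so its $\Req$-class is a group, namely the $\Heq$-class~$H_e$ of its idempotent~$e$; and $e\Req\bar u$ yields $c(e)=c(\bar u)=c(u)=\cum u$, so that $c(v)\cup c(w)\subseteq c(e)$. Since $\h\subseteq\DRH\subseteq\DO\cap\overline\h$, Proposition~\ref{p:3} gives the homeomorphism $\psi_e\colon\pseudo{c(e)}H\to H_e$, $a\mapsto eae$, whose inverse is $\rho_\h|_{H_e}$; in particular $\rho_\h$ is injective on~$H_e$. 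The crux is to show $e\bar v,e\bar w\in H_e$. Since $e\bar v\le_{\Req}e$ automatically and the $\Req$-class of~$e$ is precisely~$H_e$, it is enough to check $e\bar v\Req e$ (and symmetrically for~$w$). For this I would note that $e$, being idempotent, is regular, so $c(e)=\cum e$; then $c(\bar v)=c(v)\subseteq c(e)=\cum e$, and Remark~\ref{sec:9} gives $\lbf[\infty]{e\bar v}=\lbf[\infty]e$, whence, by the characterization of~$\Req$ on~$\pseudo A{DRH}$ in terms of sequences of left basic factorizations \cite{drh}, $e\bar v\Req e$. I expect this last implication to be the only real obstacle: it is where one genuinely needs the finer structure of~$\pseudo A{DRH}$, the rest being routine.

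Finally, once $e\bar v,e\bar w\in H_e$ is established, the proof closes at once. The element $\rho_\h(e)$ is an idempotent of the profinite group~$\pseudo AH$, hence equals its identity~$1$, so $\rho_\h(e\bar v)=\rho_\h(\bar v)=\rho_\h(v)$ and, using $v=_\h w$, also $\rho_\h(e\bar w)=\rho_\h(w)=\rho_\h(v)$; since $\rho_\h$ is injective on~$H_e$, it follows that $e\bar v=e\bar w$. As $e$ is the identity of the group~$H_e\ni\bar u$, we conclude $\bar u\bar v=\bar u e\bar v=\bar u e\bar w=\bar u\bar w$, that is, $\DRH$ satisfies $uv=uw$.
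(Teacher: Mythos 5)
Your proof is correct, and it follows exactly the route the paper intends: the paper states Corollary~\ref{c:2} as a consequence of Proposition~\ref{p:3} without giving details, and your argument (reduce to the regular part of $u$, show $e\bar v, e\bar w$ lie in $H_e$ via Remark~\ref{sec:9} and the characterization of $\Req$ by iterated left basic factorizations, then use injectivity of $\rho_\h$ on $H_e$) is a complete and accurate instantiation of that derivation. No gaps.
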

We now have all the necessary ingredients to describe the elements of
$\pseudo A{DRH}$ by means of the so-called ``decorated reduced $A$-labeled
ordinals'',
which we do along the next subsection. The construction is based
on~\cite{drh}.
\subsection{Decorated reduced $A$-labeled ordinals}

A \emph{decorated reduced $A$-labeled ordinal} is a triple $(\alpha, \ell,
g)$ where
\begin{itemize}
\item $\alpha$ is an ordinal.
\item $\ell: \alpha \to A$ is a function.  For a limit ordinal $\beta
  \le \alpha$, we let the \emph{cumulative content of $\beta$ with
    respect to $\ell$} be given by
  $$\cum {\beta, \ell} = \{a \in A \colon \exists (\beta_n)_{n \ge 1} \mid
  \cup_{n \ge 1} \beta_n = \beta, \: \beta_n < \beta \text{ and }
  \ell(\beta_n) = a\}.$$
  Later, in Remark \ref{r:1}, we observe that the relationship between
  the cumulative content of
  an ordinal and the cumulative content of a pseudoword makes this
  terminology adequate.
  We further require for $\ell$ the following property:
  \begin{quote}
    for every limit ordinal $\beta < \alpha$, the letter $\ell(\beta)$
    does not belong to the set $\cum {\beta, \ell}$.
  \end{quote}
\item $g: \{\beta \le \alpha\colon \beta \text{ is a limit ordinal}\}
  \to \pseudo AH$ is a function such that $g(\beta) \in \pseudo {\cum{
    \beta, \ell}}H$.
\end{itemize}
We denote the set of all decorated reduced $A$-labeled ordinals by
${\sf rLO}_\h(A)$.

To each pseudoword~$u$ over~$\DRH$, we assign an element of ${\sf rLO}_\h(A)$ as follows.
Let us say that the product $ua$ is \emph{end-marked} if $a \notin
\cum u$.
It is known that the set of all end-marked pseudowords over a finite alphabet
constitutes a well-founded forest under the partial order
$\le_{\Req}$ \cite[Proposition 4.8]{JA}.
Then, $\alpha_u$ is the unique ordinal such that there exists an
isomorphism (also unique)
\begin{equation*}
  \theta_u: \alpha_u \to \{\text{end-marked prefixes of
  }u\}
\end{equation*}
such that $\theta_u(\beta) >_{\Req} \theta_u(\gamma)$ whenever $\beta <
\gamma$.
We let $\ell_u: \alpha_u \to A$ be the function sending
each ordinal $\beta \le \alpha$ to the letter $a$ if $\theta_u(\beta) =
va$.
\begin{rem}\label{r:1}
  We point out that, for every limit ordinal  $\beta \le \alpha$ such
  that $\theta_u(\beta) = va$, we have $\cum{v} = \cum{\beta,
    \ell_u}$.
\end{rem}
It remains to define~$g_u$.
Let~$\beta \le \alpha_u$ be a limit ordinal. By definition
of~$\theta_u$, if $\theta_u(\beta) = va$, then the regular part of~$v$
is nonempty. Then, we set $g_u(\beta)$ to be the projection
onto~$\pseudo AH$
of the regular part of~$v$.
Observe that, by Remark \ref{r:1}, $g_u(\beta)$ defined in that way
belongs to $\pseudo{\cum{\beta, \ell_u}} H$. Hence,
$(\alpha_u, \ell_u, g_u)$ is indeed a decorated reduced $A$-labeled ordinal.
We call~$F$ the mapping thus defined:
\begin{align*}
  F: \pseudo A{DRH} & \to {\sf rLO}_\h(A)
  \\ u &\mapsto (\alpha_u, \ell_u, g_u).
\end{align*}
\noindent It turns out that~$F$ is a bijection~\cite[Theorem~6.1.1]{drh}.
In fact, it is possible to define an algebraic structure on~${\sf
  rLO}_\h(A)$ that turns~$F$ into an isomorphism. We do not include
such construction since we make no explicit use of it.

Let $u \in \pseudo AS$.
Sometimes we abuse notation and write~$\alpha_u$ to refer to~$\alpha_{\rho_\DRH(u)}$.

\begin{notacao}\label{sec:19}
  Let $u \in \pseudo AS$ and take ordinals $\beta \le \gamma \le
  \alpha_u$. Let $\theta_u(\beta) = v a$ and $\theta_u(\gamma) =
  wb$. If $\beta < \gamma$, then we denote by $u[\beta,\gamma{[}$ the
  product $az$, where~$z$ is the unique pseudoword such that $w =
  vaz$.
  We set $u[\beta, \beta{[} = I$.
\end{notacao}
If $u$ is a $\kappa$-word, then the factors of $u$ of the form
$u[\beta,\gamma{[}$ are $\kappa$-words as well. This fact arises as a
consequence of the following lemma when we iterate it inductively.
\begin{lemma}
  [{\cite[Lemma 2.2]{palavra}}]\label{l:6}
  Let $u \in \pseudok AS$ and let $(u_\ell,a,u_r)$ be its left basic
  factorization. Then, $u_\ell$ and $u_r$ are $\kappa$-words.
\end{lemma}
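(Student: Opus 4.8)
The plan is to establish a mild generalization of the lemma that is better suited to an induction on the structure of $\kappa$-terms. For a subset $D\subseteq A$, call a factorization $w = z\,a\,w_r$, with $z,w_r\in(\pseudo AS)^I$ and $a\in A$, a \emph{left basic factorization of $w$ relative to $D$} if $a\notin D\cup c(z)$ and $D\cup c(z)\cup\{a\} = D\cup c(w)$; for $D=\emptyset$ this is precisely the left basic factorization of Proposition~\ref{p:1}. I will prove the following claim: if $w\in\pseudok AS$ and $c(w)\not\subseteq D$, then $w$ admits a left basic factorization $w=z\,a\,w_r$ relative to $D$ with $z,w_r\in(\pseudok AS)^I$. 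Granting this, the lemma follows at once: a $\kappa$-word $u$ has $c(u)\neq\emptyset$, so applying the claim with $D=\emptyset$ produces a factorization of $u$ meeting the defining conditions of its left basic factorization; by the uniqueness in Proposition~\ref{p:1} this is the left basic factorization, whence $u_\ell$ and $u_r$ are $\kappa$-words.

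I prove the claim by induction on the number of occurrences of the multiplication and of the $(\omega-1)$-power in a fixed $\kappa$-term representing $w$. If $w=a\in A$, then $c(w)\not\subseteq D$ forces $a\notin D$, and $w=I\cdot a\cdot I$ is as required. Suppose $w=pq$ with $p,q\in\pseudok AS$. If $c(q)\subseteq D\cup c(p)$, then $D\cup c(p)=D\cup c(w)\supsetneq D$, so $c(p)\not\subseteq D$; applying the induction hypothesis to $p$ and $D$ gives $p=z'\,a\,p_r$, and then $w=z'\,a\,(p_r q)$ is a left basic factorization of $w$ relative to $D$, with $z'$ and $p_r q$ in $(\pseudok AS)^I$. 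If instead $c(q)\not\subseteq D\cup c(p)$, apply the induction hypothesis to $q$ and $D':=D\cup c(p)$ to get $q=z''\,a\,q_r$; then $w=(p z'')\,a\,q_r$ works, with $p z''$ and $q_r$ in $(\pseudok AS)^I$. In both cases the content conditions are checked directly from those supplied by the induction hypothesis.

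The remaining case, $w=p^{\omega-1}$ with $p\in\pseudok AS$, is the heart of the matter and is where I expect the main difficulty. Here $c(w)=c(p)\not\subseteq D$, and the crucial point is the identity $p^{\omega-1}=p\cdot(p^{\omega-1})^2$ in $\pseudo AS$ (both sides equal $\lim_n p^{n!-1}$): it exhibits $p$ as a prefix of $w$ and shows that the corresponding tail $(p^{\omega-1})^2$ is again a $\kappa$-word. Since a single copy of $p$ already realizes the whole content $D\cup c(p)=D\cup c(w)$, a relative left basic factorization of $w$ cannot reach past that first copy of $p$; concretely, applying the induction hypothesis to $p$ and $D$ to get $p=z'\,a\,p_r$, one checks that
\[
  w \;=\; z'\,a\,\bigl(p_r\,(p^{\omega-1})^2\bigr)
\]
is a left basic factorization of $w$ relative to $D$, with $z'$ and $p_r\,(p^{\omega-1})^2$ in $(\pseudok AS)^I$ (the content conditions being inherited from those for $p$, as $c(w)=c(p)$). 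This closes the induction. The obstacle is thus entirely concentrated in this $(\omega-1)$-step — recognizing that the factorization does not see beyond the first period and that the leftover tail re-expresses as a $\kappa$-word via $p^{\omega-1}=p\,(p^{\omega-1})^2$ — whereas the product case needs only the harmless passage to a context $D$, which is exactly what keeps the recursion on proper subterms alive (without it, the completing letter of a product $pq$ need not be the completing letter of $p$).
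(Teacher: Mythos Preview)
Your argument is correct. The structural induction on $\kappa$-terms goes through cleanly: the relative context $D$ is exactly what is needed to keep the recursion on proper subterms in the product case, and in the $(\omega-1)$-case the identity $p^{\omega-1}=p\,(p^{\omega-1})^2$ (which holds in $\pseudo AS$ since in every finite image both sides equal the $(\omega-1)$-power) lets you peel off the first copy of $p$ while keeping the tail a $\kappa$-word. The final appeal to the uniqueness in Proposition~\ref{p:1} to identify your relative factorization for $D=\emptyset$ with the actual left basic factorization is the right way to close.

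Note, however, that the paper does not give its own proof of this lemma: it is quoted verbatim from \cite[Lemma~2.2]{palavra} and used as a black box. So there is no proof in the present paper to compare yours against. What you have written is in fact the standard argument one would expect for this statement (structural induction on $\kappa$-terms, with the $(\omega-1)$-case handled via $p^{\omega-1}=p\,(p^{\omega-1})^2$); your device of carrying the context $D$ is a tidy way to avoid awkward case analysis in the product step.
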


\subsection{Further properties of pseudowords over~$\DRH$}

We proceed with the statement of some structural results to handle
pseudowords modulo~$\DRH$.
Although we could not find the exact statement that fits our purpose,
they seem to be already used in the
literature.
For that reason, we do not include any proof. They may be found in~\cite{phd}.

We first characterize $\Req$-classes of $\pseudo A{DRH}$ by means
of iteration of left basic factorizations to the right.

\begin{lemma}\label{sec:13}
  Let $u, v$ be pseudowords over $\DRH$. Then, $u$ and $v$ lie in the
  same $\Req$-class if and only if $\lbf[\infty] u = \lbf[\infty] v$.
\end{lemma}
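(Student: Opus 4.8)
The plan is to prove the two implications separately, using the iterated left basic factorization and the fact (Proposition~\ref{p:1}) that it is unique and well-defined modulo $\DRH$. For the forward direction, suppose $u$ and $v$ lie in the same $\Req$-class modulo $\DRH$, say $u \Req v$ with $u = vw$ and $v = ux$ modulo $\DRH$ for suitable $w,x \in (\pseudo A{DRH})^I$. I would first treat the case $\leng u < \infty$ (equivalently $\cum u = \emptyset$): here $u$ is a finite product of end-marked factors, and since $v \le_{\Req} u$ together with $u \le_{\Req} v$, a straightforward induction on $\leng u$ using the uniqueness of the left basic factorization shows that the two sequences of factors agree term by term, hence $\lbf[\infty]u = \lbf[\infty]v$. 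For the case $\cum u \neq \emptyset$, I would use Lemma~\ref{sec:9}/Remark~\ref{sec:9}: if $u \Req v$ then in particular $u\cdot(\text{something}) = v$ modulo $\DRH$, and iterating left basic factorizations on both sides, at each finite stage $k$ the factor $\lbf[k]u$ is determined by the $\le_{\Req}$-prefix structure up to that point, which $u$ and $v$ share; hence $\lbf[k]u = \lbf[k]v$ for all $k$.

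For the converse, suppose $\lbf[\infty]u = \lbf[\infty]v$. Write $\lbf[k]u = u_k a_k = v_k b_k = \lbf[k]v$, so in particular $a_k = b_k$ and $u_k = v_k$ for every $k$. If the common sequence is eventually $(I,I,\ldots)$, say it terminates after $n$ steps, then $u = u_1a_1\cdots u_na_n = v$ modulo $\DRH$ and there is nothing more to prove. Otherwise the iteration runs forever, and we have $u = (\lbf[1]u\cdots\lbf[k]u)\cdot u'_k$ and $v = (\lbf[1]u\cdots\lbf[k]u)\cdot v'_k$ modulo $\DRH$ for every $k$, where $u'_k, v'_k$ are the respective tails. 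Taking accumulation points of the infinite product $(\lbf[1]u\cdots\lbf[k]u)_{k\ge 1}$, one gets $u \le_{\Req} p$ and $v \le_{\Req} p$ for some accumulation point $p$; and conversely, since the cumulative content of $u$ (and of $v$) equals $\cum{u}$, the tails $u'_k$ and $v'_k$ have content inside $\cum{u}$, so by Corollary~\ref{c:2} one shows $p \le_{\Req} u$ and $p \le_{\Req} v$, whence $u \Req p \Req v$. The key technical point is that $u$ and $v$ are both $\le_{\Req}$-below and $\le_{\Req}$-above the same limit element $p$, using that multiplying on the right by something with content in $\cum{u}$ does not change the $\Req$-class (Remark~\ref{sec:9}).

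The main obstacle I anticipate is the converse direction in the regular case: matching up the ``regular parts'' of $u$ and $v$ so that one can conclude $u \Req v$ rather than merely $u \le_{\Req} v$ and $v \le_{\Req} u$ in some approximate sense. Concretely, one needs that the accumulation point $p$ of the infinite product is reached from $u$ by right-multiplication by a pseudoword whose content is contained in $\cum u$, and symmetrically for $v$; this is where Proposition~\ref{p:1}, Corollary~\ref{c:2}, and the characterisation of regular elements as those equal to their own regular part all come into play. Once that is in hand, $\Req$-equivalence follows from the fact that $\le_{\Req}$ is a partial order on $\Req$-classes together with the observation that $p$ and $u$ have the same iterated left basic factorization (both equal to $\lbf[\infty]u$), reducing the regular case to a fixed point of the construction.
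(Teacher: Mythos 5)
The paper does not actually prove this lemma; it is one of the statements deferred to~\cite{phd}, so your attempt can only be judged on its own terms. Your forward direction is sound: from $u\Req v$ one gets $c(u)=c(v)$, writing $u=vw$ and $v=ux$ and invoking the uniqueness in Proposition~\ref{p:1} forces $\lbf[1]u=\lbf[1]v$ with $\Req$-equivalent residuals, and the induction on $k$ (not really on $\leng u$, which may be infinite) closes in both the finite and the infinite case.

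The converse contains a circularity at its decisive step. Having written $u=pq$ with $p$ an accumulation point of the common prefixes $P_k=\lbf[1]u\cdots\lbf[k]u$ and $c(q)\subseteq\cum{p}$, you need $p\le_{\Req}u$, i.e.\ $pq\Req p$, and you justify it by saying that ``multiplying on the right by something with content in $\cum{u}$ does not change the $\Req$-class (Remark~\ref{sec:9})''. Remark~\ref{sec:9} asserts only that such a multiplication does not change $\lbf[\infty]{(\cdot)}$; upgrading that to preservation of the $\Req$-class is precisely the implication the lemma is asking you to prove. Corollary~\ref{c:2} cannot close the gap either: it yields $pv=pw$ only under the hypothesis $v=_{\h}w$, and no such pair of $\h$-equivalent right factors is available here. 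The ingredient actually needed is the group structure of regular $\Req$-classes: writing $p=P\,r$ with $r$ the regular part of $p$ (so $c(r)=\cum{r}=\cum{p}\supseteq c(q)$), Proposition~\ref{p:3} gives that $r$ lies in a group $\Heq$-class $H_e$ and that $e\,\pseudo{c(e)}S\,e$ maps into $H_e$; hence $rqe=r(eqe)\in H_e$, so $rq\ge_{\Req}rqe\Req r\ge_{\Req}rq$ and $rq\Req r$, and since $\Req$ is a left congruence $pq\Req p$. You do list ``the characterisation of regular elements as those equal to their own regular part'' among the ingredients for the obstacle you identify, but the tools you actually invoke to resolve it do not suffice, and the group-theoretic step above is the one that cannot be skipped. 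Once it is in place, the remainder of your outline (the same $p$ serves for $v$, and $\lbf[\infty]p=\lbf[\infty]u$ because every $P_k$ is an initial segment of $p$) is correct.
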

As a consequence, we have the following:
\begin{cor}
  \label{c:3}
  Let $u,v \in \pseudo A{DRH}$. Then, the relation $u \Req v$ holds if and only if
  $\alpha_u=\alpha_v$, $\ell_u = \ell_v$ and $g_u|_{\{\beta < \alpha_u
    \colon \beta \text{ is a limit
      ordinal}\}} = g_v|_{\{\beta < \alpha_v \colon \beta \text{ is a
      limit ordinal}\}}$.
\end{cor}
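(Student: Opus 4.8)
The plan is to deduce Corollary~\ref{c:3} directly from Lemma~\ref{sec:13} together with the construction of the map $F$. First I would recall that, by Lemma~\ref{sec:13}, the relation $u \Req v$ is equivalent to the equality $\lbf[\infty] u = \lbf[\infty] v$ of the (one-sided infinite) sequences of left basic factors. So the task reduces to showing that this equality of sequences is in turn equivalent to the conjunction $\alpha_u = \alpha_v$, $\ell_u = \ell_v$, and $g_u$ and $g_v$ agreeing on the limit ordinals strictly below $\alpha_u = \alpha_v$.

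The key point is that the iteration of the left basic factorization to the right ``reads off'' exactly the end-marked prefixes of $u$ that sit strictly below $u$ in the $\le_{\Req}$ order, in decreasing order, and this is precisely the data encoded by $\theta_u$ restricted to $\beta < \alpha_u$. More concretely, I would argue that the sequence $(\lbf[k] u)_{k \ge 1}$ determines, and is determined by, the restriction of the decorated reduced $A$-labeled ordinal $(\alpha_u, \ell_u, g_u)$ to $\{\beta < \alpha_u\}$: the ordinal type of the end-marked proper prefixes of $u$ gives $\alpha_u$ (note $\alpha_u$ itself may or may not be a ``used'' index, but the proper part below it is what matters), the labels of those prefixes give $\ell_u|_{\beta < \alpha_u}$, and at each limit ordinal $\beta < \alpha_u$ the value $g_u(\beta)$ is the projection to $\pseudo AH$ of the regular part of the corresponding prefix, which is recovered from the sequence of left basic factors accumulating at that stage via the idempotent-designation and Proposition~\ref{p:3}. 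Since $F$ is a bijection (cited from~\cite[Theorem~6.1.1]{drh}), passing between $u$ and $(\alpha_u, \ell_u, g_u)$ loses no information; the content of the corollary is simply that the ``$\Req$-class'' quotient of $F$ corresponds to forgetting the top value, i.e.\ the datum $g_u(\alpha_u)$ when $\alpha_u$ is a limit ordinal (equivalently, forgetting the regular part of $u$ itself).

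Thus I would structure the proof as two implications. For the forward direction, assume $u \Req v$; by Lemma~\ref{sec:13}, $\lbf[\infty] u = \lbf[\infty] v$, and then I read off from the common sequence that the end-marked proper prefixes of $u$ and of $v$ coincide as $\le_{\Req}$-chains, hence $\alpha_u = \alpha_v$, $\ell_u = \ell_v$ on $\{\beta < \alpha_u\}$, and the regular parts at each limit $\beta < \alpha_u$ coincide, giving agreement of $g_u$ and $g_v$ there; the labels $\ell_u(\alpha_u) = \ell_v(\alpha_v)$ also agree since the last entry of the stabilized (or terminating) sequence carries that letter, so in fact $\ell_u = \ell_v$ outright. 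For the converse, assuming the three conditions, I reconstruct $\lbf[\infty] u$ from $(\alpha_u, \ell_u|_{\beta<\alpha_u}, g_u|_{\beta<\alpha_u, \text{limit}})$ — which by hypothesis equals the corresponding data for $v$ — and conclude $\lbf[\infty] u = \lbf[\infty] v$, whence $u \Req v$ by Lemma~\ref{sec:13} again.

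The main obstacle I anticipate is making precise the claim that the sequence of right-iterated left basic factors encodes exactly the ``strictly-below'' part of $\theta_u$, including the behavior at limit stages: one must match the informal ``accumulation point / idempotent designated by the infinite product'' description from the text with the formal value $g_u(\beta)$, and check that the cumulative content bookkeeping (Remark~\ref{r:1}) is consistent so that the regular parts live in the right free pro-$\h$ semigroups. This is essentially an unwinding of the definition of $F$ in the previous subsection, so it should be a short argument rather than a genuinely hard one, but it is where the care is needed; the rest is formal manipulation on top of Lemma~\ref{sec:13} and the bijectivity of $F$.
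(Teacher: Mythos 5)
Your proposal is correct and follows exactly the route the paper intends: the paper states this corollary as an immediate consequence of Lemma~\ref{sec:13} (omitting the proof and referring to~\cite{phd}), and your argument — reduce $u\Req v$ to $\lbf[\infty]u=\lbf[\infty]v$ and then identify that sequence with the data $(\alpha_u,\ell_u,g_u)$ restricted below $\alpha_u$, so that only the top value $g_u(\alpha_u)$ (the regular part of $u$ itself) is forgotten — is precisely that unwinding. One small remark: $\ell_u$ has domain $\{\beta<\alpha_u\}$ (matching $\theta_u$), so the extra step you include to handle ``$\ell_u(\alpha_u)$'' is vacuous; only $g_u$ is defined at $\alpha_u$, which is why the corollary restricts $g$ but not $\ell$.
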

We also have a kind of left cancellative law over $\DRH$.

\begin{cor}
  \label{c:6}
  Let $u$ and $v$ be pseudowords over $\DRH$ that are
  $\Req$-equivalent. Suppose that they admit factorizations $u = u_1au_2$ and $v
  = v_1bv_2$ such that $u_1a$ and $v_1b$ are end-marked.
  If $\alpha_{u_1} = \alpha_{v_1}$, then $a
  = b$, $u_1 = v_1$, and $u_2 \Req v_2$. If, in
  addition, the equality $u = v$ holds, then also $u_2 = v_2$.
\end{cor}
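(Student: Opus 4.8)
The plan is to prove Corollary~\ref{c:6} as a direct consequence of the description of $\Req$-classes in terms of decorated reduced $A$-labeled ordinals (Corollary~\ref{c:3}) together with the uniqueness of the isomorphism $\theta_u$ and the cancellation behaviour encoded in Notation~\ref{sec:19}. First, I would observe that since $u_1a$ and $v_1b$ are end-marked, they are respectively $\theta_u(\alpha_{u_1})$ and $\theta_v(\alpha_{v_1})$: indeed, the end-marked prefixes of a pseudoword are exactly the images of $\theta$, and $\alpha_{u_1}$ counts precisely the end-marked prefixes strictly below $u_1a$ in the $\le_\Req$ order, so $\theta_u(\alpha_{u_1}) = u_1a$ (and symmetrically for $v$). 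Here I am using the convention, noted after Notation~\ref{sec:19}, that $\alpha_{u_1}$ stands for $\alpha_{\rho_\DRH(u_1)}$.

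Next, from $u \Req v$ and Corollary~\ref{c:3} we get $\ell_u = \ell_v$ (and $\alpha_u = \alpha_v$, $g_u = g_v$ on limit ordinals below $\alpha_u$). Evaluating $\ell_u = \ell_v$ at the common ordinal $\beta := \alpha_{u_1} = \alpha_{v_1}$ and using the previous paragraph, $\ell_u(\beta)$ is the last letter of $u_1a$, namely $a$, while $\ell_v(\beta)$ is the last letter of $v_1b$, namely $b$; hence $a = b$. For $u_1 = v_1$: the prefix $u_1a = \theta_u(\beta)$ is determined, modulo $\DRH$, by the restrictions of $\ell_u$ and $g_u$ to ordinals $\le \beta$ (this is precisely the content of $F$ being a bijection, applied to the pseudoword $u_1a$, whose associated labeled ordinal is the restriction of $(\alpha_u,\ell_u,g_u)$ to $\{\gamma : \gamma \le \beta\}$); since these restrictions agree with those of $\ell_v, g_v$, we conclude $u_1a =_\DRH v_1b$, and as $a = b$ already, left cancellation is not even needed here — but to pass from the equality modulo $\DRH$ to genuine equality of the relevant factors I would invoke that $u_1$ and $v_1$ are themselves being compared only up to $\rho_\DRH$, which is the meaning of the statement. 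For $u_2 \Req v_2$: apply Lemma~\ref{sec:13}, noting $\lbf[\infty]{u} = \lbf[\infty]{v}$; the iteration of left basic factorizations of $u$ past the end-marked prefix $u_1a$ yields exactly the iteration for $u_2$ shifted, and likewise for $v$, so $\lbf[\infty]{u_2} = \lbf[\infty]{v_2}$, whence $u_2 \Req v_2$. Alternatively, phrase this via $u[\beta,\alpha_u[\,$: one has $u = u_1a\cdot(\text{tail})$ with the tail $\Req$-equivalent to $u_2$, and the tails of $u$ and $v$ coincide up to $\Req$ because the labeled-ordinal data of $u$ and $v$ agree on $[\beta,\alpha_u[$.

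Finally, for the last sentence: if in addition $u = v$ (as elements of $\pseudo AS$, or modulo $\DRH$), then having already established $a = b$ and $u_1 =_\DRH v_1$, we may left-cancel: $u_1 a u_2 = v_1 b v_2$ with $u_1a = v_1b$ forces $u_2 = v_2$ by cancellativity of prefixes in $\pseudo AS$ — more carefully, write $u_2$ and $v_2$ as the unique pseudowords completing the common prefix $u_1a$ to $u = v$, which are therefore equal. The main obstacle I anticipate is the bookkeeping around the convention that $\alpha_{u_1}$, $\ell_{u_1}$ etc.\ refer to the $\DRH$-projections, and being scrupulous about which equalities hold genuinely in $\pseudo AS$ versus only modulo $\DRH$; the substantive mathematical content is entirely carried by Corollaries~\ref{c:3} and the bijectivity of $F$, so once the indexing is set up correctly the argument is short.
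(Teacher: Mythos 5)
The paper offers no proof of this corollary --- it is one of the structural facts explicitly deferred to the thesis \cite{phd} --- so there is nothing internal to compare against; I can only judge your argument on its own terms. Your treatment of the first two conclusions is essentially right: identifying $u_1a$ with $\theta_u(\alpha_{u_1})$, reading off $a=\ell_u(\alpha_{u_1})=\ell_v(\alpha_{v_1})=b$ from Corollary~\ref{c:3}, and recovering $F(u_1)$ from the restriction of $(\alpha_u,\ell_u,g_u)$ to ordinals $\le\alpha_{u_1}$ is the correct use of the bijection $F$ (one still has to check that $F(u_1)$ is determined by $F(u_1a)$, i.e.\ that dropping the final letter of an end-marked product is injective, but that is routine from the same data).

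The gaps are in the last two claims. For $u_2\Req v_2$, your first route rests on a false premise: the boundary between $u_1a$ and $u_2$ need not be a boundary of the iterated left basic factorization of $u$. Already for $u=ab$ with $u_1=I$ and $u_2=b$, one has $\lbf[\infty]{u}=(ab,I,\dots)$ while $\lbf[\infty]{u_2}=(b,I,\dots)$; the single factor $ab$ straddles the cut, so the iteration for $u$ ``past $u_1a$'' is not the iteration for $u_2$ shifted. Your alternative route restates the goal: to apply Corollary~\ref{c:3} to $u_2,v_2$ you must show that the decorated labeled ordinal of $u_2$ is determined by the data of $u$ beyond $\alpha_{u_1}$, and the crux is that $wc$ is an end-marked prefix of $u_2$ if and only if $u_1aw\cdot c$ is an end-marked prefix of $u$; this requires $\cum{u_1aw}=\cum{w}$ (and a matching statement for the $g$-values), a nontrivial structural fact you neither state nor prove. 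For the final claim, ``cancellativity of prefixes'' is false in both $\pseudo AS$ (where $a^\omega\cdot a=a^\omega\cdot a^{\omega+1}$ but $a\neq a^{\omega+1}$) and $\pseudo A{DRH}$ (where $(ab)^\omega\cdot ab=(ab)^\omega\cdot(ab)^{\omega+1}$ by Corollary~\ref{c:2}, while $ab\neq(ab)^{\omega+1}$); the uniqueness you then invoke is precisely the left-cancellation law being proved. A clean repair for this last step is Lemma~\ref{sec:16}: having $u_2\Req v_2$, it suffices to check $u_2=_\h v_2$, which follows from $u=_\h v$ and $u_1a=_\h v_1b$ by cancellation in the profinite group $\pseudo AH$.
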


The following result is just a rewriting of the previous corollary that we state for
later reference.

\begin{cor}
  \label{c:7}
   Let $u,v$ be pseudowords that are $\Req$-equivalent modulo
   $\DRH$. Take ordinals
  $\beta < \gamma < \alpha_u = \alpha_v$. Then, the pseudovariety
  $\DRH$ also satisfies $u[\beta, \gamma{[} = v[\beta, \gamma{[}$ and
  $u[\gamma, \alpha_u{[} \Req v[\gamma, \alpha_v{[}$. Moreover, if $u
  =_\DRH v$, then $u[\gamma, \alpha_u{[} =_\DRH v[\gamma,
  \alpha_v{[}$.
\end{cor}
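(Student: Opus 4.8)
The plan is to read off the statement from Corollary~\ref{c:6}. Since $u[\beta,\gamma{[}$, $u[\gamma,\alpha_u{[}$, $\alpha_u$ and the relation ``$u\Req v$ modulo~$\DRH$'' depend only on $\rho_\DRH(u)$ and $\rho_\DRH(v)$, I would first replace $u$ and $v$ by their images under~$\rho_\DRH$, so that $u,v\in\pseudo A{DRH}$, the hypothesis reads ``$u\Req v$ in $\pseudo A{DRH}$'', and ``$u=_\DRH v$'' reads ``$u=v$''.

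Next I would observe that $\theta_u=\theta_v$. Indeed, if $p$ is a prefix of $u$ then $u\le_{\Req}p$, hence $v\le_{\Req}p$ because $u\Req v$, so $p$ is a prefix of $v$ as well; since being end-marked is intrinsic to $p$, the pseudowords $u$ and $v$ have the same end-marked prefixes, and $\theta_u$, $\theta_v$ are the unique $\le_{\Req}$-decreasing bijections from the common ordinal $\alpha_u=\alpha_v$ onto that set. In particular $\ell_u=\ell_v$; and since, by Notation~\ref{sec:19}, $u[\beta,\gamma{[}$ is built solely from $\theta_u(\beta)$ and $\theta_u(\gamma)$, we obtain $u[\beta,\gamma{[}=v[\beta,\gamma{[}$, which is the first assertion.

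For the remaining two assertions, write the common end-marked prefix $\theta_u(\gamma)=\theta_v(\gamma)$ as $pa$, where $a$ is its last letter and $p$ the remaining part. Since $\theta_u(\gamma)$ is an end-marked prefix of both $u$ and $v$, we may write $u=pau_2$ and $v=pav_2$ with $u_2,v_2\in(\pseudo A{DRH})^I$, and then Notation~\ref{sec:19} gives $u[\gamma,\alpha_u{[}=au_2$ and $v[\gamma,\alpha_v{[}=av_2$. Now apply Corollary~\ref{c:6} to these two factorizations of $u$ and $v$: they are $\Req$-equivalent, $pa$ is end-marked, and their left factors both equal $p$, so the hypothesis comparing the left factors holds trivially; the conclusion is $u_2\Req v_2$, and moreover $u_2=v_2$ whenever $u=v$. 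Since Green's relation $\Req$ is a left congruence, $u_2\Req v_2$ yields $au_2\Req av_2$, that is, $u[\gamma,\alpha_u{[}\Req v[\gamma,\alpha_v{[}$; and if $u=_\DRH v$, so that $u=v$ after the reduction above, then $u_2=v_2$, whence $u[\gamma,\alpha_u{[}=au_2=av_2=v[\gamma,\alpha_v{[}$. There is no genuine obstacle here: the argument is pure bookkeeping once Corollary~\ref{c:6} is available, and the only points deserving attention are the initial reduction modulo~$\DRH$ (which is what makes the equality clause of Corollary~\ref{c:6} applicable) and the fact, read off from Notation~\ref{sec:19}, that $u[\gamma,\alpha_u{[}$ equals $au_2$ rather than $u_2$ — which is precisely why the closing, harmless, appeal to $\Req$ being a left congruence is needed.
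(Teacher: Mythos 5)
Your argument is correct and follows exactly the route the paper intends: the paper gives no proof, remarking only that Corollary~\ref{c:7} is ``just a rewriting'' of Corollary~\ref{c:6}, and your derivation — identifying the end-marked prefixes of $\Req$-equivalent pseudowords so that $\theta_u=\theta_v$, then applying Corollary~\ref{c:6} at the common end-marked prefix $\theta_u(\gamma)$ and using that $\Req$ is a left congruence — is precisely that rewriting, carried out carefully.
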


The next lemma can be thought as the key ingredient when proving
our main result. It becomes trivial when $\DRH = \R$.

\begin{lemma}
  \label{sec:16}
  Let $u,v \in \pseudo A{DRH}$ and $u_0, v_0 \in (\pseudo A{DRH})^I$ be such
  that $c(u_0) \subseteq \cum u$ and $c(v_0) \subseteq \cum
  v$. Then, the equality $uu_0 = vv_0$ holds if and
  only if $u \Req v$ and if, in addition, the pseudovariety
  $\h$ satisfies $uu_0 = vv_0$. In particular, by taking $u_0 = I =
  v_0$, we get that $ u = v$ if and only if $u
  \Req v$ and $u =_\h v$.
\end{lemma}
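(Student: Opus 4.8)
The plan is to prove the statement by reducing it, via the combinatorial description of $\pseudo A{DRH}$ through decorated reduced $A$-labeled ordinals, to the already-known structural corollaries established earlier, and then to handle the remaining ``regular part'' by an application of Proposition~\ref{p:3} or, equivalently, Corollary~\ref{c:2}. First I would dispose of the easy direction: if $uu_0 = vv_0$ holds modulo $\DRH$, then since $\h \subseteq \DRH$ it certainly holds modulo $\h$; and since $c(u_0)\subseteq\cum u$ and $c(v_0)\subseteq\cum v$, Remark~\ref{sec:9} gives $\lbf[\infty]{uu_0} = \lbf[\infty]u$ and $\lbf[\infty]{vv_0} = \lbf[\infty]v$, whence $\lbf[\infty]u = \lbf[\infty]v$ and so $u \Req v$ by Lemma~\ref{sec:13}. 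This already establishes one implication and, taking $u_0 = I = v_0$, the corresponding half of the ``in particular'' clause.

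For the converse, assume $u \Req v$ and $uu_0 =_\h vv_0$. The key point is that $u \Req v$ forces $u$ and $v$ to have the same ``skeleton'': by Corollary~\ref{c:3}, $\alpha_u = \alpha_v$, $\ell_u = \ell_v$, and the functions $g_u$, $g_v$ agree on all limit ordinals $\beta < \alpha_u$. So $u$ and $v$ can differ only in the behaviour at the top ordinal $\alpha_u$, i.e.\ in their regular parts when $\cum u \ne \emptyset$, or else they are literally equal when $\cum u = \emptyset$ (in the latter case $\alpha_u$ is a successor or $0$, the first-occurrences/left basic factorization data determine $u$ completely, and $c(u_0) \subseteq \cum u = \emptyset$ forces $u_0 = I = v_0$, so there is nothing to prove). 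Thus assume $\cum u = \cum v =: C \ne \emptyset$; write $u = \hat u\, r_u$ and $v = \hat v\, r_v$ where $\hat u$, $\hat v$ are the common prefix up to the start of the regular part and $r_u$, $r_v$ are the regular parts, which satisfy $c(r_u), c(r_v) \subseteq C$. By Corollary~\ref{c:7} applied with the ordinal $\gamma$ marking the beginning of the regular part, $\hat u =_\DRH \hat v$, so modulo $\DRH$ we may replace $\hat v$ by $\hat u$ and it suffices to show $\hat u\, r_u u_0 =_\DRH \hat u\, r_v v_0$. Now $c(r_u u_0) \subseteq C = \cum{\hat u}$ and likewise $c(r_v v_0) \subseteq \cum{\hat u}$, so by Corollary~\ref{c:2} it is enough to check $r_u u_0 =_\h r_v v_0$; but that is exactly what $uu_0 =_\h vv_0$ gives after noting that modulo $\h$ (a group pseudovariety) $u =_\h \hat u\, r_u$ cancels down—more carefully, since $u \Req v$ we already know $\hat u =_\h \hat v$ (projections of the left basic factorization data agree, cf.\ Corollary~\ref{c:3}), so from $uu_0 =_\h vv_0$ and left cancellation in the free pro-$\h$ group we deduce $r_u u_0 =_\h r_v v_0$, as required.

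The main obstacle I anticipate is the careful bookkeeping at the transition between the non-regular prefix and the regular part: one must be sure that the ordinal $\gamma$ singling out the regular part is the same for $u$ and $v$ (which follows from $\ell_u = \ell_v$ and $\alpha_u = \alpha_v$ via the definition of cumulative content of an ordinal and Remark~\ref{r:1}), and that $\cum{\hat u} = C$ exactly, so that Corollary~\ref{c:2} applies with the full content $C$ available on the right. A secondary subtlety is the degenerate case where the regular part sits at the very top, i.e.\ $u$ is itself regular ($\cum u = c(u)$); there $\hat u = I$ is not available as an element of $\pseudo A{DRH}$, but then $u, v$ lie in a single regular $\Req$-class, which by Proposition~\ref{p:3} is homeomorphic to $\pseudo CH$ with $\rho_\h$ as the inverse homeomorphism, so $u \Req v$ together with $u =_\h v$ (the case $u_0 = I = v_0$) forces $u = v$ directly, and the general case $u_0, v_0$ is then obtained by multiplying inside that group $\Heq$-class and again invoking that $\rho_\h$ is injective there. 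Assembling these pieces yields the full statement, including the ``in particular'' clause as the instance $u_0 = I = v_0$.
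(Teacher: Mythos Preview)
The paper does not actually prove Lemma~\ref{sec:16}; it is one of the structural facts stated without proof, with a reference to~\cite{phd}. So there is no ``paper's own proof'' to compare against, and I can only assess your argument on its merits.

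Your forward direction is correct, and the overall strategy for the converse is the right one. However, there is a genuine error in the heart of the argument. You write $u = \hat u\, r_u$ with $\hat u = \lbf[1]{u}\cdots\lbf[m]{u}$ and $r_u$ the regular part, and then claim $\cum{\hat u} = C$ in order to apply Corollary~\ref{c:2}. This is false: the iterated left basic factorization of $\hat u$ yields exactly the finite sequence $\lbf[1]{u},\ldots,\lbf[m]{u}$ and then terminates, so by definition $\cum{\hat u} = \emptyset$. Consequently Corollary~\ref{c:2} with $\hat u$ on the left says nothing useful, and the reduction to $r_u u_0 =_\h r_v v_0$ is not justified the way you wrote it.

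The repair is easy and keeps your strategy intact. One option: apply Corollary~\ref{c:2} with $u$ itself (not $\hat u$) as the left factor, where indeed $\cum u = C$. Since $u \Req v$, write $v = uz$ in $\pseudo A{DRH}$ with $c(z)\subseteq C$ (Remark~\ref{sec:9} and Lemma~\ref{sec:13}); then $vv_0 = u(zv_0)$ with $c(u_0),\,c(zv_0)\subseteq C$, and Corollary~\ref{c:2} reduces the goal to $u_0 =_\h z v_0$, which follows from $uu_0 =_\h vv_0 = u z v_0$ by left cancellation in the pro-$\h$ group. A second option, closer to what you do in the ``degenerate'' case: observe that $r_u, r_v, r_u u_0, r_v v_0$ all lie in the same regular $\Req$-class (hence group $\Heq$-class) $H_e$ with $c(e)=C$, and use that $\rho_\h|_{H_e}$ is injective (Proposition~\ref{p:3}); then from $\hat u = \hat v$ (which follows immediately from $\lbf[\infty]u = \lbf[\infty]v$, no need for Corollary~\ref{c:7}) and $uu_0 =_\h vv_0$ you cancel to get $r_u u_0 =_\h r_v v_0$, whence $r_u u_0 = r_v v_0$ in $H_e$, and thus $uu_0 = \hat u\,r_u u_0 = \hat v\,r_v v_0 = vv_0$.

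Your treatment of the case $\cum u=\emptyset$ and of the fully regular case is fine.
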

\section{Periodicity modulo $\DRH$}\label{sec:500}
Now, we state and prove two results concerning a
certain periodicity of members of~$\pseudo A{DRH}$. We first
need a few auxiliary lemmas.

\begin{lemma}
  [cf.\ {\cite[Lemma 5.1]{JA}}]
  \label{sec:40}
  Let $u, v$ be pseudowords over $\DRH$ such that $uv^\omega \Req
  v^\omega$.
  If $c(u) \subsetneqq c(v)$, then equality $uv = v$ holds.
\end{lemma}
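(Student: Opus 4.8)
The plan is to first reduce the claim to a statement about iterated left basic factorizations, using Lemma~\ref{sec:13}. Since $uv^\omega \Req v^\omega$, we have $\lbf[\infty]{uv^\omega} = \lbf[\infty]{v^\omega}$. The hypothesis $c(u) \subsetneqq c(v)$ should force the left basic factorization of $uv^\omega$ to ``start inside $u$'' in a controlled way; more precisely, writing $v = v_\ell b v_r$ for the left basic factorization of $v$, the letter $b$ satisfies $b \in c(v) \setminus c(uv_\ell)$ is false in general, so instead I would argue that the first-occurrences factorization of $uv^\omega$ reads off the letters of $c(v^\omega) = c(v)$ in an order that is eventually periodic, dictated by $v$. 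The key point is that because $c(u) \subsetneqq c(v)$, the cumulative content of $uv^\omega$ equals $c(v)$, and the end-marked prefixes of $uv^\omega$ that lie beyond the $u$-part coincide (as $\Req$-classes) with those of $v^\omega$.

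Concretely, I would use the infinite-product description: $uv^\omega$ is an accumulation point of $(uv^n)_{n\ge1}$ and $v^\omega$ of $(v^n)_{n\ge1}$, both lying in the same regular $\Req$-class (a group $H_e$ with identity $e$). From $uv^\omega \Req v^\omega$ and Lemma~\ref{sec:16} applied with $u_0 = v^\omega$, $v_0 = v^\omega$ (note $c(v^\omega)=c(v)=\cum{v^\omega}$), we get $uv^\omega = v^\omega$ iff $u v^\omega =_\h v^\omega$ — but that is not quite what we want either, since we want $uv=v$, not $uv^\omega = v^\omega$. So the better route is: apply Corollary~\ref{c:2} or Lemma~\ref{sec:16} to the equality we are trying to prove. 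Since $c(v) = \cum{v^\omega}$ and we'd like $uv^\omega \cdot$ something; actually the cleanest path is to show $uv \Req v$ first, then invoke Lemma~\ref{sec:16} with $u_0 = I = v_0$ to reduce to $uv =_\h v$, and finally handle the group case.

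For $uv \Req v$: I would show $\lbf[\infty]{uv} = \lbf[\infty]{v}$. Here I expect the hypothesis $c(u) \subsetneqq c(v)$ to be essential — it guarantees that prepending $u$ does not introduce any new letter into the left basic factorization process of $v$, so the factorization of $uv$ and of $v$ stabilize to the same sequence; combined with $uv^\omega \Req v^\omega$ (which pins down the ``tail'' behaviour and forces $c(uv) = c(v)$, hence $\cum{uv} \supseteq$ the relevant content), one gets $\lbf[\infty]{uv} = \lbf[\infty]{v}$. Then $uv \Req v$, so by Lemma~\ref{sec:16} it remains to prove $uv =_\h v$. For this last step, project everything to $\pseudo{c(v)}H$: in the group $H_e$ we have $e \cdot \rho_\h(v)^\omega \cdot \rho_\h(v) = \rho_\h(v)^{\omega+1} = \rho_\h(v)$ (reading $v^\omega$ as the idempotent times the group element), and the hypothesis $uv^\omega \Req v^\omega$ together with Proposition~\ref{p:3} / Corollary~\ref{c:2} lets one cancel to deduce $\rho_\h(u)\cdot\rho_\h(v) = \rho_\h(v)$, i.e.\ $uv =_\h v$.

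The main obstacle I anticipate is the first reduction step, namely rigorously establishing $uv \Req v$ (equivalently $\lbf[\infty]{uv} = \lbf[\infty]{v}$) from $c(u) \subsetneqq c(v)$ and $uv^\omega \Req v^\omega$. One has to be careful that the strict inclusion is genuinely used: if $c(u) = c(v)$ the statement fails (e.g.\ $u = v$ with $v$ not idempotent-like), so the proof of this step must exploit that some letter of $c(v)$ is missing from $c(u)$, which is precisely what makes the left basic factorization of $uv$ ``see'' the same end-marked prefixes as that of $v$ from some point on. Once $uv \Req v$ is in hand, the rest is a routine application of Lemma~\ref{sec:16} and the group structure of regular $\Req$-classes described in Proposition~\ref{p:3}.
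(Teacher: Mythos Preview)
Your approach is overcomplicated and contains a genuine gap. The paper's proof is a four-line direct argument: pick any letter $a \in c(v) \setminus c(u)$ and write $v = v_1 a v_2$ with $a \notin c(v_1)$ (the first occurrence of $a$ in $v$). Then $a \notin c(uv_1)$ as well, so in both $uv^\omega$ and $v^\omega$ the prefix up to the first occurrence of $a$ is, respectively, $uv_1$ and $v_1$. Since $uv^\omega \Req v^\omega$, Corollary~\ref{c:7} forces these prefixes to coincide, giving $uv_1 = v_1$ and hence $uv = uv_1 a v_2 = v_1 a v_2 = v$ outright --- no splitting into an $\Req$-part and an $\h$-part is needed.

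Your proposed step~3 does not go through as stated: you want to deduce $uv =_\h v$ (equivalently $\rho_\h(u) = 1$ in the free pro-$\h$ group) from $uv^\omega \Req v^\omega$, but an $\Req$-relation carries no information when projected to a group pseudovariety (every pair of elements in a group is $\Req$-related), and neither Proposition~\ref{p:3} nor Corollary~\ref{c:2} supplies the cancellation you describe. In fact the only route to $\rho_\h(u) = 1$ from the hypotheses is via an actual equality in $\DRH$ such as $uv_1 = v_1$ --- which is precisely the paper's argument, and once you have it the conclusion $uv = v$ follows immediately, rendering your steps~2--3 redundant. Your step~1 is also left as the ``main obstacle'' without a concrete mechanism; observe that already showing $\lbf[1]{uv} = \lbf[1]{v}$ amounts to absorbing $u$ into a prefix of $v$, which is the whole problem.
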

\begin{proof}
  Let $a$ be a letter in $c(v) \setminus c(u)$. By
  Corollary~\ref{c:5}, we may factorize $v =
  v_1av_2$ with $a \notin c(v_1)$.
  Then, the equality $uv^\omega = v^\omega$ may be rewritten as $uv_1av_2v^{\omega-1} = v_1av_2v^{\omega-1}$.
  Since $a
  \notin c(uv_1)$, again
  Corollary~\ref{c:5} implies $uv_1 = v_1$,
  resulting in turn that~$uv = v$.
\end{proof}

We also recall a lemma related with the pseudovariety $\R$ that may be
used to prove
a weaker similar result for $\DRH$.
\begin{lemma}
  [{\cite[Lemma 5.2]{JA}}]
  \label{sec:15}
  If $u,v \in \pseudo AR$ are such that $vu^2 = u^2$, then $vu = u$.
\end{lemma}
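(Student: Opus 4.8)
The key facts I would rely on are that $\pseudo AR$ is $\Req$-trivial — it satisfies the pseudoidentity $(xy)^{\omega}x=(xy)^{\omega}$ — and the machinery of left basic factorizations. Comparing contents in $vu^{2}=u^{2}$ gives $c(v)\subseteq c(u)$. If $u$ is regular, that is $c(u)=\cum u$ (see \cite[Corollary~6.1.5]{drh}), then $u$ is idempotent since $\pseudo AR$ is $\Req$-trivial; hence $u^{2}=u$ and the hypothesis already reads $vu=u$, so there is nothing to prove. I would therefore assume from now on that $u$ is not regular, i.e.\ $\cum u\subsetneq c(u)$.

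Suppose first that the inclusion $c(v)\subseteq c(u)$ is strict. From $vu^{2}=u^{2}$ one deduces $vu^{n}=u^{n}$ for every $n\ge 2$ by induction, hence $vu^{\omega}=u^{\omega}$; in particular $vu^{\omega}\Req u^{\omega}$, and Lemma~\ref{sec:40}, applied with the roles of $u$ and $v$ interchanged, yields $vu=u$. It thus remains to treat the case $c(v)=c(u)$ with $u$ non-regular, and here I claim more, namely that the equality $vu^{2}=u^{2}$ cannot hold at all; establishing this will finish the proof.

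To prove the claim I would iterate a comparison of left basic factorizations. Let $u=u_{\ell}au_{r}$ be the left basic factorization of $u$; since $c(u_{\ell}a)=c(u)=c(u^{2})$, the triple $(u_{\ell},a,u_{r}u)$ is the left basic factorization of $u^{2}$. Assuming $vu^{2}=u^{2}$, I would build a chain of equalities $v_{j}u^{2}=(u^{2})'_{k_{j}}$ for $j=0,1,2,\dots$, with $v_{0}=v$, indices $0=k_{0}<k_{1}<k_{2}<\cdots$, each $v_{j}\in(\pseudo AR)^{I}$ satisfying $c(v_{j})\subseteq c(u)$, and $(u^{2})'_{k}$ the element obtained from $u^{2}$ by iterating its left basic factorization to the right $k$ times. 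The passage from stage $j$ to stage $j+1$ compares the left basic factorization of $v_{j}u^{2}$ with that of $(u^{2})'_{k_{j}}$, using $c(v_{j})\subseteq c(u)=c(u^{2})$: at stage $0$, since $a\in c(v)=c(u)$, one reads off a factorization $v=u_{\ell}av_{1}$ with $v_{1}u^{2}=(u^{2})'_{1}=u_{r}u$; and in general either the breakpoint letter of $(u^{2})'_{k_{j}}$ belongs to $c(v_{j})$, in which case the same peeling produces $v_{j+1}$ and $k_{j+1}$ with $c(v_{j+1})\subseteq c(u)$, or it does not, in which case matching the two left basic factorizations produces, via Corollary~\ref{c:2} and an analysis of cumulative contents, an absorption equality that would force $u$ to be regular — a contradiction. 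I would isolate this single-step dichotomy as an auxiliary lemma describing the left basic factorization of a product $xu^{2}$, with $c(x)\subseteq c(u)$, in terms of that of~$u$.

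Finally, the chain cannot proceed indefinitely. One has $c(u^{2})=c(u)$ and $\cum{u^{2}}=\cum u\subsetneq c(u)$, so $u^{2}$ is non-regular; hence iterating its left basic factorization to the right reaches, after some finite number $k^{*}$ of steps, either the empty word or an element of content $\cum u\subsetneq c(u)$, and likewise for every $k\ge k^{*}$. Once $k_{j}\ge k^{*}$ the equality $v_{j}u^{2}=(u^{2})'_{k_{j}}$ is impossible: if $v_{j}\ne I$ its left-hand side has content $c(u)$, strictly larger than that of its right-hand side; and if $v_{j}=I$ then $u^{2}=(u^{2})'_{k_{j}}$ with $k_{j}\ge 1$, so by Lemma~\ref{sec:13} the sequence of left basic factors of $u^{2}$ is periodic, hence, being content-non-increasing, content-constant, forcing $\cum{u^{2}}=c(u^{2})=c(u)$ and contradicting $\cum{u^{2}}\subsetneq c(u)$. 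Thus a contradiction arises before $k_{j}$ can reach $k^{*}$, or else the chain is interrupted by the negative branch of the dichotomy above; either way the claim follows. The main obstacle is exactly this single-step comparison of left basic factorizations and the bookkeeping ensuring the iteration is well-founded; the remaining ingredients are routine.
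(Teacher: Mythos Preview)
The paper does not prove this lemma; it is quoted from~\cite[Lemma~5.2]{JA}, so there is no in-paper argument to compare against and I assess your proposal on its own merits. Your reductions are sound: the regular case is immediate, and for $c(v)\subsetneq c(u)$ your appeal to Lemma~\ref{sec:40} (with roles swapped) is valid. The remaining claim --- that $vu^{2}=u^{2}$ is impossible when $c(v)=c(u)$ and $u$ is non-regular --- is also true, but your iterative peeling has a genuine gap in its negative branch. When the breakpoint letter of $(u^{2})'_{k_j}$ lies outside $c(v_{j})$ (equivalently $c(v_{j})\subsetneq c(u)$), matching left basic factorizations gives only $u_{\beta}\,u=(u^{2})'_{k_j+1}$ for a proper suffix $u_{\beta}$ of $u$; this is no longer of the shape $(\text{something})\cdot u^{2}$, and I do not see how Corollary~\ref{c:2} or any ``absorption equality'' forces regularity of $u$ from here. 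This branch genuinely occurs (precisely when $v$ itself is non-regular), so it cannot be dismissed. You also use $\cum{u^{2}}=\cum u$ without justification; it is true in $\pseudo AR$, but it needs an argument.

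A much shorter route avoids the iteration entirely and closes both gaps. From $vu^{2}=u^{2}$ one gets $v^{\omega}u^{2}=u^{2}$; since $v^{\omega}$ is idempotent, $\cum{v^{\omega}}=c(v)=c(u)=c(u^{2})$, and $\Req$-triviality of $\pseudo AR$ yields $v^{\omega}u^{2}=v^{\omega}$. Hence $u^{2}=v^{\omega}$ is idempotent, so $\cum{u^{2}}=c(u)$. Writing $u=u_{1}u_{2}$ with $c(u_{1})\subseteq\cum u$ maximal (so that, in $\pseudo AR$, $u\cdot u=u\cdot u_{2}$ and both $u\cdot u_{2}$ and $u_{1}\cdot u_{2}$ are reduced), one reads off $\cum{u^{2}}=\cum{u_{2}}=\cum u$. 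Therefore $\cum u=c(u)$, i.e.\ $u$ is regular --- the desired contradiction.
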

We say that the product  $uv$ of two pseudowords is \emph{reduced} if
$v$ is not the empty word and its first letter does not belong to the
cumulative content of $u$.
\begin{cor}
  \label{c:4}
  If $u,v \in \pseudo A{DRH}$ are such that $vu^2 = u^2$ and the product~$u \cdot u$ is reduced,
  then the equality $vu = u$ holds.
\end{cor}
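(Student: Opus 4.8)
The plan is to reduce Corollary~\ref{c:4} to the $\R$-level statement of Lemma~\ref{sec:15} by means of the cancellation principle of Lemma~\ref{sec:16}. First I would observe that, since $\R$ is a quotient of $\DRH$ (namely $\DRH$ satisfies all pseudoidentities of $\R$), the hypothesis $vu^2=u^2$ modulo $\DRH$ gives in particular $vu^2=_\R u^2$, so Lemma~\ref{sec:15} applied in $\pseudo AR$ yields $vu=_\R u$. Thus $\R$ satisfies $vu=u$, and \emph{a fortiori} so does every pseudovariety of groups~$\h$ (since $\h\subseteq\G\subseteq$ the pseudovarieties where $\R$-identities between... ) — more carefully: $vu=_\R u$ means $\rho_\R(vu)=\rho_\R(u)$, and because $\R$ itself does not see group structure we instead argue that $vu=u$ already holds modulo $\R$, hence modulo any pseudovariety satisfying the $\R$-identity $vu=u$; in particular it holds modulo~$\h$, as $\h$ consists of groups in which every $\R$-trivial identity valid in $\R$ holds (groups are $\Req$-trivial). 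So $vu=_\h u$.

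Next I would supply the $\Req$-comparison needed to invoke Lemma~\ref{sec:16}. The goal equality is $vu=u$; writing it as $(vu)\cdot I = u\cdot I$, Lemma~\ref{sec:16} says it holds modulo $\DRH$ precisely when $vu\Req u$ and $vu=_\h u$. The second condition was just established, so it remains to prove $vu\Req u$ modulo $\DRH$. For this I use the reducedness hypothesis on $u\cdot u$ together with the relation $vu^2=u^2$. From $vu^2=_\DRH u^2$ we get $vu^2\Req u^2$; since the product $u\cdot u$ is reduced, the first letter of the right-hand factor $u$ lies outside $\cum u$, so the left basic factorization of $u^2$ begins inside the first copy of $u$ — more precisely, iterating left basic factorization to the right on $u^2$ produces the same sequence as on $u$ until the first copy is exhausted, and then continues into the second copy (because the first letter of the second $u$ is end-marked relative to the first $u$). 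Applying the left cancellation of Corollary~\ref{c:6} (equivalently Corollary~\ref{c:7}) along the end-marked prefixes coming from the first copy of $u$, the relation $vu^2\Req u^2$ descends to $vu\Req u$. One must check that $vu^2$ has, modulo $\DRH$, at least as long an "$\Req$-ordinal" as the first copy of $u$; this follows because $vu^2$ and $u^2$ are $\Req$-equivalent, so $\alpha_{vu^2}=\alpha_{u^2}\ge\alpha_u$, and the initial segment of length $\alpha_u$ of both consists of the end-marked prefixes obtained by prepending $v$ (on one side) or nothing (on the other) — here we use that prepending $v$ does not destroy the end-marked structure past $\cum(vu)$, and that $c(v)\subseteq c(u^2)=c(u)$, which is forced by $vu^2=u^2$.

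Assembling: from $vu^2=_\DRH u^2$ deduce (i) $vu=_\R u$ hence $vu=_\h u$, and (ii) $vu\Req u$ via Corollary~\ref{c:6} using reducedness of $u\cdot u$; then Lemma~\ref{sec:16} with $u_0=I=v_0$ (noting $c(I)=\emptyset\subseteq\cum u$ and $c(I)\subseteq\cum{vu}$) gives $vu=_\DRH u$, as desired.

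The main obstacle I anticipate is step (ii): making the $\Req$-cancellation rigorous. One must verify that the reducedness of $u\cdot u$ really guarantees that the end-marked prefixes of $u^2$ of $\Req$-rank below $\alpha_u$ are exactly $\{wu : w \text{ an end-marked prefix of } u\}$ (so that Corollary~\ref{c:6}, comparing $vu^2$ and $u^2$ along a common initial ordinal segment of length $\alpha_u$, strips off the first copy of $u$ and leaves $vu\Req u$), and in particular that $\alpha_{vu^2}=\alpha_{u^2}$ is at least $\alpha_u$ and that the labelings $\ell$ agree on that segment; the content bound $c(v)\subseteq c(u)$, needed to control $\cum{vu}$ and to know that no new letters appear, comes for free from $c(vu^2)=c(u^2)$. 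A cleaner route avoiding prefix bookkeeping is to note $vu^2\Req u^2$ and $u^2\Req u\cdot u$ with $u\cdot u$ reduced, so by Corollary~\ref{c:6} applied to the factorizations $vu^2=(vu)\,a\,u'$ and $u^2=u\,a\,u'$ (where $ua$ is the appropriate end-marked prefix supplied by reducedness) one obtains directly $vu\Req u$.
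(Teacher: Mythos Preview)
Your overall plan --- use Lemma~\ref{sec:15} over $\R$, then combine $vu\Req u$ in $\DRH$ with $vu=_\h u$ via Lemma~\ref{sec:16} --- is exactly the paper's. But your derivation of $vu=_\h u$ contains a genuine error. The implication ``$vu=_\R u$, hence $vu=_\h u$'' fails because $\R$ and $\h$ are in general incomparable: a nontrivial group is never $\Req$-trivial (your parenthetical confuses \emph{having a single $\Req$-class} with \emph{being $\Req$-trivial}), so a pseudoidentity valid in $\R$ need not hold in $\h$. For instance $a^{\omega}=_\R a^{\omega+1}$ while $a^{\omega}\ne_\h a^{\omega+1}$ whenever $\h$ is nontrivial. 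The fix, which is what the paper does, is immediate and does not pass through $\R$ at all: since $\h\subseteq\DRH$, the hypothesis gives $vu^2=_\h u^2$, and ordinary cancellation in groups yields $vu=_\h u$.

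For step~(ii), your ``cleaner route'' via Corollary~\ref{c:6} needs two things you have not secured. First, to apply Corollary~\ref{c:6} to factorizations $vu^2=(vu)\,a\,u'$ and $u^2=u\,a\,u'$ you must already know $\alpha_{vu}=\alpha_u$; this does follow from $vu=_\R u$ (the ordinal $\alpha$ depends only on the $\R$-image), and the paper says so explicitly via Corollary~\ref{c:3}. Second, you need $(vu)a$ to be end-marked, i.e.\ the product $(vu)\cdot u$ to be reduced, which the hypothesis does not guarantee. The paper handles this with a brief case split: if $(vu)\cdot u$ is reduced then $(vu^2)[0,\alpha_{vu}{[}=vu$; otherwise factor $u=u_1u_2$ with $c(u_1)\subseteq\cum{vu}$ and $u_1\cdot u_2$ reduced (or $u_2=I$), so that $(vu^2)[0,\alpha_{vu}{[}=vuu_1\Req vu$. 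In either case Corollary~\ref{c:7} applied to the equality $vu^2=u^2$ gives $(vu^2)[0,\alpha_{vu}{[}=u^2[0,\alpha_u{[}=u$, whence $vu\Req u$, and Lemma~\ref{sec:16} finishes.
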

\begin{proof}
  Since $\R \subseteq \DRH$, the pseudovariety $\R$ satisfies $vu^2 =
  u^2$ and Lemma~\ref{sec:15} yields that it also satisfies $vu =
  u$. Therefore, from Corollary~\ref{c:3} we conclude that $\alpha_{vu} =
  \alpha_u$. As the product~$u \cdot u$ is reduced, it follows
  that $u^2[0, \alpha_u{[}= u$.
  On the other hand, Corollary~\ref{c:7} yields the identity  $vu^2[0,
  \alpha_{vu}{[} = vu^2[0, \alpha_{u}{[} = u^2[0,
  \alpha_{u}{[}$.
  Moreover, either $(vu)\cdot u$ is a reduced product and $vu^2[0,
  \alpha_{vu}{[} = vu$, or we may write $u = u_1\cdot u_2$, with $u_2
  = I$ or
  $u_1\cdot u_2$ a reduced product and $c(u_1)\subseteq
  \cum{uv}$, and then $vu^2[0, \alpha_{vu}{[} =  vuu_1$.
  In any case, $vu$ and $u$ are $\Req$-equivalent. Also, the inclusion~$\h \subseteq \DRH$ implies that
  $vu^2 = u^2$ modulo $\h$ and so, $vu = u$ modulo $\h$. Finally, it follows
  from Lemma~\ref{sec:16} that~$vu = u$.
\end{proof}
Now, we are ready to prove the announced results on the
periodicity in $\pseudo A{DRH}$.
\begin{lemma}
  [cf.\ {\cite[Lemma 5.4]{JA}}]
  \label{sec:12}
  Let $x$ and $y$ be pseudowords such that $x^\omega = y^\omega$
  modulo~$\DRH$. If the products $x \cdot x$ and $y \cdot y$ are
  reduced, then there are pseudowords $u \in \pseudo AS$ and~$v, w \in
  (\pseudo AS)^I$, and positive integers $k,\ell$ such that the following
  pseudoidentities hold in~$\DRH$
  \begin{align*}
    x &= u^k v,
    \\ y  &= u^\ell w,
    \\u  & = vu = wu,
  \end{align*}
  and all the products $u\cdot u$, $u \cdot v$, $u\cdot w$,
  $v\cdot u$, and $w\cdot u$ are reduced.
\end{lemma}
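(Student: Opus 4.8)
The statement is a $\DRH$-analogue of \cite[Lemma 5.4]{JA}, so the plan is to follow the Green's-relations decomposition of that proof and then upgrade from $\R$ to $\DRH$ using Lemma~\ref{sec:16}. First I would pass to the pseudovariety $\R$: since $\R\subseteq\DRH$, the hypothesis $x^\omega=y^\omega$ gives $x^\omega=_\R y^\omega$, and the reducedness of $x\cdot x$ and $y\cdot y$ modulo $\DRH$ persists modulo $\R$ (the cumulative content can only shrink when projecting, and in fact for $\R$-reduced products this is exactly the condition in \cite{JA}). So \cite[Lemma 5.4]{JA} applies and yields pseudowords $u\in\pseudo AR$, $v,w\in(\pseudo AR)^I$, and positive integers $k,\ell$ with $x=_\R u^kv$, $y=_\R u^\ell w$, $u=_\R vu=_\R wu$, and all five products $u\cdot u$, $u\cdot v$, $u\cdot w$, $v\cdot u$, $w\cdot u$ reduced. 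By Lemma~\ref{l:6} (iterated) we may moreover take $u,v,w$ to be $\kappa$-words if $x,y$ are, though this is not needed for the statement as phrased. The real content is to show these same equalities hold modulo $\DRH$, not merely modulo $\R$.

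The bridge is Lemma~\ref{sec:16}: an equality $pp_0=qq_0$ holds in $\DRH$ precisely when $p\Req q$ in $\pseudo A{DRH}$ \emph{and} $pp_0=_\h qq_0$, provided $c(p_0)\subseteq\cum p$ and $c(q_0)\subseteq\cum q$. For the equality $u=vu$ (and symmetrically $u=wu$), I would first check that it holds modulo $\h$: since $\h\subseteq\DRH$ we have $x^\omega=_\h y^\omega$, so in the group-pseudovariety setting the same manipulation as in \cite{JA} (which ultimately rests on Corollary~\ref{c:4} / Lemma~\ref{sec:15}-type cancellation, all of which hold modulo $\h$ because $\h$ is a pseudovariety of groups where every element is invertible) gives $u=_\h vu$ and $u=_\h wu$. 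Next I must verify the $\Req$-condition: $u$ and $vu$ are $\Req$-equivalent modulo $\DRH$. Here the reducedness of the product $v\cdot u$ is essential — it guarantees, via Corollary~\ref{c:3} and the description of $\Req$-classes in Lemma~\ref{sec:13}, that $\lbf[\infty]{vu}=\lbf[\infty]u$. Indeed, one already knows $vu=_\R u$, hence $\alpha_{vu}=\alpha_u$, $\ell_{vu}=\ell_u$, and (by Corollary~\ref{c:3}) the restrictions of $g_{vu}$ and $g_u$ to limit ordinals below $\alpha_u$ agree; combined with $\alpha_{vu}=\alpha_u$ this is exactly the statement $u\Req vu$ in $\pseudo A{DRH}$. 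Then Lemma~\ref{sec:16} (with $p=u$, $q=vu$, $p_0=q_0=I$) upgrades $u=_\h vu$ and $u\Req vu$ to $u=_\DRH vu$; likewise $u=_\DRH wu$.

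Finally, the equalities $x=u^kv$ and $y=u^\ell w$ modulo $\DRH$: I would again argue through Lemma~\ref{sec:16}. We have $x=_\R u^kv$, giving $x\Req u^kv$ modulo $\R$ hence (as above) $x\Req u^kv$ modulo $\DRH$ via matching the data $(\alpha,\ell,g)$ on limit ordinals below $\alpha_x$ through Corollary~\ref{c:3}; and $x=_\h u^kv$ because $x^\omega=_\h y^\omega$ forces, in the group setting, $x=_\h u^kv$ (the integers $k,\ell$ and the relation $u=_\h vu=_\h wu$ pin this down exactly as in the $\R$ case). Then Lemma~\ref{sec:16} gives $x=_\DRH u^kv$, and symmetrically $y=_\DRH u^\ell w$. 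The reducedness of all five displayed products is inherited verbatim from \cite[Lemma 5.4]{JA}: reducedness of $p\cdot q$ only refers to the first letter of $q$ not lying in $\cum p$, a condition expressed intrinsically in $\pseudo A{DRH}$, and these hold by construction of $u,v,w$.

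\textbf{Main obstacle.} The delicate point is the passage from $=_\R$ to $=_\DRH$ in establishing $u=vu=wu$; one cannot simply invoke cancellation in $\DRH$ since $\DRH$ need not be $\Req$-cancellative in the naive sense. The argument must be routed through Lemma~\ref{sec:16}, which splits the equality into an $\Req$-part (secured by $=_\R$ plus Corollary~\ref{c:3}, using reducedness of $v\cdot u$) and an $\h$-part (secured by working modulo the group-pseudovariety $\h$, where the periodicity argument of \cite{JA} transfers because all elements are invertible). Getting the $\h$-part cleanly — i.e.\ re-running the $\R$-style periodicity extraction with $\R$ replaced by $\h$ and checking that the same $u,v,w,k,\ell$ can be chosen simultaneously for both $\R$ and $\h$ — is where care is required; this is precisely why the lemma "becomes trivial when $\DRH=\R$", as the authors note.
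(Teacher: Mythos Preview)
Your approach has a genuine gap, concentrated exactly where you yourself flag it as the ``main obstacle'': you cannot recover $\Req$-equivalence in $\pseudo A{DRH}$ from equality in $\pseudo AR$. Concretely, you assert that from $vu=_\R u$ one obtains, via Corollary~\ref{c:3}, agreement of $g_{vu}$ and $g_u$ on limit ordinals below $\alpha_u$. But Corollary~\ref{c:3} is a characterization of $\Req$ in $\pseudo A{DRH}$; it is not a consequence of equality modulo~$\R$. Projecting to $\R$ kills precisely the $g$-component (since $\R=\mathsf{DRI}$ has trivial group part), so $u=_\R vu$ yields $\alpha_u=\alpha_{vu}$ and $\ell_u=\ell_{vu}$ and nothing more. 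If you treat \cite[Lemma~5.4]{JA} as a black box, the output $u,v,w$ are determined only up to $\R$-equivalence, and there is no reason their $g$-data should match. Your $\h$-part is equally unsubstantiated: the periodicity extraction in~\cite{JA} proceeds through left basic factorizations and prefix-maximality arguments that have no analogue over a pseudovariety of groups, so ``the same manipulation modulo~$\h$'' does not parse, and there is no mechanism forcing the same $u,v,w,k,\ell$ to work simultaneously for $\R$ and~$\h$.

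The paper's proof avoids this by not using \cite[Lemma~5.4]{JA} as a black box. Instead it reruns the transfinite induction on $\max\{\alpha_x,\alpha_y\}$ directly in $\DRH$: one shows (via Corollary~\ref{c:7}) that the shorter of $x,y$ is a genuine $\DRH$-prefix of the longer, writes $y=_\DRH x^m y'$ for the maximal such~$m$, and branches on whether $c(y')\subsetneqq c(x)$ (handled by Lemma~\ref{sec:40}) or $c(y')=c(x)$ (handled by showing $y'^\omega=_\DRH x^\omega$ and invoking the induction hypothesis on the pair $(x,y')$). The point is that $u,v,w$ are produced as explicit $\DRH$-factors of $x$ and $y$, so the identities hold in $\DRH$ by construction, with no separate $\R$/$\h$ lifting needed. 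To salvage your route you would have to open up the proof of \cite[Lemma~5.4]{JA} and track the specific factors it produces --- at which point you are carrying out the paper's argument anyway.
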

\begin{proof}
   We argue by transfinite induction on $\alpha = \max\{\alpha_x,
  \alpha_y\}$.
  
  If $\alpha_x = \alpha_y$, since the products $x\cdot x$ and $y\cdot y$ are
  reduced, we then have $x = y$ in $\DRH$, by Corollary~\ref{c:7}.
  So, we may choose $u = x$, $v = w = I$, and $k = \ell = 1$.

  From now on, we assume that the pseudovariety $\DRH$ does not
  satisfy $x = y$.
  Suppose, without loss of generality, that $\alpha_x < \alpha_y =
  \alpha$.
  Again, by Corollary~\ref{c:7}, $\DRH$ satisfies
  $$y = y^\omega[0, \alpha_y{[} = x^\omega[0, \alpha_y{[} =
  x^{\omega}[0, \alpha_x{[} \:x^\omega[\alpha_x, \alpha_y{[} =
  xx^\omega[\alpha_x, \alpha_y{[}$$
  and so, $x$ is a prefix of $y$ modulo $\DRH$.
  Thus, the set
  \begin{align*}
    P = \{ m \ge 1 \colon  \exists (y_1, \ldots, y_m \in
    \pseudo AS)\:
     y \le_{\Req} y_1 \cdots y_m \text{ and }  y_i =_\DRH x, \text{ for }  i = 1, \cdots m \}
  \end{align*}
  is nonempty. If it were unbounded then, since $x\cdot x$ is a
  reduced product and by definition of cumulative content, every
  letter of $c(x) = c(y_i)$ would be in the cumulative
  content of~$y$, so that $\vec{c}(y) = c(x) = c(y)$, a contradiction
  with the hypothesis that $y\cdot y$ is a reduced product. Take $m =
  \max(P)$ and let $y = y_1 \cdots y_m y'$, with $y_i =_\DRH x$,
  for $i = 1, \ldots, m$.
  Since $x^{\omega}  =_\DRH  y^{\omega}$, we deduce that
  $\DRH$ satisfies
  \begin{align*}
    x^\omega = y^\omega = y_1 \cdots y_m y' y^{\omega-1} =
    x^my'y^{\omega-1}
  \end{align*}
  which in turn, since the involved products are reduced, implies that
  $\DRH$ also satisfies
  \begin{align*}
    x^{\omega - m} = y' y^{\omega-1}.
  \end{align*}
  In particular, as $y^\omega = x^\omega$ in $\DRH$ (and
  so, $c(x) = c(y)$), we may conclude that $\DRH$ satisfies
  \begin{equation}
    x^\omega = y' y^{\omega-1} x^m = y' x^\omega y^{\omega-1}x^m
    \Req y'x^\omega .\label{eq:23}
  \end{equation}
  We now distinguish two cases.
  \begin{itemize}
  \item If $c(y') \subsetneqq c(x)$ then, by Lemma~\ref{sec:40}, the
    pseudovariety $\DRH$ satisfies $x = y'x$, so that we may
    choose $u = x$, $v = I$, $k = 1$, $w = y'$, and $\ell = m$.
  \item If $c(y') = c(x)$ then, successively multiplying by $y'$ on the left
    the leftmost and rightmost sides of \eqref{eq:23}, we get that the
    relation $x^\omega \Req y'^\omega x^\omega = y'^\omega$ holds in $\DRH$. As
    $x^\omega$ and $y'^\omega$ are both the identity in the same
    regular $\Req$-class, hence in the same group, the mentioned relation is actually an
    equality: $x^\omega =_\DRH y'^\omega$.
    Furthermore, the product $y' \cdot y'$ is reduced because so is
    $y\cdot y$.
    Indeed, $\vec{c}(y') = \vec{c}(y)$, the first letters of $y'$
    and $x$ coincide and, in turn, the first letter of $x$ is the
    first letter of $y$.
    Consequently, $y'$ and $x$ verify the conditions of applicability of
    the lemma and have associated a smaller induction parameter. In
    fact, maximality of $m$ guarantees that $\alpha_{y'} \le \alpha_x
    < \alpha_y = \alpha$. By induction hypothesis, there exist $u \in
    \pseudo AS$, $v,w \in (\pseudo AS)^I$, and $k,\ell > 0$ such that the
    identities
    \begin{equation}
      \begin{aligned}
         x &= u^kv,
        \\ y'  &= u^\ell w,
        \\ u & = vu = wu
      \end{aligned}\label{eq:44}
    \end{equation}
    are valid in $\DRH$, and where all products, including $u \cdot
    u$ are reduced. The computation
    $$ y = x^my' = (u^kv)^mu^\ell w =  u^{km+\ell}w$$
    modulo $\DRH$ justifies that, except for the value of $\ell$, which
    now is $km + \ell$, the choice in~\eqref{eq:44} also fits the original pair
    $x,y$.
  \end{itemize}
\end{proof}
The proof of the next result consists of an induction argument
that is similar to the one used in the proof
of~\cite[Proposition 5.5]{JA}. Here, the induction basis is given by
Lemma~\ref{sec:12}, and Corollary~\ref{c:4} plays the role of~\cite[Lemma 5.2]{JA}.
\begin{prop}
  \label{JA5.5}
  Let $x_0, x_1, \ldots, x_n \in \pseudo AS$ be such that $x_0^\omega
  = x_1^\omega = \cdots = x_n^\omega$ modulo $\DRH$ and suppose that,
  for $i = 0, 1, \ldots, n$, the product $x_i\cdot x_i$ is
  reduced. Then, there exist pseudowords $u \in \pseudo AS$, $v_0,
  v_1, \ldots, v_n \in (\pseudo AS)^I$, and positive integers $p_0, p_1,
  \ldots, p_n$ such that the pseudovariety $\DRH$ satisfies
  \begin{align*}
    x_i &= u^{p_i} v_i, \quad \text{ for } i = 0, 1, \ldots, n,
    \\ u & = v_i u,\quad \text{ for } i = 0, 1, \ldots, n,
  \end{align*}
  and all the products $u\cdot u$, $u\cdot v_i$, and
  $v_i\cdot u$ are reduced.
\end{prop}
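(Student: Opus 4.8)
The plan is to argue by induction on $n$, with base case $n = 1$ given directly by Lemma~\ref{sec:12} (the case $n = 0$ being trivial: take $u = x_0$, $v_0 = I$, $p_0 = 1$). For the inductive step, assume the statement holds for $n - 1$. Applying the induction hypothesis to $x_0, x_1, \ldots, x_{n-1}$ produces a pseudoword $u' \in \pseudo AS$, pseudowords $v_0', \ldots, v_{n-1}' \in (\pseudo AS)^I$, and positive integers $p_0', \ldots, p_{n-1}'$ such that $\DRH$ satisfies $x_i = u'^{p_i'} v_i'$ and $u' = v_i' u'$ for $i = 0, \ldots, n-1$, with all the relevant products reduced. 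In particular $x_0^\omega = u'^\omega$ modulo $\DRH$, since $x_0 = u'^{p_0'} v_0'$ and $u' = v_0' u'$ give $x_0^\omega \Req u'^\omega$ and these are identities of the same regular $\Req$-class, hence of the same group, so the relation is an equality. Combined with the hypothesis $x_0^\omega = x_n^\omega$, we get $u'^\omega = x_n^\omega$ in $\DRH$. Since both $u' \cdot u'$ and $x_n \cdot x_n$ are reduced products, Lemma~\ref{sec:12} applies to the pair $(u', x_n)$: there are $u \in \pseudo AS$, $v, w \in (\pseudo AS)^I$, and positive integers $k, \ell$ such that $\DRH$ satisfies $u' = u^k v$, $x_n = u^\ell w$, and $u = vu = wu$, with all products $u \cdot u$, $u \cdot v$, $u \cdot w$, $v \cdot u$, $w \cdot u$ reduced.

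It remains to recombine these data into a witness that works for all of $x_0, \ldots, x_n$ simultaneously. For the first $n$ pseudowords, substitute $u' = u^k v$ into $x_i = u'^{p_i'} v_i'$: modulo $\DRH$ we get $x_i = (u^k v)^{p_i'} v_i' = u^{k p_i'} (v u^k)^{p_i' - 1} v v_i'$, but it is cleaner to set $v_i := v$ when $p_i' = 1$ and otherwise note that since $u = vu$ we have $(u^k v)^{p_i'} = u^{k p_i'} v$ modulo $\DRH$ — indeed $u^k v \cdot u^k = u^k \cdot v u \cdot u^{k-1} = u^{2k}$ using $vu = u$, and iterating gives $(u^k v)^j u^k = u^{(j+1)k}$, hence $(u^k v)^{p_i'} = u^{k(p_i' - 1)} \cdot u^k v = u^{k p_i'} v$. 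Therefore $x_i = u^{k p_i'} v v_i'$, and we set $p_i := k p_i'$, $v_i := v v_i'$ for $i = 0, \ldots, n-1$. From $u = vu$ and $u = v_i' u$ we get $u = v_i' u = v_i'(vu) \ne$ — here one must be slightly careful: we want $v_i u = u$, i.e. $v v_i' u = u$. Using $u' = u^k v$ and $u' = v_i' u'$ we have $v_i' u^k v = u^k v$; multiplying on the right by $u$ and using $vu = u$ repeatedly ($u^k v u = u^{k+1}$, so $u^k v = u^k v u \cdot$ ... actually $v u = u \Rightarrow u^k v \cdot u = u^{k+1}$) we obtain $v_i' u^{k+1} = u^{k+1}$, and since $\R$ (hence $\DRH$) then gives $v_i' u = u$ by cancellation of the reduced-product context — more precisely, $v_i' u \Req u$ follows from Corollary~\ref{c:4}-type reasoning and $v_i' u =_\h u$ from the group identity, so Lemma~\ref{sec:16} yields $v_i' u = u$; then $v_i u = v v_i' u = vu = u$. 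For the last pseudoword set $p_n := \ell$ and $v_n := w$; then $x_n = u^\ell w = u^{p_n} v_n$ and $u = wu = v_n u$ are already given by Lemma~\ref{sec:12}. Reducedness of all products $u \cdot u$, $u \cdot v_i$, $v_i \cdot u$ is inherited from the reducedness clauses of Lemma~\ref{sec:12} and the induction hypothesis, using that $c(v_i) = c(v v_i')$ and the first letter of $v v_i'$ is the first letter of $v$ (when $v \ne I$), together with the standard fact that reducedness only depends on cumulative contents and first letters.

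The main obstacle I anticipate is the bookkeeping in the recombination step: one must verify that the new witnesses $v_i := v v_i'$ (for $i < n$) still satisfy the cancellation-type identity $u = v_i u$ and that the products $u \cdot v_i$ and $v_i \cdot u$ remain reduced, even though $v_i$ is now a two-factor product and the two induction hypotheses were produced independently. The key algebraic leverage throughout is that $x_j^\omega = u^\omega$ is an identity between elements of a single regular $\Req$-class — a group — so $\Req$-equivalences among these idempotent-powers automatically upgrade to equalities, and Lemma~\ref{sec:16} lets us reduce equalities modulo $\DRH$ to the conjunction of an $\Req$-equivalence and an equality modulo $\h$; the latter is handled by ordinary group computation, while the former is handled by the structural corollaries (Corollaries~\ref{c:4}, \ref{c:7}) about reduced products and left basic factorizations. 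All products staying reduced is precisely what makes the $\Req$-parts behave like free monoid concatenation, so no collapse occurs.
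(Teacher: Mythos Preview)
Your inductive strategy --- apply the induction hypothesis to $x_0,\ldots,x_{n-1}$ to obtain $u'$, then Lemma~\ref{sec:12} to the pair $(u',x_n)$ to obtain $u$, and recombine via $x_i = u^{kp_i'}(vv_i')$ --- is exactly the approach the paper indicates (it defers to \cite[Proposition~5.5]{JA}, with Lemma~\ref{sec:12} as the base case and Corollary~\ref{c:4} in place of \cite[Lemma~5.2]{JA}); your reducedness checks and the identity $(u^kv)^{p_i'}=u^{kp_i'}v$ (from $vu=u$) are correct.

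The obstacle you flag is the only substantive point, and your ``Corollary~\ref{c:4}-type reasoning'' needs to be made precise, because that corollary is stated only for exponent~$2$. From $v_i' u' = u'$ and $u' u = u^{k+1}$ you correctly derive $v_i' u^{k+1}=u^{k+1}$ in $\DRH$; the clean way to descend to $v_i' u = u$ is to \emph{iterate} Corollary~\ref{c:4}. Choose $j$ with $2^j\ge k+1$ and multiply on the right by $u^{2^j-(k+1)}$ to obtain $v_i' u^{2^j}=u^{2^j}$. Since $u\cdot u$ is reduced one has $\cum{u^{2^{j-1}}}=\cum{u}$ and the first letter of $u^{2^{j-1}}$ is that of $u$, so the product $u^{2^{j-1}}\cdot u^{2^{j-1}}$ is reduced; applying Corollary~\ref{c:4} with $u^{2^{j-1}}$ in the role of~$u$ gives $v_i' u^{2^{j-1}}=u^{2^{j-1}}$. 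Repeating $j$ times yields $v_i' u = u$ in $\DRH$, whence $v_i u = v(v_i'u)=vu=u$ as you wrote. This iterated use of Corollary~\ref{c:4} is exactly the role the paper assigns to it in the induction step.
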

\section{Some simplifications concerning reducibility}\label{section4}
Almeida, Costa and Zeitoun \cite{JA} proved that, in order to
achieve complete $\kappa$-reducibility, it is enough to consider
systems of $\kappa$-equations with empty set of parameters (in fact,
they proved the result more generally, for any implicit signature~$\sigma$).

\begin{prop}[{\cite[Proposition 3.1]{JA}}]
  Let $\V$ be an arbitrary pseudovariety. If $\V$ is
  $\kappa$-reducible for systems of $\kappa$-equations without
  parameters, then $\V$ is completely $\kappa$-reducible.
\end{prop}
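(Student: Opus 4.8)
The plan is to reduce a system of $\kappa$-equations with parameters to one without parameters by \emph{absorbing each parameter into the equations as a new variable}, while transferring the information carried by the evaluation map $\ev$ into the constraints attached to that new variable. Concretely, given a finite system $\{u_i = v_i \colon i = 1, \ldots, n\}$ over variables $X$ and parameters $P$, together with constraints $(\varphi, \nu)$ on $X$ and an evaluation $\ev : P \to \pseudo AS$, I would build a new system over the variable set $X' = X \cup P$ (now treating the old parameters as genuine variables) with \emph{no} parameters, keeping the same formal equations $u_i = v_i$. The delicate point is that $\ev(p)$ is an arbitrary pseudoword, so I cannot simply record its value; instead I would choose, for each $p \in P$, a finite semigroup $S_p$ and a continuous homomorphism $\varphi_p : \pseudo AS \to S_p$ together with $\nu_p = \varphi_p(\ev(p))$, chosen fine enough that the new constraint $(\varphi_p, \nu_p)$ pins down enough of $\ev(p)$ for our purposes. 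One then takes $\varphi'$ to be the product of $\varphi$ with all the $\varphi_p$ (so it factors through a single finite semigroup) and $\nu'$ the obvious extension of $\nu$ by the $\nu_p$.

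The key steps, in order, are: (1) observe that $\delta$ is a solution modulo $\V$ of the original system with parameters if and only if $\delta$, viewed as a homomorphism on $\pseudo{X'}S$, is a solution modulo $\V$ of the new parameter-free system satisfying the constraints $(\varphi', \nu')$ \emph{and} satisfying $\delta(p) = \ev(p)$ for all $p$; (2) conclude that the original system has a solution modulo $\V$ as soon as the new system does, with the caveat that we must recover condition \ref{s3}; (3) invoke the hypothesis that $\V$ is $\kappa$-reducible for parameter-free systems to obtain a solution $\delta'$ of the new system in $\kappa$-words; (4) check that $\delta'$, restricted to $X$ and combined with the original $\ev$ on $P$, yields a solution of the original system in $\kappa$-words. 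The subtle part is step (4): a priori $\delta'(p)$ need not equal $\ev(p)$, only $\varphi_p(\delta'(p)) = \varphi_p(\ev(p))$; so to make this work one must either (a) argue that it suffices for the new system to have \emph{some} $\kappa$-word solution with the old parameters now free — i.e. reinterpret the conclusion so that in the target system the ``parameters'' are allowed to take new $\kappa$-word values — or (b) build the new equations so that feasibility is insensitive to the precise value of $\ev(p)$ beyond its image under $\varphi_p$. Approach (a) is the clean one: the statement only asserts $\kappa$-reducibility of $\V$, and a solution in $\kappa$-words of the original system need not use the given $\ev$ — but then one must re-examine what role $\ev$ plays and, in fact, recall that $\ev$ is typically itself required to take $\kappa$-word values (as the excerpt notes, the existence of a $\kappa$-word solution forces $\ev$ to be in $\kappa$-words via \ref{s3}), so that the original parameter evaluation can simply be \emph{retained}.

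The cleanest route, and the one I would actually carry out, is therefore: enlarge the constraint semigroup so that $\varphi'$ separates the parameter values to whatever finite precision is needed, solve the parameter-free system in $\kappa$-words, and then \emph{re-impose the original $\ev$ by hand}, using that the formal equations are unchanged and that $\V$-satisfaction of $\delta'(u_i) = \delta'(v_i)$ depends on $\delta'$ only through its images, so substituting $\ev(p)$ for $\delta'(p)$ (which agree after $\varphi'$, hence can be chosen to agree after any prescribed finite quotient) preserves all of \ref{s1}--\ref{s3}. The main obstacle is making this last substitution rigorous: one needs $\varphi'$ chosen a priori to be fine enough that matching $\varphi'$-images of the parameter values is enough to guarantee that replacing $\delta'(p)$ by $\ev(p)$ does not destroy any equation $\delta'(u_i) =_\V \delta'(v_i)$ — and since there are only finitely many equations and finitely many parameters, such a finite quotient exists by a standard compactness argument in the profinite semigroup $\pseudo AS$. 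I expect this finite-quotient bookkeeping, rather than any conceptual difficulty, to be the real work; everything else is formal manipulation of the definitions of solution and constraint.
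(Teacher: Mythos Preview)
The paper does not give its own proof of this proposition; it simply cites \cite[Proposition~3.1]{JA}. So there is no in-paper argument to compare against, but your proposal can still be assessed on its merits.

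Your plan contains a genuine gap, precisely at the step you flag as ``the real work''. You propose to choose the constraint homomorphism $\varphi'$ fine enough that whenever $\varphi'(\delta'(p)) = \varphi'(\ev(p))$, replacing $\delta'(p)$ by $\ev(p)$ in $\delta'(u_i) =_\V \delta'(v_i)$ preserves the identity. But this cannot be arranged by a compactness argument: the equations also involve the unknowns $\delta'(x)$ for $x \in X$, and you do not know their values when choosing $\varphi'$. For a trivial illustration, take the single equation $xp = y$: after solving you have $\delta'(x)\delta'(p) =_\V \delta'(y)$, and you now need $\delta'(x)\,\ev(p) =_\V \delta'(y)$, i.e.\ that $\delta'(p)$ and $\ev(p)$ become equal after left multiplication by the unknown $\delta'(x)$. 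No finite quotient chosen in advance can guarantee this for all possible $\kappa$-word values of $\delta'(x)$. Nor can you force $\delta'(p) = \ev(p)$ via a constraint, since $\{\ev(p)\}$ is not clopen in general.

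The standard argument (the one in \cite{JA}) exploits the observation you mention but do not use: the existence of a $\kappa$-word solution forces each $\ev(p)$ to be a $\kappa$-word. Write each $\ev(p)$ as an explicit $\kappa$-term $w_p(a_1,\ldots,a_k)$ in the letters of $A$. Introduce one new variable $y_a$ per letter $a \in A$, constrained to equal $a$ (this is possible because singletons of letters are clopen, cf.\ the use of Hunter's Lemma later in the paper), and replace each occurrence of $p$ in the equations by $w_p(y_{a_1},\ldots,y_{a_k})$. The result is a parameter-free system of $\kappa$-equations over $X \cup \{y_a : a \in A\}$. A $\kappa$-word solution of it has $y_a \mapsto a$ forced by the constraints, so its restriction to $X$, extended by $\ev$ on $P$, is a $\kappa$-word solution of the original system. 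The point you are missing is that the $\kappa$-term \emph{structure} of $\ev(p)$ lets you encode the parameter syntactically rather than trying to approximate its value semantically.
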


A pseudovariety $\V$ is said to be \emph{weakly cancellable} if
whenever $\V$ satisfies $u_1au_2 = v_1av_2$ with $a$ 
not belonging to any of the sets $c(u_1)$, $c(u_2)$, $c(v_1)$, and
$c(v_2)$, it also satisfies
$u_1 = u_2$ and $v_1 = v_2$. When $\V$ is a weakly cancellable
pseudovariety, we 
may restrict our study to systems consisting of one single
$\kappa$-equation without parameters.
\begin{prop}[{\cite[Proposition 3.2]{JA}}]
  \label{p:6}
  Let $\V$ be a weakly cancellable pseudovariety. If~$\V$ is
  $\kappa$-reducible for systems consisting of just one
  $\kappa$-equation without parameters, then $\V$ is completely
  $\kappa$-reducible.
\end{prop}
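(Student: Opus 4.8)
The plan is to use weak cancellability to merge any finite system of $\kappa$-equations without parameters into a single one, and then to invoke \cite[Proposition~3.1]{JA}. So let $\iS = \{u_i = v_i : i = 1, \ldots, n\}$ be a finite system of $\kappa$-equations over a finite variable set~$X$, with empty set of parameters and with a constraint $(\varphi, \nu)$, where $\varphi \colon \pseudo AS \to S$ and $\nu \colon X \to S$, and suppose that $\iS$ has a solution $\delta$ modulo~$\V$; we must produce one in $\kappa$-words. We may assume $n \ge 1$, the case $n = 0$ being immediate since $\pseudok AS$ is dense in $\pseudo AS$. First I would adjoin $n-1$ fresh letters $b_1, \ldots, b_{n-1}$, forming $A' = A \uplus \{b_1, \ldots, b_{n-1}\}$, and $n-1$ fresh variables $t_1, \ldots, t_{n-1}$, forming $X' = X \uplus \{t_1, \ldots, t_{n-1}\}$, and consider the single $\kappa$-equation without parameters
\[
  \iE \colon \quad u_1 t_1 u_2 t_2 \cdots u_{n-1} t_{n-1} u_n \;=\; v_1 t_1 v_2 t_2 \cdots v_{n-1} t_{n-1} v_n,
\]
whose two sides lie in $\pseudok {X'}S$ because the $u_i, v_i$ do.

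For the constraint I would take the finite semigroup $S' = S \times \bigl(\mathcal{P}(A') \setminus \{\emptyset\}\bigr)$, its second factor being the free semilattice on~$A'$ under union, together with the continuous homomorphism $\varphi' \colon \pseudo{A'}S \to S'$ defined on generators by $\varphi'(a) = (\varphi(a), \{a\})$ for $a \in A$ and $\varphi'(b_j) = (s_j, \{b_j\})$ for an arbitrary fixed $s_j \in S$; thus the first component of $\varphi'$ restricted to $\pseudo AS$ is~$\varphi$, and the second component of $\varphi'$ is the content function~$c$. Set $\nu'(x) = (\nu(x), c(\delta(x)))$ for $x \in X$ and $\nu'(t_j) = \varphi'(b_j)$. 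Any continuous homomorphism respecting $(\varphi', \nu')$ evaluates each $x \in X$ inside $\pseudo AS$ (content $\subseteq A$) and each $t_j$ to a pseudoword of content exactly $\{b_j\}$. Now the map $\delta'$ that agrees with $\delta$ on~$X$ and sends $t_j \mapsto b_j$ is, by construction, a solution of~$\iE$ modulo~$\V$ satisfying $(\varphi', \nu')$ --- using $\delta(u_i) =_\V \delta(v_i)$ --- so $\iE$ is solvable modulo~$\V$. Conversely, suppose $\eta$ is a solution of~$\iE$ modulo~$\V$ \emph{in $\kappa$-words} satisfying $(\varphi', \nu')$. The blocks $\eta(t_j)$ then have content $\{b_j\}$, and $b_j$ occurs in none of the $\eta(u_i)$, $\eta(v_i)$, nor in $\eta(t_k)$ for $k \ne j$. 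Cancelling these blocks one at a time from the left, by weak cancellability of~$\V$, yields $\eta(u_i) =_\V \eta(v_i)$ for $i = 1, \ldots, n$; hence $\gamma := \eta|_X$, which takes its values in $\pseudok AS$ since $c(\eta(x)) \subseteq A$, is a solution of~$\iS$ modulo~$\V$ in $\kappa$-words (condition~\ref{s2} is read off the first component of $\varphi' \circ \eta$, and there are no parameters). Applying the hypothesis to~$\iE$ and then \cite[Proposition~3.1]{JA} completes the argument.

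The step I expect to be the main obstacle is the cancellation just invoked. Weak cancellability, as stated, removes a single \emph{letter} together with its two neighbouring factors, whereas each block $\eta(t_j)$ is only known to be a $\kappa$-word over the single fresh letter $b_j$, and a finite constraint cannot force a variable to equal a letter. So the substance of the argument is an auxiliary cancellation fact --- that, for a weakly cancellable $\V$ and a letter $b$ occurring in none of $p, p', q, q'$, the equality $p\,w\,q =_\V p'\,w\,q'$ with $c(w) = \{b\}$ forces $p =_\V p'$ and $q =_\V q'$ --- after which the iteration over~$j$, and the rest of the proof, are routine.
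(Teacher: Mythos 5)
Your overall strategy---concatenating the equations with fresh separator variables and splitting the resulting single equation back apart by cancellation---is exactly the one used in the cited proof of \cite[Proposition~3.2]{JA}, but the cancellation step has a genuine gap, and it is precisely the step you flag. The constraint you impose on $t_j$ only pins down the \emph{content} of $\eta(t_j)$, so $\eta(t_j)$ may be any $\kappa$-word over $\{b_j\}$, for instance $b_j^\omega$. The auxiliary fact you would need---that $p\,w\,q =_\V p'\,w\,q'$ with $c(w)=\{b\}$ and $b$ fresh forces $p =_\V p'$ and $q =_\V q'$---is not a formal consequence of weak cancellability (which cancels a single occurrence of a \emph{letter}, and cannot even be iterated to handle $w=b^2$: writing $w = b\cdot b$, the letter $b$ still occurs in one of the outer factors, so the hypothesis of weak cancellability fails), and it is false in general: for a weakly cancellable pseudovariety of groups, $b^\omega$ evaluates to the identity, so $p\,b^\omega\,q =_\V p'\,b^\omega\,q'$ collapses to $pq =_\V p'q'$ and retains no information separating $p$ from $p'$. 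So the argument does not close as written, and no amount of massaging weak cancellability will prove the block version.

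The repair is available, and it is exactly where your stated reason goes wrong: a finite constraint \emph{can} force a variable to be evaluated to a letter. The singleton $\{b_j\}$ is a clopen (recognizable) subset of $\pseudo{A'}S$, so by Hunter's Lemma there is a continuous homomorphism $\varphi_{b_j}$ onto a finite semigroup with $\varphi_{b_j}^{-1}(\varphi_{b_j}(b_j)) = \{b_j\}$; adjoining these components to your $\varphi'$ and setting $\nu'(t_j)$ accordingly forces $\eta(t_j) = b_j$ on the nose. (The present paper uses this very device in the proof of Lemma~\ref{cumH} to guarantee $\varepsilon'(b_{x,i}) = a_{x,i}$.) Once $\eta(t_j) = b_j$ exactly, weak cancellability applies verbatim, one separator letter at a time, and the remainder of your argument---including the bookkeeping of constraints through the product semigroup and the final appeal to \cite[Proposition~3.1]{JA}---goes through.
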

Of course, the pseudovariety
$\DRH$ is weakly cancellable. Indeed, weak cancellability is a
particular instance of uniqueness of the first-occurrences factorization
(recall Corollary~\ref{c:5}).
Actually, we may go even further and, similarly to the case of $\R$
(see~\cite[Lemmas~6.1 and~6.2]{JA}), we prove that, in order to
obtain
complete $\kappa$-reducibility of a pseudovariety~$\DRH$, it suffices
to consider systems of word equations (without parameters).
\begin{lemma}\label{6.1}
  Let $u,v \in \pseudo AS$. Then, $\DRH$ satisfies the pseudoidentity
  $u = v^{\omega - 1}$ if and only if $c(u) = c(v)$, and the
  pseudoidentities 
  $uvu =u$ and $uv = vu$ hold in $\DRH$.
\end{lemma}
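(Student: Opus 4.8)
The plan is to prove the two implications separately. For the direct implication, suppose $\DRH$ satisfies $u = v^{\omega-1}$. Then $uv = v^{\omega-1}v = v^\omega$ and $vu = vv^{\omega-1} = v^\omega$, so $uv = vu$ holds modulo $\DRH$, and $uvu = v^\omega v^{\omega-1} = v^{\omega-1} = u$, which gives $uvu = u$. It remains to check $c(u) = c(v)$. Applying the content function $c$ (that is, composing with $\rho_\Sl$) to $u = v^{\omega-1}$, and using that in a semilattice $v^{\omega-1}$ has the same content as $v$ (indeed $v^\omega = v^{\omega+1}$ forces $c(v^{\omega-1}) = c(v^\omega) = c(v)$ in $\Sl$), we get $c(u) = c(v)$.

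For the converse, assume $c(u) = c(v)$ and that $\DRH$ satisfies $uvu = u$ and $uv = vu$. The goal is to deduce $\DRH \models u = v^{\omega-1}$, equivalently $uv = v^\omega$ (since $v^{\omega-1}$ is characterized by $v^{\omega-1}v = v v^{\omega-1} = v^\omega$ together with $v^{\omega-1} \le_{\Heq} v^\omega$ inside the relevant $\Heq$-class). First I would use the two hypotheses to show that $uv$ is idempotent modulo $\DRH$: from $uvu = u$ we get $(uv)(uv) = (uvu)v = uv$. Since $uv = vu$, also $(vu)^2 = vu$. Next, $c(uv) = c(u) \cup c(v) = c(v)$, and from $uvu = u$ we obtain $uv \cdot u = u$, so by Corollary~\ref{c:2} (or by the group structure of the regular $\Req$-class) the idempotent $uv$ and the idempotent $v^\omega$ lie in the same regular $\Req$-class — here one uses that $u \cdot (uv)^\omega \Req (uv)^\omega$ type arguments pin down the $\Req$-class via Lemma~\ref{sec:13}, combined with the fact that $c(uv) = c(v)$ controls the cumulative content. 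Having both idempotents in the same group (the regular $\Heq$-class), equality of idempotents in a group forces $uv =_\DRH v^\omega$; but more is needed, namely $u =_\DRH v^{\omega-1}$ rather than merely $uv =_\DRH v^\omega$.

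To bridge that last gap I would argue as follows. Write $e = v^\omega$ for the idempotent designated by the relevant power. From $uvu = u$ and $uv = vu = e$ we get $u = uvu = eu = ue$, so $u$ lies in the $\Heq$-class $H_e$, which is a group in $\h$. Inside that group, $ue = u$ and $uv = e$ with $v$ also mapping into a group element $ve = ev$ of $H_e$ (since $c(v) = c(e)$, multiplying $v$ by $e$ on either side lands in $H_e$); from $uv = e$ and $vu = e$ we get that the image of $u$ in the group $H_e$ is the inverse of the image of $v$, i.e. $u =_\DRH v^{-1}$ in $H_e$, which is exactly $u =_\DRH v^{\omega-1}$ because $v^{\omega-1}$ is by definition the inverse of $v$ in that group (for $\DRH$, powers of $v$ eventually land in $H_e$ and $v^{\omega-1} = v^\omega v^{-1}$). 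Making this precise will rely on Proposition~\ref{p:3} and Lemma~\ref{sec:16}: decompose the equality $u = v^{\omega-1}$ into the $\Req$-component and the $\h$-component. The $\Req$-component follows since both $u$ and $v^{\omega-1}$ are $\Req$-equivalent to the idempotent $e$ (they are units of $H_e$), and the $\h$-component is the group-theoretic identity $u = v^{-1}$ derived above.

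The main obstacle I anticipate is the bookkeeping needed to certify that $u$, $v$, and $v^{\omega-1}$ all sit inside the \emph{same} regular $\Heq$-class $H_e$ — this is where the hypothesis $c(u) = c(v)$ is essential (without it the relevant idempotents could have different contents and live in different groups), and it is the step that genuinely uses the structure theory of $\pseudo A{DRH}$ (Lemma~\ref{sec:13}, Corollary~\ref{c:2}, Proposition~\ref{p:3}) rather than being a purely formal manipulation of pseudoidentities. Once that is in place, the reduction to a statement about inverses in a finite group, handled via Lemma~\ref{sec:16}, is routine.
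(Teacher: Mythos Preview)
Your forward direction is correct and essentially the same as the paper's.

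For the converse you take a genuinely different route from the paper: you try to place $u$, $uv$, and $v^{\omega-1}$ into the maximal subgroup $H_e$ (with $e=v^\omega$) and then argue group-theoretically that $u$ is the inverse of $ve$. The paper instead gives a four-line direct computation
\[
v^{\omega-1} \;\just={\text{Cor.~\ref{c:2}}}\; v^{\omega-1}u^\omega
\;=\; v^{\omega-1}u^{\omega-1}u
\;\just={uv=vu}\; (uv)^{\omega-1}u
\;\just={uvu=u\Rightarrow (uv)^2=uv}\; (uv)u \;=\; u,
\]
which never needs to locate the $\Req$-class of $u$ explicitly.

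Your structural approach can be made to work, but as written it has a genuine gap at exactly the place you flag as ``the main obstacle'': you do not prove that $uv$ (equivalently $u$) is $\Req$-equivalent to $v^\omega$ in $\pseudo A{DRH}$. The sentence ``$u\cdot(uv)^\omega \Req (uv)^\omega$ type arguments pin down the $\Req$-class via Lemma~\ref{sec:13}'' is not an argument, and in fact $u\cdot(uv)^\omega = u\cdot uv = u^2v$, which does not directly give what you need. One way to close the gap: from $u=uvu$ and $uv=vu$ one gets $u=v^n u^{n+1}$ for every $n\ge 1$, and by compactness $u=v^\omega u^{\omega+1}$, so $u\le_\Req v^\omega$; conversely $c(u)=c(v)=\cum{v^\omega}$ gives $v^\omega u\Req v^\omega$, and commutativity yields $v^\omega u=uv^\omega\le_\Req u$, so $v^\omega\le_\Req u$. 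Hence $u\Req v^\omega$, and since $uv$ and $v^\omega$ are idempotents in the same (group) $\Req$-class, $uv=_\DRH v^\omega$. After that your group-theoretic conclusion is fine via Lemma~\ref{sec:16}. But note how much longer this is than the paper's calculation; Corollary~\ref{c:2} does in one stroke what your detour through $H_e$ achieves in several.
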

\begin{proof}
  Suppose that $\DRH$ satisfies $u = v^{\omega-1}$.
  Since the semigroup~$\pseudo
  A{DRH}$ has a content function, we have $c(u) = c(v^{\omega - 1}) =
  c(v)$. In order to verify that the pseudoidentities
  $uvu=u$ and $uv=vu$ are valid in ${\DRH}$, we may perform
  the following computations:
  \begin{align*}
    u & =_\DRH v^{\omega-1} = v^{\omega - 1}\: (v v^{\omega-1}) =_\DRH uvu,
    \\ uv & =_\DRH v^{\omega-1}v = vv^{\omega-1} =_\DRH vu.
  \end{align*}
  Conversely,
  suppose that $\DRH$ satisfies the pseudoidentities $uvu=u$ and
  $uv=vu$, and $c(u) = c(v)$.
  Then, the following pseudoidentities are valid in
  $\DRH$:
  \begin{align*}
    v^{\omega-1} &= v^{\omega-1} u^\omega
    \qquad\text{by Corollary~\ref{c:2}}
    \\ & = v^{\omega-1} u^{\omega-1} u = (uv)^{\omega-1}u  \qquad
         \text{because $uv =_\DRH vu$}
    \\ & = (uv)u
    \qquad\text{because } uvu =_\DRH u \text{ implies } (uv)^{\omega-1} =_\DRH uv
    \\ & = u.
  \end{align*}
  This concludes the proof.
\end{proof}
Lemma~\ref{6.1} allows us to transform each $\kappa$-equation into
a finite
system of word equations. Therefore, by Proposition~\ref{p:6}, in order to
prove the complete $\kappa$-reducibility of~$\DRH$, it is enough to
consider systems consisting of a single word equation.
We do not include the details of that step, as it is entirely
analogous to~\cite[Proposition 6.2]{JA}.

\begin{prop}\label{6.2}
  The pseudovariety $\DRH$ is completely $\kappa$-reducible if and
  only if it is $\kappa$-reducible for a single word equation without
  parameters.
\end{prop}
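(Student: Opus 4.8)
The plan is to reduce the general case, step by step, through the three preparatory results already quoted, so that Proposition~\ref{6.2} becomes a matter of assembling what has been collected. First, Proposition~3.1 of~\cite{JA} lets me forget about parameters entirely: it suffices to prove $\kappa$-reducibility of $\DRH$ for systems of $\kappa$-equations without parameters. Second, since $\DRH$ is weakly cancellable — because weak cancellability is an immediate consequence of uniqueness of the first-occurrences factorization (Corollary~\ref{c:5}) — Proposition~\ref{p:6} reduces the problem further to a \emph{single} $\kappa$-equation without parameters. So the only real work is to pass from one $\kappa$-equation to a finite system of word equations, which is exactly what Lemma~\ref{6.1} is designed to handle.

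The key step is therefore the following translation. Given a $\kappa$-equation $u = v$ (without parameters), I rewrite both sides so that every occurrence of the $(\omega-1)$-power is replaced by a fresh variable. Concretely, whenever a subterm $w^{\omega-1}$ appears, I introduce a new variable $z_w$ and, using Lemma~\ref{6.1}, I record the constraints that $z_w$ being the $(\omega-1)$-power of $w$ amounts to, namely $c(z_w) = c(w)$, $z_w w z_w = z_w$, and $z_w w = w z_w$. The content condition $c(z_w) = c(w)$ is not itself a word equation, but it can be imposed through the constraint mappings $(\varphi,\nu)$ attached to the variables — one chooses $\varphi$ to map onto a large enough semilattice so that $\nu$ pins down the content of each variable — or, alternatively, absorbed by adding auxiliary word equations that force the requisite letters to occur; either way this is routine and identical to the treatment in~\cite{JA}. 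Iterating this substitution from the innermost $\kappa$-subterms outward (there are only finitely many), one arrives after finitely many steps at a finite system of word equations, in finitely many variables, together with constraints, which has a solution modulo $\DRH$ if and only if the original $\kappa$-equation does, and whose solutions in $\kappa$-words correspond to solutions of the original in $\kappa$-words. This is precisely the content of~\cite[Proposition 6.2]{JA} transported to $\DRH$, the only change being that Lemma~\ref{6.1} replaces the analogous lemma for $\R$ and that Corollary~\ref{c:2} is what makes the ``$\Leftarrow$'' direction of Lemma~\ref{6.1} go through in the $\DRH$ setting.

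Finally, combining these reductions: if $\DRH$ is $\kappa$-reducible for a single word equation without parameters, then it is $\kappa$-reducible for every finite system of $\kappa$-equations without parameters (by the translation above, applied to each equation of the system and taking the union of the resulting word-equation systems), hence $\kappa$-reducible for all finite systems of $\kappa$-equations (by Proposition~\ref{p:6} together with weak cancellability), hence completely $\kappa$-reducible (by Proposition~3.1 of~\cite{JA}). The converse implication is trivial, since a single word equation is a particular finite system of $\kappa$-equations. The main obstacle — and it is a mild one at this stage — is bookkeeping: ensuring that the constraint mappings for the newly introduced variables are compatible with the original ones and that the correspondence between solutions is genuinely bijective at the level of what matters (existence of a $\kappa$-word solution). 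Since the paper explicitly notes that this step is ``entirely analogous to~\cite[Proposition 6.2]{JA}'', I would state the reduction, invoke Lemma~\ref{6.1} and the earlier propositions, and refer the reader to~\cite{JA} for the verification details rather than reproducing them.
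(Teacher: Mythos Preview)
Your proposal is correct and follows essentially the same route as the paper: use Proposition~3.1 of~\cite{JA} to drop parameters, use weak cancellability (via Corollary~\ref{c:5}) and Proposition~\ref{p:6} to reduce to a single $\kappa$-equation, and then apply Lemma~\ref{6.1} to replace each $(\omega-1)$-power by a fresh variable subject to word equations, referring to~\cite[Proposition~6.2]{JA} for the bookkeeping. The paper itself omits the details with exactly that reference.

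One small presentational remark: in your final paragraph the attribution of propositions to steps is slightly scrambled. Proposition~\ref{p:6} is what collapses a \emph{system} (of $\kappa$-equations, or of word equations, via the same separator-letter trick) to a \emph{single} equation for weakly cancellable pseudovarieties; Proposition~3.1 of~\cite{JA} is what removes parameters. So the clean chain is: single word equation $\Rightarrow$ system of word equations (Proposition~\ref{p:6}) $\Rightarrow$ single $\kappa$-equation (Lemma~\ref{6.1} read backwards) $\Rightarrow$ systems of $\kappa$-equations without parameters (Proposition~\ref{p:6} again) $\Rightarrow$ completely $\kappa$-reducible (Proposition~3.1 of~\cite{JA}). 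This is not a mathematical gap, just a matter of citing the right proposition at each arrow.
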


Let $u,v \in X^+$ and $\delta: \pseudo XS \to \pseudo
AS$ be a solution modulo $\DRH$ of $u = v$, subject to the constraints
given by the pair $(\varphi: \pseudo AS \to S, \nu:X \to S)$.
The last simplification consists in transforming the word equation $u =
v$ into a more convenient system of equations, namely,
into a system that we denote by
$\iS_{u = v}$ and that is the union of 
systems $\{u'=v'\}$, $\iS_1$ and $\iS_2$ with variables in~$X'$.
We construct $\iS_{u = v}$
inductively as follows.

We use an auxiliary system $\iS_0$ and start with $\iS_0 = \iS_1 =
\iS_2 = \emptyset$,
$X' = X$, $u'= u$, and $v' = v$.
Since~$\DRH$ is a weakly cancellable pseudovariety, the word equation
$u = v$ is equivalent to the equation $u \# = v\#$, where $\# \notin A$ is a
parameter evaluated to itself.
Suppose that, whenever $xy$ is a factor of $u\#v\#$ ($x,y \in X$), the
product $\delta(x)\cdot \delta(y)$ is reduced. Then, we say that the
solution $\delta$ is
\emph{reduced with respect to the equation $u =
  v$}.
If $\delta$ is not reduced with respect to $u = v$, then we pick a
factor $xy$ such that $\delta(x)\delta(y)$ is not 
a reduced product and we distinguish between two situations:
\begin{itemize}
\item If $c(\delta(y)) \subseteq \vec{c}(\delta(x))$, then we add
  a new variable $z$ to $X'$ and we put the equation $xy = z$ in
  $\iS_1$. We also redefine $u'$ and $v'$ by substituting each
  occurrence of the product
  $xy$ in the equation $u'\#v'\#$ by the variable $z$.
\item If $c(\delta(y)) \nsubseteq \vec{c}(\delta(x))$, then we add three
  new variables $y_1$, $y_2$, and $z$ to $X'$ and we put the
  equations $y = y_1y_2$ and $z = xy_1$ in $\iS_0$ and $\iS_1$,
  respectively. We also redefine $u'$ and $v'$ by substituting the product
  $xy$ in the equation $u'\#v'\#$ by the product of variables $zy_2$.
\end{itemize}
In both situations, we can factorize $\delta(y) =
\delta(y)_1\delta(y)_2$, with $\delta(y)_2$ possibly an empty word,
such that $c(\delta(y)_1) \subseteq \vec{c}(\delta(x))$ and the
product $(\delta(x)\delta(y)_1)\cdot \delta(y)_2$ is reduced if
$\delta(y)_2 \neq I$.
We extend $\delta$ to $\pseudo {X'}S$ by letting $\delta(z) =
\delta(x)\delta(y)_1$ and, whenever we are in the second situation,
by letting
$\delta(y_i) = \delta(y)_i$ ($i = 1,2$). Of course,~$\delta$ is a solution modulo
$\DRH$ of the new system of equations $\{u' = v'\} \cup \iS_0 \cup
\iS_1$.

We repeat the described process until the extended solution
$\delta$ is reduced with respect to the equation $u' = v'$. Since
$u$ and $v$ are both words, we have for granted that this iteration
eventually ends. Yet, the extension of $\delta$ to $\pseudo
{X'}S$ (which is a solution modulo $\DRH$ of
$\{u'=v'\}\cup\iS_0\cup\iS_1$) has the property of being reduced with
respect to the equation $u'=v'$.
We further observe that the
resulting system $\iS_1$ may be written as $\iS_1 =
\{x_{(i)}y_{(i)} = z_{(i)}\}_{i = 1}^ N$ and its extended solution
$\delta$ satisfies $c(\delta(y_{(i)})) \subseteq \cum{\delta(x_{(i)})}$.
For each variable $x \in X'$, we set $A_x = \vec{c}(\delta(x))$ and
define $\iS_2=\{x a^{\omega} = x\colon a \in
A_x\}_{x \in X'}$.
The homomorphism $\delta$ is a solution modulo
$\DRH$  of $\iS_2$. Finally, since $\DRH$ is weakly cancellable and
all the products $\delta(y_1)\cdot
  \delta(y_2)$ are reduced, we may
assume that the satisfaction of the equations in
$\iS_0$ by $\delta$ is a consequence of the satisfaction of the
equation $u' = v'$ by $\delta$, without losing the reducibility of
$\delta$ with respect to $u'=v'$. More specifically, if $y = y_1y_2$
is an equation of $\iS_0$, then we take for $u'$ the word $u'\overline\# y$
and for $v'$ the word $v' \overline\# y_1 y_2$, where $\overline\#$ is a new symbol,
working as a parameter evaluated to itself.
In the same fashion, we may also assume
that all the variables of $X'$ occur in $u' = v'$.
Although at the moment it may not be clear to the reader why we
wish that all the variables in $X'$  occur in the equation $u' = v'$,
that becomes useful later, when dealing with certain systems of
equations modulo~$\h$ that intervene in the so-called ``systems of boundary
relations''.
The
resulting system $\{u' = v'\} \cup \iS_1 \cup \iS_2$ is
the one that we denote by $\iS_{u=v}$ and it also has a solution modulo
$\DRH$. The constraints for the variables in $X'$ are those defined by
the described extension of $\delta$ to $\pseudo{X'}S$, namely, we put
$\nu(x) = \varphi(\delta(x))$ for each $x \in X'$.

Conversely, suppose that $\iS_{u = v}$ has a solution modulo $\DRH$
in $\kappa$-words, say $\varepsilon$. Then, it is easily checked
that, by construction, the restriction of $\varepsilon$ to $\pseudo
XS$ is a solution modulo $\DRH$ of the original equation $u = v$.
Moreover, by definition of $\iS_2$, this
solution is such that~$\vec{c}(\varepsilon(x)) =
\vec{c}(\delta(x))$, for all $x \in X'$. As, in addition, $S$ has a
content function, the satisfaction of the constraints yields
that~$c(\varepsilon(y_{(i)})) = c(\delta(y_{(i)}))$ and, in
particular, the inclusion~$c(\varepsilon(y_{(i)}))\subseteq \vec{c}
(\varepsilon(x_{(i)}))$ holds
for all the equations~$x_{(i)} y_{(i)} = z_{(i)}$ in $\iS_1$.

Taking into account Proposition~\ref{6.2}, we have just proved the
following result in which we use the above notation.

\begin{prop}\label{6.3}
  Suppose that the pseudovariety $\DRH$ is $\kappa$-reducible for
  systems of equations of the form
  \begin{equation}
    \label{sist}
    \iS_{u=v} = \{u' = v'\} \cup \iS_1 \cup \iS_2,
  \end{equation}
  where $u'=v'$ is a word equation, $\iS_1 = \{x_{(i)}y_{(i)} =
  z_{(i)}\}_{i = 1}^N$ and $\iS_2 = \{xa^\omega=x \colon a \in A_x\}_{x
    \in X}$,
  which have a solution $\delta$ modulo $\DRH$ that is reduced with respect to
  the equation $u'=v'$ and satisfies $c(\delta(y_{(i)})) \subseteq
    \cum{\delta(x_{(i)})}$, for $i = 1, \ldots, N$. Then, the
  pseudovariety $\DRH$ is completely $\kappa$-reducible.
\end{prop}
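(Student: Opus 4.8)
The plan is to assemble the reductions already established. By Proposition~\ref{6.2} it suffices to prove that $\DRH$ is $\kappa$-reducible for a single word equation without parameters. So I would fix such an equation $u=v$ with $u,v\in X^+$, together with a solution $\delta$ modulo $\DRH$ subject to constraints $(\varphi,\nu)$, and run on it the construction of the system $\iS_{u=v}$ laid out in the paragraphs preceding the statement. The points to record as one does so are: the iterative reduction step terminates because $u$ and $v$ are finite words; the extended homomorphism, still written $\delta$, is a solution modulo $\DRH$ of $\iS_{u=v}$ and is reduced with respect to the resulting word equation $u'=v'$; the inclusion $c(\delta(y_{(i)}))\subseteq\cum{\delta(x_{(i)})}$ holds for every equation $x_{(i)}y_{(i)}=z_{(i)}$ of $\iS_1$, directly from the case distinction used to build $\iS_1$; and the constraints extend to $X'$ consistently with the given ones by setting $\nu(x)=\varphi(\delta(x))$, which for $x\in X$ agrees with the original $\nu(x)$ by condition~\ref{s2} for $\delta$. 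Thus $\iS_{u=v}$ is precisely of the shape covered by the hypothesis.

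Next I would invoke the hypothesis: the system $\iS_{u=v}$ admits a solution $\varepsilon$ modulo $\DRH$ in $\kappa$-words. The remaining step is to check that the restriction $\varepsilon|_{\pseudo XS}$ is a solution of $u=v$ in $\kappa$-words satisfying $(\varphi,\nu)$. Being $\kappa$-word-valued is immediate, since a restriction of a $\kappa$-word-valued homomorphism is again one; the constraints on $X$ are satisfied because $\varepsilon$ satisfies them on the larger set $X'$; and $\varepsilon|_{\pseudo XS}$ solves $u=v$ because $u'=v'$, once the equations of the auxiliary system $\iS_0$ are folded back in via the parameter $\overline\#$, is obtained from $u=v$ through the chain of substitutions encoded in $\iS_1$, each of which $\varepsilon$ respects. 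Combining this with Proposition~\ref{6.2} gives complete $\kappa$-reducibility of $\DRH$.

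The only step calling for genuine care --- and where most of the bookkeeping sits --- is the final verification: that undoing the substitutions turns a solution of $u'=v'$ back into a solution of the original equation $u=v$, without the reducedness and content properties of $\delta$ being lost along the way. This is the same argument carried out for $\R$ in~\cite{JA}; I would transcribe it, noting that at this stage nothing beyond weak cancellability of $\DRH$, that is, uniqueness of first-occurrences factorizations (Corollary~\ref{c:5}), is used, so the passage from $\R$ to $\DRH$ presents no new obstacle here.
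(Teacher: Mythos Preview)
Your proposal is correct and follows essentially the same approach as the paper: the paper's proof is precisely the construction of $\iS_{u=v}$ described in the paragraphs preceding the statement, together with the observation that a $\kappa$-word solution of $\iS_{u=v}$ restricts to one of $u=v$, all of which is then combined with Proposition~\ref{6.2}. Your summary captures each of these steps faithfully, including the termination of the iteration, the reducedness and content-inclusion properties of the extended $\delta$, and the back-substitution argument (which the paper also treats as routine, noting it is ``easily checked'').
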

\begin{rem}
  It is sometimes more convenient to allow $\delta$ to take its values
  in $(\pseudo AS)^I$.
  For this purpose, we naturally extend the function $\varphi$ to a 
  continuous homomorphism $\varphi^I: (\pseudo AS)^I \to S^I$ by letting
  $\varphi^I(I) = I$.
  It is worth noticing that this assumption does not lead us to trivial
  solutions since the constraints must be satisfied.
  We allow ourselves some flexibility in this point, adopting each scenario
  according to each particular situation, without further mention.
  In the case where we consider the homomorphism $\varphi^I$, we abuse
  notation and denote it by $\varphi$.
\end{rem}

We end this section with a result regarding reducibility of
pseudovarieties of groups that is later used to derive reducibility
properties of $\DRH$.

\begin{lemma}\label{cumH}
  Let $\h$ be a completely $\kappa$-reducible pseudovariety of groups
  and $\iS$ a finite system of $\kappa$-equations with constraints given by
  the pair
  $(\varphi:(\pseudo AS)^I \to S^I, \nu:X \to S^I)$, and with $\delta:
  \pseudo XS \to (\pseudo AS)^I$ as a solution modulo $\h$. Then $\iS$
  has a solution modulo $\h$ in $\kappa$-words, say $\varepsilon$, such
  that $\cum{\varepsilon(x)} = \cum{\delta(x)}$ for all $x \in X$.
\end{lemma}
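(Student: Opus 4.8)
The plan is to reduce the statement to an application of the hypothesis that $\h$ is completely $\kappa$-reducible, by augmenting the given system $\iS$ with extra equations that force the cumulative content of each variable to be preserved. The key observation is that, for a pseudoword $w$ over $\h$ (equivalently, an element of a free pro-$\h$ semigroup, since $\h\subseteq\G$ consists of groups), the relevant ``content''-type data is captured by the content function $c$ on $\pseudo AH$; but here the ambient semigroup is $\pseudo AS$, and what we must control is $\cum{\cdot}$ computed in $\pseudo A{DRH}$ (or in $\s$). So first I would recall, as in the construction of $\iS_2$ in Section~\ref{section4}, that for a pseudoword $w$ one has $a\in\cum w$ if and only if $\DRH$ (equivalently $\s$) satisfies $wa^\omega = w$; more precisely $c(w)$ and $\cum w$ are determined by which pseudoidentities of the form $wa=w$, $wa^\omega=w$ hold. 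The trick is that these are not $\kappa$-equations over $\h$ in the naive sense — modulo $\h$ every $a^\omega$ is trivial — so we cannot simply throw them into $\iS$ and invoke reducibility of $\h$. Instead, I would work with a system over $\h$ that uses \emph{fresh parameters} or fresh variables recording the letters, as follows.

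The precise implementation: let $\varphi:(\pseudo AS)^I\to S^I$ be the given constraint homomorphism; by the standing assumption $S$ has a content function, so there is a continuous homomorphism $c_S:(\pseudo AS)^I\to (2^A,\cup)^I$ refining (a power of) $\rho_\Sl$, and we may assume $\varphi$ factors through enough of this data, or simply replace $S$ by $S\times 2^A$ and $\varphi$ accordingly — this does not change the class of solutions modulo~$\h$ up to the content bookkeeping. Now consider the new system $\iS'$ over the alphabet of variables $X\cup\{x_a : x\in X,\ a\in\cum{\delta(x)}\}$ obtained from $\iS$ by adding, for each $x\in X$ and each $a\in\cum{\delta(x)}$, an equation expressing that $x = x\,(x_a)^{\omega}$ — wait, that is trivial modulo $\h$; the correct device is to add a \emph{word} equation $x = x_a$ together with the constraint $\nu(x_a)=\nu(x)$ and, crucially, impose via the (strengthened) constraint map that $x_a$ has content containing $a$: since $S$ now carries $2^A$, the constraint $\varphi(\delta(x_a))=(\ldots, \supseteq\{a\})$ is a genuine constraint, and because $\h$ is $\kappa$-reducible \emph{with the given constraints}, any solution $\varepsilon'$ of $\iS'$ modulo~$\h$ in $\kappa$-words has $c(\varepsilon'(x))\supseteq\{a\}$ for each prescribed $a$, and (running the same argument with $\subseteq$, using the full value of the $2^A$-component) in fact $c(\varepsilon(x)) = c(\delta(x))$ for the restriction $\varepsilon=\varepsilon'|_{\pseudo XS}$.

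That handles equality of \emph{content}; to upgrade $c$ to $\cum{\cdot}$ one uses that for any pseudoword $w$, $\cum w$ is the content of a suitable ``tail'', and more usefully that $a\in\cum w$ is equivalent to $wa = wb$ being consistent with infinitely many first-occurrence steps — but modulo $\h$ this collapses, since $\pseudo AH$ has no such infinite behaviour and every element already equals its own ``regular part''. In fact $\cum{\cdot}$ is not an invariant of the image in $\pseudo AH$ at all, so the cleanest route is: the statement only asks for $\cum{\varepsilon(x)}=\cum{\delta(x)}$ where $\varepsilon$ takes values in $(\pseudo AS)^I$, and for a $\kappa$-word $\varepsilon(x)$ we have (by Corollary~\ref{c:5} and the structure of $\kappa$-words, cf.\ Lemma~\ref{l:6}) that $\cum{\varepsilon(x)}$ equals the set of letters $a$ such that $\varepsilon(x) =_\s \varepsilon(x) a^\omega$, and similarly $a\in\cum{\delta(x)}$ iff $\DRH\models \delta(x)a^\omega = \delta(x)$. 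So I would enlarge $\iS$ to $\iS''$ by adding, for each $x\in X$, the $\kappa$-equations $x\,a^{\omega} = x$ for every $a\in\cum{\delta(x)}$ \emph{and nothing forcing the reverse inclusion}, then additionally constrain $c(\varepsilon(x))\subseteq c(\delta(x))$ via the $2^A$-component of $\varphi$ as above; since $\cum w\subseteq c(w)$ always, the combination $\cum{\delta(x)}\subseteq\cum{\varepsilon(x)}$ and $\cum{\varepsilon(x)}\subseteq c(\varepsilon(x))\subseteq c(\delta(x))$ together with the reverse content inclusion (forced by the constraint being an equality on the $2^A$-component) — hmm, this still gives only $c$-equality on the outside. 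The resolution is that once $c(\varepsilon(x)) = c(\delta(x))$ and the $\kappa$-equations $x a^\omega = x$ hold for $a\in\cum{\delta(x)}$, one checks directly that $\cum{\varepsilon(x)} \supseteq \cum{\delta(x)}$; for the opposite inclusion note $\cum{\varepsilon(x)}\subseteq c(\varepsilon(x)) = c(\delta(x))$ is not enough, so one must also add equations $x a^\omega = x$ with $a$ ranging over $c(\delta(x))\setminus\cum{\delta(x)}$ to the list of equations we forbid — but we cannot forbid equations in a system. Therefore I would instead impose the reverse via constraints too: enlarge $S$ so that $\varphi$ also records, for each pseudoword, its cumulative content (this is possible since $\cum{\cdot}$ is a continuous function $\pseudo AS\to 2^A$ once we recall it factors through a finite quotient encoding left-basic-factorization data up to stabilization — this is exactly the content of \cite[Proposition 4.8]{JA} and the finiteness of $A$), and then the pair $(\varphi,\nu)$ with $\nu(x)$ recording $\cum{\delta(x)}$ forces $\cum{\varepsilon(x)} = \cum{\delta(x)}$ automatically.

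Putting it together: I would (i) replace $S$ by a finite semigroup $\widehat S$ and $\varphi$ by a continuous homomorphism $\widehat\varphi:(\pseudo AS)^I\to\widehat S^I$ refining $\varphi$ and additionally recording the value of $\cum{\cdot}$ (legitimate by finiteness of $A$ and continuity of $\cum{\cdot}$ as a map into the finite lattice $2^A$); (ii) set $\widehat\nu(x)$ to the value $\widehat\varphi(\delta(x))$, so $\delta$ is still a solution modulo $\h$ of $\iS$ with the refined constraints; (iii) invoke complete $\kappa$-reducibility of $\h$ to get a solution $\varepsilon:\pseudo XS\to(\pseudo AS)^I$ modulo $\h$ of $\iS$ in $\kappa$-words satisfying $\widehat\varphi(\varepsilon(x)) = \widehat\nu(x)$; (iv) read off from the $\cum{\cdot}$-component of $\widehat\varphi$ that $\cum{\varepsilon(x)} = \cum{\delta(x)}$ for every $x\in X$. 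The main obstacle is step (i): justifying that $\cum{\cdot}:\pseudo AS\to 2^A$ is continuous, equivalently factors through some finite quotient of $\pseudo AS$. This follows because $\cum u$ is determined by $\rho_{\DRH}(u)$ (it depends only on the left-basic-factorization tower), and within $\pseudo A{DRH}$ the assignment $u\mapsto\cum u$ is continuous: the set $\{u : a\in\cum u\}$ is the preimage under $\rho_{\DRH}$ of $\{w : wa^\omega=w\}$, which is closed and open in $\pseudo A{DRH}$ as it is defined by a pseudoidentity between $\kappa$-words — actually it is closed; openness requires the complement $\{w : wa^\omega\ne w\}$ to be closed, which holds because $wa^\omega = w$ iff $wa=w$ or $a\in\cum w$ and one argues by the well-foundedness in \cite[Proposition 4.8]{JA} that membership of $a$ in $\cum w$ is a clopen condition. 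I would spell this out carefully, as it is the one genuinely non-formal point; everything else is routine bookkeeping with constraints.
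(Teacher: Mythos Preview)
Your final plan hinges on step~(i): encoding the cumulative content via a continuous homomorphism $\widehat\varphi:(\pseudo AS)^I\to\widehat S^I$ into a finite semigroup. This step fails, because the map $\cum{\cdot}:\pseudo AS\to 2^A$ is \emph{not} continuous. Concretely, take $A=\{a,b\}$ and consider the sequence $(ab)^{n!}$ of finite words. Each has empty cumulative content, yet the sequence converges to $(ab)^\omega$, whose cumulative content is $\{a,b\}$. Thus $\{u: a\in\cum u\}$ is closed (it equals $\{u: ua^\omega =_{\DRH} u\}$, a pseudoidentity) but not open, and there is no finite quotient of $\pseudo AS$ through which $\cum{\cdot}$ factors. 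Your attempted justification of openness is circular: you reduce ``$\{w:wa^\omega\ne w\}$ is closed'' back to ``$a\in\cum w$ is a clopen condition,'' which is precisely what is at issue. Your earlier attempts in the proposal run into the obstacle you yourself noted: any $\kappa$-equation of the shape $xa^\omega=x$ is vacuous modulo~$\h$, since $\h$ consists of groups and $a^\omega$ is trivial there, so adding such equations to~$\iS$ constrains nothing.

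The paper's proof avoids this obstruction entirely. Rather than trying to \emph{detect} cumulative content via constraints, it \emph{manufactures} the correct cumulative content in the solution. For each variable~$x$ it iterates the left basic factorization of $\delta(x)$ far enough to expose a periodic tail (using finiteness of $S$ to find $k<\ell$ with matching $\varphi$-values), introduces fresh variables for the factors and for the separating letters, and replaces $x$ by a product $P_x$ that contains an explicit $(\omega)$-power of the periodic block when $\cum{\delta(x)}\neq\emptyset$. Hunter's Lemma is used to constrain each letter-variable $b_{x,i}$ to be the specific letter $a_{x,i}$, so that after applying complete $\kappa$-reducibility of~$\h$ to this enlarged system, the resulting $\kappa$-word $\varepsilon(x)=\varepsilon'(P_x)$ has its left-basic-factorization structure forced, and in particular $\cum{\varepsilon(x)}=\cum{\delta(x)}$ by construction rather than by constraint. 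The argument is by induction on $\max_x|c(\delta(x))|$, which is what lets the recursive decomposition terminate.
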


\begin{proof}
   We prove the result by induction on $m =
  \max\{\card{c(\delta(x))}\colon x \in X\}$. If $m = 0$, then there
  is nothing to prove.
  Otherwise, let $x$ be a variable of $X$.
  Given $i \le \leng{\delta(x)}$, we denote $\lbf[i]{\delta(x)}$
  by $\delta(x)_{i}a_{x,i}$ and write $\delta(x)
  = \lbf[1]{\delta(x)} \cdots \lbf[i]{\delta(x)}\delta(x)_i'$. 
  If $\cum{\delta(x)}$ is the
  empty set, then we have
  \begin{equation}
    \label{eq:63}
    \varphi(\delta(x)) = \varphi(\lbf[1]{\delta(x)}\cdots \lbf[\leng{\delta(x)}]{\delta(x)}).
  \end{equation}
  For the remaining
  variables, since $X$, $A$, and $S$ are finite, there are integers
  $1<k<\ell$ such that
  \begin{align*}
   \cum{\delta(x)}  &= c(\lbf[k+1]{\delta(x)});
    \\  \varphi(\lbf[1]{\delta(x)} \cdots \lbf[k]{\delta(x)}) &=
         \varphi(\lbf[1]{\delta(x)} \cdots \lbf[\ell]{\delta(x)}),
  \end{align*}
  {for all $x\in X$ with $\cum{\delta(x)} \neq \emptyset$.}
  In particular, from the second equality we deduce
  \begin{equation}
    \label{eq:41}
    \varphi(\delta(x)) =
    \varphi(\lbf[1]{\delta(x)} \cdots
    \lbf[k]{\delta(x)})\varphi(\lbf[k+1]{\delta(x)} \cdots
    \lbf[\ell]{\delta(x)})^\omega\varphi(\delta(x)_k').
  \end{equation}
  We
  consider a new set of variables $X'$ given by
  \begin{align*}
    X'& = \{y_{x,1}, b_{x,1}, \ldots, y_{x, \leng{\delta(x)}}, b_{x,
    \leng{\delta(x)}}\colon x \in X \text{ and } \cum{\delta(x)} =
        \emptyset\}
    \\ &\quad \uplus\{y_{x,1}, b_{x,1}, \ldots, y_{x, \ell}, b_{x,
    \ell}, y_x' \colon x \in X \text{ and } \cum{\delta(x)} \neq \emptyset\}
  \end{align*}
  and a new system of equations $\iS'$ with variables in
  $X'$ obtained from $\iS$ by substituting each variable $x$ by the
  product 
  \begin{equation}
    P_x = y_{x,1}b_{x,1} \cdots y_{x,\leng{\delta(x)}}b_{x,
      \leng{\delta(x)}},\label{eq:39}
  \end{equation}
  whenever $\cum{\delta(x)} = \emptyset$, and by the product
  \begin{equation}
    P_x = y_{x,1}b_{x,1} \cdots y_{x,k}b_{x,k}(y_{x,k+1}b_{x,k+1} \cdots
    y_{x,\ell}b_{x,\ell})^\omega y_x',\label{eq:40}
  \end{equation}
  otherwise.
  Let us define the constraints for the variables in~$X'$.
  Let $a \in A$ be a letter. Since $\{a\}$ is a clopen subset of
  $\pseudo AS$, by Hunter's Lemma there exists a continuous
  homomorphism $\varphi_a:\pseudo AS \to S_a$ such that $\{a\} =
  \varphi^{-1}(\varphi(\{a\}))$.
  Representing by $\prod_{a \in A} S_a$
  the direct product of the
  semigroups $S_a$, we let the constraints be given by the pair
  $(\varphi', \nu')$, where $\varphi'$ is the following continuous homomorphism
  \begin{align*}
    \varphi': \pseudo AS &\to S^I \times \prod_{a \in A} S_a
    \\ u & \mapsto (\varphi(u), (\varphi_a(u))_{a \in A}),
  \end{align*}
  and $\nu'$ is the mapping
  \begin{align*}
    \nu': X' & \to S^I \times \prod_{a \in A} S_a
    \\ y_{x,i} & \mapsto \varphi'(\delta(x)_i),
    \\ y_x' &\mapsto \varphi'(\delta(x)'_k),
    \\ b_{x,i} & \mapsto \varphi'(a_{x,i}).
  \end{align*}
  Since $\h$ satisfies $\delta'(P_x) = \delta(x)$, for every
  variable $x \in X$
  (check~\eqref{eq:39} and~\eqref{eq:40}), the homomorphism $\delta'$
  is a solution modulo $\h$ of~$\iS'$. Therefore, as we are
  assuming that the pseudovariety $\h$ is completely
  $\kappa$-reducible,
  there is a solution $\varepsilon': \pseudo
  {X'}S \to
  \pseudo AS$ modulo $\h$ of $\iS'$ such that $\varepsilon'(X')
  \subseteq \pseudok AS$.
  On the other hand, this
  homomorphism $\varepsilon'$ defines a solution in $\kappa$-words
  modulo~$\h$ of the
  original system $\iS$, namely, by letting $\varepsilon(x) =
  \varepsilon'(P_x)$ for each~$x \in X$. Moreover, by definition of
  $(\varphi', \nu')$, we necessarily have $\varepsilon'(b_{x,i}) = a_{x,i}$
  and the fact that $S$ has a content function entails that
  $c(\varepsilon'(y_{x,i})) = c(\delta'(y_{x,i})) = c(\delta(x)_i)$
  and, similarly, that $c(\varepsilon'(y_x')) = c(\delta'(y_x')) =
  c(\delta(x)_k')$. In particular, $a_{x,i}$ does not belong to
  $c(\delta(x)_i)$. So, the
  iteration of left factorization to the right of $\varepsilon(x)$ is
  the one induced by the product~$P_x$, implying that
  $\cum{\varepsilon(x)} = \cum{\delta(x)}$ as intended.
  Finally, we verify that the constraints on~$X$ are satisfied
  by~$\varepsilon$. Taking into account that the definition of
  $(\varphi',\nu')$ yields the equalities $\varphi(\varepsilon'(y_{x,i}b_{x,i})) =
  \varphi(\lbf[i]{\delta(x)})$ and $\varphi(\varepsilon'(y_{x}')) =
  \varphi(\lbf[k]{\delta(x)_k'})$ (for $x \in X$ and $i = 1, \ldots,
  k$), we may compute
  \begin{align*}
    \varphi(\varepsilon(x)) &
    =
      \begin{cases}
        \varphi(\varepsilon'(y_{x,1}b_{x,1} \cdots y_{x,\leng{\delta(x)}}b_{x,
          \leng{\delta(x)}})), \quad\text{ if } \cum{\delta(x)} = \emptyset
        \\ \varphi(\varepsilon'(y_{x,1}b_{x,1} \cdots y_{x,k}b_{x,k}(y_{x,k+1}b_{x,k+1} \cdots
        y_{x,\ell}b_{x,\ell})^\omega y_x')), \quad\text{ otherwise}
      \end{cases}
    \\ & =
         \begin{cases}
           \varphi(\lbf[1]{\delta(x)} \cdots
           \lbf[{\leng{\delta(x)}}]{\delta(x)}),
           \quad\text{ if } \cum{\delta(x)} = \emptyset
        \\  \varphi(\lbf[1]{\delta(x)} \cdots \lbf[k]{\delta(x)})
        \\ \quad \cdot \varphi(\lbf[k+1]{\delta(x)} \cdots
         \lbf[\ell]{\delta(x)} )^\omega \varphi(\delta(x)'_k), \quad\text{ otherwise}
       \end{cases}
    \\ & \kern-13pt\just ={\eqref{eq:63},\,\eqref{eq:41}} \varphi(\delta(x)).
  \end{align*}
  Hence, the homomorphism $\varepsilon$ plays the desired role.
\end{proof}
\section{Systems of boundary relations and their models}\label{section5}

In this section, we define some tools that turn out to be
useful when proving that $\DRH$ is completely $\kappa$-reducible. The
original notion of a boundary equation was given by
Makanin~\cite{Makanin} and it was later adapted by Almeida, Costa and
Zeitoun~\cite{JA} to deal with the problem of complete $\kappa$-reducibility of
the pseudovariety $\R$. Here, we extend the
definitions used in~\cite{JA} to the context of the pseudovariety
$\DRH$, for any pseudovariety of groups $\h$, and use them to prove
that, under certain conditions, the
pseudovariety $\DRH$ is completely $\kappa$-reducible.

From hereon, we fix a word equation $u = v$ and a solution $\delta:
\pseudo XS \to \pseudo AS$
modulo $\DRH$ of
$\iS_{u = v}$ (recall~\eqref{sist}), subject to the constraints given
by the
pair $(\varphi:\pseudo AS \to S, \nu: X \to S)$.
By a \emph{system of boundary relations} we mean a tuple $\iS =
(\iX, J, \zeta, M, \chi, \dirr, \iB, \iB_{\h})$ where
\begin{itemize}
\item $\iX$ is a finite set equipped with an involution without
  fixed points
  $x \mapsto \xx$, whose elements are called
  \emph{variables};
\item $J$ is a finite set equipped with a total order $\le$, whose
  elements are called
  \emph{indices}. If $i$ and $j$ are two
  consecutive indices, then we write $i \prec j$ and we denote $i$ by
  $j^-$;
\item $\zeta: \{(i,j) \in J \times J \colon i \prec j\} \to 2^{S
    \times S^I}$ is a function that is useful to deal with the
  constraints;
\item $M:\{(i,j,\vs) \in J \times J \times (S \times S^I) \colon i
  \prec j, \: s \in \zeta(i,j)\} \to \omega \setminus \{0\}$ is a function
  that determines the number of different factorizations
  in $\pseudo AS$ modulo $\DRH$ that we
  assign to each variable of $\iX$;
\item $\chi: \{(i,j) \in J \times J \colon i \prec j\} \to 2^A$ is a
  function whose aim is to fix the cumulative content of each variable;
\item $\dirr: \iX \to J$ is a function that helps in defining the
  relations we need to attain our goal;
\item $\iB$ is a subset of $J \times \iX \times J \times \iX$, whose
  elements are of the form $(i,x,j,\xx)$. Moreover, if $(i,x,j,\xx)$ is
  an element of $\iB$, then so is $(j, \xx, i, x)$. The elements of
  $\iB$ are called
  \emph{boundary relations} and the boundary relation
  $(j, \xx, i, x)$ is said to be the
  \emph{dual boundary relation} of
  $(i,x,j,\xx)$.
  The pairs $(i,x)$ and $(j, \xx)$ are
  \emph{boxes of $\iB$}.
  Together with the $\dirr$ function, the set $\iB$
  encodes the relations we want to be satisfied  in~$\DRH$;
\item finally, for each pair of indices $i,j$ such that $i \prec j$,
  we consider a symbol $(i \mid j)$ and, for each pair $(\vs, \mu) \in
  \zeta(i,j)$, we consider another symbol $\{i \mid j\}_{\vs,
    \mu}$. These symbols are understood as variables and we denote by
  $X_{(J,\zeta, M)}$ the set of those variables:
  \begin{equation}
    \begin{aligned}
      X_{(J,\zeta,M)} & = \{(i \mid j) \colon i,j \in J, \: i \prec j\}
      \\ & \quad  \cup \{\{i
      \mid j\}_{\vs, \mu} \colon (i,j,\vs) \in \dom(M) \text{ and } \mu
      \in M(i,j,\vs)\}
    \end{aligned}\label{var}
  \end{equation}
  Then, $\iB_{\h}$ is a finite set of $\kappa$-equations with
  variables
  in $X_{(J, \zeta, M)}$ whose solutions are meant to be taken over~$\h$. If $i_0
  \prec \cdots \prec i_n$ is a chain of indices in $J$, then we denote
  by $(i_0\mid i_n)$ the product of variables
  $\prod_{k=1}^n(i_{k-1} \mid i_k)$.
\end{itemize}
Given a variable $x \in \iX$, the \emph{left of x}
is the index $$\esq(x) = \{i
\in I\colon \text{ there exists a box }(i,x) \text{ in } \iB\}.$$

We let
${\sf{prod}}: \pseudo AS \times (\pseudo AS)^I\to \pseudo AS$
be the function sending each pair of pseudowords~$(u,v)$ to its product
$u v$.

A \emph{model}
of the system of boundary relations $\iS$ is a triple
$\iM = (w, \iota, \Theta)$, where
\begin{itemize}
\item $w$ is a possibly empty pseudoword;
\item $\iota: J \to \alpha_w + 1$ is an injective function that
  preserves the order and such that, if $J$ is not the empty set then
  $\iota$ sends $\min(J)$ to $0$ and $\max(J)$ to $\alpha_w$;
\item for each triple $(i,j,\vs)$ in $\dom(M)$ and each $\mu$ in
  $M(i,j,\vs)$,  $\Theta(i,j,\vs, \mu)$ is a pair
  $(\Phi(i,j,\vs, \mu), \Psi(i,j,\vs, \mu))$ of $\pseudo AS \times
  (\pseudo AS)^I$ such that $c(\Psi(i,j, \vs, \mu))\subseteq
  \cum{\Phi(i,j,\vs, \mu)}$.
\end{itemize}
\begin{notacao}
  When there exists a map $\iota: J \to \alpha_w+1$ as above, we may
  write $w(i,j)$ instead of $w[\iota(i), \iota(j){[}$ (recall
  Notation~\ref{sec:19}).
\end{notacao}
 Moreover, the following properties are required for $\iM$:
 \begin{enumerate}[label = (M.\arabic*)]
 \item \label{m1} if $(i,j,\vs) \in \dom(M)$ and
   $\mu \in M(i,j,\vs)$, then $\Thetap(i,j,\vs,\mu) =_\DRH
   w(i,j)$;
 \item \label{m2} if $(i,j,\vs) \in \dom(M)$, $\vs
   = (s_1, s_2)$, and $\mu \in M(i,j,\vs)$, then
   $$\varphi(\Phi(i,j,\vs,\mu)) = s_1 \text{ and }
   \varphi(\Psi(i,j,\vs,\mu)) = s_2\text{;}$$
 \item \label{m3} if $i\prec j$, then $\cum{w(i,j)}
   = \chi(i,j)$;
 \item \label{m4} if $(i,x,j,\xx) \in \iB$, then
   $\DRH$ satisfies $w(i, \dirr (x)) \Req w(j, \dirr(\xx))$;
 \item \label{m5} let $\iC := (J, \iota, M, \Theta)$
   and $\delta_{w, \iC}: \pseudo {X_{(J, \zeta, M)}} S \to \pseudo AS$
   be the
   unique continuous homomorphism defined by
   \begin{equation}
     \begin{aligned}
        \delta_{w, \iC}(i\mid j) &= w(i,j), \\  \delta_{w, \iC} (\{i\mid
       j\}_{\vs, \mu}) &= \Psi(i,j,\vs, \mu).
     \end{aligned}\label{eq:48}
   \end{equation}
   Then, $\delta_{w, \iC}$ is a solution modulo $\h$ of $\iB_{\h}$.
 \end{enumerate}
 We say that $\iM$ is a
 \emph{model of $\iS$ in $\kappa$-words} if $w \in
 (\pseudok AS)^I$  and the coordinates of $\Theta$ are given by
 $\kappa$-words.
By Proposition~\ref{6.3}, to prove that $\DRH$ is completely
 $\kappa$-reducible, it is enough to prove that $\DRH$ is
 $\kappa$-reducible for certain systems of equations
 of the form $\iS_{u=v}$.
 With that in mind, we associate to such a system $\iS_{u =
   v}$ a system of boundary
 relations,
 denoted $\overline{\iS}_{u=v}$. Then, we construct a
 model of $\overline{\iS}_{u=v}$ and prove that the existence of a
 model in $\kappa$-words entails the existence of a solution of the
 original system $\iS_{u = v}$ also in
 $\kappa$-words (Theorem~\ref{main}).
  
 Let $\delta: \pseudo XS \to \pseudo AS$ be a solution modulo $\DRH$ of $\iS_{u = v} =
 \{u' = v'\} \cup \iS_1 \cup \iS_2$  such that $\delta$ is reduced with respect
 to $u' = v'$ and for every equation $xy =z$ of $\iS_1$ we have
 $c(\delta(y)) \subseteq \cum{\delta(x)}$ (recall Proposition~\ref{6.3}).
 Suppose that $u' =
 x_1 \cdots x_r$ and $v' = x_{r+1} \cdots x_t$, and write $\iS_1 =
   \{x_{(i)}y_{(i)} = z_{(i)}\}_{i = 1}^N$ and $\iS_2 = \{xa^\omega =
   x \colon a \in A_x\}_{x \in X}$. Let $G$ be an
 undirected graph whose vertices are given by the set $\{1, \ldots,
 t\}$ and that has an edge connecting the vertices $p$ and $q$  if and
 only if $p \neq q$ and either $x_p = x_q$ or $\{x_p, x_q\} = \{x_{(i)},
z_{(i)}\}$ for a certain $i$. Let $\widehat G$ be a spanning forest for
$G$. We define
\begin{align}\label{S}
\overline{\iS}_{u=v} = (\iX,
J, \zeta, M, \chi, \dirr, \iB, \iB_{\h})
\end{align}
as follows:
\begin{itemize}
\item the set of variables is $$\iX = \{(p,q) \colon \text{ there is an
    edge in } \widehat G \text{ connecting } p \text{ and } q\} \uplus \{\tl\}
  \uplus\{\tr\},$$ and the involution in $\iX$ is given by $\overline{(p,q)}
  = (q,p)$ and by $\overline{\tl} = \tr$;
\item the set of indices is $J = \{i_0, \ldots, i_t\}$ with $i_0
  \prec \cdots \prec i_t$;
\item the function $\zeta$ is defined by $\zeta(i_{p-1}, i_p) =
  \{(\nu({x_p}), I)\}$ for every $p = 1, \ldots, t$;
\item we set $M(i_{p-1}, i_p, (\nu({x_p}), I)) = 1$ for
  every $p = 1, \ldots, t$;
\item the function $\chi$ sends each pair $(i_{p-1}, i_p)$ to the set
  $A_{x_p}$;
\item the $\dirr$ function is given by $\dirr(p,q) = i_p$, $\dirr(\tl) =
  i_r$, and $\dirr(\tr) = i_t$;
\item the set of boundary relations $\iB$ contains the boundary
  relations $(i_0, \tl, i_r, \tr)$, and $(i_r, \tr,
  i_0, \tl)$ plus all the boundary relations of the form $(i_{p-1},
  (p,q), i_{q-1}, (q, p))$, where $(p,q) \in \iX$;
\item we put in $\iB_{\h}$ the equations $(i_0 \mid i_r) = (i_r \mid
  i_t)$ and $(i_{p-1} \mid i_p) = (i_{q-1}\mid i_q)$,
  whenever $x_p = x_q$, and the equation $(i_{p-1}\mid i_p)
  (i_{m-1} \mid i_m) = (i_{q-1} \mid i_q)$ for each $x_px_m = x_q \in \iS_1$.
\end{itemize}

\begin{ex}
  Let $X = \{x,y,z\}$, $u = xyx$, $ v = x^2z$, and let $\delta: \pseudo X S
  \to \pseudo AS$ be defined by $\delta(x) = a$, $\delta(y) =
  (ab)^{p^\omega}$, and $\delta(z) = (ba)^{p^\omega}$.
  Clearly, the homomorphism $\delta$ is a solution modulo $\DRH$ of $u
  = v$ and the system $\iS_{u=v} = \{u'=v'\} \cup \iS_1\cup \iS_2$ is given by
  $u' = xt_{yx}\#_1 y \#$, $v' =  x^2 z \#_1 y\#$, $\iS_1 = \{t_{yx} =
  yx\}$, and $\iS_2 = \{ya^\omega = y, yb^\omega = y, t_{yx}a^\omega = t_{yx},
  t_{yx}b^\omega = t_{yx}\}$.
  The extended solution $\delta$ is obtained by letting
  $\delta(t_{yx}) = (ab)^{p^\omega} a$.
  Then, the set of indices is $J = \{i_0, i_1, \ldots, i_{11}\}$.
  Although the graph~$G$ is unique, there are several
  possibilities for~$\widehat G$, so that the set of variables $\iX$
  is not uniquely determined. One of the possible choices of~$\widehat G$
  produces the following~$\iX$:
  $$\iX = \{(1,6), (6,1), (6,7),(7,6), (2,4), (4,2), (3,9), (9,3), (5,11),
  (11,5), \tl, \tr\}.$$
  We schematize the set of boundary relations $\iB$ in Fig.~\ref{fig:21}.
  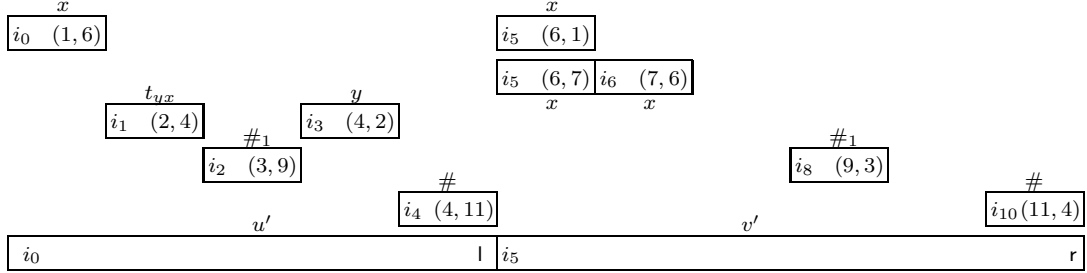
\begin{figure}[htpb]
    \centering
    \setlength{\unitlength}{1.3cm}
    \begin{picture}(10,2.45)(1.4,1.9)
      \put(1,4.35){\line(1,0){1}}\put(1,4){\line(0,1){0.35}}\put(1.05,4.1){\scriptsize$i_0$}
      \put(1,4){\line(1,0){1}}\put(2,4){\line(0,1){0.35}}\put(1.45,4.1){\scriptsize$(1,6)$}
      \put(6,4.35){\line(1,0){1}}\put(6,4){\line(0,1){0.35}}\put(6.05,4.1){\scriptsize$i_5$}
      \put(6,4){\line(1,0){1}}\put(7,4){\line(0,1){0.35}}\put(6.45,4.1){\scriptsize$(6,1)$}
      \put(1.5,4.4){\scriptsize{$x$}}
      \put(6.5,4.4){\scriptsize{$x$}}
      \put(6,3.9){\line(1,0){1}}\put(6,3.55){\line(0,1){0.35}}\put(6.05,3.65){\scriptsize$i_5$}
      \put(6,3.55){\line(1,0){1}}\put(7,3.55){\line(0,1){0.35}}\put(6.45,3.65){\scriptsize$(6,7)$}
      \put(7,3.9){\line(1,0){1}}\put(7,3.55){\line(0,1){0.35}}\put(7.05,3.65){\scriptsize$i_6$}
      \put(7,3.55){\line(1,0){1}}\put(8,3.55){\line(0,1){0.35}}\put(7.45,3.65){\scriptsize$(7,6)$}
      \put(6.5,3.4){\scriptsize{$x$}}
      \put(7.5,3.4){\scriptsize{$x$}}
      \put(2,3.45){\line(1,0){1}}\put(2,3.1){\line(0,1){0.35}}\put(2.05,3.2){\scriptsize$i_1$}
      \put(2,3.1){\line(1,0){1}}\put(3,3.1){\line(0,1){0.35}}\put(2.45,3.2){\scriptsize$(2,4)$}
      \put(4,3.45){\line(1,0){1}}\put(4,3.1){\line(0,1){0.35}}\put(4.05,3.2){\scriptsize$i_3$}
      \put(4,3.1){\line(1,0){1}}\put(5,3.1){\line(0,1){0.35}}\put(4.45,3.2){\scriptsize$(4,2)$}
      \put(2.4, 3.5){\scriptsize{$t_{yx}$}}
      \put(4.5, 3.5){\scriptsize{$y$}}
      \put(3,3){\line(1,0){1}}\put(3,2.65){\line(0,1){0.35}}\put(3.05,2.75){\scriptsize$i_2$}
      \put(3,2.65){\line(1,0){1}}\put(4,2.65){\line(0,1){0.35}}\put(3.45,2.75){\scriptsize$(3,9)$}
      \put(9,3){\line(1,0){1}}\put(9,2.65){\line(0,1){0.35}}\put(9.05,2.75){\scriptsize$i_8$}
      \put(9,2.65){\line(1,0){1}}\put(10,2.65){\line(0,1){0.35}}\put(9.45,2.75){\scriptsize$(9,3)$}
      \put(3.4,3.05){\scriptsize$\#_1$}
      \put(9.4,3.05){\scriptsize$\#_1$}
      \put(5,2.55){\line(1,0){1}}\put(5,2.2){\line(0,1){0.35}}\put(5.05,2.3){\scriptsize$i_4$}
      \put(5,2.2){\line(1,0){1}}\put(6,2.2){\line(0,1){0.35}}\put(5.35,2.3){\scriptsize$(4,11)$}
      \put(11,2.55){\line(1,0){1}}\put(11,2.2){\line(0,1){0.35}}\put(11.05,2.3){\scriptsize$i_{10}$}
      \put(11,2.2){\line(1,0){1}}\put(12,2.2){\line(0,1){0.35}}\put(11.35,2.3){\scriptsize$(11,4)$}
      \put(5.4,2.6){\scriptsize$\#$}
      \put(11.4,2.6){\scriptsize$\#$}
      \put(1,2.1){\line(1,0){6}}\put(1,1.75){\line(0,1){0.35}}\put(1.15,1.85){\scriptsize$i_0$}
      \put(1,1.75){\line(1,0){6}}\put(6,1.75){\line(0,1){0.35}}\put(5.8,1.85){\scriptsize$\tl$}
      \put(6,2.1){\line(1,0){6}}\put(6.05,1.85){\scriptsize$i_5$}
      \put(6,1.75){\line(1,0){6}}\put(12,1.75){\line(0,1){0.35}}\put(11.85,1.85){\scriptsize$\tr$}
      \put(3.5,2.15){\scriptsize$u'$}\put(8.5,2.15){\scriptsize$v'$}
    \end{picture}
    \caption{The set of boundary relations $\iB$.}
  \label{fig:21}
  \end{figure}

  Finally, the set $\iB_\h$ contains the equations
  $(i_0 \mid i_1) = (i_5 \mid i_6) = (i_6 \mid i_7)$,
  $(i_2 \mid i_3) = (i_8 \mid i_9)$,
  $(i_4 \mid i_5) = (i_{10} \mid i_{11})$,
  $(i_0 \mid i_5) = (i_5 \mid i_{11})$,
  and
  $(i_1\mid i_2) = (i_3 \mid i_4)(i_0 \mid i_1)$.
\end{ex}

A candidate to be a model of $\overline{\iS}_{u=v}$ is
$\iM_{u = v} = (w, \iota, \Theta)$, where
\begin{itemize}
\item $w = \delta(uv)$;
\item $\iota: J \to \alpha_w + 1$ is given by $\iota(i_0) = 0$, and
  $\iota(i_p) = \alpha_{\delta(x_1 \cdots x_p)}$, for each $p = 1,
  \ldots, t$;
\item $\Theta(i_{p-1}, i_p, (\nu({x_p}), I), 0) =
  (\delta(x_p), I)$, for $p = 1, \ldots, t$.
\end{itemize}

\begin{prop}\label{7.4}
  The tuple $\overline{\iS}_{u = v}$ in~\eqref{S} is a system of
  boundary relations which has $\iM_{u = v}$ as a model. Moreover, if
  $\overline{\iS}_{u = v}$ admits a model in $\kappa$-words, then the
  system of equations $\iS_{u= v}$ has a solution modulo $\DRH$ in
  $\kappa$-words.
\end{prop}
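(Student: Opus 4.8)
The plan is to prove the two assertions of the statement separately: that $\overline{\iS}_{u=v}$ is a system of boundary relations with $\iM_{u=v}$ as a model, and that a model of $\overline{\iS}_{u=v}$ in $\kappa$-words yields a solution of $\iS_{u=v}$ modulo $\DRH$ in $\kappa$-words. Checking that $\overline{\iS}_{u=v}$ is a system of boundary relations is a routine inspection of~\eqref{S}: the involution $\overline{(p,q)}=(q,p)$, $\overline{\tl}=\tr$ has no fixed point because an edge of $\widehat G$ joins distinct vertices; $\iB$ is closed under taking dual relations because $G$ is undirected, so $(p,q)\in\iX$ iff $(q,p)\in\iX$; and every equation of $\iB_\h$ involves only the variables $(i\mid j)$, hence belongs to $X_{(J,\zeta,M)}$. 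That $\iM_{u=v}$ is a model amounts to verifying conditions \ref{m1}--\ref{m5}. The observation underlying \ref{m1}--\ref{m3} is that, since $\delta$ is reduced with respect to $u'=v'$, all the products $\delta(x_p)\cdot\delta(x_{p+1})$ are reduced; consequently the ordinals $\alpha_{\delta(x_1\cdots x_p)}$ strictly increase, so $\iota$ is a well-defined order embedding of $J$ into $\alpha_w+1$ with $\iota(i_0)=0$ and $\iota(i_t)=\alpha_w$, and $w(i_{p-1},i_p)=_\DRH\delta(x_p)$ (as in the proof of Corollary~\ref{c:4}). Then \ref{m1} holds since $\Thetap(i_{p-1},i_p,(\nu(x_p),I),0)=\delta(x_p)=_\DRH w(i_{p-1},i_p)$, \ref{m2} is condition \ref{s2} for $\delta$ together with $\varphi^I(I)=I$, and \ref{m3} follows from $\cum{\delta(x_p)}=A_{x_p}=\chi(i_{p-1},i_p)$. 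For \ref{m4}, the relation $(i_0,\tl,i_r,\tr)$ (and its dual) reduces to $\delta(u')\Req\delta(v')$, valid because $\delta$ solves $u'=v'$; a relation $(i_{p-1},(p,q),i_{q-1},(q,p))$ reduces to $\delta(x_p)\Req\delta(x_q)$, trivial when $x_p=x_q$ and, when $\{x_p,x_q\}=\{x_{(i)},z_{(i)}\}$, obtained from $\delta$ solving $x_{(i)}y_{(i)}=z_{(i)}$ and from the assumption $c(\delta(y_{(i)}))\subseteq\cum{\delta(x_{(i)})}$ via Remark~\ref{sec:9} and Lemma~\ref{sec:13}. Finally \ref{m5} holds since $\delta_{w,\iC}$ sends each $(i_{p-1}\mid i_p)$ to $w(i_{p-1},i_p)=_\DRH\delta(x_p)$ and each $\{i\mid j\}_{\vs,\mu}$ to $I$, so every equation of $\iB_\h$ is a consequence modulo $\DRH$, hence modulo $\h\subseteq\DRH$, of $\delta$ being a solution of $u'=v'$ and of $\iS_1$.

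For the converse, let $\iM=(w,\iota,\Theta)$ be a model of $\overline{\iS}_{u=v}$ in $\kappa$-words. For each variable $x$ of $\iS_{u=v}$ I would fix a position $p_x$ with $x_{p_x}=x$ and put $\varepsilon(x)=\Thetap(i_{p_x-1},i_{p_x},(\nu(x),I),\mu)=\Phi\Psi$ for the unique admissible $\mu$; this is a $\kappa$-word because the coordinates of $\Theta$ are. It matters that one uses $\Phi\Psi$ and not the factor $w(i_{p_x-1},i_{p_x})$ of $w$: by \ref{m2} one then gets $\varphi(\varepsilon(x))=\varphi(\Phi)\varphi(\Psi)=\nu(x)$, so the constraints hold, whereas $\varphi$ need not factor through $\DRH$. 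By \ref{m1}, $\varepsilon(x_{p_x})=_\DRH w(i_{p_x-1},i_{p_x})$, and since positions carrying the same variable lie in the same component of $\widehat G$, conditions \ref{m4} and \ref{m5} together with Lemma~\ref{sec:16} give $\varepsilon(x_p)=_\DRH w(i_{p-1},i_p)$ for every occurrence $p$. From this, for the equation $u'=v'$ one telescopes the factors $w(i_{p-1},i_p)$ to get $\varepsilon(u')=_\DRH w(i_0,i_r)$ and $\varepsilon(v')=_\DRH w(i_r,i_t)$; by \ref{m4} (relation $(i_0,\tl,i_r,\tr)$) these are $\Req$-equivalent, by \ref{m5} (equation $(i_0\mid i_r)=(i_r\mid i_t)$) they are equal modulo $\h$, so they are equal modulo $\DRH$ by Lemma~\ref{sec:16}. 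For an equation $x_{(i)}y_{(i)}=z_{(i)}$ of $\iS_1$: by \ref{m3}, $\cum{\varepsilon(x_{(i)})}=A_{x_{(i)}}$, while the satisfaction of the constraints and the fact that $S$ has a content function yield $c(\varepsilon(y_{(i)}))=c(\delta(y_{(i)}))\subseteq\cum{\delta(x_{(i)})}=A_{x_{(i)}}$; hence $\varepsilon(x_{(i)})\varepsilon(y_{(i)})\Req\varepsilon(x_{(i)})$, \ref{m4} along $\widehat G$ gives $\varepsilon(x_{(i)})\Req\varepsilon(z_{(i)})$, and \ref{m5} (equation $(i_{p-1}\mid i_p)(i_{m-1}\mid i_m)=(i_{q-1}\mid i_q)$) gives $\varepsilon(x_{(i)})\varepsilon(y_{(i)})=_\h\varepsilon(z_{(i)})$, so $\varepsilon(x_{(i)})\varepsilon(y_{(i)})=_\DRH\varepsilon(z_{(i)})$ by Lemma~\ref{sec:16}. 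For $xa^\omega=x$ in $\iS_2$: $a\in A_x=\cum{\varepsilon(x)}$ by \ref{m3}, so $\varepsilon(x)a^\omega=_\DRH\varepsilon(x)$ by Corollary~\ref{c:2}. This makes $\varepsilon$ a solution of $\iS_{u=v}$ modulo $\DRH$ in $\kappa$-words (the evaluation of the parameters being respected, since they occur among the $x_p$ and are pinned down by their constraints).

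The routine parts are the verifications of \ref{m1}--\ref{m3} and the elementary manipulations with the factors $w(i,j)$. The hard part will be, in the direct implication, condition \ref{m4} for the boundary relations coming from $\iS_1$, where the one-sided equality $\delta(z_{(i)})=_\DRH\delta(x_{(i)})\delta(y_{(i)})$ has to be promoted to an $\Req$-equivalence by means of the content hypothesis on the $y_{(i)}$; and, in the converse, the systematic use of Lemma~\ref{sec:16} to turn the $\Req$-equivalences provided by \ref{m4} and the $\h$-equalities provided by \ref{m5} into equalities modulo $\DRH$, which requires keeping cumulative contents under control throughout. This is where \ref{m3} and the standing assumption that $S$ has a content function are needed, and also the reason $\varepsilon(x)$ must be built from $\Theta$ rather than from $w$.
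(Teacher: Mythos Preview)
Your proposal is correct and follows essentially the same approach as the paper's proof: both verify \ref{m1}--\ref{m5} for $\iM_{u=v}$ directly from the construction (with the $\Req$-equivalence for edges of $\widehat G$ coming from the content hypothesis on the $y_{(i)}$), and both build $\varepsilon$ from $\Thetap$ rather than from $w$, then combine \ref{m4} and \ref{m5} via Lemma~\ref{sec:16} to recover the equations of $\iS_{u=v}$. Your write-up is in fact slightly more explicit than the paper's in places (for instance, in spelling out why the value $\varepsilon(x)$ is independent of the chosen occurrence, and why $\varepsilon$ must come from $\Theta$ to secure the constraints).
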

\begin{proof}
  For the first part, we notice that the Properties~\ref{m1}--\ref{m3}
  of the requirements for being a model are given for
  free 
  from the construction. Let $(i,x,j,\xx)$ be a boundary
  relation. Since each equation $x_{(k)}y_{(k)} = z_{(k)}$ of
  $\iS_1$
  is such that the inclusion $c(\delta(y_{(k)})) \subseteq
  \cum{\delta(x_{(k)})}$ holds,
  whenever an edge in the graph $\widehat G$ links two indices $p$ and $q$,
  the elements $\delta(x_p) = \Phi(i_{p-1}, i_p, (\nu({x_p}), I), 0)$
  and $\delta(x_q) = \Phi(i_{q-1}, i_q, (\nu({x_q}), I), 0)$ are
  $\Req$-equivalent
  modulo $\DRH$. Therefore, unless $(i,x,j,\xx)$ is one of the
  relations $(i_0, \tl, i_r, \tr)$ or $(i_r, \tr, i_0, \tl)$, the
  Property~\ref{m4} is trivially satisfied. For those relations, we
  just need to observe that $w(i_0, \dirr{(\tl)}) = \delta(u)$ and $w(i_r, \dirr{(\tr)}) =
  \delta(v)$.
  The last Property~\ref{m5} translates into the verification of
  pseudoidentities modulo $\h$ that are satisfied by the pseudovariety
  $\DRH$ by construction.
  This proves that $\iM_{u = v}$ is a model of $\overline\iS_{u = v}$.

  For the second assertion, we consider a model of
  $\overline\iS_{u=v}$ in $\kappa$-words, say $\iM' = (w', \iota',
  \Theta')$, and we let
  $\varepsilon: \pseudo XS \to \pseudo AS$ be the continuous
  homomorphism that sends the variable $x$ to $\Thetap(i_{p-1}, i_p,
  (\nu(x_p), I), 0)$,
  where $p$ is such that $x_p = x$. Such an $x_p$ exists for every variable since we are
  assuming that all the variables occur in $u' = v'$.
  It is worth to mention that the value
  modulo $\DRH$ that
  we assign to $\varepsilon(x)$ when $x = x_p$ for some $p$ does not
  depend on the chosen $p$.
  By Property~\ref{m2}, all the constraints imposed by $\iS_{u= v}$ are
  satisfied by~$\varepsilon$.
  The following computation shows that $\DRH$ satisfies $\varepsilon(u')
  = \varepsilon(v')$:
  \begin{align*}
    \varepsilon(u') &= \varepsilon(x_1\cdots x_r) =
                      \varepsilon(x_1)\cdots \varepsilon(x_r)
    \\ &= \Thetap(i_{0}, i_1,(\nu({x_1}), I), 0) \cdots
         \Thetap(i_{r-1}, i_r,(\nu({x_r}), I), 0)
    \\ & \kern-5pt\just = {\text{\ref{m1}}} w'(i_0, i_1)\cdots w'(i_{r-1}, i_r) =
         w'(i_0, i_r)
    \\ & \kern-1pt\just ={(*)} w'(i_r, i_t) = w'(i_r, i_{r+1}) \cdots
         w'(i_{t-1}, i_t)
    \\ &\kern-5pt\just ={\text{\ref{m1}}}\Thetap(i_{r},
         i_{r+1},(\nu({x_{r+1}}), I), 0) \cdots
         \Thetap(i_{t-1}, i_t,(\nu({x_t}), I), 0)
    \\ & = \varepsilon(x_{r+1}) \cdots \varepsilon(x_t) =
         \varepsilon(x_{r+1}\cdots x_t) = \varepsilon(v').
  \end{align*}
  The reason for $(*)$ is the fact that the relation $(i_0, \tl, i_r,
  \tr)$ belongs to $\iB$ and the equation $(i_0\mid i_r) = (i_r \mid
  i_t)$ to $\iB_\h$, together with Properties~\ref{m4} and~\ref{m5},
  and with Lemma~\ref{sec:16}.
  For the system $\iS_2$, we
  point out that its only aim is to fix the cumulative content of the
  variables and Property~\ref{m3} ensures that.
  Finally, let $x_p x_m = x_q$ be an equation of~$\iS_1$. Since
  for such an equation, we have a relation $(i_{p-1}, (p,q), i_{q-1},
  (q,p))$ in $\iB$ and an equation $(i_{p-1}\mid i_p) (i_{m-1}\mid i_m) =
  (i_{q-1} \mid i_q)$ in $\iB_\h$, from~\ref{m4} we deduce that
  $\varepsilon(x_p)$ and $\varepsilon(x_q)$ are $\Req$-equivalent in
  $\DRH$ and from~\ref{m5} that
  $\varepsilon(x_p)\varepsilon(x_m) = \varepsilon(x_q)$ is a valid
  pseudoidentity in $\h$.
  In addition, the assumption that $S$ has a content function together with
  Property~\ref{m2} yield that $c(\delta(x)) = c(\varepsilon(x))$. In
  turn, we already observed that $\cum{\delta(x)} =
  \cum{\varepsilon(x)}$.
  Therefore, as by construction of $\iS_{u = v}$
  we know that $c(\delta(x_m)) \subseteq
  \cum{\delta(x_p)}$, we have $\varepsilon(x_p)\varepsilon(x_m)
  \Req \varepsilon(x_q)$ modulo $\DRH$, and from Lemma~\ref{sec:16} we
  obtain that $\DRH$ satisfies $\varepsilon(x_p)\varepsilon(x_m) =
  \varepsilon(x_q)$.
\end{proof}

The following criterion for having complete $\kappa$-reducibility of a
pseudovariety~$\DRH$ follows from Proposition~\ref{6.3} together with
Proposition~\ref{7.4}.

\begin{cor}\label{gen7.5}
  If every system of boundary relations which has a model also has a
  model in $\kappa$-words, then $\DRH$ is a completely
  $\kappa$-reducible pseudovariety.
\end{cor}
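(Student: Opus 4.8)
The plan is to obtain the statement by simply chaining together Propositions~\ref{6.3} and~\ref{7.4}; no fresh argument is required. By Proposition~\ref{6.3}, it suffices to show that $\DRH$ is $\kappa$-reducible for every system of the form $\iS_{u=v} = \{u'=v'\}\cup\iS_1\cup\iS_2$ as in~\eqref{sist} that admits a solution $\delta$ modulo $\DRH$ which is reduced with respect to the word equation $u'=v'$ and satisfies $c(\delta(y_{(i)})) \subseteq \cum{\delta(x_{(i)})}$ for every equation $x_{(i)}y_{(i)} = z_{(i)}$ of $\iS_1$. So I would start by fixing one such system together with such a solution $\delta$.

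Next I would attach to these data the system of boundary relations $\overline{\iS}_{u=v}$ defined in~\eqref{S}, together with the triple $\iM_{u=v}=(w,\iota,\Theta)$. The first assertion of Proposition~\ref{7.4} guarantees that $\overline{\iS}_{u=v}$ is genuinely a system of boundary relations and that $\iM_{u=v}$ is a model of it. Since $\overline{\iS}_{u=v}$ thus has a model, the hypothesis of the corollary applies and yields a model of $\overline{\iS}_{u=v}$ in $\kappa$-words. Feeding this model into the second assertion of Proposition~\ref{7.4} produces a solution of $\iS_{u=v}$ modulo $\DRH$ in $\kappa$-words.

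Hence every system $\iS_{u=v}$ that has a reduced solution subject to the displayed content condition on $\iS_1$ also has a solution in $\kappa$-words, which is exactly what Proposition~\ref{6.3} asks for; therefore $\DRH$ is completely $\kappa$-reducible. I do not anticipate any real difficulty here: all of the substance — the translation of a word equation into a system of boundary relations and the recovery of a $\kappa$-word solution from a $\kappa$-word model — has already been absorbed into the construction of $\overline{\iS}_{u=v}$, the model $\iM_{u=v}$, and Proposition~\ref{7.4}. The only point worth a second look is that the solution $\delta$ used to build $\iM_{u=v}$ must be reduced with respect to $u'=v'$ and must satisfy the content inclusion for $\iS_1$, but these are precisely the standing hypotheses under which Proposition~\ref{6.3} is stated, so they come for free in the reduction.
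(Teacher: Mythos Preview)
Your proposal is correct and follows exactly the paper's approach: the paper states only that the corollary ``follows from Proposition~\ref{6.3} together with Proposition~\ref{7.4}'', and you have simply spelled out the chaining of those two results in detail.
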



\section{Factorization schemes}\label{section6}

A \emph{factorization scheme}
for a pseudoword $w$ is a tuple $\iC =
(J, \iota, M, \Theta)$, where:
\begin{itemize}
\item $J$ is a totally ordered finite set;
\item $\iota: J \to \alpha_w + 1$ is an injective function that preserves
  the order;
\item $M:\{(i,j,\vs) \in J \times J \times (S \times S^I)\} \to \omega
  \setminus \{0\}$ is a partial function;
\item $\Theta: \{(i,j,\vs, \mu) \colon (i,j,\vs) \in \dom(M), \:
  \mu \in M(i,j,\vs) \} \to \pseudo AS \times (\pseudo AS)^I$ is a
  function that sends each tuple $(i,j,\vs,\mu)$ to a pair
  $(\Phi(i,j,\vs,\mu), \Psi(i,j,\vs, \mu))$ and satisfies
  $c(\Psi(i,j,\vs, \mu)) \subseteq \cum{\Phi(i,j,\vs, \mu)}$.
\end{itemize}
Moreover, if $(i,j,\vs) \in \dom(M)$ and $\mu \in M(i,j,\vs)$, then
the following properties should be satisfied:
\begin{enumerate}[label = (FS.\arabic*)]
\item\label{fs1} $\Thetap(i,j,\vs, \mu) =_\DRH w[\iota(i), \iota(j)[$;
\item\label{fs2} if $\vs = (s_1, s_2)$, then
  $\varphi(\Phi(i,j,\vs, \mu)) = s_1$ and $\varphi(\Psi(i,j,\vs, \mu))
  = s_2$.
\end{enumerate}
We say that $\iC$ is a \emph{factorization scheme in $\kappa$-words}
if the coordinates of $\Theta$ take $\kappa$-words as values. It is
easy to check that, given a system of boundary relations $\iS$ and a
model $\iM$ for $\iS$, the pair $(\iS, \iM)$ determines a
factorization scheme for $w$, namely $(J, \iota, M, \Theta)$, which we
denote
by $\iC(\iS, \iM)$. Furthermore, a factorization scheme $\iC$ for a
pseudoword $w$
induces functions $\zeta_{w,\iC}$ and $\chi_{w,\iC}$ as follows
\begin{equation}
\begin{aligned}
  \zeta_{w,\iC}: \{(i,j) \in J \times J \colon i \prec j\} &\to
  2^{S\times S^I} \\ (i,j)  &\mapsto \{\vs \colon
  (i,j,\vs) \in \dom(M)\},
\end{aligned}\label{zeta}
\end{equation}
and
\begin{equation}
  \label{chi}
  \begin{aligned}
    \chi_{w,\iC}: \{(i,j) \in J \times J \colon i \prec j\}  &\to 2^A
    \\ (i,j) &\mapsto \cum{w[\iota(i), \iota(j)[}.
  \end{aligned}
\end{equation}
The reason for using this notation becomes clear with the
following lemma, whose proof we leave to the reader.
\begin{lemma}\label{zetachi}
  Let $\iS = (\iX, J, \zeta, M, \chi, \dirr, \iB, \iB_\h)$ be a system of
  boundary relations, $w$ a pseudoword, and $\iC = (J, \iota, M,
  \Theta)$ a factorization scheme for $w$. We define $\iM = (w, \iota,
  \Theta)$ as a candidate for a model of $\iS$. If $\zeta = \zeta_{w,\iC}$
  and  $\chi = \chi_{w,\iC}$, then the Properties~\ref{m1}--\ref{m3} are
  satisfied.
\end{lemma}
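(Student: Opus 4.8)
The plan is to unwind the definitions; the statement is essentially a bookkeeping exercise, which is why it is left to the reader. Recall that, once the order-preserving injection $\iota\colon J \to \alpha_w + 1$ is fixed, we abbreviate $w[\iota(i),\iota(j){[}$ by $w(i,j)$, so that the functions $\zeta_{w,\iC}$ and $\chi_{w,\iC}$ of \eqref{zeta} and \eqref{chi} depend only on $w$, $\iota$, $M$ and $\Theta$, that is, only on the factorization scheme $\iC$.

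First I would check that $\iM = (w,\iota,\Theta)$ is indeed an admissible candidate, in the sense that the partial function $M$ has the domain required in a system of boundary relations. By \eqref{zeta}, $\zeta_{w,\iC}(i,j) = \{\vs \colon (i,j,\vs)\in\dom(M)\}$ for $i \prec j$; hence the hypothesis $\zeta = \zeta_{w,\iC}$ gives $\dom(M) = \{(i,j,\vs)\colon i \prec j,\ \vs \in \zeta(i,j)\}$, which is exactly the set on which $M$ is required to be defined. The three properties then follow immediately. Property~\ref{m1} asks, for $(i,j,\vs)\in\dom(M)$ and $\mu\in M(i,j,\vs)$, that $\Thetap(i,j,\vs,\mu) =_\DRH w(i,j) = w[\iota(i),\iota(j){[}$, which is literally Property~\ref{fs1} of the factorization scheme $\iC$. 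Property~\ref{m2} asks, when $\vs=(s_1,s_2)$, that $\varphi(\Phi(i,j,\vs,\mu)) = s_1$ and $\varphi(\Psi(i,j,\vs,\mu)) = s_2$, which is verbatim Property~\ref{fs2}. Finally, for Property~\ref{m3}, the hypothesis $\chi = \chi_{w,\iC}$ combined with \eqref{chi} yields $\chi(i,j) = \chi_{w,\iC}(i,j) = \cum{w[\iota(i),\iota(j){[}} = \cum{w(i,j)}$ for every $i \prec j$, which is precisely what Property~\ref{m3} demands.

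There is no real obstacle here: the content of the lemma is simply that the factorization scheme axioms \ref{fs1}--\ref{fs2} were set up so as to coincide with the model axioms \ref{m1}--\ref{m2}, while \ref{m3} is built into the definition of $\chi_{w,\iC}$ through \eqref{chi}. The only mild point of care — and the reason the hypotheses $\zeta = \zeta_{w,\iC}$ and $\chi = \chi_{w,\iC}$ are needed at all — is the domain-matching observation made above; note that the endpoint conditions on $\iota$ appearing in the definition of a model (sending $\min J$ to $0$ and $\max J$ to $\alpha_w$) play no role, since \ref{m1}--\ref{m3} do not refer to them.
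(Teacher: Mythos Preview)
Your proposal is correct and complete; the paper explicitly leaves the proof of this lemma to the reader, so there is no argument to compare against, and your unwinding of the definitions --- matching \ref{fs1} to \ref{m1}, \ref{fs2} to \ref{m2}, and the definition \eqref{chi} of $\chi_{w,\iC}$ to \ref{m3} --- is exactly the intended verification. Your closing remarks about the endpoint conditions on $\iota$ and the role of the hypothesis $\zeta=\zeta_{w,\iC}$ are accurate and show that you have read the definitions carefully.
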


For $k = 1,2$, let $\iC_k = (J_k, \iota_k, M_k, \Theta_k)$ be a
factorization scheme for $w$. We say that $\iC_1$ is a
\emph{refinement}
of $\iC_2$ if the following properties are
satisfied:
\begin{enumerate}[label = (R.\arabic*)]
\item\label{rr1} $\im(\iota_2) \subseteq
  \im(\iota_1)$;
\item\label{rr2} there exists a function
  $$\Lambda:\{(i,j,\vs, \mu) \colon (i,j,\vs) \in \dom(M_2),
  \:\mu \in M_2(i,j,\vs)\} \to \bigcup_{k \ge 1} (S\times S^I)^k
  \times \omega$$ such that, if $\Lambda(i,j,\vs, \mu) = ((\vt_1,
  \ldots, \vt_n), \mu')$, then the following holds:
  \begin{enumerate}[label = (R.2.\arabic*)]
  \item\label{rr21} there are $n+1$ elements $i_0,
    \ldots, i_n$ in $J_1$ such that $i_0 \prec \cdots \prec i_n$,
    $\iota_2(i) = \iota_1(i_0)$, and $\iota_2(j) = \iota_1(i_n)$;
  \item\label{rr22} $(i_{m-1},i_m,\vt_m) \in
    \dom(M_1)$, for $m = 1, \ldots, n$;
  \item\label{rr23} if $\vs = (s_1,s_2)$ and $\vt_m
    = (t_{m,1}, t_{m,2})$ for $m = 1, \ldots, n$, then  the equalities
    $s_1 =
    t_{1,1}t_{1,2} \cdots t_{n-1,1}t_{n-1,2} \cdot t_{n,1}$ and $s_2 =
    t_{n,2}$ hold;
  \item\label{rr24} $\mu' \in M_1(i_{n-1}, i_n,
    \vt_n)$ and $\Psi_2(i,j,\vs,\mu) = \Psi_1(i_{n-1}, i_n, \vt_n,
    \mu')$ modulo $\h$.
  \end{enumerate}
\end{enumerate}
We call the function $\Lambda$ in~\ref{rr2} a
\emph{refining function from
  $\iC_2$ to $\iC_1$}.

\begin{prop}[cf.\ {\cite[Proposition 8.1]{JA}}]\label{8.1}
  Let $\iC_k = (J_k, \iota_k, M_k, \Theta_k)$ ($k = 1,2$) be
  factorization schemes for a given pseudoword $w$. Then, there is a
  factorization scheme $\iC_3 = (J_3, \iota_3, M_3, \Theta_3)$ for $w$
  which is a common refinement of~$\iC_1$ and~$\iC_2$. Moreover,
  if~$\iC_1$ and~$\iC_2$ are both refinement schemes in
  $\kappa$-words, then
  we may choose $\iC_3$ with the same property.
\end{prop}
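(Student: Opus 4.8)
The plan is to build the common refinement $\iC_3$ by superimposing the two subdivisions of $w$ recorded by $\iota_1$ and $\iota_2$, and then carrying along the factorization data of each scheme through the finer subdivision. Concretely, I would set $\im(\iota_3) = \im(\iota_1) \cup \im(\iota_2)$ as a subset of $\alpha_w+1$, take $J_3$ to be an abstract totally ordered set of the same cardinality, and let $\iota_3\colon J_3 \to \alpha_w+1$ be the unique order-preserving bijection onto that union. This immediately gives \ref{rr1} for both $\iC_1$ and $\iC_2$: each index of $J_k$ has its image in $\im(\iota_k) \subseteq \im(\iota_3)$, hence corresponds to some index of $J_3$. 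Write $\pi_k\colon J_k \to J_3$ for the induced order embedding.

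Next I would define $M_3$ and $\Theta_3$. For each pair $(i,j,\vs) \in \dom(M_1)$ with $\mu \in M_1(i,j,\vs)$, the ordinals $\iota_1(i) < \iota_1(j)$ bracket finitely many points of $\im(\iota_3)$, say $\iota_3(i_0) < \cdots < \iota_3(i_n)$ with $i_0 = \pi_1(i)$, $i_n = \pi_1(j)$; using Corollary~\ref{c:7} (applied to $w$ and $\Thetap(i,j,\vs,\mu)$, which are $\Req$-equivalent modulo $\DRH$ and in fact equal on the relevant prefix by \ref{fs1}) one obtains a factorization of $\Phi(i,j,\vs,\mu)$ matching the subdivision, and one records into $\dom(M_3)$ each consecutive pair $(i_{m-1},i_m)$ together with the $S\times S^I$-datum $\vt_m$ read off from $\varphi$ of the corresponding factor (the last factor absorbing $\Psi(i,j,\vs,\mu)$ on the right). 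Doing the same for $\iC_2$ and taking the union of the two resulting families of data (re-indexing the multiplicities $\mu$ so that the contributions of $\iC_1$ and $\iC_2$ live on disjoint ranges) gives $M_3$ and $\Theta_3$; Properties~\ref{fs1} and~\ref{fs2} hold by construction, and $c(\Psi)\subseteq \cum{\Phi}$ is inherited because we never move the right-hand factor across a reduced-product boundary.

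Finally, the refining functions $\Lambda_{3\to k}\colon \iC_k \to \iC_3$ are exactly the bookkeeping maps just described: $\Lambda(i,j,\vs,\mu) = ((\vt_1,\dots,\vt_n),\mu')$ where the $\vt_m$ and the chosen $\mu'$ for the last block are those we inserted into $M_3$, and conditions \ref{rr21}--\ref{rr24} are verified directly — \ref{rr21} from the definition of $\iota_3$, \ref{rr22} and \ref{rr24} from the construction of $M_3$, and \ref{rr23} from the way the $\vt_m$ were read off via $\varphi$ and the multiplicativity of $\varphi$, with the equality $\Psi_2(i,j,\vs,\mu) =_\h \Psi_1(i_{n-1},i_n,\vt_n,\mu')$ holding because both equal (modulo $\h$) the projection of the common right-hand factor, using Corollary~\ref{c:2} to absorb content. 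If both $\iC_1$ and $\iC_2$ are in $\kappa$-words, then all the factors produced are factors of the form $w[\iota(i),\iota(j)[$ of $\kappa$-words, hence $\kappa$-words by Lemma~\ref{l:6} iterated (as noted after Notation~\ref{sec:19}), so $\iC_3$ is again in $\kappa$-words.

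The step I expect to be the main obstacle is the one hidden in the second paragraph: making precise that the $S\times S^I$-data $\vt_m$ extracted from the subdivision are \emph{consistent} between the two schemes on the overlap, i.e.\ that when a block coming from $\iC_1$ and a block coming from $\iC_2$ happen to span the same pair of consecutive indices of $J_3$, we really may list \emph{both} data (the definition of $\dom(M_3)$ as a set of triples $(i,j,\vs)$ allows several $\vs$ for the same $(i,j)$, so this is fine) and that \ref{rr23}'s telescoping identity $s_1 = t_{1,1}t_{1,2}\cdots t_{n-1,1}t_{n-1,2}t_{n,1}$, $s_2 = t_{n,2}$ comes out with the factors in the correct order and with the $\Psi$-part correctly isolated as the \emph{last} coordinate — this forces a careful choice of where to cut $\Phi$ versus $\Psi$, exactly as in the reduced-product discussion preceding Proposition~\ref{6.3}, and is the place where one must invoke $c(\Psi(i,j,\vs,\mu)) \subseteq \cum{\Phi(i,j,\vs,\mu)}$ together with Corollary~\ref{c:7} to guarantee the cut is legitimate.
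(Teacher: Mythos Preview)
Your proposal is correct and follows essentially the same route as the paper: set $J_3$ to be (an order-isomorphic copy of) $\iota_1(J_1)\cup\iota_2(J_2)$, then for each datum $(i,j,\vs,\mu)$ of each $\iC_k$ cut $\Phi_k(i,j,\vs,\mu)$ along the intermediate points of $J_3$, record the resulting pieces in $\Theta_3$ with fresh multiplicity indices, and let the bookkeeping be the refining function. Your anticipated ``main obstacle'' is not one: the paper never attempts to reconcile data coming from $\iC_1$ and $\iC_2$ on a common block --- it simply assigns each contribution its own fresh $\mu$-value (exactly your ``disjoint ranges'' remark) --- and \ref{rr24} holds on the nose because $\Psi_3$ on the last block is \emph{defined} to be $\Psi_k(i,j,\vs,\mu)$, so no appeal to Corollary~\ref{c:2} is needed there.
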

\begin{proof}
  Let $J_3 = \iota_1(J_1) \cup \iota_2(J_2)$ and $\iota_3: J_3
  \hookrightarrow \alpha_w+1$ be the inclusion of ordinals.
  Starting with $\Theta_3$ defined nowhere, we extend it
  inductively as follows.
  Fix $k, \ell \in \{1,2\}$ with $k \neq \ell$, and
  let $i \prec j$ in $J_k$. Let $p_1, \ldots,
  p_m \in J_\ell$ be the indices that are sent by $\iota_\ell$ to an
  ordinal between $\iota_k(i)$ and $\iota_k(j)$ and suppose that
  $\{\beta_0, \beta_1, \ldots, \beta_n\} = \iota_k(\{i,j\}) \cup
  \iota_\ell(\{p_1, \ldots, p_m\})$ with $\beta_0 < \cdots <
    \beta_n$.
  Then, for $r = 1, \ldots, n$, the relation $\beta_{r-1} \prec
  \beta_r$ holds in $J_3$. We fix $\vs \in
  \zeta_{w,\iC_k}(i,j)$, with $\vs = (s_1, s_2)$.
  For each $r < n$, let
  \begin{align*}
   \vt_r & = (\varphi(\Phi_k(i,j,\vs,0)[\beta_{r-1}, \beta_r{[}), I),
    \\ \mu_r  &= \{\overline\mu \colon \Theta_3(\beta_{r-1}, \beta_r, \vt_r,
    \overline\mu) \text{ is defined}\}+1.
  \end{align*}
  We set
  $$\Theta_3(\beta_{r-1}, \beta_r, \vt_r, \mu_r) = (\Thetap_k(i,j,
  \vs, 0)[\beta_{r-1}, \beta_r{[}, I).$$
  For $r = n$, we take
  $$\vt_n = (\varphi(\Phi_k(i,j,\vs,\mu))[\beta_{n-1}, \beta_n{[},
  s_2).$$
  Then, for each $\mu \in M(i,j,\vs)$,
  we set
  \begin{align*}
  \Theta_3(\beta_{n-1}, \beta_n, \vt_n, \mu') &=
  (\Phi_k(i,j, \vs, \mu)[\beta_{n-1}, \beta_n{[},
  \Psi_k(i,j,\vs, \mu)),
  \\ \Lambda_k(i,j,\vs, \mu) &= ((\vt_1, \ldots, \vt_n), \mu'),
  \end{align*}
  where
  $$\mu' = \{\overline\mu \colon \Theta_3(\beta_{n-1}, \beta_n,
  \vt_n, \overline\mu) \text{ is defined}\}+1.$$

  We repeat this process for all possible choices of $k$, $ \ell$,
  $ i$, and $j$.
  Finally, we set $M_3(\beta,\gamma,\vt) = \{\mu\colon
  \Theta_3(\beta,\gamma,\vt,\mu) \text{ is defined}\}$ whenever
  $\Theta_3(\beta,\gamma, \vt, 0)$ is defined.

  Then, the way the construction was performed guarantees not only that
  $\iC_3$ is a factorization scheme for $w$, but also that it is a
  common refinement of $\iC_1$ and $\iC_2$. Moreover, it follows from
  Lemma~\ref{l:6} that
  if $\iC_1$ and
  $\iC_2$ are both factorization schemes in $\kappa$-words, then so
  is~$\iC_3$.
\end{proof}

If $\iC_1 = (J_1, \iota_1, M_1, \Theta_1)$ is a factorization scheme
for $w$,
then it induces a set of factorizations for~$w$. However, it might be
useful to consider the set of factorizations that we obtain by
multiplying some of the adjacent factors. To this end, we define
what is a
\emph{candidate for a refining function to $\iC_1$ with
  respect to $J_2$}: given a totally ordered finite set $J_2$ and an
order preserving injective function $\iota_2: J_2 \to \alpha_w+1$ such
that $\im(\iota_2) \subseteq \im (\iota_1)$, it consists of a partial
function
$$\Lambda:\{(i,j,\vs,\mu) \in J_2 \times J_2 \times (S \times
S^I) \times \omega \colon i \prec j\} \to
\bigcup_{k\ge 1} (S \times S^I)^k \times \omega$$ such that
\begin{enumerate}[label = (C.\arabic*)]
\item\label{c1} $\dom(\Lambda)$ is finite;
\item\label{c2} if $(i,j,\vs,\mu) \in
  \dom(\Lambda)$ and $\mu' \in \mu$, then $(i,j,\vs,\mu') \in
  \dom(\Lambda)$;
\item\label{c3} If $(i,j,\vs,\mu) \in
  \dom(\Lambda)$ and $\Lambda(i,j,\vs, \mu) = ((\vt_1, \ldots, \vt_n),
  \mu')$, then
  \begin{enumerate}[label = (C.3.\arabic*)]
  \item\label{c31} there exist $n+1$ elements $i_0, \ldots, i_n \in J_1$, such that $i_0 \prec \cdots \prec
    i_n$, $\iota_2(i) = \iota_1(i_0)$, and $\iota_2(j) = \iota_1(i_n)$;
  \item\label{c32} if $\vs = (s_1,s_2)$ and
    $\vt_m  = (t_{m,1}, t_{m,2})$ for $m = 1, \ldots, n$, then  the
    equalities $s_1 =
    t_{1,1}t_{1,2} \cdots t_{n-1,1}t_{n-1,2} \cdot t_{n,1}$ and $s_2 =
    t_{n,2}$ hold;
  \item\label{c33} for $m = 1, \ldots, n$,
    $(i_{m-1}, i_m, \vt_m) \in \dom(M_1)$ and $\mu' \in M_1(i_{n-1},
    i_n, \vt_n)$.
  \end{enumerate}
\end{enumerate}
Given a candidate $\Lambda$ for a refining function to $\iC_1$ with
respect to $J_2$, we define the tuple $\iC_2 = (J_2, \iota_2, M_2, \Theta_2)$ as
follows:
\begin{itemize}
\item we let $\dom(M_2) = \{(i,j,\vs) \colon \exists \mu \in \omega
  \mid (i,j,\vs,\mu) \in \dom(\Lambda)\}$;
\item if $(i,j,\vs) \in \dom(M_2)$, then we let $M_2(i,j,\vs) = \{\mu
  \colon (i,j,\vs,\mu) \in \dom(\Lambda)\}$;
\item let $(i,j,\vs) \in \dom(M_2)$ and $\mu \in M(i,j,\vs)$. If
  $\Lambda(i,j,\vs,\mu) =  ((\vt_1, \ldots, \vt_n), \mu')$ and $i_0
  \prec \cdots \prec i_n$ in $J_1$ are such that $\iota_2(i) =
  \iota_1(i_0)$ and $\iota_2(j) = \iota_1(i_n)$, then we define
  \begin{align*}
    & \Phi_2(i,j,\vs,\mu) = \left(\prod_{m=1}^{n-1}
      \Thetap_1(i_{m-1}, i_m, \vt_m, 0)\right) \cdot \Phi_1(i_{n-1},
      i_n, \vt_n, \mu'); \\
    & \Psi_2(i,j,\vs,\mu) = \Psi_1(i_{n-1},i_n,\vt_n,\mu').
  \end{align*}
  We put $\Theta_2(i,j,\vs,\mu) = (\Phi_2(i,j,\vs,\mu),
  \Psi_2(i,j,\vs,\mu))$.
\end{itemize}
We say that $\iC_2$ is the
\emph{restriction of $\iC_1$ to $J_2$ with
  respect to $\Lambda$}. The following result justifies this
terminology. It is a routine matter to prove it.
\begin{prop}\label{8.2}
  Let $\iC_1$, $\iC_2$ and $\Lambda$ be as above. Then,
  \begin{enumerate}
  \item\label{item:2} $\iC_2$ is a factorization scheme for $w$;
  \item\label{item:3} $\iC_1$ is a refinement of $\iC_2$;
  \item\label{item:4} $\Lambda$ is a refining function from $\iC_2$ to $\iC_1$.
  \end{enumerate}
  Moreover, if $\iC_1$ is a factorization scheme in $\kappa$-words,
  then so is $\iC_2$.
\end{prop}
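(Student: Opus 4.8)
The statement is obtained by unwinding the definitions: the tuple $\iC_2 = (J_2, \iota_2, M_2, \Theta_2)$ was built from the candidate $\Lambda$ precisely so that the three assertions hold, and I would verify them in the stated order, treating the $\kappa$-word addendum last. The first assertion — that $\iC_2$ is a factorization scheme for $w$ — amounts to checking its four data. That $J_2$ is finite and totally ordered and that $\iota_2\colon J_2\to\alpha_w+1$ is an order-preserving injection is part of the hypotheses on $\Lambda$; that $M_2$ is a partial function into $\omega\setminus\{0\}$ with the indicated domain follows from~\ref{c1} (finiteness of $\dom(\Lambda)$) together with~\ref{c2}, which makes each set $M_2(i,j,\vs)=\{\mu\colon(i,j,\vs,\mu)\in\dom(\Lambda)\}$ a nonempty down-set of $\omega$, hence a positive integer; and $\Theta_2$ is well defined because, $\iota_1$ being injective, condition~\ref{c31} forces $i_0$ and $i_n$, and then, by consecutiveness, the entire chain $i_0\prec\cdots\prec i_n$ in $J_1$, to be uniquely determined. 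To get~\ref{fs1} for $\iC_2$ I would multiply the equalities $\Thetap_1(i_{m-1},i_m,\vt_m,0)=_\DRH w[\iota_1(i_{m-1}),\iota_1(i_m){[}$ ($m<n$) and $\Thetap_1(i_{n-1},i_n,\vt_n,\mu')=_\DRH w[\iota_1(i_{n-1}),\iota_1(i_n){[}$ supplied by~\ref{fs1} for $\iC_1$, using that consecutive factors of the form $w[\,\cdot\,,\,\cdot\,{[}$ multiply to a single such factor (cf.\ Notation~\ref{sec:19}) and that $\iota_2(i)=\iota_1(i_0)$, $\iota_2(j)=\iota_1(i_n)$ by~\ref{c31}; this gives $\Thetap_2(i,j,\vs,\mu)=_\DRH w[\iota_2(i),\iota_2(j){[}$. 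Property~\ref{fs2} for $\iC_2$ then follows from~\ref{fs2} for $\iC_1$ and the arithmetic relations~\ref{c32} between the components of $\vs$ and of the $\vt_m$.

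The only point that is not pure bookkeeping is the inclusion $c(\Psi_2(i,j,\vs,\mu))\subseteq\cum{\Phi_2(i,j,\vs,\mu)}$, and I expect this to be the main (minor) obstacle. I would first record the auxiliary fact that, for a pseudoword $u$ and $u_0\in(\pseudo AS)^I$, one has $c(u_0)\subseteq\cum u$ if and only if $u\Req uu_0$ modulo $\DRH$: the forward implication follows from Remark~\ref{sec:9} together with Lemma~\ref{sec:13}, and for the converse one passes from $u\Req uu_0$ to $u(u_0t)^\omega=_\DRH u$ for a suitable $t\in(\pseudo A{DRH})^I$ and invokes Remark~\ref{sec:9} once more. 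Granting this, since $\iC_1$ is a factorization scheme we have $c(\Psi_1(i_{n-1},i_n,\vt_n,\mu'))\subseteq\cum{\Phi_1(i_{n-1},i_n,\vt_n,\mu')}$, hence $\Phi_1(i_{n-1},i_n,\vt_n,\mu')\Req\Phi_1(i_{n-1},i_n,\vt_n,\mu')\,\Psi_1(i_{n-1},i_n,\vt_n,\mu')$ modulo $\DRH$; left-multiplying by $\prod_{m=1}^{n-1}\Thetap_1(i_{m-1},i_m,\vt_m,0)$ and using that $\Req$ is a left congruence yields $\Phi_2(i,j,\vs,\mu)\Req\Phi_2(i,j,\vs,\mu)\,\Psi_2(i,j,\vs,\mu)$ modulo $\DRH$, whence the desired inclusion by the auxiliary fact.

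For the remaining two assertions I would take $\Lambda$ itself as the refining function from $\iC_2$ to $\iC_1$: condition~\ref{rr1} is the standing hypothesis $\im(\iota_2)\subseteq\im(\iota_1)$; by construction $\dom(\Lambda)$ equals $\{(i,j,\vs,\mu)\colon(i,j,\vs)\in\dom(M_2),\ \mu\in M_2(i,j,\vs)\}$; and for $\Lambda(i,j,\vs,\mu)=((\vt_1,\ldots,\vt_n),\mu')$ the conditions~\ref{rr21}, \ref{rr22}, \ref{rr23} and the membership clause of~\ref{rr24} are, respectively, \ref{c31}, the first clause of~\ref{c33}, \ref{c32} and the second clause of~\ref{c33}, while the equality $\Psi_2(i,j,\vs,\mu)=\Psi_1(i_{n-1},i_n,\vt_n,\mu')$ required (even only modulo $\h$) in~\ref{rr24} holds by the very definition of $\Psi_2$; this simultaneously gives that $\iC_1$ is a refinement of $\iC_2$ and that $\Lambda$ is a refining function from $\iC_2$ to $\iC_1$. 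Finally, for the $\kappa$-word statement, if the coordinates of $\Theta_1$ are $\kappa$-words then each $\Thetap_1(i_{m-1},i_m,\vt_m,0)$ is a product of two $\kappa$-words and so a $\kappa$-word, whence $\Phi_2(i,j,\vs,\mu)$, being a finite product of $\kappa$-words, and $\Psi_2(i,j,\vs,\mu)$, being a copy of a coordinate of $\Theta_1$, are $\kappa$-words; note that, in contrast with Proposition~\ref{8.1}, no factor of a $\kappa$-word is extracted here, so Lemma~\ref{l:6} is not needed.
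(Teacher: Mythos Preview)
Your proposal is correct and is precisely the routine verification the paper alludes to; the paper itself gives no proof beyond the sentence ``It is a routine matter to prove it,'' so your unwinding of the definitions is exactly the intended argument. The one place where you do a little work beyond pure bookkeeping --- the inclusion $c(\Psi_2(i,j,\vs,\mu))\subseteq\cum{\Phi_2(i,j,\vs,\mu)}$ via the equivalence between $c(u_0)\subseteq\cum u$ and $u\Req uu_0$ modulo $\DRH$ --- is handled correctly and is the natural way to lift this condition through the left multiplication defining $\Phi_2$.
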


We proceed with a few notes describing general situations that
appear repeatedly later.

\begin{rem}\label{m}
  Let $w$ be a pseudoword and $\iC =(J,\iota, M, \Theta)$  a
  factorization scheme for $w$. Suppose that $\iC_1 = (J_1, \iota_1,
  M_1, \Theta_1)$ is a refinement of the factorization scheme $\iC$ and
  let $\Lambda$ be a refining function from $\iC$ to $\iC_1$. Finally,
  suppose that $\iC_1' = (J_1, \iota_1', M_1, \Theta_1')$ is a
  factorization scheme for another pseudoword~$w'$.
  The function $\Lambda$ is
  clearly a candidate for a refining function to $\iC_1'$ with respect
  to $J$. Moreover, if $\iC' = (J, \iota', M', \Theta')$ is the
  restriction of $\iC_1'$ with respect to $\Lambda$, then $M' = M$.
\end{rem}

\begin{notacao}
  \label{not1}
  Suppose that $\iS = (\iX,J,\zeta, M, \chi, \dirr, \iB, \iB_\h)$ is a
  system of boundary relations that has $\iM = (w, \iota, \Theta)$
  as a model.
  Let $\iC_1 = (J_1, \iota_1, M_1, \Theta_1)$ be a refinement of
  $\iC(\iS, \iM)$ and let $\Lambda$ be a refining
  function from $\iC_1$ to $\iC(\iS, \iM)$. Define $\xi = \iota_1^{-1}
  \circ \iota$.
  We denote by $\xi_{\Lambda}(\iB_{\h})$
  the system of
  $\kappa$-equations with variables in $X_{(J_1, \zeta_{\iC_1}, M_1)}$
  (recall~\eqref{var} and~\eqref{zeta}) obtained from $\iB_{\h}$ by
  substituting each variable $(i\mid j)$ by $(\xi(i) \mid \xi(j))$ and
  each variable $\{i \mid j\}_{\vs, \mu}$ by $\{\xi(j)^-\mid
  \xi(j)\}_{\vt_n, \mu'}$, where $\Lambda(i,j,\vs,\mu) = ((\vt_1,
  \ldots, \vt_n), \mu')$.
\end{notacao}
\begin{rem}
  \label{sec:21}
  Using the notation above, the homomorphism
  $\delta_{w, \iC_1}$ (recall~\eqref{eq:48}) is a solution modulo~$\h$ of the system
  $\xi_\Lambda(\iB_\h)$.
\end{rem}
\begin{rem}
  \label{simp}
  Keeping again the notation, suppose that we are given a pseudoword
  $w_1'$ and a factorization scheme $\iC_1'=(J_1, \iota_1', M_1,
  \Theta_1')$ for $w_1'$, such that $\delta_{w_1', \iC_1'}$ is a
  solution modulo $\h$ of $\xi_\Lambda(\iB_\h)$.
  Further assume
  that there exists a factorization scheme of the form $\iC' = (J,
  \iota', M, \Theta')$ for another pseudoword $w'$ such that
  $\zeta_{w', \iC'} = \zeta$ and the following pseudoidentities are
  valid in $\h$, for every $(i\mid j), \{i \mid j\}_{\vs, \mu} \in
  X_{(J, \zeta, M)}$:
  \begin{align*}
   w'(i,j)  & = w_1'(\xi(i), \xi(j));
    \\ \Psi'(i,j,\vs,\mu) & = \Psi_1'(\xi(j)^-, \xi(j), \vt_n, \mu').
  \end{align*}
  Then, the homomorphism $\delta_{w', \iC'}$ is a solution modulo $\h$
  of $\iB_\h$.
\end{rem}
\section{Complete $\kappa$-reducibility of the pseudovarieties~$\DRH$}\label{section7}

Suppose that $\DRH$ is a completely $\kappa$-reducible
pseudovariety and consider a finite system of $\kappa$-equations $\iS =
\{u_i = v_i\}_{i = 1}^n$ with
variables in~$X$ and constraints given by the pair $(\varphi:\pseudo
AS \to S,\ \nu:X \to S)$. Let~$\delta:\pseudo XS \to
\pseudo AS$ be a solution modulo~$\h$ of~$\iS$.
For a new variable $x_0 \notin X$, we consider a new finite system
of $\kappa$-equations given by $\iS' = \{x_0 u_i = x_0 v_i\}_{i =
  1}^n$ and, writing $A = \{a_1, \ldots, a_k\}$, we set the
constraints on $X \cup \{x_0\}$ to be given by the pair $(\varphi,
\nu')$, where $\nu'|_X = \nu$ and $\nu'(x_0) = \varphi((a_1\cdots a_k)^\omega)$.
By Corollary~\ref{c:2}, the continuous homomorphism $\delta'$ defined by
\begin{align*}
  \delta' : \pseudo{X \uplus \{x_0\}} S& \to \pseudo AS
  \\ x &\mapsto
  \delta(x), \text{ if } x \in X \\ x_0 &\mapsto (a_1 \cdots a_k)^\omega
\end{align*}
is a solution modulo $\DRH$ of $\iS'$. Since we are assuming
that~$\DRH$ is completely $\kappa$-reducible, there exists a solution
in $\kappa$-words
modulo~$\DRH$ of~$\iS'$. Of course, any solution
modulo $\DRH$ of~$\iS'$ 
provides a solution modulo~$\h$ of~$\iS$,
by restriction to~$\pseudo XS$.
Hence, we proved the following.
\begin{prop}\label{p:7}
  If $\DRH$ is a completely $\kappa$-reducible pseudovariety,
  then $\h$ is completely $\kappa$-reducible as well.
\end{prop}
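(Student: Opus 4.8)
The plan is to transfer $\kappa$-reducibility from $\DRH$ down to $\h$ by a purely syntactic device: prepend to every equation a fresh variable constrained to take a value of full content, so that left-multiplication by it turns $\h$-equalities into $\DRH$-equalities. Concretely, given a finite system $\iS = \{u_i = v_i\}_{i=1}^{n}$ of $\kappa$-equations in the variables $X$, with constraints $(\varphi\colon \pseudo AS \to S,\ \nu\colon X \to S)$ and a solution $\delta$ modulo $\h$, I would introduce a new variable $x_0 \notin X$, write $A = \{a_1, \ldots, a_k\}$, and form the system $\iS' = \{x_0 u_i = x_0 v_i\}_{i=1}^{n}$ with constraints $(\varphi, \nu')$ given by $\nu'|_X = \nu$ and $\nu'(x_0) = \varphi((a_1 \cdots a_k)^\omega)$; then extend $\delta$ to $\delta'\colon \pseudo{X \uplus \{x_0\}}S \to \pseudo AS$ by $\delta'(x_0) = (a_1 \cdots a_k)^\omega$.

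The only step with real content is checking that $\delta'$ is a solution of $\iS'$ \emph{modulo $\DRH$}, not merely modulo $\h$. Since $(a_1\cdots a_k)^\omega$ is idempotent it is regular, so by the characterization of regular elements of $\pseudo A{DRH}$ recalled in Section~\ref{section3} its content equals its cumulative content, whence $\cum{\delta'(x_0)} = A$. As $c(\delta(u_i)) \cup c(\delta(v_i)) \subseteq A$ and $\h$ satisfies $\delta(u_i) = \delta(v_i)$, Corollary~\ref{c:2} yields $\delta'(x_0)\delta(u_i) =_\DRH \delta'(x_0)\delta(v_i)$, i.e.\ $\delta'$ satisfies each equation of $\iS'$ modulo $\DRH$; the constraint conditions hold by the choice of $\nu'$, since $\varphi(\delta'(x_0)) = \varphi((a_1\cdots a_k)^\omega) = \nu'(x_0)$.

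Finally I would invoke complete $\kappa$-reducibility of $\DRH$ to get a solution $\varepsilon'$ of $\iS'$ modulo $\DRH$ with all values in $\pseudok AS$, and restrict it to $\pseudo XS$. Because $\h \subseteq \DRH$ and $\pseudo A\h$ is cancellative (being trivial or a profinite group), the restriction is a solution of the original system $\iS$ modulo $\h$, it takes values in $\pseudok AS$, and it respects the constraints of $\iS$ since $\varepsilon'$ respects those of $\iS'$. As $\iS$ was arbitrary, $\h$ is completely $\kappa$-reducible. I do not expect any genuine obstacle; the single delicate point is ensuring $\cum{\delta'(x_0)} = A$, which is exactly what makes Corollary~\ref{c:2} applicable, and this is the reason for choosing $x_0$ to evaluate to a regular pseudoword whose content is all of $A$.
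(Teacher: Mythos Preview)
Your proposal is correct and follows essentially the same route as the paper: introduce a fresh variable $x_0$ constrained to $(a_1\cdots a_k)^\omega$, form $\iS' = \{x_0 u_i = x_0 v_i\}$, use Corollary~\ref{c:2} to see that the extended $\delta'$ solves $\iS'$ modulo $\DRH$, apply complete $\kappa$-reducibility of $\DRH$, and restrict. Your write-up is in fact slightly more explicit than the paper's, since you spell out why $\cum{\delta'(x_0)} = A$ and why cancellativity in $\pseudo A\h$ lets you strip off $\varepsilon'(x_0)$ at the end.
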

It is known that neither any proper non-locally finite
subpseudovariety of~$\Ab$~\cite{MR2364777} nor the pseudovarieties
$\G$~\cite{MR1485465} and
$\G_p$~\cite{MR0179239}  are completely
$\kappa$-reducible.
Hence, we have the following.
\begin{cor}\label{c:1}
  Let $\h$ be either a proper non-locally finite subpseudovariety
  of~$\Ab$, or one of the pseudovarieties~$\G$
  and~$\G_p$. Then,~$\DRH$ is not completely $\kappa$-reducible.
\end{cor}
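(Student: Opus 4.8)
The plan is to obtain this statement by contraposition from Proposition~\ref{p:7}, combined with the non-reducibility results recalled in the paragraph immediately preceding the corollary.

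First I would note that Proposition~\ref{p:7} says precisely that complete $\kappa$-reducibility of $\DRH$ implies complete $\kappa$-reducibility of $\h$; reading this contrapositively, if $\h$ is \emph{not} completely $\kappa$-reducible, then $\DRH$ is not completely $\kappa$-reducible either. So it suffices to know that each of the pseudovarieties $\h$ listed in the statement fails to be completely $\kappa$-reducible. For this I would invoke the cited facts: by~\cite{MR2364777} no proper non-locally finite subpseudovariety of $\Ab$ is completely $\kappa$-reducible, by~\cite{MR1485465} the pseudovariety $\G$ is not completely $\kappa$-reducible, and by~\cite{MR0179239} (in the form in which it enters the complete $\kappa$-reducibility discussion here) the same holds for $\G_p$. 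In each of the three cases of the statement, $\h$ is therefore not completely $\kappa$-reducible, and the contrapositive of Proposition~\ref{p:7} gives the desired conclusion for $\DRH$.

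Since the whole argument is a one-step deduction from an already-proved proposition together with external results that have just been recalled, I do not expect any genuine obstacle. The only point deserving a moment's care is making sure that the cited theorems really are (or can be routinely rephrased as) statements about complete $\kappa$-reducibility in the present sense, rather than about some formally different variant; but this identification is exactly what the sentence preceding the corollary takes for granted, so I would simply rely on it.
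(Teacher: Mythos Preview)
Your proposal is correct and matches the paper's approach exactly: the corollary is deduced immediately from the contrapositive of Proposition~\ref{p:7} together with the cited external results that $\G$, $\G_p$, and proper non-locally finite subpseudovarieties of $\Ab$ fail to be completely $\kappa$-reducible. The paper does not even include a separate proof, since the sentence preceding the corollary already supplies the entire argument.
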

In fact, it may be proved that both $\DRH$, for
$\h \subsetneqq \Ab$ non-locally finite, and ${\sf DRG}_p$ are not even
$\kappa$-reducible~\cite{phd}, meaning that they are not $\kappa$-reducible with
respect to the class of systems of equations that may be
obtained from finite graphs (see \cite{steinbergJA} for details).

Our next goal is to prove that $\h$ being completely
$\kappa$-reducible also suffices
for so being~$\DRH$. With that in
mind, throughout this section we fix a pseudovariety of groups~$\h$
that is completely $\kappa$-reducible.
In view of Corollary~\ref{gen7.5}, we should prove the following.

\begin{theorem}\label{main}
  Let $\iS$ be a system of boundary relations that has a model. Then,
  $\iS$ has a model in $\kappa$-words.
\end{theorem}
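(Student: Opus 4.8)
The plan is to proceed by transfinite induction on $\alpha_w$, where $w$ is the pseudoword from a given model $\iM = (w, \iota, \Theta)$ of the system of boundary relations $\iS$. The key idea, following the scheme of \cite{JA}, is to peel off the left basic factorization of $w$ and treat the contributions coming from the regular part (which is governed by $\h$) separately from the ``transient'' part. Concretely, the first step is to reduce, using Proposition~\ref{7.4} and Corollary~\ref{gen7.5}, to the situation where $\iS = \overline{\iS}_{u=v}$ arises from a word equation together with the auxiliary systems $\iS_1$, $\iS_2$; this lets us assume the combinatorial structure of $\iB$, $\iB_\h$, and $\dirr$ is the one described in~\eqref{S}, and in particular that $\iB_\h$ is a finite system of $\kappa$-equations to be solved modulo~$\h$.

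The heart of the argument is the following dichotomy on the pseudoword $w$ of the model. If $\cum w \neq \emptyset$, then $w$ has a nonempty regular part, and I would invoke Lemma~\ref{cumH} (using that $\h$ is completely $\kappa$-reducible) to replace the regular part of $w$ by a $\kappa$-word while preserving cumulative contents and the constraints imposed through $\varphi$; the truncated ``transient'' prefix of $w$ has strictly smaller ordinal length, so the induction hypothesis applies to it, and the two pieces are recombined using Lemma~\ref{sec:16} and Corollary~\ref{c:2} to control the product modulo~$\DRH$. If $\cum w = \emptyset$, then $\alpha_w$ is a successor or a countable supremum of the end-marked prefixes; in the successor case $w = w_1 a$ with $\alpha_{w_1} < \alpha_w$, the boundary relations and the system $\iB_\h$ restrict to a system of boundary relations on $w_1$ by cutting at $\iota^{-1}(\alpha_w^-)$, and one applies the induction hypothesis to that smaller model, then glues back the last letter $a$. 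In the limit case with $\cum w = \emptyset$ one uses the periodicity results of Section~\ref{sec:500} --- specifically Proposition~\ref{JA5.5}, whose hypotheses (all relevant products $x_i \cdot x_i$ reduced, all $x_i^\omega$ equal modulo~$\DRH$) are arranged by the reduction to $\overline{\iS}_{u=v}$ --- to exhibit a common $\kappa$-word ``root'' $u$ and factors $v_i$, yielding a $\kappa$-word model directly.

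Throughout, the bookkeeping is done at the level of factorization schemes: given the model $\iM$, form $\iC(\iS,\iM)$, refine it via Proposition~\ref{8.1} so that the cut points used in the induction are indices, and use Propositions~\ref{8.2} and the machinery of Remarks~\ref{m}, \ref{sec:21}, and~\ref{simp} to transport the system $\iB_\h$ back and forth between the original index set and the refined one via $\xi_\Lambda(\iB_\h)$. The point is that a $\kappa$-word solution of $\xi_\Lambda(\iB_\h)$ modulo~$\h$, obtained from complete $\kappa$-reducibility of~$\h$, together with $\kappa$-word data on the smaller pseudoword from the induction hypothesis, assembles (via $\delta_{w',\iC'}$ in Remark~\ref{simp}) into a model of $\iS$ in $\kappa$-words. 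One must check that Properties~\ref{m1}--\ref{m5} survive this assembly: \ref{m1}, \ref{m2}, \ref{m3} follow from Lemma~\ref{zetachi} once $\zeta$ and $\chi$ are matched, \ref{m4} follows from Lemma~\ref{sec:13} and Corollary~\ref{c:6} because $\Req$-classes are determined by iterated left basic factorizations, and \ref{m5} is exactly the $\h$-solvability that complete $\kappa$-reducibility of~$\h$ provides.

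**Main obstacle.** The delicate point is the limit case with empty cumulative content: there the ordinal $\alpha_w$ is a genuine limit, so there is no single ``previous'' cut to induct on, and one must instead identify the periodicity structure of $w$ correctly --- i.e.\ verify that the finitely many boundary relations force the pseudowords attached to the boxes to share a common root so that Proposition~\ref{JA5.5} is applicable, and then confirm that the $\kappa$-word $u^{p_i}v_i$ produced still satisfies all the boundary relations in $\iB$ and the equations in $\iB_\h$ modulo~$\h$. Coordinating the reducedness conditions (needed for Corollary~\ref{c:4} and Lemma~\ref{sec:12}) with the refinement bookkeeping is where the bulk of the technical work lies; everything else is a faithful adaptation of~\cite{JA} with $\h$ in place of the trivial group and Lemmas~\ref{sec:16} and~\ref{cumH} supplying the extra ingredient needed to handle the nontrivial regular $\Req$-classes.
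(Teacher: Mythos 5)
Your proposal has a genuine gap: the induction parameter is wrong. You propose transfinite induction on $\alpha_w$, peeling off the left basic factorization of $w$ or cutting at a predecessor ordinal. But the boundary relations in $\iB$ assert $\Req$-equivalences between overlapping factors $w(i,\dirr(x))$ and $w(j,\dirr(\xx))$ of one and the same pseudoword, and the transformations that actually simplify such a system do not shorten $w$: in the crucial situation where two boxes with left ends $i<j$ both end at the rightmost active position $r$ and $c(w(i,j))=c(w(i,r))$, the pseudoword of the new model is again $w$ itself, and what decreases is the position of the rightmost box end together with the number of boxes ending there. The paper accordingly inducts on the pair $[\iS,\iM]=(\alpha,n)$, where $\alpha$ is the largest ordinal at which some box ends and $n$ counts the boxes ending there, and runs a five-case analysis on how the boundary relations meet that position. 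Your dichotomy on $\cum w$ and your ``successor case'' (cut at $\alpha_w^-$ and glue back a letter) never engage with this structure; restricting the system to a proper prefix of $w$ is not even well defined, since boxes straddle any cut point. Moreover, your opening reduction to $\iS=\overline{\iS}_{u=v}$ is not available: Theorem~\ref{main} must hold for arbitrary systems of boundary relations precisely because the induction step immediately leaves the class of systems arising from word equations.

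You do identify the right external ingredients (Proposition~\ref{JA5.5} for periodicity, Lemma~\ref{cumH} and complete $\kappa$-reducibility of $\h$ for the group part, Lemma~\ref{sec:16} to upgrade $\Req$-equivalence plus $\h$-equality to $\DRH$-equality, and the factorization-scheme bookkeeping), but the plan omits the mechanism that makes the periodic case close up with the constraints intact: after Proposition~\ref{JA5.5} produces the common root $u$, one must pump inside the finite semigroup $T=S^K$ (choosing $1\le H<M<N$ with $\vt_1\cdots\vt_H=\vt_1\cdots\vt_M$) so that replacing the factor $w(\ell,r)$ by one containing an $\omega$-power preserves all $\varphi$-constraints. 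Also missing are the actual base case (Proposition~\ref{baseind}, itself an induction on content size rather than on $\alpha_w$) and the auxiliary alignment step that normalizes the left ends of the boxes ending at $r$. As written, the argument stalls on any system falling under the paper's Case~\ref{caso6}.
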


We fix the pair
$(\iS, \iM)$, where
\begin{equation}
  \label{S,M}
  \begin{aligned}
    \iS &= (\iX, J, \zeta, M, \chi, \dirr, \iB, \iB_\h) \text{ is a system
      of boundary relations},
    \\ \iM &= (w, \iota, \Theta) \text{ is a model of } \iS,
  \end{aligned}
\end{equation}
and we define the parameter
\begin{equation}
  [\iS, \iM] = (\alpha, n),\label{pind}
\end{equation}
where $\alpha $ is the largest ordinal of the form $\iota(c)$ such
that there exists a box $(i,x)$ with $\dirr(x) = c$ if $\iB \neq
\emptyset$, and is $0$ otherwise, and $n$ is the number of boxes
$(i,x)$ such that $\iota(\dirr(x)) = \alpha$.
We denote by $r$ the index $\iota^{-1}(\alpha)$.
In order to prove Theorem~\ref{main}, we argue by transfinite
induction on the parameter
$[\iS, \iM]$, where the pairs $(\alpha, n)$ are ordered 
lexicographically. The
induction step amounts to associating to each pair $(\iS, \iM)$ a new
pair $(\iS_1, \iM_1)$ such that the following properties are satisfied:
\begin{enumerate}[label = (P.\arabic*)]
\item\label{p1} $[\iS_1, \iM_1] < [\iS, \iM]$;
\item\label{p2} if $\iS_1$ has a model in
  $\kappa$-words, then $\iS$ also has a model in $\kappa$-words.
\end{enumerate}

Depending on the set of boundary relations $\iB$, we
consider the following cases:
\begin{description}
\item[Case 1\namedlabel{caso1}{1}] There is a box $(i,x)$ in $\iB$
  such that $i = r =\dirr(x)$.
\item[Case 2\namedlabel{caso2}{2}] There is a boundary relation $(i,x,i,
  \xx)$ such that $\dirr(x) = r = \dirr(\xx)$.
\item[Case 3\namedlabel{caso4}{3}] There is a boundary relation
  $(i,x,j,\xx)$ such that $i < j$, $\dirr(x) = r = 
  \dirr(\xx)$, and the inclusion $c(w(i,j)) \subsetneqq
    c(w(i,\dirr(x)))$  holds.
\item[Case 4\namedlabel{caso5}{4}] There is a boundary relation
  $(i,x,j,\xx)$ such that $\dirr(x) < \dirr(\xx) = r$.
\item[Case 5\namedlabel{caso6}{5}] There is a boundary relation
  $(i,x,j,\xx)$ such that $i < j$,   $\dirr(x) = r = \dirr(\xx)$, and
  $c(w(i,j)) = c(w(i,\dirr(x)))$.
\end{description}
In each case, we assume that all the preceding cases do not apply.
In~\cite[Section~9]{JA}, where the analogous result for the
pseudovariety $\R$ is proved, the cases that are considered are
similar. However, the difference in definition of the induction
parameter~\eqref{pind} justifies the fact of needing to deal with one
less case in the present work.

\subsection{Induction basis}

If the induction parameter $[\iS, \iM]$ is $(0,0)$, then either
 $\iB = \emptyset$ or all the boundary relations of
$\iB$ are of the form $(\min(J),x,\min(J),\xx)$ with $\dirr(x) = \min(J)
= \dirr(\xx)$. In any case,
Property~\ref{m4} for a model of $\iS$ becomes trivial. Hence,
having a model in $\kappa$-words amounts to
having, for each $(i,j,\vs) \in \dom(M)$ and each $\mu \in M(i,j,\vs)$,
a pair of $\kappa$-words $(\Phi(i,j,\vs, \mu), \Psi(i,j,\vs, \mu))$
such that the Properties~\ref{m1}--\ref{m3} and~\ref{m5} are
satisfied. Note that the Property~\ref{m1} means that we should have
$$\Phi(i,j,\vs_1, \mu_1) \Psi(i,j,\vs_1, \mu_1)
=_\DRH  \Phi(i,j,\vs_2, \mu_2) \Psi(i,j,\vs_2, \mu_2)$$
for all $(i,j,\vs_k) \in \dom(M)$ and $\mu_k \in M(i,j,\vs_k)$, $k =
1,2$. We formalize that in the following proposition.

\begin{prop}\label{baseind}
  Suppose that $\h$ is a completely $\kappa$-reducible pseudovariety of groups.
  Let $\iS_1 = \{x_{i,1}y_{i,1} = \cdots = x_{i,n_i}y_{i, n_i}\}_{i = 1}^N$ and
  let $\iS_2$ be a finite system of $\kappa$-equations (possibly
  with parameters in $P$).
  Let $X$ be the set of variables occurring in $\iS_1$ and $\iS_2$
  and suppose that the constraints for the variables are given by the
  pair $(\varphi,\nu)$.
  Let $\delta:
  \pseudo {X \cup P}S \to
  (\pseudo AS)^I$ be a solution modulo $\DRH$ of $\iS_1$ which is also
  a solution modulo $\h$ of $\iS_2$ and such that, for $i = 1, \ldots,
  N$ and $p = 1, \ldots, n_i$, $c(\delta(y_{i,p})) \subseteq
  \cum{\delta(x_{i,p})}$. Then, there exists a continuous homomorphism
  $\varepsilon: \pseudo {X \cup P}S \to (\pseudo AS)^I$ such that
  \begin{enumerate}
  \item\label{a} $\varepsilon(X) \subseteq (\pseudok AS)^I$;
  \item\label{b} $\varepsilon$ is a solution modulo $\DRH$ of $\iS_1$;
  \item\label{c} $\varepsilon$ is a solution modulo $\h$ of $\iS_2$;
  \item\label{e} $\cum{\varepsilon(x)} = \cum{\delta(x)}$, for all the
    variables $x \in X$.
  \end{enumerate}
\end{prop}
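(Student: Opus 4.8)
The plan is to use Lemma~\ref{sec:16} to split the requirement ``$\varepsilon$ solves $\iS_1$ modulo~$\DRH$'' into a bundle of $\Req$-equivalences together with a system to be solved modulo~$\h$; then to dispose of the $\Req$-equivalences by a substitution, so that only a finite system of $\kappa$-equations over~$\h$ remains; and finally to apply Lemma~\ref{cumH}, which delivers a solution in $\kappa$-words over~$\h$ that preserves cumulative contents. Recombining the pieces yields~$\varepsilon$.

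\emph{Step 1 (reduction of $\iS_1$).} Since $c(\delta(y_{i,p}))\subseteq\cum{\delta(x_{i,p})}$, Lemma~\ref{sec:16} shows that ``$\delta$ solves $\iS_1$ modulo~$\DRH$'' is equivalent to the conjunction of: (i) for each~$i$ the pseudowords $\delta(x_{i,1}),\ldots,\delta(x_{i,n_i})$ are pairwise $\Req$-equivalent modulo~$\DRH$; and (ii) $\delta$ satisfies every equation of~$\iS_1$ modulo~$\h$. Let $\sim$ be the least equivalence on~$X$ with $x_{i,p}\sim x_{i,q}$ for all $i,p,q$; by (i) and transitivity, for each $\sim$-class~$C$ the pseudowords $\{\delta(x)\colon x\in C\}$ are pairwise $\Req$-equivalent, hence share a cumulative content~$D_C$ and a content. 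I intend to build $\varepsilon$ with $\cum{\varepsilon(x)}=\cum{\delta(x)}$ and, since $S$ has a content function, $c(\varepsilon(x))=c(\delta(x))$ for all~$x$; then the same reduction becomes available for~$\varepsilon$ (its content hypotheses being met), so it will be enough to produce $\varepsilon$ in $\kappa$-words with those content properties, satisfying $\iS_2$ and every equation of~$\iS_1$ modulo~$\h$, and with $\varepsilon(x)\Req\varepsilon(x')$ modulo~$\DRH$ whenever $x\sim x'$.

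\emph{Step 2 (killing the $\Req$-equivalences).} Fix a representative $x_C$ in each $\sim$-class~$C$. For $x\in C$ we have $\delta(x)\Req\delta(x_C)$; by Corollary~\ref{c:7} these two pseudowords have a common (possibly empty) non-regular prefix, and by Proposition~\ref{p:3} their regular parts lie in a common regular $\Heq$-class which is a group. Writing those regular parts as $g,g'$ in that group, one gets $\delta(x)=_\DRH\delta(x_C)\cdot g^{\omega-1}g'$; set $s_x=g^{\omega-1}g'$ (with $s_x=I$ when $D_C=\emptyset$, in which case in fact $\delta(x)=_\DRH\delta(x_C)$), so that $c(s_x)\subseteq\cum{\delta(x_C)}=D_C$. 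Introduce a fresh variable $h_x$ for each $x\in C\setminus\{x_C\}$ and replace, throughout $\iS_1$ and~$\iS_2$, every occurrence of~$x$ by the product $x_Ch_x$; this yields a finite system of $\kappa$-equations~$\tilde\iS$ whose variable set is $\{x_C\}\cup\{h_x\}\cup(\text{variables of }\iS_2\text{ not occurring in }\iS_1)$. Extending $\delta$ by $\delta'(h_x)=s_x$ gives a homomorphism $\delta'$ which solves the substituted~$\iS_1$ modulo~$\DRH$ and the substituted~$\iS_2$ modulo~$\h$, hence~$\tilde\iS$ modulo~$\h$, for the constraints $(\varphi,\nu')$ with $\nu'(h_x)=\varphi^I(s_x)$ and $\nu'$ equal to~$\nu$ on the surviving original variables. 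The gain is that, for \emph{any} solution $\varepsilon'$ of~$\tilde\iS$ with $c(\varepsilon'(h_x))\subseteq\cum{\varepsilon'(x_C)}$, the choice $\varepsilon(x)=\varepsilon'(x_Ch_x)$ forces $\varepsilon(x)\Req\varepsilon(x_C)$ modulo~$\DRH$ by Remark~\ref{sec:9} --- no equation modulo~$\DRH$ is needed to secure it.

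\emph{Step 3 (conclusion).} Apply Lemma~\ref{cumH} to~$\tilde\iS$ with constraints $(\varphi,\nu')$ and the solution~$\delta'$ (parameters are untouched throughout, so the lemma applies verbatim to systems with parameters): since $\h$ is completely $\kappa$-reducible, there is a solution $\varepsilon'$ of~$\tilde\iS$ modulo~$\h$ in $\kappa$-words with $\cum{\varepsilon'(z)}=\cum{\delta'(z)}$, and hence also $c(\varepsilon'(z))=c(\delta'(z))$, for every variable~$z$. Define $\varepsilon$ on $X\cup P$ by $\varepsilon(x_C)=\varepsilon'(x_C)$, $\varepsilon(x)=\varepsilon'(x_Ch_x)$ for $x\in C\setminus\{x_C\}$, and $\varepsilon=\varepsilon'$ elsewhere. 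Then $\varepsilon(X)\subseteq(\pseudok AS)^I$; from $c(\varepsilon'(h_x))=c(s_x)\subseteq D_C=\cum{\varepsilon'(x_C)}$ and Remark~\ref{sec:9} one gets $\cum{\varepsilon(x)}=\cum{\varepsilon'(x_C)}=\cum{\delta(x_C)}=\cum{\delta(x)}$ (the equality being trivial on the remaining variables), giving items~(a) and~(e); $\varepsilon$ inherits from~$\varepsilon'$ the satisfaction modulo~$\h$ of~$\iS_2$ and of every equation of~$\iS_1$, giving item~(c) and half of~(b); and the $\Req$-equivalences within $\sim$-classes hold by Step~2, so by the reduction of Step~1 (now valid for~$\varepsilon$) the homomorphism~$\varepsilon$ solves~$\iS_1$ modulo~$\DRH$, giving~(b). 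The constraints are respected by construction of~$\nu'$. I expect the only genuinely delicate point --- and the one place where more is needed than in the $\R$-case of~\cite{JA}, where $\Req$-equivalence is trivial --- to be the treatment of the $\Req$-equivalences: the $\Req$-class of a pseudoword over~$\DRH$ is not determined by its cumulative content, so these equivalences cannot be read off from item~(e), and it is the substitution $x\mapsto x_Ch_x$ combined with the decomposition $\delta(x)=_\DRH\delta(x_C)s_x$, $c(s_x)\subseteq\cum{\delta(x_C)}$, that makes them automatic.
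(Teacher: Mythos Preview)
Your reduction in Steps~1 and~2 handles the $\Req$-equivalences and the equations over~$\h$ cleanly, but there is a genuine gap in the treatment of the \emph{constraints}. You set $\varepsilon(x)=\varepsilon'(x_C)\,\varepsilon'(h_x)$ and assert that ``the constraints are respected by construction of~$\nu'$''. Unwinding, this gives
\[
  \varphi(\varepsilon(x))=\varphi(\varepsilon'(x_C))\cdot\varphi(\varepsilon'(h_x))
  =\nu(x_C)\cdot\varphi(s_x)=\varphi\bigl(\delta(x_C)\,s_x\bigr),
\]
whereas what is needed is $\varphi(\delta(x))$. You only established $\delta(x)=_{\DRH}\delta(x_C)\,s_x$, and the finite semigroup~$S$ carrying the constraints is \emph{not} assumed to lie in~$\DRH$; so there is no reason for $\varphi(\delta(x_C)s_x)=\varphi(\delta(x))$. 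A concrete failure (with $\h$ trivial, so $\DRH=\R$): take $\delta(x_1)=ab^{\omega}ab^{\omega}$, $\delta(x_2)=ab^{\omega+1}ab^{\omega}$, $\delta(y_1)=\delta(y_2)=I$, and let $S=\{1,-1\}\times 2^{A}$ with $\varphi(a)=(1,\{a\})$, $\varphi(b)=(-1,\{b\})$. Then $\delta(x_1)=_\R\delta(x_2)$, the regular parts are both $b^{\omega}$, so your $s_{x_2}=g^{\omega-1}g'=b^{\omega}$, and $\varphi(\delta(x_1)s_{x_2})=(1,\{a,b\})\neq(-1,\{a,b\})=\nu(x_2)$. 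Thus your $\varepsilon(x_2)$ violates~(S.2), so it is not a solution and item~(b) fails. One might try to salvage this by choosing $s_x$ with $\varphi(\delta(x_C))\varphi(s_x)=\varphi(\delta(x))$, but that requires $\varphi(\delta(x))\le_{\Req}\varphi(\delta(x_C))$ in~$S$ realised by an element of the right content and $\h$-value, and there is no general mechanism guaranteeing this.

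The paper's proof avoids the problem by \emph{not} collapsing each $\sim$-class to a single representative. Instead it proceeds by induction on $\max_{i,p}\lvert c(\delta(x_{i,p}))\rvert$: every $\delta(x_{i,p})$ is decomposed via its own iterated left basic factorization (in $\pseudo AS$), yielding new variables $x_{i,p;j}$ with constraints $\varphi(\delta(x_{i,p})_j)$ that are specific to each $(i,p)$. The $\DRH$-equalities between the $\delta(x_{i,p})$'s translate into $\DRH$-equalities between the corresponding pieces (a new system $\iS_1'$ of strictly smaller content), and the constraints are preserved because $\varepsilon(x_{i,p})$ is rebuilt from pieces whose $\varphi$-values come from $\delta(x_{i,p})$ itself (see the computation around~\eqref{eq:50}). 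In short, the delicate point is not the $\Req$-equivalences but the bookkeeping of constraints across $\Req$-equivalent variables, and this is exactly what forces the inductive decomposition rather than a one-shot appeal to Lemma~\ref{cumH}.
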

\begin{proof}
  We argue by induction on $m = \max\{\card{c(\delta(x_{i,p}))}
  \colon i = 1, \ldots, N; \: p = 1, \ldots, n_i \}$. Note that, if
  $\delta(x_{i,1}) = I$, then we may discard the
  equations $x_{i,1}y_{i,1} = \cdots = x_{i,n_i}y_{i, n_i}$. Hence,
  when $m = 0$, the result amounts to proving the existence
  of~$\varepsilon$ satisfying~\ref{a},~\ref{c} and~\ref{e}.
  But that comes for free from the fact that $\h$ is completely
  $\kappa$-reducible, together with Lemma~\ref{cumH}.
  
  Now, assume that $m \ge 1$ and suppose that $\delta(x_{i,p}) \neq
  I$, for all $i,p$.
  For each variable~$x$ and each $k \ge 1$ such that
  $\lbf[k]{\delta(x)}$ is defined we write
  \begin{align*}
    \lbf[k]{\delta(x)} &= \delta(x)_ka_{x,k},
    \\ \delta(x) &= \lbf[1]{\delta(x)} \cdots \lbf[k]{\delta(x)} \delta(x)_k'.
  \end{align*}
  Since $X$, $A$ and $S$ are finite, there exist $1 \le k < \ell$ such
  that, for all $x \in X$ with $\cum{\delta(x)} \neq \emptyset$,
  the following equalities hold:
  \begin{align*}
    \cum{\delta(x)} & = c(\lbf[k+1]{\delta(x)});
    \\  \varphi(\lbf[1]{\delta(x)} \cdots \lbf[k]{\delta(x)})&=
         \varphi(\lbf[1]{\delta(x)} \cdots \lbf[\ell]{\delta(x)}).
  \end{align*}
  In particular, the latter equality yields
  \begin{equation}
    \label{eq:50}
    \varphi(\delta(x)) = \varphi(\lbf[1]{\delta(x)} \cdots
    \lbf[k]{\delta(x)})\varphi(\lbf[k+1]{\delta(x)} \cdots
    \lbf[\ell]{\delta(x)})^\omega \varphi(\delta(x)_k').
  \end{equation}
  For $i = 1, \ldots, N$, set 
  $$\ell_i =\begin{cases}
    \ell, &\text{ if $\cum{\delta(x_{i,1})} \neq \emptyset$},
    \\ \leng{\delta(x_{i,1})}, &\text{ otherwise}.
  \end{cases}
  $$
  We consider a new set of variables $X'$ given by
  \begin{align*}
    X' = X & \uplus \{x_{i,p;j}\colon i = 1, \ldots, N;\: p = 1, \ldots,
    n_i;\: j = 1, \ldots, \ell_i\}
    \\ & \uplus \{x_{i,p}'\colon i = 1, \ldots, N;\: p = 1, \ldots, n_i;\:
         \cum{\delta(x_{i,p})} \neq \emptyset\},
  \end{align*}
  where the variables $x_{i_1,p;j}$ and $x_{i_2,q;j}$, and the
  variables
  $x_{i_1,p}'$ and $x_{i_2,p}'$ (if defined) are the same,
  whenever the variables $x_{i_1,p}$ and $x_{i_2,q}$ are also the
  same.
  We also consider the following systems of equations with variables in $X'$:
  \begin{itemize}
  \item $\iS_1' = \{x_{i,1;j} = \cdots = x_{i,n_i;j}\colon i = 1,
    \ldots, N; \: j = 1, \ldots,
    \ell_i\}$;
  \item $\iS_2'$ is the system of equations obtained from $\iS_2$ by
    substituting each one of the variables $x_{i,p}$ by the product
    $P_{i,p}$ given by
    \begin{align*}
      P_{i,p} =
      \begin{cases}
        x_{i,p;1}a_{x_{i,p},1}\cdots x_{i,p;\ell} x_{i,p}',\quad\text{ if
          $\cum{\delta(x_{i,p})}\neq \emptyset$},
        \\ x_{i,p;1}a_{x_{i,p},1}\cdots x_{i,p;\ell_i}, \quad\text{ otherwise};
      \end{cases}
    \end{align*}
  \item $\iS_2'' = \{x_{i,1}'z_{i,1} = \cdots = x_{i,n_i}'z_{i,n_i}
    \colon i = 1, \ldots, N;\: \cum{\delta(x_i,1)} \neq \emptyset\}$, where we take
    $$z_{i,p} =
    \begin{cases}
      P_{j,q}, \quad\text{ if $y_{i,p} = x_{j,q}$ for some $j = 1,\ldots,
        N$; $q = 1, \ldots, n_j$},
      \\ y_{i,p}, \quad\text{ otherwise}.
    \end{cases}
    $$
  \end{itemize}
  In the systems $\iS_2'$ and $\iS_2''$ the letters in $A$ work as
  parameters evaluated to themselves, so that the system of equations
  $\iS_2'\cup \iS_2''$ has parameters in $P' = P \cup A$.
  We let the constrains for the variables be given by the pair
  $(\varphi, \nu')$, where the map $\nu'$ is given by
  \begin{equation}
    \begin{aligned}
      \nu': X'& \to S
      \\ x &\mapsto \nu(x), \quad\text{if $x \in X$};
      \\ x_{i,p;j} &\mapsto \varphi(\delta(x_{i,p})_j), \quad\text{if
        $x_{i,p;j} \in X' \setminus X$};
      \\ x_{i,p}' &\mapsto \varphi(\delta(x_{i,p})_k'), \quad\text{if
        $x_{i,p}' \in X' \setminus X$};
    \end{aligned}\label{eq:64}
  \end{equation}
  
  Let $\delta': \pseudo {X' \cup P'} S \to \pseudo AS$ be the
  continuous homomorphism defined by
  \begin{align*}
    \delta'(y) &= \delta(y), \quad\text{ if } y \in X \cup P;
    \\ \delta'(x_{i,p;j}) &= \delta(x_{i,p})_j, \quad\text{ if $i = 1,
         \ldots, N; \: p = 1, \ldots, n_i; \: j = 1, \ldots, \ell_i$};
    \\ \delta'(x_{i,p}') &= \delta(x_{i,p})_k', \quad\text{ if $i = 1,
         \ldots, N; \: p = 1, \ldots, n_i$; $\cum{\delta(x_{i,p})}
         \neq \emptyset$ };
    \\  \delta'(a) &= a, \quad\text{ if }a \in A.
  \end{align*}
  Then, $\delta'$ is a solution modulo $\DRH$ of $\iS_1'$
  which is also a solution modulo $\h$ of $\iS_2' \cup \iS_2''$.
  Since we decreased the induction parameter and the pair $(\iS_1',
  \iS_2'\cup \iS_2'')$ satisfies the hypothesis of the proposition,
  we may invoke the induction hypothesis to derive the  existence of a
  solution in $\kappa$-words 
  modulo
  $\DRH$ of $\iS_1'$, and modulo $\h$ of $\iS_2' \cup
  \iS_2''$, satisfying condition~\ref{e}.

  Now, we define the continuous homomorphism $\varepsilon: \pseudo {X \cup P}S \to
  \pseudo AS$ by:
  \begin{align*}
    \varepsilon(x_{i,p})
    & =
      \begin{cases}
        \varepsilon'(x_{i,p;1}a_{x_{i,p},1}\cdots
        x_{i,p;k}a_{x_{i,p},k})
        \\ \quad\cdot \varepsilon'(x_{i,p;k+1}a_{x_{i,p},k+1}\cdots
        x_{i,p;\ell}a_{x_{i,p},\ell})^\omega
        \varepsilon'(x_{i,p}'), \quad\text{ if $\cum{\delta(x_{i,p})} \neq
          \emptyset$};
        \\ \varepsilon'(P_{i,p}), \quad\text{ if $\cum{\delta(x_{i,p})} = \emptyset$};
      \end{cases}
    \\ \varepsilon(x) & = \varepsilon'(x), \quad\text{ otherwise}.
  \end{align*}
  Clearly, $\varepsilon(X)\subseteq \pseudok AS$.
  Moreover, since we are assuming that $S$ has a content function, it
  follows from $\varphi \circ \varepsilon' = \varphi 
  \circ \delta'$ that $ \cum{\varepsilon(x_{i,p})}=
  \cum{\delta(x_{i,p})}$, for all $i,p$.
  For the other variables $x \in X$, the condition~\ref{e} for
  $\varepsilon$ follows from the same condition for~$\varepsilon'$.

  Let us verify that $\varepsilon$ is a solution modulo $\DRH$ of
  $\iS_1$ and a solution modulo $\h$ of $\iS_2$.
  Since $\varepsilon'$ is a solution modulo $\DRH$ of $\iS_1'$, for
  every pair of variables $x_{i,p}, x_{i,q}$, $\DRH$ satisfies
  $\varepsilon'(x_{i,p;j}) = \varepsilon(x_{i,q;j})$, for $j = 1,
  \ldots, \ell_i$.
  Further, since $\delta$ is a solution modulo $\DRH$ of $\iS_1$ we
  also have $a_{x_{i,p;j}} = a_{x_{i,q;j}}$.
  Thus, we get
  \begin{align*}
    \varepsilon(x_{i,p})
    & =
      \begin{cases}
        \varepsilon'(x_{i,p;1}a_{x_{i,p},1}\cdots
        x_{i,p;k}a_{x_{i,p},k})
        \\ \quad\cdot \varepsilon'(x_{i,p;k+1}a_{x_{i,p},k+1}\cdots
        x_{i,p;\ell}a_{x_{i,p},\ell})^\omega
        \varepsilon'(x_{i,p}'), \quad\text{ if $\cum{\delta(x_{i,p})} \neq
          \emptyset$};
        \\ \varepsilon'(x_{i,p;1}a_{x_{i,p},1}\cdots
        x_{i,p;\ell_i}a_{x_{i,p},\ell_i}), \quad\text{ if $\cum{\delta(x_{i,p})} = \emptyset$};
      \end{cases}
    \\ & =
         \begin{cases}
           \varepsilon'(x_{i,q;1}a_{x_{i,q},1}\cdots
           x_{i,q;k}a_{x_{i,q},k})
           \\ \quad\cdot \varepsilon'(x_{i,q;k+1}a_{x_{i,q},k+1}\cdots
           x_{i,q;\ell}a_{x_{i,q},\ell})^\omega
           \varepsilon'(x_{i,p}'), \quad\text{ if $\cum{\delta(x_{i,p})} \neq
             \emptyset$};
           \\ \varepsilon'(x_{i,q;1}a_{x_{i,q},1}\cdots
           x_{i,q;\ell_i}a_{x_{i,q},\ell_i}), \quad\text{ if
             $\cum{\delta(x_{i,p})} = \emptyset$}.
         \end{cases}
  \end{align*}
  In the second situation, when $\cum{\delta(x_{i,p})} =
  \emptyset$, since $c(\delta(y_{i,p}))\subseteq
  \cum{\delta(x_{i,p})}$, it follows that $\DRH$ satisfies
  $\varepsilon(x_{i,p}y_{i,p}) = \varepsilon(x_{i,p})=
  \varepsilon(x_{i,q})=\varepsilon(x_{i,q}y_{i,q}) $.
  Otherwise, if $\cum{\delta(x_{i,p})} \neq \emptyset$, the above
  equalities imply the relation $\varepsilon(x_{i,p}y_{i,p}) \Req
  \varepsilon(x_{i,q}y_{i,q})$ modulo $\DRH$.
  Also, since $\varepsilon'$ is a solution modulo
  $\h$ of~$\iS_2''$,  we may use Lemma~\ref{sec:16} to conclude that $\DRH$
  satisfies $\varepsilon(x_{i,p}y_{i,p}) =
  \varepsilon(x_{i,q}y_{i,q})$.
  Thus, the homomorphism $\varepsilon$ is a solution modulo $\DRH$ of
  $\iS_1$.
  On the other hand, the pseudovariety $\h$ satisfies
  $\varepsilon(P_{i,p}) = \varepsilon(x_{i,p})$. By definition of
  $\iS_2'$ it follows that $\varepsilon$ is a solution modulo $\h$ of
  $\iS_2$.
  Finally, due to~\eqref{eq:50} and~\eqref{eq:64}, the constraints for the
  variables of~$X$ are satisfied by~$\varepsilon$.
\end{proof}

\subsection{Factorization of a pair $(\iS,
  \iM)$}\label{sec:18}

Instead of repeating the same argument several times, we use this
subsection to describe a general construction that is performed later in some of
the considered cases.

Let $\iE$ be a subset of $\iB$ such that, if $(i,x,j,\xx) \in \iE$, then
$(j,\xx, i,x) \notin \iE$.
Suppose that we are given a set of pairs of ordinals $\Delta =
\{(\beta_e, \gamma_e)\}_{e \in \iE}$ such that, for each boundary relation $e = (i_e,
x_e, j_e, \xx_e) \in \iE$, the following properties are satisfied:
\begin{enumerate}[label = (F.\arabic*)]
\item\label{f1} $\iota(i_e) < \beta_e <
  \iota(\dirr(x_e))$ and $\iota(j_e) < \gamma_e < \iota(\dirr(\xx_e))$;
\item\label{f2} $w[\iota(i_e),
  \beta_e{[} =_\DRH w[\iota(j_e), \gamma_e{[}$.
\end{enumerate}
We say that the
\emph{factorization of $(\iS, \iM)$ with respect to
  $(\iE, \Delta)$} is the pair $(\iS_0, \iM_0)$, where
$$\iS_0 = (\iX_0, J_0, \zeta_0, M_0, \chi_0, \dirr_0, \iB_0,(\iB_\h)_0)
\text{ and }\iM_0 = (w_0, \iota_0, \Theta_0),$$
are defined as follows:
\begin{itemize}
\item the set of variables $\iX_0$ contains all the variables from $\iX$
  and a pair of new variables $y_e$, $\y_e$ for each relation $e \in
  \iE$;
\item we take $w_0 = w$;
\item  we let $J_0$, $\iota_0$, $M_0$ and $\Theta_0$ be given by the
  factorization scheme $\iC_0 = (J_0, \iota_0, M_0, \Theta_0)$, which
  is chosen to be a common refinement of the factorization schemes
  $\iC(\iS, \iM)$ and $(\{\beta_e, \gamma_e\}_{e \in \iE}, \{\beta_e, \gamma_e\}_{e
      \in \iE} \hookrightarrow \alpha_w + 1, \emptyset, \emptyset)$
  for $w$. We denote by $\ell_e$ and $k_e$ the indices
  $\iota_0^{-1}(\beta_e)$ and $\iota_0^{-1}(\gamma_e)$ in $J_0$,
  respectively, by $\xi$ the composite function $\iota_0^{-1} \circ
  \iota$, and we let
  $$\Lambda: \{(i,j,\vs, \mu) \colon (i,j,\vs) \in \dom(M), \: \mu \in
  M(i,j, \vs)\} \to \bigcup_{k \ge 0} (S \times S^I)^k \times \omega
  \setminus \{0\}$$
  be a refining function from $\iC(\iS, \iM)$ to $\iC_0$;
\item the maps $\zeta_0$ and $\chi_0$ are, respectively,
  $\zeta_{w_0,\iC_0}$ and $\chi_{w_0,\iC_0}$ (recall~\eqref{zeta} and~\eqref{chi});
\item the $\dirr_0$ function assigns $\xi(\dirr(x))$ to each variable $x
  \in \iX$ and, for each $e \in \iE$, we let $\dirr_0(y_e) = \ell_e$ and
  $\dirr_0(\y_e) = k_e$;
\item the set of boundary relations $\iB_0$ is obtained by putting the boundary
  relation $(\xi(i), x, \xi(j), \xx)$ whenever $(i,x,j,\xx)$ neither
  belongs to $\iE$ nor is the dual of a boundary relation of
  $\iE$, and the boundary relations $(\xi(i_e), y_e, 
  \xi(j_e), \y_e)$, $(\ell_e, x_e, k_e,\xx_e)$ and their duals for
  each $e \in \iE$;
\item the set $(\iB_\h)_0$ contains $\xi_\Lambda(\iB_\h)$ as well as the equation
  $(\xi(i_e)\mid \ell_e) = (\xi(j_e)\mid k_e)$, for each $e
  \in \iE$.
\end{itemize}

The way we construct $\iB_0$ is illustrated in Fig.~\ref{fig:19}.
\begin{figure}[htpb]
  \centering
  \unitlength=0.0035mm \brickbackup=-600\brickheight=1000
  \begin{picture}(35000,6000)(0,-2000)
    \small
    \brick(0,1000,13000,100,350)(\xi(i_e),x_e)
    \brick(22000,1000,13000,100,350)(\xi(j_e), \overline {x}_e)
    \brick(13000,-500,9000,100,400)(\xi(i_f),x_f)
    \brick(25000,-500,9000,100,350)(\xi(j_f), \overline {x}_f)
    \multiput(5000,1000)(0,1000)4{\dvert(0,0)}\llvert(5000,1000)\put(5100,4200){$\beta_e$}
    \multiput(27000,1000)(0,1000)4{\dvert(0,0)}\llvert(27000,1000)\put(27100,4200){$\gamma_e$}
    \multiput(17000,0)(0,1000)5{\dvert(0,0)}\llvert(17000,-500)\put(17100,4200){$\beta_f$}
    \multiput(29000,0)(0,1000)5{\dvert(0,0)}\llvert(29000,-500)\put(29100,4200){$\gamma_f$}
    \put(4200,1240){$y_e$}\put(5100,1240){$\ell_e$}
    \put(26120,1240){$\overline y_e$}\put(27100,1240){$k_e$}
    \put(16000,-260){$y_f$}\put(17100,-260){$\ell_f$}
    \put(28000,-260){$\overline y_f$}\put(29100,-260){$k_f$}
  \end{picture}
  \caption{Factorization of $(\iS, \iM)$, when $\iE = \{e, f\}$.}
  \label{fig:19}
\end{figure}

\begin{prop}\label{p:8}
  The triple $\iM_0$ is a model of $\iS_0$ such that $[\iS_0, \iM_0] =
  [\iS, \iM]$ and the Property~\ref{p2} is satisfied.
\end{prop}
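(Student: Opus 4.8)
The plan is to verify the three assertions of Proposition~\ref{p:8} in turn---namely that $\iM_0$ is a model of $\iS_0$, that $[\iS_0,\iM_0]=[\iS,\iM]$, and that Property~\ref{p2} holds---by unwinding the construction of the factorization $(\iS_0,\iM_0)$ with respect to $(\iE,\Delta)$ and checking the defining properties $(M.1)$--$(M.5)$ of a model against it.

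\emph{$\iM_0$ is a model.}
First I would observe that, since $J_0$, $\iota_0$, $M_0$, $\Theta_0$ come from a factorization scheme $\iC_0$ for $w_0=w$ and the maps $\zeta_0,\chi_0$ were \emph{defined} as $\zeta_{w_0,\iC_0}$ and $\chi_{w_0,\iC_0}$, Properties~\ref{m1}--\ref{m3} are immediate from Lemma~\ref{zetachi}. For Property~\ref{m4}, the boundary relations of $\iB_0$ fall into three types. Relations $(\xi(i),x,\xi(j),\xx)$ inherited from $\iB$ satisfy it because $w(\xi(a),\xi(b))=w[\iota_0(\xi(a)),\iota_0(\xi(b))[=w[\iota(a),\iota(b)[=w(a,b)$ (as $\iota_0\circ\xi=\iota$), so the required $\Req$-equivalence modulo~$\DRH$ is exactly the one that held in~$\iM$; here one also uses that $\dirr_0(x)=\xi(\dirr(x))$. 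For relations $(\ell_e,x_e,k_e,\xx_e)$ one must check $w(\ell_e,\dirr_0(x_e))\Req w(k_e,\dirr_0(\xx_e))$ modulo $\DRH$, i.e.\ $w[\beta_e,\iota(\dirr(x_e))[\,\Req\,w[\gamma_e,\iota(\dirr(\xx_e))[$; this follows from the original relation $w[\iota(i_e),\iota(\dirr(x_e))[\,\Req\,w[\iota(j_e),\iota(\dirr(\xx_e))[$ together with $(F.2)$, namely $w[\iota(i_e),\beta_e[=_\DRH w[\iota(j_e),\gamma_e[$, and the left-cancellative behaviour of $\Req$-classes modulo $\DRH$ recorded in Corollary~\ref{c:7}. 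For the new relations $(\xi(i_e),y_e,\xi(j_e),\y_e)$, since $\dirr_0(y_e)=\ell_e$ and $\dirr_0(\y_e)=k_e$, Property~\ref{m4} reduces to $w[\iota(i_e),\beta_e[\,\Req\,w[\iota(j_e),\gamma_e[$, which is $(F.2)$ itself (an equality is in particular an $\Req$-relation). Finally, for Property~\ref{m5} I would invoke Remark~\ref{sec:21}: the homomorphism $\delta_{w,\iC_0}$ is a solution modulo~$\h$ of $\xi_\Lambda(\iB_\h)$; and for the extra equations $(\xi(i_e)\mid\ell_e)=(\xi(j_e)\mid k_e)$ added to $(\iB_\h)_0$, evaluating $\delta_{w,\iC_0}$ gives $w(\xi(i_e),\ell_e)=w[\iota(i_e),\beta_e[$ and $w(\xi(j_e),k_e)=w[\iota(j_e),\gamma_e[$, which coincide modulo~$\h$ because by $(F.2)$ they already coincide modulo~$\DRH$, hence modulo~$\h\subseteq\DRH$.

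\emph{Equality of induction parameters.}
The parameter $[\iS,\iM]=(\alpha,n)$ is determined by the boxes $(i,x)$ with $\dirr(x)$ of maximal $\iota$-image. The new variables $y_e,\y_e$ have $\dirr_0(y_e)=\ell_e$, $\dirr_0(\y_e)=k_e$ with $\iota_0(\ell_e)=\beta_e<\iota(\dirr(x_e))\le\alpha$ and $\iota_0(k_e)=\gamma_e<\iota(\dirr(\xx_e))\le\alpha$ by $(F.1)$; and for $e\in\iE$ the relations $(i_e,x_e,j_e,\xx_e)$ and their duals are replaced by $(\ell_e,x_e,k_e,\xx_e)$, $(\xi(i_e),y_e,\xi(j_e),\y_e)$ and duals, none of which introduces a box whose $\dirr_0$-image has $\iota_0$-value $\ge\alpha$ that was not already counted. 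Concretely, the boxes $(\xi(i),x)$ with $\iota_0(\dirr_0(x))=\alpha$ are in bijection, via $i\mapsto\xi(i)$, with the boxes $(i,x)$ of $\iB$ with $\iota(\dirr(x))=\alpha$, since $\iota_0(\xi(\dirr(x)))=\iota(\dirr(x))$; so the largest such ordinal is still $\alpha$ and the count of such boxes is still $n$, giving $[\iS_0,\iM_0]=(\alpha,n)=[\iS,\iM]$. (Note this step does \emph{not} decrease the parameter; that is why this proposition is an auxiliary factorization step, with the genuine decrease happening in the individual cases that invoke it.)

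\emph{Property~\ref{p2}.}
Suppose $\iS_0$ has a model in $\kappa$-words, say $\iM_0'=(w_0',\iota_0',\Theta_0')$. The goal is to produce a model of $\iS$ in $\kappa$-words. The natural candidate is obtained by restriction along $\xi$: set $w'$ to be built from $w_0'$ so that $w'(i,j)=w_0'(\xi(i),\xi(j))$ for consecutive $i\prec j$ in $J$, define $\iota'$ accordingly, and take $\Theta'$ to be the restriction of the factorization scheme $\iC(\iS_0,\iM_0')$ to $J$ with respect to $\Lambda$ (in the sense of Proposition~\ref{8.2} and Remark~\ref{m}); by Remark~\ref{m}, $M$ is recovered unchanged. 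Then $\iC'=(J,\iota',M,\Theta')$ is a factorization scheme in $\kappa$-words for $w'$ (Proposition~\ref{8.2}, last sentence, and Lemma~\ref{l:6}), and $\iM'=(w',\iota',\Theta')$ is the candidate model of $\iS$. Properties~\ref{m1}--\ref{m3} for $\iM'$ follow from Lemma~\ref{zetachi} once we check $\zeta_{w',\iC'}=\zeta$ and $\chi_{w',\iC'}=\chi$, which holds because $\zeta$ and $\chi$ were read off $\iC(\iS,\iM)$ and the refinement/restriction operations preserve them along $\xi$. Property~\ref{m4} for the relations of $\iB$ is inherited from the corresponding relations of $\iB_0$: for an inherited relation $(\xi(i),x,\xi(j),\xx)\in\iB_0$ we directly get $w'(i,\dirr(x))\Req w'(j,\dirr(\xx))$ modulo $\DRH$; for a relation $e=(i_e,x_e,j_e,\xx_e)\in\iE$ we recover it by concatenating the two $\iB_0$-relations $(\xi(i_e),y_e,\xi(j_e),\y_e)$ and $(\ell_e,x_e,k_e,\xx_e)$ along the split at $\beta_e,\gamma_e$ and using Corollary~\ref{c:7}: from $w_0'(\xi(i_e),\ell_e)\Req w_0'(\xi(j_e),k_e)$ and $w_0'(\ell_e,\dirr_0(x_e))\Req w_0'(k_e,\dirr_0(\xx_e))$ one reassembles $w'(i_e,\dirr(x_e))\Req w'(j_e,\dirr(\xx_e))$ modulo~$\DRH$ (one uses here that the extra $\iB_\h$-equation $(\xi(i_e)\mid\ell_e)=(\xi(j_e)\mid k_e)$ forces the two prefixes to agree modulo~$\h$, and then Lemma~\ref{sec:16} upgrades the $\Req$-relation plus $\h$-equality to the needed $\DRH$-relation on the concatenations). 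Property~\ref{m5} for $\iM'$, i.e.\ that $\delta_{w',\iC'}$ solves $\iB_\h$ modulo~$\h$, follows from Remark~\ref{simp}: $\delta_{w_0',\iC(\iS_0,\iM_0')}$ solves $(\iB_\h)_0\supseteq\xi_\Lambda(\iB_\h)$ modulo~$\h$, and the defining identities $w'(i,j)=w_0'(\xi(i),\xi(j))$ and $\Psi'(i,j,\vs,\mu)=\Psi_0'(\xi(j)^-,\xi(j),\vt_n,\mu')$ are exactly the hypotheses of that remark.

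\emph{Main obstacle.}
The technically delicate point is the reassembly in Property~\ref{m4} for relations of $\iE$: one is splicing two $\Req$-relations that hold modulo~$\DRH$ across a cut point, and $\Req$ is not compatible with products on the left, so one cannot simply multiply. The correct tool is the combination of Corollary~\ref{c:7} (which gives that the \emph{prefixes} $w'[\,\cdot\,,\beta_e[$ and $w'[\,\cdot\,,\gamma_e[$ agree modulo~$\DRH$, not merely up to $\Req$, since the corresponding $\iB_\h$-equation forces equality modulo~$\h$ and Lemma~\ref{sec:16} then yields equality modulo~$\DRH$) together with the $\Req$-relation on the \emph{suffixes}; gluing an honest $\DRH$-equality on a common prefix to an $\Req$-equivalence on the suffixes does produce an $\Req$-equivalence on the full products, and that is what Property~\ref{m4} demands. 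Keeping straight, throughout, the three bookkeeping maps $\xi=\iota_0^{-1}\circ\iota$, the refining function $\Lambda$, and the substitution $\xi_\Lambda$ on the $\h$-equations is the other place where care is needed, but the substantive mathematical content is entirely carried by Corollaries~\ref{c:7}, \ref{c:3} and Lemma~\ref{sec:16}.
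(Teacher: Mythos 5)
Your proposal is correct and follows essentially the same route as the paper: the first two claims are checked directly from the construction (the paper dismisses them as immediate, while you usefully spell them out), and for Property (P.2) you use exactly the paper's mechanism — restrict $\iC(\iS_0,\iM_0')$ along $\Lambda$, upgrade the prefix $\Req$-relation coming from $(\xi(i_e),y_e,\xi(j_e),\y_e)$ to a $\DRH$-equality via the added $\h$-equation and Lemma~\ref{sec:16}, splice it with the suffix relation from $(\ell_e,x_e,k_e,\xx_e)$, and settle (M.5) by Remark~\ref{simp}. The only nitpick is that in your closing paragraph the prefix upgrade is attributed partly to Corollary~\ref{c:7} when the work is really done by Lemma~\ref{sec:16} (which you also cite correctly), so nothing substantive is missing.
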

\begin{proof}
  The facts that $\iM_0$ is a model of~$\iS_0$ and  $[\iS_0, \iM_0] =
  [\iS, \iM]$ are easy to derive from the construction.
  
  For Property~\ref{p2}, we suppose that $\iM_0' = (w_0',
  \iota_0', \Theta_0')$ is a model of $\iS_0$ in $\kappa$-words and we
  take $\iM' = (w', \iota', \Theta')$, where $w' = w_0'$, $\iota' =
  \iota_0' \circ \xi$, and $\Theta'$ is given by the factorization scheme
  $\iC' = (J, \iota', M, \Theta')$ corresponding to the restriction of
  $\iC(\iS_0, \iM_0)$ with respect to~$\Lambda$ (cf.\ Remark~\ref{m}). We claim that~$\iM'$ is a model 
  of~$\iS$ (in $\kappa$-words by
  Proposition~\ref{8.2}). Properties~\ref{m1} and~\ref{m2} are a
  consequence of $\iC'$ being a
  factorization scheme for $w'$.
  A simple computation shows that $\cum{w'(i,j)} = \chi(i,j)$, so
  that we have~\ref{m3}.
  Property~\ref{m4} is straightforward for all boundary relations
  except for the relations $(i_e, x_e, j_e, \xx_e)$ and their duals.
  In this case, since $(\xi(i_e), y_e, \xi(j_e),
  \y_e)$ belongs to~$\iB_0$, $(\xi(i_e) \mid \ell_e) =
  (\xi(j_e) \mid k_e)$ belongs to $(\iB_\h)_0$, and $\iM_0'$ is a
  model of $\iS_0$, we invoke Lemma~\ref{sec:16} to conclude that
  $\DRH$ satisfies $w_0'(\xi(i_e), \ell_e) = w_e'(\xi(j_e), k_e)$.
  On the other hand, the relation
  $(\ell_e, x_e, k_e, \xx_e)$ also belongs to $\iB_0$, so that the
  relation $w_0'(\ell_e, \dirr_0(x_e)) \Req w_0'(k_e, \dirr_0(\xx_e))$
  holds modulo $\DRH$. Thus, we obtain $w'(i_e, \dirr(x_e)) \Req
  w'(j_e, \dirr(\xx_e)) $ modulo~$\DRH$.
  Finally, since $\xi_\Lambda(\iB_\h) \subseteq (\iB_\h)_0$, we may
  use Remark~\ref{simp} to conclude that in order to prove Property~\ref{m5} it
  is enough to show that the following identities hold in $\h$:
  \begin{align*}
    w'(i,j) &= w'_0(\xi(i), \xi(j)),
    \quad \text{ for all } i \prec j \text{ in } J;
    \\  \Psi'(i,j,\vs,\mu) &= \Psi_0'(\xi(j)^-, \xi(j), \vt,\mu'),
    \quad\text{ for all } (i,j,\vs, \mu) \in \dom(M) \times M(i,j,\vs)
    \\ & \hspace{5cm}\text{ and } ((\ldots, \vt), \mu') = \Lambda(i,j,\vs).
  \end{align*}
  The first one follows from the definition of $w_0'$
  and $\iota'$, while the second is implied by the fact that $\iC'$ is the
  restriction of $\iC'_0$ with respect to $\Lambda$.
  \end{proof}

\subsection{Case 1}

When we are in Case~\ref{caso1}, we have at least one empty box
$(r,x)$. Since for every pseudoword $w$ we have $w(r, \dirr(x)) =
w(r,r) = I$, we may delete the boundary relations involving empty
boxes.
In this way we obtain a new system of boundary relations $\iS_1$
which has exactly the same models as $\iS$ and so, Property~\ref{p2}
is satisfied. Moreover, the parameter associated to $(\iS_1, \iM)$ is
smaller than the parameter associated to $(\iS, \iM)$ since we removed
some boxes ending at $r$. Therefore, Property~\ref{p1} also holds.

\subsection{Case 2}

In this case, there exists a boundary relation $(i,x,i,\xx)$ 
with $\dirr(x) = r = \dirr(\xx)$. Since such a boundary relation yields a
trivial relation in~\ref{m4}, we may argue as in the previous case and
simply delete $(i,x,i,\xx)$ and its dual from $\iS$ obtaining thus a new pair
$(\iS_1, \iM)$ satisfying~\ref{p1} and~\ref{p2}.
\subsection{Case 3}

This is the case where we assume the existence of a boundary relation
$(i_0,x_0,j_0,\xx_0)$ such that $i_0 < j_0$, $\dirr(x_0) = r =
\dirr(\xx_0)$ and $c(w(i_0,j_0)) \subsetneqq c(w(i_0, \dirr(x_0)))$.

Let $a \in c(w(i_0, r)) \setminus c(w(i_0, j_0))$. Since $i_0 < j_0$,
the letter $a$ also belongs to $w(j_0, r)$. Therefore, by
Corollary~\ref{c:5}, 
there are unique factorizations given by $w(i_0, r) = u_i \: a\: v_i$ and $w(j_0, r) =
u_j \: a\: v_j$ such that $a \notin c(u_i) \cup c(u_j)$ and $\DRH$ satisfies the
equality $u_i = u_j$ and the relation $v_i \Req v_j$. Thus, the
decreasing of the induction parameter in this case is achieved by
discarding the segment $[\iota(i_0)+\alpha_{u_i}, \iota(r){[}$ in the
boundary relation $(i_0, x_0, j_0, \xx_0)$ as it is outlined in
Fig.~\ref{fig:caso4} below.

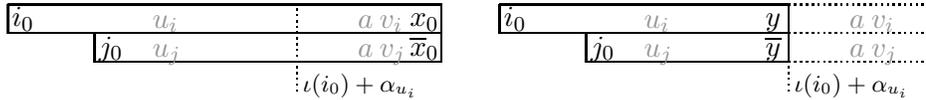
\begin{figure}[htpb]
  \centering
  \unitlength=0.00385mm \brickbackup=-900\brickheight=1000
  \begin{picture}(35000,4000)(0,-2000)
    \brick(0,1000,15000,100,200)(i_0,x_0)
    \brick(3000,0,12000,100,200)(j_0, \xx_0)
    \dvert(10000,0) \dvert(10000,1000)
    \dvert(10000,-1000)
    \put(10200, -1000){\footnotesize{$\iota(i_0)+\alpha_{u_i}$}}
    \put(12100,1200){\textcolor{Gray}{$a\:v_i$}}
    \put(12100,200){\textcolor{Gray}{$a\:v_j$}}
    \put(5000,1200){\textcolor{Gray}{$u_i$}}
    \put(5000,200){\textcolor{Gray}{$u_j$}}
    \brick(17000,1000,10000,100,-100)(i_0,y)
    \brick(20000,0,7000,100,-100)(j_0, \y)
    \dvert(27000,0) \dvert(27000,1000)
    \dvert(27000,-1000)
    \put(27200, -1000){\footnotesize{$\iota(i_0)+\alpha_{u_i}$}}
    \dbrick(27000,1000,25)
    \dvert(32000,1000)
    \dbrick(27000,0,25)
    \dvert(32000,0)
    \put(29100,1200){\textcolor{Gray}{$a\:v_i$}}
    \put(29100,200){\textcolor{Gray}{$a\:v_j$}}
    \put(22000,1200){\textcolor{Gray}{$u_i$}}
    \put(22000,200){\textcolor{Gray}{$u_j$}}
  \end{picture}
  \caption{Discarding the segment $[\iota(i_0)+\alpha_{u_i},
    \iota(r){[}$ in the boundary relation $(i_0, x_0, j_0, \xx_0)$.}
  \label{fig:caso4}
\end{figure}
Let $\iE = \{(i_0, x_0, j_0, \xx_0)\}$ and $\Delta = \{(\iota(i_0) +
\alpha_{u_i},\iota(i_0) + \alpha_{u_i})\}$.
By the above, the pair~$(\iE, \Delta)$ satisfies~\ref{f1}
and~\ref{f2}. Let $(\iS_0, \iM_0)$ be the factorization of~$(\iS,
\iM)$ with respect to~$(\iE, \Delta)$. Then, the pair~$(\iS_0, \iM_0)$
is covered by Case~\ref{caso2} and we may use it in order to decrease
the induction parameter.

Before proceeding with Cases~\ref{caso5} and~\ref{caso6} we perform an
auxiliary step that is useful in both of the remaining
cases.

\subsection{Auxiliary step}\label{aux_step}

We are interested in modifying some of the boundary relations of the
form $(i,x,j,\xx)$ such that $i < j$ and $\dirr(x) = r
= \dirr(\xx)$, so we assume that there exists at least one.
For each $i_0 \in \{i \in J \colon i < r\}$, let
$\iE(\iS, i_0) = \{(i,x,j,\xx) \colon \dirr(x) = r = \dirr(\xx), \:i < j, \:i \le
i_0\}$. Our goal is to prove the existence of a new
pair $(\iS_1, \iM_1)$ that keeps the induction parameter unchanged,
satisfies Property~\ref{p2}, and such that $\iE(\iS_1, i_0) =
\emptyset$. We first construct a pair $(\iS_0, \iM_0)$
satisfying the first two properties and such that $\card
{\iE(\iS_0, i_0)} < \card {\iE(\iS, i_0)}$. Then we argue by induction
to conclude the existence of such a pair $(\iS_1, \iM_1)$.

If $\iE(\iS, i_0) \neq \emptyset$, then we fix a boundary relation
$(k_0, x_0, k_1, \xx_0) \in \iE(\iS, i_0)$. Property~\ref{m4} yields $
w(k_0,k_1) w(k_1, r) = w(k_0,r) \Req w(k_1,r)$ modulo $\DRH$,
which in turn implies that $\DRH$ satisfies $w(k_0,r) \Req w(k_0, k_1)^{\omega}
  w(k_1,r)$.
As we are assuming that the Case~\ref{caso4} does
not hold, the contents of $w(k_0, k_1)$ and $w(k_1, r)$ are the same,
and so, $\DRH$ satisfies $ w(k_0, r) \Req w(k_0,
k_1)^{\omega}$.
Moreover,
since the product $w(k_0, k_1) \cdot w(k_0, k_1)$ is reduced, we may
use Corollary~\ref{c:3} to obtain
$\alpha_{w(k_0, r)} = \alpha_{w(k_0, k_1)^\omega} = \alpha_{w(k_0,
  k_1)} \cdot \omega.$
In particular, setting $\beta_p = \iota(k_0) + \alpha_{w(k_0, k_1)}
\cdot p$ for every $p \ge 0$, the inequality $\beta_p < \alpha = \iota(r)$
holds. On the other hand, as $k_0 \le i_0 < r$, we also have
$\alpha_{w(k_0, i_0)} < \alpha_{w(k_0, r)} = \alpha_{w(k_0, k_1)}
\cdot \omega$ and therefore there exists an integer $n \ge 1$ such
that $\alpha_{w(k_0, i_0)} < \alpha_{w(k_0, k_1)} \cdot n$. We fix
such an $n$ and we take $\iE = \{(k_0, x_0, k_1, \xx_0)\}$ and $\Delta
= \{(\beta_n, \beta_{n+1})\}$.
It is easy to check that the pair $(\iE, \Delta)$ satisfies
both~\ref{f1} and~\ref{f2}.
So, we let
$(\iS_0, \iM_0)$ be the factorization of $(\iS, \iM)$ with respect to
$(\iE, \Delta)$. Intuitively, the transformation performed in the
step $(\iS, \iM) \mapsto (\iS_0, \iM_0)$ is represented in pictures
\ref{fig:aux_step1} (before) and~\ref{fig:aux_step2} (after).
\begin{figure}[htpb]
  \centering
  \unitlength=0.00385mm \brickbackup=-600\brickheight=1000
  \begin{picture}(32000,3000)(500,-1050)
    \brickstart(0,1000,13500,100,400)(k_0,x_0)
    \brickend(31000,1000,2000,100,400)(k_0,x_0)
    \put(17000,2000){\line(1,0){11000}}
    \dbrick(13500,1000,90)
    \dbrick(13500,0,90)
    \put(17000,1000){\line(1,0){11000}}
    \put(17000,0){\line(1,0){11000}}
    \brickstart(4000,0,9500,100,400)(k_1, \xx_0)
    \brickend(31000,0,2000,100,400)(k_1, \xx_0)
    \dvert(4000,1000)
    \dvert(8000,1000)
    \dvert(8000,0) \dvert(8000,-1000)
    \put(8000,-1000){\footnotesize{$\beta_2$}}
    \dvert(12000,1000)
    \dvert(12000,0) \dvert(12000,-1000)
    \put(12000,-1000){\footnotesize{$\beta_3$}}
    \dvert(18000,1000)
    \dvert(18000,0) \dvert(18000,-1000)
    \put(18000,-1000){\footnotesize{$\beta_{n-1}$}}
    \dvert(22000,1000)
    \dvert(22000,0) \dvert(22000,-1000) \put(22000,
    -1000){\footnotesize{$\beta_n$}}
    \dvert(26000,1000)
    \dvert(26000,0) \dvert(26000,-1000) \put(26000,
    -1000){\footnotesize{$\beta_{n+1}$}}
    \dvert(20300,1000)
    \dvert(20300,0) \dvert(20300,-1000)
    \put(20300,-1000){\footnotesize{$\iota(i_0)$}}
    \put(14500,250){$\cdots$}
    \put(14500,1250){$\cdots$}
    \put(29000,250){$\cdots$}
    \put(29000,1250){$\cdots$}
  \end{picture}
  \caption{Original relation $(k_0, x_0, k_1, \xx_0)$ in the
    system of boundary relations $\iS$.}
  \label{fig:aux_step1}
\end{figure}
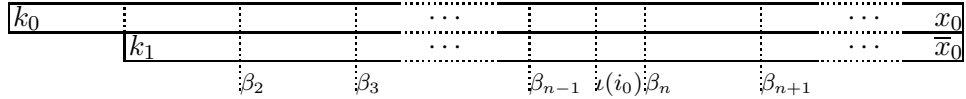
\begin{figure}[htpb]
  \centering
  \unitlength=0.00385mm \brickbackup=-600\brickheight=1000
  \begin{picture}(32000,1500)(500,-1000)
    \brickstart(0,1000,11000,100,400)(k_0,x_0)
    \brickend(31000,1000,2000,100,400)(k_0,x_0)
    \dbrick(11000,1000,100)
    \dbrick(11000,0,100)
    \put(14500,2000){\line(1,0){13500}}
    \put(14500,1000){\line(1,0){13500}}
    \put(14500,0){\line(1,0){13500}}
    \brickstart(4000,0,7000,100,400)(k_1, \xx_0)
    \brickend(31000,0,2000,100,400)(k_1, \xx_0)
    \llvert(22000,1000) \put(21300,1250){$y$}
    \put(22100,1250){{$k_n$}}
    \dvert(22000,0) \dvert(22000,-1000) \put(22000,
    -1000){\footnotesize{$\beta_n$}}
    \dvert(26000,1000) \put(25300,250){$\y$}
    \put(26100,250){{$k_{n+1}$}}
    \llvert(26000,0) \dvert(26000,-1000) \put(26000,
    -1000){\footnotesize{$\beta_{n+1}$}}
    \dvert(20300,1000)
    \dvert(20300,0) \dvert(20300,-1000)
    \put(20300,-1000){\footnotesize{$\iota(i_0)$}}
    \put(12000,250){$\cdots$}
    \put(12000,1250){$\cdots$}
    \put(29000,250){$\cdots$}
    \put(29000,1250){$\cdots$}
  \end{picture}
  \caption{Factorization of the relation $(k_0, x_0, k_1,\xx_0)$ in the
    new system of boundary relations $\iS_0$.}
  \label{fig:aux_step2}
\end{figure}
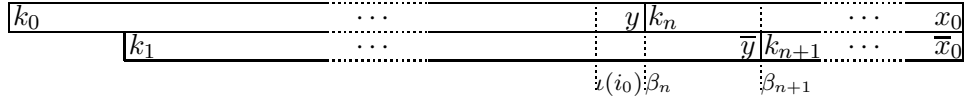

The definition of $(\iS_0, \iM_0)$ yields the following:
\begin{lemma}\label{l:10}
  Let $(\iS_0, \iM_0)$ be the pair defined above. Then the following
  holds:
  \begin{enumerate}
  \item Cases~\ref{caso2} and~\ref{caso4} do not apply to the system
    of boundary relations $\iS_0$;
  \item\label{item:5} the inequality $\card{ \iE (\iS_0, i_0)} < \card{ \iE (\iS,
      i_0)}$ holds.
  \end{enumerate}
\end{lemma}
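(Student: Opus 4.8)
The plan is to read both assertions directly off the definition of the factorization $(\iS,\iM)\mapsto(\iS_0,\iM_0)$, keeping in mind that $w_0=w$, that $\iC_0$ refines $\iC(\iS,\iM)$ through $\xi=\iota_0^{-1}\circ\iota$, that $[\iS_0,\iM_0]=[\iS,\iM]$ by Proposition~\ref{p:8} --- so the distinguished index of $\iS_0$ is $r_0:=\xi(r)=\iota_0^{-1}(\alpha)$ --- and that throughout this auxiliary step Cases~\ref{caso1}--\ref{caso5} are assumed not to apply to $\iS$, so in particular no boundary relation of $\iS$ witnesses the condition of Case~\ref{caso2} or of Case~\ref{caso4}. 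Write $k_n=\iota_0^{-1}(\beta_n)$ and $k_{n+1}=\iota_0^{-1}(\beta_{n+1})$ for the two new indices of $J_0$.

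For part~(1), I would first list the boundary relations of $\iB_0$ both of whose ends have $\dirr_0$-value equal to $r_0$, since these are exactly the ones that could make Case~\ref{caso2} or Case~\ref{caso4} apply to $\iS_0$. There are two families: (i) the reindexed relations $(\xi(i),x,\xi(j),\xx)$ arising from relations $(i,x,j,\xx)$ of $\iS$ with $\dirr(x)=r=\dirr(\xx)$, none of which is $(k_0,x_0,k_1,\xx_0)$ or its dual; and (ii) the new relation $(k_n,x_0,k_{n+1},\xx_0)$ together with its dual. The relation $(\xi(k_0),y,\xi(k_1),\y)$ does not occur on the list because $\dirr_0(y)=k_n$ and $\iota_0(k_n)=\beta_n<\alpha$, so $k_n\ne r_0$, and likewise $k_{n+1}\ne r_0$. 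Then Case~\ref{caso2} cannot be witnessed: a family-(i) relation with equal left and right index would come from a relation $(i,x,i,\xx)$ of $\iS$ of the forbidden form, while $k_n\ne k_{n+1}$ because $\beta_n<\beta_{n+1}$. And Case~\ref{caso4} cannot be witnessed either: for a family-(i) relation one has $w_0(\xi(i),\xi(j))=w(i,j)$ and $w_0(\xi(i),r_0)=w(i,r)=w(i,\dirr(x))$, so the strict inclusion demanded by Case~\ref{caso4} in $\iS_0$ is literally the one forbidden in $\iS$; and for $(k_n,x_0,k_{n+1},\xx_0)$ one computes $c(w_0(k_n,k_{n+1}))=c(w(k_0,k_1))=c(w_0(k_n,r_0))$, so the inclusion is not strict, while its dual has its left index above its right one and hence does not even have the shape required by Case~\ref{caso4}. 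This exhausts the relevant relations.

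For part~(2), I would compare $\iE(\iS_0,\xi(i_0))$ with $\iE(\iS,i_0)$ along the same list. A family-(i) relation $(\xi(i),x,\xi(j),\xx)$ lies in $\iE(\iS_0,\xi(i_0))$ iff $i<j$ and $i\le i_0$, that is, iff $(i,x,j,\xx)\in\iE(\iS,i_0)$; since $(k_0,x_0,k_1,\xx_0)$ belongs to $\iE(\iS,i_0)$ but has been deleted from $\iB_0$ --- its dual not lying in $\iE(\iS,i_0)$ anyway, having its left index above its right one --- the family-(i) contribution has cardinality $\card{\iE(\iS,i_0)}-1$. The new relation $(k_n,x_0,k_{n+1},\xx_0)$ contributes nothing: by the very choice of $n$, namely $\alpha_{w(k_0,i_0)}<\alpha_{w(k_0,k_1)}\cdot n$, one gets $\iota_0(\xi(i_0))=\iota(i_0)<\beta_n=\iota_0(k_n)$, so its left index $k_n$ exceeds $\xi(i_0)$; and its dual $(k_{n+1},\xx_0,k_n,x_0)$ has left index $k_{n+1}>k_n$, so it is not of the required shape. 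Hence $\card{\iE(\iS_0,\xi(i_0))}=\card{\iE(\iS,i_0)}-1<\card{\iE(\iS,i_0)}$.

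The only step that is more than bookkeeping --- and the one I would write out carefully --- is the content equality $c(w_0(k_n,k_{n+1}))=c(w_0(k_n,r_0))$ used in part~(1). The plan there is: using $w(k_0,r)\Req w(k_0,k_1)^\omega$ modulo $\DRH$ and $\alpha_{w(k_0,r)}=\alpha_{w(k_0,k_1)}\cdot\omega$ (both obtained just above the lemma), identify the ordinal endpoints $\beta_n,\beta_{n+1}$ with the block boundaries $\alpha_{w(k_0,k_1)}\cdot n$ and $\alpha_{w(k_0,k_1)}\cdot(n+1)$ inside $w(k_0,r)$; then Corollary~\ref{c:7}, together with the fact that $w(k_0,k_1)\cdot w(k_0,k_1)$ is a reduced product, yields $w_0(k_n,k_{n+1})=_\DRH w(k_0,k_1)$ (a single full block) and $w_0(k_n,r_0)\Req w(k_0,k_1)^\omega$ (a tail past finitely many blocks of the idempotent $w(k_0,k_1)^\omega$ is again $\Req$-equivalent to it), so that both contents equal $c(w(k_0,k_1))$. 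Everything else follows mechanically from the construction of $(\iS_0,\iM_0)$ and from the standing hypothesis that Cases~\ref{caso1}--\ref{caso5} do not apply to $\iS$.
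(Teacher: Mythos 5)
Your proposal is correct and follows exactly the route the paper intends: the authors state this lemma without proof as an immediate consequence of the construction of $(\iS_0,\iM_0)$, and your case-by-case bookkeeping (together with the content computation $c(w_0(k_n,k_{n+1}))=c(w(k_0,k_1))=c(w_0(k_n,r_0))$, which rests on the relations $w(k_0,r)\Req w(k_0,k_1)^\omega$ and $\alpha_{w(k_0,r)}=\alpha_{w(k_0,k_1)}\cdot\omega$ established just before the lemma) is precisely the verification being left to the reader. No gaps.
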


Recall that, by Proposition~\ref{p:8}, we also have $[\iS_0, \iM_0] = [\iS, \iM]$
and Property~\ref{p2} is satisfied by $(\iS_0, \iM_0)$.
Thus, arguing by induction, we may assume, without loss of generality, that
given a system $\iS$ in Cases~\ref{caso5} or~\ref{caso6} we have
$\iE(\iS, i_0) = \emptyset$, for all $i_0 < r$ in~$J$.

\subsection{Case 4}

In this case we suppose that the Cases~\ref{caso1},~\ref{caso2} and~\ref{caso4} do
not hold and that there is a boundary relation $(i,x,j,\xx)$
such that $\dirr(\xx) < \dirr(x) = r$.
Consider the index $\ell = \min\{\esq(x) \colon \dirr(\xx) < \dirr(x) = r\}$. By the
auxiliary step in Subsection~\ref{aux_step}, we may assume
without loss of generality that all 
boundary relations $(i,x,j,\xx)$ with $\dirr(x) = r = \dirr(\xx)$  
are such that $i,j > \ell$.
Let $x_0 \in \iX$ be such that $\esq(x_0) = \ell$ and $\dirr(\xx_0) <
\dirr(x_0) = r$, and let $\ell^* \in J$ be such that $(\ell, x_0, \ell^*, \xx_0)
\in \iB$. We set $r^* = \dirr(\xx_0)$.
Since Case~\ref{caso1} does not hold, we know that $\ell < r$.
The intuitive idea consists in transferring all the information
comprised in the factor $w(\ell, r)$ to the factor $w(\ell^*, r^*)$
in order to decrease the induction parameter by discarding the
factors $w(r^-, r)$ and $w[\iota(\ell^*) + (\iota(r^-) -
\iota(\ell)), \iota(r^*)[$ intervening in the boundary relation $(\ell, x_0, \ell^*,
\xx_0)$. See Fig.~\ref{fig:caso5}.

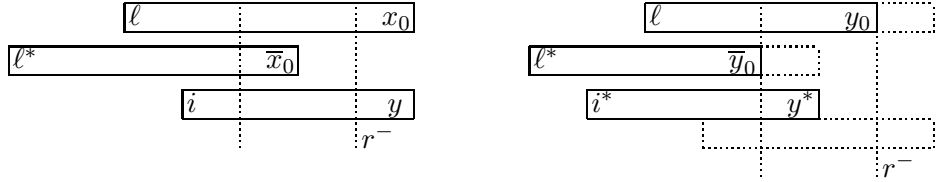
\begin{figure}[htpb]
  \centering
  \unitlength=0.00385mm \brickbackup=-900\brickheight=1000
  \begin{picture}(33000,8000)(0,-3000)
    \brick(6000,0,8000,100,0)(i, y)
    \brick(0, 1500, 10000, 100, 200)(\ell^*, \xx_0)
    \brick(4000, 3000, 10000, 100, 200)(\ell,x_0)
    \dvert(12000, -1000) \put(12200, -1000){$r^-$}
    \dvert(12000, 0)
    \dvert(12000, 1000)
    \dvert(12000, 2000)
    \dvert(12000, 3000)
    \dvert(8000, -1000) 
    \dvert(8000, 0)
    \dvert(8000, 1000)
    \dvert(8000, 2000)
    \dvert(8000, 3000)
    \brick(20000,0,8000,100,200)(i^*, y^*)
    \brick(18000, 1500, 8000, 100, 200)(\ell^*, \y_0)
    \brick(22000, 3000, 8000, 100, 200)(\ell,y_0)
    \dvert(30000,-2000)
    \dvert(30000, -1000) \put(30200, -2000){$r^-$}
    \dvert(30000, 0)
    \dvert(30000, 1000)
    \dvert(30000, 2000)
    \dvert(30000, 3000)
    \dvert(26000, -2000)
    \dvert(26000, -1000) 
    \dvert(26000, 0)
    \dvert(26000, 1000)
    \dvert(26000, 2000)
    \dvert(26000, 3000)
    \dbrick(30000,3000,10) \dvert(32000, 3000)
    \dbrick(26000,1500,10) \dvert(28000, 1500)
    \dbrick(24000,-1000,40) \dvert(24000, -1000) \dvert(32000, -1000)
  \end{picture}
  \caption{Transferring the segment $(\ell, r)$ to the segment $(\ell^*,r^*)$
    and discarding the final segments of the boxes $(\ell, x_0)$ and
    $(\ell^*, \xx_0)$.}
  \label{fig:caso5}
\end{figure}
More formally, we define the set of transport positions by
$$T = \{i \in J \colon \exists \text{ box } (i,x) \text{ such that }
\dirr(x) = r\} \cup \{r^-, r\}.$$
Observe that $\min(T) = \ell$ and $\max(T) = r$. Hence, for $i \in
T$ we may define the index $i^{\circ} = \iota(\ell^*) + (\iota(i) -
\iota(\ell))$.
Some useful properties of $\_^{\circ}$ are stated in the
next lemma.
\begin{lemma}\label{9.3}
  The function $\_^{\circ}: T \to \alpha_w+1$ satisfies the following:
  \begin{enumerate}
  \item \label{9.3.1} it preserves the order and is injective;
  \item \label{9.3.2} for every $i < j$ in $T$, the pseudovariety
    $\DRH$ satisfies the equality $w[i^{\circ}, j^{\circ}{[} = w(i,j)$
    if $j < r$ and the relation $w[i^{\circ}, r^{\circ}{[} \Req
    w(i,r)$;
  \item \label{9.3.3} for every $i \in T$, the inequality
    $i^\circ < \iota(i)$ holds.
  \end{enumerate}
\end{lemma}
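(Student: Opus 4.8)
The plan is to reduce the whole statement to a single instance of Property~\ref{m4}. Since $(\ell,x_0,\ell^*,\xx_0)\in\iB$ with $\dirr(x_0)=r$ and $\dirr(\xx_0)=r^*$, the model $\iM$ satisfies $w(\ell,r)\Req w(\ell^*,r^*)$ modulo $\DRH$; by Corollary~\ref{c:3} this forces $\alpha_{w(\ell,r)}=\alpha_{w(\ell^*,r^*)}$, an ordinal I will abbreviate by $\pi$. Apart from this, I will only use the description of $\pseudo A{DRH}$ recalled in Section~\ref{section3}: for ordinals $\beta\le\gamma\le\delta\le\alpha_w$ one has $w[\beta,\delta{[}=w[\beta,\gamma{[}\,w[\gamma,\delta{[}$ and $\alpha_{w[\beta,\gamma{[}}=\gamma-\beta$ (ordinal subtraction), so that positions compose: $(w[\beta,\gamma{[})[\rho_1,\rho_2{[}=w[\beta+\rho_1,\beta+\rho_2{[}$ whenever $\rho_1\le\rho_2\le\gamma-\beta$. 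Since $\min(T)=\ell$ and $\max(T)=r$, for each $i\in T$ the ordinal $\eta_i:=\iota(i)-\iota(\ell)$ is well defined, equals $\alpha_{w(\ell,i)}$, and is $\le\iota(r)-\iota(\ell)=\pi$; moreover $i^{\circ}=\iota(\ell^*)+\eta_i$, and in particular $r^{\circ}=\iota(\ell^*)+\pi=\iota(\ell^*)+\alpha_{w(\ell^*,r^*)}=\iota(r^*)$.

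I would establish part~\ref{9.3.2} first. For $i<j$ in $T$, decomposing $w(\ell,r)=w(\ell,i)\,w(i,j)\,w(j,r)$ and, since $\iota(\ell^*)\le i^{\circ}\le j^{\circ}\le\iota(r^*)$, also $w(\ell^*,r^*)=w[\iota(\ell^*),i^{\circ}{[}\,w[i^{\circ},j^{\circ}{[}\,w[j^{\circ},\iota(r^*){[}$, and using that positions compose, one reads off $w(i,j)=w(\ell,r)[\eta_i,\eta_j{[}$ and $w[i^{\circ},j^{\circ}{[}=w(\ell^*,r^*)[\eta_i,\eta_j{[}$. If $j<r$ then $\eta_i<\eta_j<\pi=\alpha_{w(\ell,r)}=\alpha_{w(\ell^*,r^*)}$, so Corollary~\ref{c:7} gives $w(i,j)=_\DRH w[i^{\circ},j^{\circ}{[}$. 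If $j=r$, the same identifications read $w(i,r)=w(\ell,r)[\eta_i,\pi{[}$ and $w[i^{\circ},r^{\circ}{[}=w(\ell^*,r^*)[\eta_i,\pi{[}$ (using $r^{\circ}=\iota(r^*)$): for $i=\ell$ this is literally $w(\ell,r)\Req w(\ell^*,r^*)$, while for $\ell<i<r$ one has $0<\eta_i<\pi$ and the $\Req$-part of Corollary~\ref{c:7} (with $\beta=0$, $\gamma=\eta_i$) yields $w(i,r)\Req w[i^{\circ},r^{\circ}{[}$.

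Part~\ref{9.3.3} is then pure ordinal arithmetic. We are in Case~\ref{caso5}, so $r^*=\dirr(\xx_0)<r=\dirr(x_0)$, hence $r^{\circ}=\iota(r^*)<\iota(r)$; since also $r^{\circ}=\iota(\ell^*)+\pi$ and $\iota(r)=\iota(\ell)+\pi$, this says $\iota(\ell^*)+\pi<\iota(\ell)+\pi$. As ordinal addition is (non-strictly) monotone in the left argument, it follows that $\iota(\ell^*)<\iota(\ell)$, and therefore $i^{\circ}=\iota(\ell^*)+\eta_i\le\iota(\ell)+\eta_i=\iota(i)$ for every $i\in T$. This inequality is strict, for from $\iota(\ell^*)+\eta_i=\iota(\ell)+\eta_i$ one would get, adding $\pi-\eta_i$ on the right, the contradiction $\iota(\ell^*)+\pi=\iota(\ell)+\pi$. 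Hence $i^{\circ}<\iota(i)\le\alpha_w$, which in passing confirms that $\_^{\circ}$ takes values in $\alpha_w+1$. Finally, part~\ref{9.3.1}: if $i<j$ in $T$ then $\iota(i)<\iota(j)$, so $\eta_i<\eta_j$ and $i^{\circ}=\iota(\ell^*)+\eta_i<\iota(\ell^*)+\eta_j=j^{\circ}$; thus $\_^{\circ}$ preserves the order, and is in particular injective.

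The main obstacle, I expect, is the bookkeeping in part~\ref{9.3.2}: correctly identifying $w(i,j)$ and $w[i^{\circ},j^{\circ}{[}$ as the \emph{same} factor $[\eta_i,\eta_j{[}$ of the two $\Req$-equivalent pseudowords $w(\ell,r)$ and $w(\ell^*,r^*)$, and keeping track of the interior-versus-suffix distinction --- which is exactly why part~\ref{9.3.2} can only claim $\Req$-equivalence, not equality, in the case $j=r$. Everything else is elementary ordinal arithmetic combined with the structural description of $\pseudo A{DRH}$ recalled in Section~\ref{section3}.
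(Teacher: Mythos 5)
Your proof is correct and, for part \ref{9.3.2} (the only part the paper actually writes out), it follows essentially the paper's route: both arguments hinge on Property \ref{m4} for the boundary relation $(\ell, x_0, \ell^*, \xx_0)$ giving $w(\ell,r)\Req w(\ell^*,r^*)=w[\ell^\circ,r^\circ{[}$ modulo $\DRH$, followed by Corollary~\ref{c:7} applied to this pair (you apply it once at positions $\eta_i<\eta_j$; the paper applies it twice, first to the prefix and then to the suffix, to the same effect). Your explicit ordinal-arithmetic arguments for parts \ref{9.3.1} and \ref{9.3.3} --- which the paper omits by reference to the analogous statements in the $\R$ case --- are also sound, including the non-obvious strictness step in \ref{9.3.3} obtained by right-adding $\pi-\eta_i$.
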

\begin{proof}
  We omit the proofs of assertions~\ref{9.3.1}
  and~\ref{9.3.3} since they express properties of ordinal numbers and 
  thus, are entirely analogous to the proofs of the corresponding properties
  in~\cite[Lemma~9.3]{JA}.

  Let us prove~\ref{9.3.2}.
  Since $(\ell, x_0, \ell^*, \xx_0)$ is a boundary relation in $\iB$ and
  $\iM$ is a model of~$\iS$, we have
  $w(\ell, r) = w(\ell, \dirr(x_0)) \Req w(\ell^*, \dirr(\xx_0)) =
  w(\ell^*, r^*)$ modulo~$\DRH$.
  Further,
  the fact that $\ell^\circ =\iota(\ell^*)$ and $r^\circ = \iota(r^*)$,
  implies that $\DRH$ satisfies $w(\ell,r) \Req w[\ell^\circ, r^\circ[$.
  On the other hand, since $j^\circ - i^\circ =\iota(j) - \iota(i)$,
  we may use Corollary~\ref{c:7} twice to first conclude that,
  for $j < r$, $\DRH$
  satisfies $w(\ell, j) = w[\ell^\circ, j^\circ{[}$ and then, that it
  satisfies the desired identity $w(i, j) = w[i^\circ, j^\circ{[}$.
  Similarly, when $j = r$, we get that $\DRH$ satisfies $w[i^\circ,
  j^\circ{[} \Req w(i,r)$.
\end{proof}

Before defining a new pair $(\iS_1, \iM_1)$, we still need to consider
a factorization scheme for the pseudoword $w$, in order to memorize the
information on constraints that we lose when transforming $\iS$
according to Fig.~\ref{fig:caso5}. We let $\iC_0 = (J_0, \iota_0, M_0,\Theta_0)$
be defined as follows:
\begin{itemize}
\item $J_0 = \{i^\circ \colon i \in T\}$;
\item $\iota_0: J_0 \hookrightarrow \alpha_w+1$ is the inclusion of
  ordinals;
\item By Lemma~\ref{9.3}\ref{9.3.2} the pseudowords
   $w(r^-,r)$ and $w[(r^-)^\circ, r^\circ[$ are $\Req$-equivalent
  modulo $\DRH$. Therefore, since Property~\ref{m1} holds for $(\iS,
  \iM)$, given $\vs \in \zeta(r^-,
  r)$ and $\mu \in M(r^-, r, \vs)$ the pseudowords $\Phi(r^-, r, \vs,
  \mu)$ and $w[(r^-)^\circ, r^\circ[$ are $\Req$-equivalent modulo
  $\DRH$ as well. For each such pair $(\vs, \mu)$, we fix a
  pseudoword $v_{\vs, \mu} \in (\pseudo AS)^I$ such that 
  \begin{equation}
    w[(r^-)^\circ, r^\circ{[} =_\DRH \Phi(r^-, r, \vs, \mu) \:
    v_{\vs,\mu}.\label{eq:51}
  \end{equation}
  In particular, it follows that $\Phi(r^-, r, \vs, \mu) \:
  v_{\vs,\mu}$ and $\Phi(r^-, r, \vs, \mu)$ are $\Req$-equivalent
  modulo $\DRH$. Combining Remark~\ref{sec:9} with Lemma
  \ref{sec:13}, we may deduce the inclusion $c(v_{\vs,\mu}) \subseteq \cum{\Phi(r^-, r, \vs,
    \mu)} = \cum{w[(r^-)^\circ, r^\circ{[}}$.
  
  Since $\zeta(r^-, r)$ is a finite set, we may write
  $\zeta(r^-, r) = \{\vs_1, \ldots, \vs_m\}$.
  Let $\vs_p = (s_{p,1}, s_{p,2})$ and denote by $\vt_{p, \mu}$ the
  pair $(s_{p,1}, \varphi(v_{\vs_p,\mu}))$ for each $\vs_p \in
  \zeta(r^-, r )$ and $\mu \in M(r^-, r, \vs_p)$.
  We define~$\Theta_0$ inductively as follows:
  \begin{itemize}
  \item start with $\Theta_0 = \emptyset$;
  \item for each $p \in \{1, \ldots , m\}$ and $\mu \in M(r^-, r,
    \vs_p)$, we set
    \begin{align*}
       \mu_{p, \mu} &= \{\overline \mu\colon \Theta_0((r^-)^\circ,
        r^\circ, \vt_{p, \mu}, \overline \mu) \text{ is defined}\};
      \\  \Theta_0((r^-)^\circ, r^\circ, \vt_{p, \mu}, \mu_{p, \mu})
           & = (\Phi(r^-, r, \vs_p, \mu), v_{\vs_p, \mu}).
    \end{align*}
  \end{itemize}
\item the map $M_0$ is given by $M_0((r^-)^\circ, r^\circ, \vt ) = \{\mu'\colon
  \Theta_0(r^-, r, \vt, \mu') \text{ is defined}\}$, whenever $\vt =
  \vt_{p, \mu}$ for certain $p = 1, \ldots, m$ and $\mu \in M(r^-, r,
  \vs_p)$. Observe that we may have $\vt_{p, \mu} = \vt_{p',\mu'}$
  with $(p, \mu) \neq (p', \mu')$.
\end{itemize}
\begin{lemma}
  The tuple $\iC_0$ just constructed is a factorization scheme for
  $w$.
\end{lemma}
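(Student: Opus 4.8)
The plan is to check, one at a time, the four items in the definition of a factorization scheme for $w$ together with Properties~\ref{fs1} and~\ref{fs2}; almost all the needed facts have already been prepared during the construction of $\iC_0$, so the proof amounts to collecting them.

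First I would handle the ``structural'' data. The set $J_0=\{i^\circ\colon i\in T\}$ is a set of ordinals, hence totally ordered, and it is finite since $T\subseteq J$ is finite (in fact, by Lemma~\ref{9.3}\ref{9.3.1} the map $i\mapsto i^\circ$ is an order isomorphism of $T$ onto $J_0$). The function $\iota_0$ is taken to be the inclusion of ordinals, so it is injective and order-preserving by definition; it only remains to see that $J_0\subseteq\alpha_w+1$, and this follows from Lemma~\ref{9.3}\ref{9.3.3}, since $i^\circ<\iota(i)\le\alpha_w$ for every $i\in T$. Finally, $M_0$ is a partial function from $J_0\times J_0\times(S\times S^I)$ into $\omega\setminus\{0\}$: by construction its domain is contained in $\{((r^-)^\circ,r^\circ)\}\times(S\times S^I)$, and for each $\vt$ in its domain the value $M_0((r^-)^\circ,r^\circ,\vt)$ is the number of pairs $(p,\mu)$ with $p\in\{1,\dots,m\}$, $\mu\in M(r^-,r,\vs_p)$ and $\vt_{p,\mu}=\vt$, which is a positive integer.

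Next I would turn to $\Theta_0$ and to Properties~\ref{fs1}--\ref{fs2}. Any element of $\dom(\Theta_0)$ is of the form $((r^-)^\circ,r^\circ,\vt_{p,\mu},\mu_{p,\mu})$ for some $p$ and some $\mu\in M(r^-,r,\vs_p)$, and there $\Theta_0$ takes the value $(\Phi(r^-,r,\vs_p,\mu),v_{\vs_p,\mu})$. The required content inclusion $c(\Psi_0((r^-)^\circ,r^\circ,\vt_{p,\mu},\mu_{p,\mu}))\subseteq\cum{\Phi_0((r^-)^\circ,r^\circ,\vt_{p,\mu},\mu_{p,\mu})}$ is exactly the inclusion $c(v_{\vs_p,\mu})\subseteq\cum{\Phi(r^-,r,\vs_p,\mu)}$ already obtained in the construction by combining Remark~\ref{sec:9} with Lemma~\ref{sec:13}. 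For Property~\ref{fs1} one has $\Thetap_0((r^-)^\circ,r^\circ,\vt_{p,\mu},\mu_{p,\mu})=\Phi(r^-,r,\vs_p,\mu)\,v_{\vs_p,\mu}=_\DRH w[(r^-)^\circ,r^\circ{[}$ directly from~\eqref{eq:51}, and $w[(r^-)^\circ,r^\circ{[}=w[\iota_0((r^-)^\circ),\iota_0(r^\circ){[}$ since $\iota_0$ is the inclusion. For Property~\ref{fs2}, write $\vs_p=(s_{p,1},s_{p,2})$, so that $\vt_{p,\mu}=(s_{p,1},\varphi(v_{\vs_p,\mu}))$; then $\varphi(\Phi_0)=\varphi(\Phi(r^-,r,\vs_p,\mu))=s_{p,1}$ by Property~\ref{m2} applied to the model $\iM$ of $\iS$, while $\varphi(\Psi_0)=\varphi(v_{\vs_p,\mu})$ trivially, as needed.

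I do not expect a genuine obstacle here: the statement is a bookkeeping check that the tuple assembled in the construction meets the defining list. The only point deserving a word of care is that the pairs $\vt_{p,\mu}$ need not be pairwise distinct, which is precisely why $\Theta_0$ was defined by the inductive procedure assigning to each occurrence a fresh index $\mu_{p,\mu}$; this guarantees that, for every $\vt$ in the domain, the set of $\mu'$ with $\Theta_0((r^-)^\circ,r^\circ,\vt,\mu')$ defined is an initial segment $\{0,\dots,M_0((r^-)^\circ,r^\circ,\vt)-1\}$ of $\omega$, so that $M_0$ and $\Theta_0$ are consistently defined.
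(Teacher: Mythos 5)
Your proof is correct and follows essentially the same route as the paper's: the substance in both is the verification of~\ref{fs1} via~\eqref{eq:51} and of~\ref{fs2} via Property~\ref{m2} for the model $\iM$, with the remaining items being bookkeeping. You spell out more of that bookkeeping (the order isomorphism $T\to J_0$, the inclusion $J_0\subseteq\alpha_w+1$ from Lemma~\ref{9.3}\ref{9.3.3}, the content inclusion for $\Theta_0$, and the consistency of $M_0$ with the possibly coinciding pairs $\vt_{p,\mu}$) than the paper, which simply asserts these points are guaranteed by the construction.
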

\begin{proof}
  Since $r^- \prec r$ in $J$, Lemma~\ref{9.3}\ref{9.3.1} yields
  $(r^-)^\circ \prec r ^\circ$ in $J_0$. Therefore, the domain
  of $\Theta_0$ is compatible with the definition of factorization
  scheme. Moreover, the definition of $M_0$
  guarantees that the relationship between the domains of $\Theta_0$
  and of $M_0$ is the correct one.
  To prove~\ref{fs1}, let $\vs_p \in \zeta(r^-, r)$ and $\mu \in
  M(r^-, r, \vs_p)$. In $\DRH$, we have
  \begin{align*}
    \Thetap_0((r^-)^\circ, r^\circ, \vt_{p, \mu}, \mu_{p, \mu})
    & \just={\text{def.}} \Phi(r^-, r, \vs_p, \mu) \:v_{\vs_p, \mu}
    \\ & \just={\eqref{eq:51}} w[(r^-)^\circ, r^\circ{[},
  \end{align*}
  as intended.
  On the other hand, recalling that $\vt_{p, \mu} = (s_{p,1}, \varphi(v_{\vs_p, \mu}))$
  and that $\iM$ is a model of $\iS$, Property~\ref{m2} yields
  $$\varphi(\Phi_0((r^-)^\circ, r^\circ, \vt_{p, \mu}, \mu_{p, \mu}))
  = \varphi(\Phi(r^-, r, \vs_p, \mu)) = s_{p,1}$$
  and by construction,
  $$\varphi(\Psi_0((r^-)^\circ, r^\circ, \vt_{p, \mu}, \mu_{p,
    \mu})) = \varphi(v_{\vs_p, \mu}).$$
  This proves~\ref{fs2}.
\end{proof}

We are now ready to proceed with the construction of the new pair
$(\iS_1, \iM_1)$, where
$$\iS_1 = (\iX_1, J_1, \zeta_1, M_1, \chi_1, \dirr_1, \iB_1, (\iB_\h)_1)
\text{ and } \iM = (w_1, \iota_1, \Theta_1).$$
We take as set of variables $\iX_1$ the old set $\iX$ together with a pair
of new variables $y_i$ and~$\y_i$, for each $i \in T \setminus \{r\}$.
The pseudoword $w_1$ is $w$. Let
$\iC_1 = (J_1, \iota_1, M_1, \Theta_1)$ be a common refinement of
$\iC(\iS, \iM)$ and $\iC_0$. The elements $J_1$, $M_1$, $\iota_1$ and
$\Theta_1$ are those given by $\iC_1$. To simplify the notation, we
set $\xi = \iota_1^{-1}\circ \iota$ and $i^\bullet =
\iota_1^{-1}(i^\circ)$. The refining functions from $\iC(\iS, \iM)$
to $\iC_1$ and from $\iC_0$ to $\iC_1$ are given, respectively, by
$\Lambda$ and $\Lambda_0$.
The functions $\zeta_1$ and $\chi_1$ are the ones induced by $\iC_1$,
namely 
$\zeta_1 = \zeta_{w_1,\iC_1}$ and $\chi_1 = \chi_{w_1,\iC_1}$ (recall
\eqref{zeta} and~\eqref{chi}). The $\dirr_1$
function is given by
\begin{align*}
  \dirr_1: \iX_1 &\to  J_1
  \\ x &\mapsto  \xi(\dirr(x)),\quad \text{ if } x \in \iX \text{ and }
                                  \dirr(x) < r;
  \\ x &\mapsto  r^\bullet, \quad \text{ if } x \in \iX \text{ and }
                               \dirr(x) = r;
  \\ y_i &\mapsto  \xi(i), \quad \text{ if } i \in T \setminus \{r\};
  \\ \y_i & \mapsto  i^\bullet, \quad  \text{ if } i \in T \setminus\{r\}.
\end{align*}
We define $\iB_1$ iteratively by:
\begin{enumerate}[label = (\arabic*)]\addtocounter{enumi}{-1}
\item set  $\iB' = \iB \setminus \{(\ell, x_0, \ell^*, \xx_0), (\ell^*, \xx_0, \ell, x_0)\}$;
\item \label{eq1} start with $\iB_1 = \{(\xi(\ell), y_i, \ell^\bullet, \y_i),
  (\ell^\bullet, \y_i, \xi(\ell), y_i) \colon i \in T \setminus \{r\}\}$;
\item\label{eq2} for each variable $x \in \iX$ such that $\dirr(x) = r$ and for each
  boundary relation $(i,x,j, \xx) \in \iB'$, we add to $\iB_1$ two new boundary
  relations as follows:
  \begin{enumerate}
  \item \label{eq2a} if $\dirr(\xx) < r$, then add the
    relations $(i^\bullet, x, \xi(j), \xx)$ and $(\xi(j), \xx,i^\bullet,
    x)$;
  \item  \label{eq2b}if $\dirr(\xx) = r$, then add the
    relations $(i^\bullet, x,j^\bullet, \xx)$ and $(j^\bullet,
    \xx,i^\bullet, x)$;
  \end{enumerate}
\item  \label{eq3} for each variable $x \in \iX$ such that
  $\dirr(x) < r$ and $\dirr(\xx) < r$ and for each boundary relation
  $(i,x,j, \xx) \in \iB'$, we add to $\iB_1$ the boundary relations
  $(\xi(i), x, \xi(j), \xx)$ and $(\xi(j), \xx, \xi(i), x)$.
\end{enumerate}
Finally, in $(\iB_\h)_1$ we include all the equations of the set
$\xi_\Lambda(\iB_\h)$ as well as the following:
\begin{itemize}
\item $(\xi(\ell) \mid \xi(r^-)) = (\ell^\bullet \mid (r^-)^\bullet)$;
\item $(\xi(r^-) \mid \xi(r)) = ((r^-)^\bullet \mid r^\bullet)
  \cdot \{(r^\bullet)^- \mid r^\bullet\}_{\vt\,'_{p,\mu},
    \mu'_{p, \mu}}^{\omega-1}\cdot \{\xi(r)^- \mid \xi(r)\}_{\vs\,'_p, \mu'}$,
  for each $\vs_p \in \zeta(r^-, r)$ and $\mu \in M(r^-, r,
  \vs_p)$. Here, we are writing
  \begin{align*}
     \Lambda(r^-, r, \vs_p, \mu) &=((\ldots, \vs\,'_p), \mu');
    \\  \Lambda_0((r^-)^\circ, r^\circ, \vt_{p, \mu}, \mu_{p, \mu}) &=
  ((\ldots, \vt\,'_{p, \mu}), \mu'_{p, \mu}).
  \end{align*}
\end{itemize}

\begin{prop}\label{9.4}
  The tuple  $\iM_1$ is a model of the system of boundary relations $\iS_1$.
\end{prop}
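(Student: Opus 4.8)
The plan is to verify the defining properties \ref{m1}--\ref{m5} of a model for the triple $\iM_1 = (w_1, \iota_1, \Theta_1)$, bearing in mind that $w_1 = w$ and that $\iC(\iS_1, \iM_1)$ is, by construction, the common refinement $\iC_1 = (J_1, \iota_1, M_1, \Theta_1)$ of $\iC(\iS, \iM)$ and $\iC_0$. First I would dispose of the routine part: that $\iS_1$ is genuinely a system of boundary relations follows by inspection of the construction (the set $\iB_1$ is closed under taking duals, since each relation is adjoined together with its dual in steps~\ref{eq1}--\ref{eq3}, and the equations of $(\iB_\h)_1$ have their variables in $X_{(J_1, \zeta_1, M_1)}$); the injective order-preserving map $\iota_1$ sends $\min(J_1)$ to $0$ and $\max(J_1)$ to $\alpha_w$ because the ordinals $\ell^\circ = \iota(\ell^*)$ and $r^\circ = \iota(r^*)$ already lie in $\im(\iota)$; the required inclusion between the coordinates of $\Theta_1$ is part of $\iC_1$ being a factorization scheme; and, since $\zeta_1 = \zeta_{w_1, \iC_1}$ and $\chi_1 = \chi_{w_1, \iC_1}$ by construction, Lemma~\ref{zetachi} yields Properties~\ref{m1}--\ref{m3} at once.

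Next I would establish Property~\ref{m4} for each of the four families of boundary relations that make up $\iB_1$. A relation $(\xi(\ell), y_i, \ell^\bullet, \y_i)$ from step~\ref{eq1} has $\dirr_1(y_i) = \xi(i)$ and $\dirr_1(\y_i) = i^\bullet$, so the condition to check is $w(\ell, i) \Req w[\ell^\circ, i^\circ{[}$ modulo $\DRH$, which is exactly Lemma~\ref{9.3}\ref{9.3.2}. For a relation $(i^\bullet, x, \xi(j), \xx)$ from step~\ref{eq2} in the subcase $\dirr(\xx) < r$, the index $i$ lies in $T$ (being the left index of a box whose variable has $\dirr$ equal to $r$) and $i < r$ (because Case~\ref{caso1} is excluded), so chaining Lemma~\ref{9.3}\ref{9.3.2} with Property~\ref{m4} for $\iM$ gives $w[i^\circ, r^\circ{[} \Req w(i, r) \Req w(j, \dirr(\xx))$; the subcase $\dirr(\xx) = r$ of step~\ref{eq2} is handled the same way through $w[i^\circ, r^\circ{[} \Req w(i, r) \Req w(j, r) \Req w[j^\circ, r^\circ{[}$, and a relation from step~\ref{eq3} is literally Property~\ref{m4} for $\iM$.

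Finally, for Property~\ref{m5} I must show that $\delta_{w_1, \iC_1}$ is a solution modulo~$\h$ of $(\iB_\h)_1$, which is $\xi_\Lambda(\iB_\h)$ together with the two new families of equations. The equations of $\xi_\Lambda(\iB_\h)$ are handled by Remark~\ref{sec:21}. The equation $(\xi(\ell)\mid\xi(r^-)) = (\ell^\bullet \mid (r^-)^\bullet)$ is sent by $\delta_{w_1, \iC_1}$ to the pair $w(\ell, r^-)$, $w[\ell^\circ, (r^-)^\circ{[}$, which agree modulo $\DRH$, hence modulo $\h$, again by Lemma~\ref{9.3}\ref{9.3.2}. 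The decisive case is the second family: fixing $\vs_p \in \zeta(r^-, r)$ and $\mu \in M(r^-, r, \vs_p)$ and abbreviating $\Phi = \Phi(r^-, r, \vs_p, \mu)$, $\Psi = \Psi(r^-, r, \vs_p, \mu)$, $v = v_{\vs_p, \mu}$, the homomorphism $\delta_{w_1, \iC_1}$ sends the left-hand side to $w(r^-, r)$, and -- using Property~\ref{rr24} for the refinements along $\Lambda$ and $\Lambda_0$, together with the identity $\Psi_0((r^-)^\circ, r^\circ, \vt_{p, \mu}, \mu_{p, \mu}) = v$ built into $\iC_0$ -- the right-hand side to $w[(r^-)^\circ, r^\circ{[} \cdot v^{\omega - 1} \cdot \Psi$ modulo~$\h$. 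By~\eqref{eq:51} this pseudoword equals $\Phi\, v\, v^{\omega - 1}\, \Psi = \Phi\, v^{\omega}\, \Psi$ modulo $\DRH$; since $c(v) \subseteq \cum{\Phi}$ and $v^{\omega} =_\h I$, Corollary~\ref{c:2} gives $\Phi\, v^{\omega}\, \Psi =_\DRH \Phi\Psi$, and Property~\ref{m1} for $\iM$ identifies $\Phi\Psi$ with $w(r^-, r)$ modulo $\DRH$. Since $\h \subseteq \DRH$, this chain of equalities yields the $\h$-equality required, so $\iM_1$ is a model of $\iS_1$. I expect the last paragraph to be the main obstacle: one must carefully compute how each composite variable of $(\iB_\h)_1$ is evaluated through the two refining functions, and invoke the cumulative-content inclusion $c(v_{\vs_p, \mu}) \subseteq \cum{\Phi(r^-, r, \vs_p, \mu)}$ at precisely the point where Corollary~\ref{c:2} is applied.
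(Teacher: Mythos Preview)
Your proposal is correct and follows essentially the same approach as the paper. You supply more detail for \ref{m4} (which the paper dismisses as ``a routine computation''), and for the final family of equations in \ref{m5} you route the computation through $\DRH$ via Corollary~\ref{c:2} whereas the paper works directly in $\h$ using group inversion (from $\Phi v =_\DRH w[(r^-)^\circ, r^\circ{[}$ it passes to $\Phi =_\h w[(r^-)^\circ, r^\circ{[}\, v^{\omega-1}$); these are minor presentational differences, not substantive ones.
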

\begin{proof}
  Properties~\ref{m1}--\ref{m3} are satisfied as a consequence of
  Lemma~\ref{zetachi}.
  It is a routine computation to check~\ref{m4}.
  By Remark~\ref{sec:21}, the homomorphism $\delta_{w, \iC_1} =
  \delta_{w_1,\iC_1}$ is a
  solution modulo $\h$ of $\xi_\Lambda(\iB_\h)$.
  Also, the homomorphism
  $\delta_{w_1, \iC_1}$ is a solution modulo $\h$ of the equation
  $(\xi(\ell) \mid \xi(r^-)) = (\ell^\bullet \mid (r^-)^\bullet)$ as a
  consequence of the fact that $\DRH$ satisfies $w_1(\xi(\ell),
  \xi(r^-)) = w_1(\ell^\bullet, (r^-)^\bullet)$, which
  follows from~\ref{m4}.
  Finally, the equations of the form
  $(\xi(r^-) \mid \xi(r)) = ((r^-)^\bullet \mid r^\bullet) \cdot
  \{(r^\bullet)^- \mid r^\bullet\}_{\vt\,'_{p,\mu}, \mu'_{p,
      \mu}}^{\omega-1}\cdot \{\xi(r)^- \mid \xi(r)\}_{\vs\,'_p, \mu'}$
  are satisfied by $\delta_{w_1, \iC_1}$ modulo $\h$ since the
  following pseudoidentities are valid in $\h$:
  \begin{align*}
    \delta_{w_1, \iC_1}(\xi(r^-) \mid \xi(r)) & = w_1(\xi(r^-), \xi(r)) = w(r^-, r)
    \\ & = \Phi(r^-, r, \vs_p, \mu) \Psi(r^-, r, \vs_p, \mu)
       \quad\text{by Property~\ref{m1} for $(\iS, \iM)$}
    \\ & = w[(r^-)^\circ, r^\circ{[} \: v_{\vs_p, \mu}^{\omega-1} \:\:
         \Psi(r^-, r, \vs_p, \mu)
       \quad\text{by~\eqref{eq:51}}
    \\ & = w_1((r^-)^\bullet, r^\bullet) \: \Psi_0((r^-)^\circ, r^\circ, \vt_{p, \mu},
         \mu_{p, \mu})^{\omega-1} \: \Psi(r^-, r, \vs_p, \mu)
    \\ & =  w_1((r^-)^\bullet, r^\bullet) \:  \quad\quad\quad\quad \text{by~\ref{rr24} for $\Lambda$ and $\Lambda_0$}
    \\ & \quad \cdot \Psi_1((r^\bullet)^-, r^\bullet, \vt\,'_{p, \mu},
         \mu'_{p, \mu})^{\omega-1}\: \Psi_1(\xi(r)^-, \xi(r), \vs\,'_p,
         \mu')
    \\ & =  \delta_{w_1, \iC_1}\left(((r^-)^\bullet\mid r^\bullet)
         \cdot \{(r^\bullet)^- \mid r^\bullet\}_{\vt\,'_{p,\mu}, \mu'_{p,
         \mu}}^{\omega-1}\cdot \{\xi(r)^- \mid \xi(r)\}_{\vs\,'_p,
         \mu'}\right).
  \end{align*}
  With this, we may conclude that $\iM_1$ is a model of $\iS_1$.
\end{proof}
\begin{prop}
  Properties~\ref{p1} and~\ref{p2} are satisfied by the pairs $(\iS,
  \iM)$ and~$(\iS_1, \iM_1)$.
\end{prop}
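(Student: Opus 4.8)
The plan is to handle Property~\ref{p1} by a direct inspection of the induction parameter, and to obtain Property~\ref{p2} by converting a $\kappa$-model of $\iS_1$ back into a $\kappa$-model of $\iS$, essentially by ``undoing'' the transfer map $\_^\circ$. For~\ref{p1} I would simply observe that, by the definition of $\dirr_1$, every box of $\iB_1$ points, under $\iota_1$, to an ordinal strictly below $\alpha=\iota(r)$: the boxes inherited from $\iB$ through rule~\ref{eq3} keep their old $\dirr$-index, which is $<r$; the boxes with $\dirr_1$-value $r^\bullet$ produced by rule~\ref{eq2} sit at $\iota_1(r^\bullet)=r^\circ$, and $r^\circ=\iota(r^*)<\iota(r)=\alpha$ since $r^*=\dirr(\xx_0)<\dirr(x_0)=r$ (or directly by Lemma~\ref{9.3}\ref{9.3.3}); and the new boxes $(\xi(\ell),y_i)$, $(\ell^\bullet,\y_i)$ of item~\ref{eq1} point to $\xi(i)$ and $i^\bullet$ with $i\in T\setminus\{r\}$, hence to ordinals $<\alpha$. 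Thus the first component of $[\iS_1,\iM_1]$ is $<\alpha$, giving $[\iS_1,\iM_1]<[\iS,\iM]$.

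For~\ref{p2}, given a model $\iM_1'=(w_1',\iota_1',\Theta_1')$ of $\iS_1$ in $\kappa$-words, I would take $w'=w_1'$, $\iota'=\iota_1'\circ\xi$, and let $\Theta'$ come from the restriction $\iC'=(J,\iota',M,\Theta')$ of $\iC(\iS_1,\iM_1')$ to $J$ with respect to $\Lambda$ (regarding $\Lambda$, as in Remark~\ref{m}, as a candidate refining function to $\iC(\iS_1,\iM_1')$; its restriction has partial function $M$). By Proposition~\ref{8.2}, $\iC'$ is a factorization scheme for $w'$ in $\kappa$-words, so $\iM':=(w',\iota',\Theta')$ satisfies~\ref{m1} and~\ref{m2} at once, and~\ref{m3} after a routine cumulative-content computation; and, since $\xi_\Lambda(\iB_\h)\subseteq(\iB_\h)_1$ while the $\zeta$- and $M$-data of $\iC'$ match those of $\iC(\iS,\iM)$, Remark~\ref{simp} reduces~\ref{m5} to the two families of defining identities of the restriction $\iC'$. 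The substantial point is~\ref{m4}, which I would organise around the \emph{key relation}: $\DRH$ satisfies $w_1'(\xi(\ell),\xi(r))\Req w_1'(\ell^\bullet,r^\bullet)$. Because $\xi(\ell^*)=\ell^\bullet$ and $\xi(r^*)=r^\bullet$ (as $\ell^\circ=\iota(\ell^*)$ and $r^\circ=\iota(r^*)$), this is precisely~\ref{m4} for the deleted relation $(\ell,x_0,\ell^*,\xx_0)$ and its dual.

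Granting the key relation, every other instance of~\ref{m4} for $\iM'$ follows from~\ref{m4} for $\iM_1'$ by translating indices through rules~\ref{eq2} and~\ref{eq3}, combined --- whenever $\dirr(x)=r$ --- with the auxiliary equivalences $w_1'(\xi(i),\xi(r))\Req w_1'(i^\bullet,r^\bullet)$ for $i\in T$; these I would obtain from the key relation by using Corollary~\ref{c:7} to cancel the $\Req$-equivalent common prefixes $w_1'(\xi(\ell),\xi(i))\Req w_1'(\ell^\bullet,i^\bullet)$ supplied by the box relations $(\xi(\ell),y_i,\ell^\bullet,\y_i)$. To prove the key relation itself I would argue in two stages. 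First, $w_1'(\xi(\ell),\xi(r^-))=_\DRH w_1'(\ell^\bullet,(r^-)^\bullet)$: the box relation $(\xi(\ell),y_{r^-},\ell^\bullet,\y_{r^-})$ gives the $\Req$-equivalence of the two sides, and the equation $(\xi(\ell)\mid\xi(r^-))=(\ell^\bullet\mid(r^-)^\bullet)$ of $(\iB_\h)_1$ gives, through~\ref{m5} for $\iM_1'$, their equality modulo $\h$; Lemma~\ref{sec:16} then yields genuine equality modulo $\DRH$.

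Second --- and this is where the precise shape of the $(\iB_\h)_1$-equations is used --- I would evaluate by $\delta_{w_1',\iC(\iS_1,\iM_1')}$ the equation of $(\iB_\h)_1$ of the form
\[
(\xi(r^-)\mid\xi(r))=((r^-)^\bullet\mid r^\bullet)\cdot\{(r^\bullet)^-\mid r^\bullet\}_{\vt\,'_{p,\mu},\mu'_{p,\mu}}^{\omega-1}\cdot\{\xi(r)^-\mid\xi(r)\}_{\vs\,'_p,\mu'},
\]
and, using~\ref{m5} for $\iM_1'$, property~\ref{rr24} for the refining functions $\Lambda$ and $\Lambda_0$, and~\eqref{eq:51}, recognise $w_1'(\xi(r^-),\xi(r))$ modulo $\h$ as $w_1'((r^-)^\bullet,r^\bullet)$ followed by a factor whose content is contained in $\cum{w_1'(\xi(r^-),\xi(r))}$ --- here the recorded inclusion $c(v_{\vs,\mu})\subseteq\cum{w[(r^-)^\circ,r^\circ{[}}$ is what makes things fit; Lemma~\ref{sec:16} then upgrades this to $w_1'(\xi(r^-),\xi(r))\Req w_1'((r^-)^\bullet,r^\bullet)$ modulo $\DRH$. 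Prefixing the first-stage equality and using that $\Req$ is a left congruence gives the key relation. The hard part is exactly this second stage: one must check that the $(\omega-1)$-power correction built into that $(\iB_\h)_1$-equation --- designed to undo the duplication implicit in the periodicity $w(k_0,r)\Req w(k_0,k_1)^\omega$ that underlies $\_^\circ$ --- really lands inside the relevant cumulative content, so that Lemma~\ref{sec:16} is applicable and one recovers the $\Req$-equivalence rather than merely the $\h$-equality. Everything else is routine index bookkeeping among $J$, $J_1$, and the transport positions $T$.
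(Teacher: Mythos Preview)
Your argument for~\ref{p1} is correct. The gap is in~\ref{p2}, precisely at ``stage~2'' of your key relation. From~\ref{m5} for $\iM_1'$ you obtain only the $\h$-identity
\[
w_1'(\xi(r^-),\xi(r))\ =_\h\ w_1'((r^-)^\bullet,r^\bullet)\cdot\Psi_1'((r^\bullet)^-,r^\bullet,\ldots)^{\omega-1}\cdot\Psi_1'(\xi(r)^-,\xi(r),\ldots),
\]
and you then invoke Lemma~\ref{sec:16} to ``upgrade'' this to $w_1'(\xi(r^-),\xi(r))\Req w_1'((r^-)^\bullet,r^\bullet)$ in $\DRH$. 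But Lemma~\ref{sec:16} does not give that implication: it states that $uu_0=_{\DRH}vv_0$ holds \emph{iff} both $u\Req v$ \emph{and} $uu_0=_\h vv_0$; from the $\h$-equality alone you cannot extract the $\Req$-equivalence. And nothing in $\iS_1$ forces it: inspecting $\iB_1$, no box has $\dirr_1$-value $\xi(r)$, so in an arbitrary model $\iM_1'$ the segment $w_1'(\xi(r^-),\xi(r))$ is constrained only via $(\iB_\h)_1$, i.e.\ only modulo~$\h$. (Already for $\h$ trivial, nothing prevents $\alpha_{w_1'(\xi(r^-),\xi(r))}\neq\alpha_{w_1'((r^-)^\bullet,r^\bullet)}$.) Hence your choice $w'=w_1'$ need not satisfy~\ref{m4} for the deleted relation $(\ell,x_0,\ell^*,\xx_0)$.

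The paper does not take $w'=w_1'$. It \emph{rebuilds} the segment between $r^-$ and $r$, setting
\[
w'=w_1'[0,\iota_1'(\xi(r^-)){[}\ \cdot\ w_1'((r^-)^\bullet,r^\bullet)\,\Psi_1'((r^\bullet)^-,r^\bullet,\ldots)^{\omega-1}\,\Psi_1'(\xi(r)^-,\xi(r),\ldots)\ \cdot\ w_1'[\iota_1'(\xi(r)),\alpha_{w_1'}{[},
\]
with $\iota'$ and $\Theta'$ defined piecewise accordingly. With this $w'$ one has $w'(r^-,r)\Req w_1'((r^-)^\bullet,r^\bullet)$ \emph{by construction} (the $\Psi$-factors have content contained in the relevant cumulative content), and then your stage~1 computation yields~\ref{m4} for $(\ell,x_0,\ell^*,\xx_0)$. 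The $(\iB_\h)_1$-equations are used only where they legitimately apply, namely to check~\ref{m1} and~\ref{m5} (equalities modulo~$\h$) for the new pair; the rest of the verification is essentially the bookkeeping you describe.
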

\begin{proof}
  Property~\ref{p1} is trivial.
  For Property~\ref{p2}, we may let $\iM_1' = (w_1', \iota_1', \Theta_1')$
  be a model of $\iS_1$ in $\kappa$-words and we construct a new
  triple $\iM' = (w', \iota', \Theta')$ as follows. We
  fix a pair $(\vs_q, \mu_0) \in \zeta(r^-, r) \times M(r^-, r,
  \vs_q)$, for a certain $q \in \{1, \ldots, m\}$. We write
  $\Lambda(r^-, r, \vs_q, \mu_0) = ((\ldots, \vs\,'_q), \mu_0')$ and
  $ \Lambda_0((r^-)^\circ, r^\circ, \vt_{q, \mu_0},
  \mu_{q, \mu_0}) = ((\ldots, \vt\,'_{q, \mu_0}), \mu'_{q,
    \mu_0})$. The $\kappa$-word~$w'$ is given by
  \begin{align*}
    w' & =  w_1'[0, \iota_1'(\xi(r^-)){[}
    \\ & \quad \cdot  w_1'((r^-)^\bullet,
         r^\bullet) \Psi'_1((r^\bullet)^-, r^\bullet, \vt\,'_{q, \mu_0},
         \mu'_{q, \mu_0})^{\omega - 1} \Psi'_1(\xi(r)^-, \xi(r), \vs\,'_q,
         \mu_0')
    \\ & \quad \cdot w_1'[\iota'_1(\xi(r)), \alpha_{w'_1}{[}.
  \end{align*}
  Note that Lemma~\ref{l:6} yields that $w'$ is indeed a $\kappa$-word.
  For $i \in J$, we let $\iota'(i)$ be given~by
  $$ \iota'(i) =
  \begin{cases} \iota'_1(\xi(i)), \quad\text{ if } i \le r^-;
    \\ \iota'(r^-) + (\iota'_1(r^\bullet) -
    \iota'_1((r^-)^\bullet)), \quad \text{ if } i = r;
    \\ \iota'(r) + (\iota'_1(\xi(i)) - \iota'_1(\xi(r))), \quad \text{
      if } i > r.
  \end{cases} $$
  Finally, we define $\Theta'$. For $i \prec j \le r^-$ or $r \le
  i \prec j$ in $J$, $\vs \in \zeta(i,j)$ and $\mu \in M(i,j,\vs)$,
  let $\Lambda(i,j,\vs,\mu) = ((\vt_1, \ldots, \vt_n), \mu')$ and
  $\xi(i) = i_0 \prec i_1 \prec \cdots \prec i_n = \xi(j)$. Then, we
  take
  \begin{align*}
       \Phi'(i,j,\vs, \mu)
       &= \left(\prod_{k = 1}^{n-1}
        \Thetap'_1(i_{k-1}, i_k, \vt_k,0)\right) \Phi'_1(\xi(j)^-, \xi(j),
      \vt_n, \mu'),
      \\  \Psi'(i,j,\vs, \mu)
       &=  \Psi'_1(\xi(j)^-, \xi(j), \vt_n, \mu').
    \end{align*}
    On the other hand, when $(i,j) = (r^-, r)$, $\vs_p \in \zeta(r^-, r)$, and $\mu \in
  M(r^-, r, \vs_p)$,  we write $\Lambda(r^-, r, \vs_p, \mu) = ((\vt_1, \ldots, \vt_n),
  \mu')$, $\Lambda_0((r^-)^\circ, r^\circ, \vt_{p, \mu}, \mu_{p,
    \mu}) = ((\vt\,'_1, \ldots, \vt\,'_n), \mu'_{p, \mu})$, and we let
  $(r^-)^\bullet = i_0 \prec i_1 \prec \cdots
    \prec i_n = r^\bullet$. We define
  \begin{align*}
      \Phi'(r^-, r, \vs_p, \mu)
      & = \left(\prod_{k =
        1}^{n-1} \Thetap'_1(i_{k-1}, i_k, \vt\,'_k, 0)\right)
        \Phi'_1((r^{\bullet})^-, r^\bullet, \vt\,'_n, \mu'_{p,\mu}),
      \\ \Psi'(r^-, r, \vs_p, \mu)
      &=  \Psi'_1(\xi(r)^-, \xi(r), \vt_n, \mu').
    \end{align*}
  It is worth observing that, since each component of $\Theta'_1$ is a
  $\kappa$-word, the components of $\Theta'$ are $\kappa$-words as
  well.
  
  Let us verify that $\iM'$ is a model of $\iS$.
  For Properties
  \ref{m1} and~\ref{m2}, take an element $(i,j, \vs) \in \dom(M)$ and
  let $\mu \in
  M(i,j,\vs)$.
  Property~\ref{m1} follows from the same property for the
  pair~$(\iS_1, \iM_1')$ and, when $(i,j) \neq (r^-, r)$,~\ref{m2} follows from the
  same property for $(\iS_1, \iM_1')$ and from
  Property~\ref{rr23} for~$\Lambda$.
  To prove~\ref{m1} when $(i,j) = (r^-, r)$ is more delicate. We suppose
  that $\vs = \vs_p$.
  Using the construction of~$\Theta'$ and the Property~\ref{m1} for
  $(\iS_1, \iM_1')$, it may be derived that $\DRH$ satisfies
  \begin{equation}
    \label{eq:52}
    \Thetap'(r^-, r, \vs_p, \mu) = w_1'((r^-)^\bullet, r^\bullet)\Psi'_1((r^{\bullet})^-,
    r^\bullet, \vt\,'_n, \mu'_{p,\mu})^{\omega-1} \Psi'_1(\xi(r)^-,
    \xi(r), \vt_n, \mu').
  \end{equation}
  In turn, since $c(\Psi'_1((r^{\bullet})^-, r^\bullet, \vt\,'_n,
  \mu'_{p,\mu})^{\omega-1} \Psi'_1(\xi(r)^-,\xi(r), \vt_n, \mu'))
  \subseteq \cum{ w_1'((r^-)^\bullet, r^\bullet)}$, 
  $\DRH$ also satisfies
  \begin{align}
      w_1'((r^-)^\bullet, r^\bullet)\Psi'_1((r^{\bullet})^-,
      r^\bullet, \vt\,'_n, \mu'_{p,\mu})^{\omega-1} \Psi'_1(\xi(r)^-,
      \xi(r), \vt_n, \mu') & \Req w'_1((r^-)^\bullet, r^\bullet)\nonumber
      \\ & \Req w'(r^-, r).
    \label{eq:8}
  \end{align}
  On the other hand, since the equations
  \begin{align}
    (\xi(r^-) \mid \xi(r))
    &= ((r^-)^\bullet \mid r^\bullet) \cdot
      \{(r^\bullet)^- \mid r^\bullet\}_{\vt\,'_{n}, \mu'_{p,
      \mu}}^{\omega-1}\cdot \{\xi(r)^- \mid \xi(r)\}_{\vt_n, \mu'};\label{eq:53}
    \\  (\xi(r^-) \mid \xi(r))
    &= ((r^-)^\bullet \mid r^\bullet) \cdot
      \{(r^\bullet)^- \mid r^\bullet\}_{\vt\,'_{q,\mu_0}, \mu'_{q,
      \mu_0}}^{\omega-1}\cdot \{\xi(r)^- \mid \xi(r)\}_{\vs\,'_q, \mu_0'}\label{eq:54}
  \end{align}
  belong to $(\iB_\h)_1$, $\h$ satisfies
  \begin{align}
    \Thetap'(r^-, r, \vs_p, \mu)
    & \just{=}{\eqref{eq:52}} w'_1((r^-)^\bullet, r^\bullet)\nonumber
      \\ & \qquad \cdot
      \Psi'_1 ((r^{\bullet})^-, r^\bullet,\vt\,'_n,
      \mu'_{p,\mu})^{\omega-1} \Psi'_1(\xi(r)^-,\xi(r), \vt_n,\mu')\nonumber
    \\ & \just ={\eqref{eq:53}} w'_1(\xi(r^-), \xi(r))\nonumber
    \\ & \just ={\eqref{eq:54}} w_1'((r^-)^\bullet, r^\bullet)
    \\ & \qquad\cdot \Psi'_1((r^{\bullet})^-,
         r^\bullet,\vt\,'_{q, \mu_0},\mu'_{q,\mu_0})^{\omega-1}
         \Psi'_1(\xi(r)^-,\xi(r), \vs\,'_{q},\mu'_0)\nonumber
    \\ & \just={\text{def.}} w'(r^-,r).\label{eq:55}
  \end{align}
  Using~\eqref{eq:52},~\eqref{eq:8},~\eqref{eq:55} and Lemma
  \ref{sec:16}, we finally get that $\DRH$ satisfies the
  pseudoidentity  $\Thetap'(r^-, r, \vs_p, \mu) =
  w'(r^-,r)$, obtaining~\ref{m1}. 
  For Property~\ref{m3}, let $i \prec j$ in $J$. Then, we have
  \begin{align*}
    \cum{w'(i,j)} & =
    \begin{cases}
      \cum{w'_1(\xi(i), \xi(j))}, \quad\text{ if } (i,j) \neq
      (r^-, r);
      \\ \cum{w'_1((r^-)^\bullet, r^\bullet)}, \quad \text{ if } (i,j) =
      (r^-, r);
    \end{cases}
    \\ & =
         \begin{cases}
           \chi_1(\xi(i), \xi(j)),  \quad\text{ if } (i,j) \neq
           (r^-, r);
           \\ \chi_1((r^-)^\bullet, r^\bullet), \quad \text{ if } (i,j) =
          ( r^-, r);
         \end{cases} \quad\text{ by \ref{m3} for $(\iS_1,\iM'_1)$ }
    \\ & =
         \begin{cases}
           \cum{w[\iota_1(\xi(i)), \iota_1(\xi(j))[},  \quad\text{ if } (i,j) \neq
           (r^-, r);
           \\  \cum{w[\iota_1((r^-)^\bullet), \iota_1(r^\bullet)[},  \quad\text{ if } (i,j) =
           (r^-, r);
         \end{cases} \quad\text{ by definition~\eqref{chi}} 
    \\ &=
    \begin{cases}
      \cum{w(i,j)},  \quad\text{ if } (i,j) \neq (r^-, r);
      \\ \cum{w[(r^-)^\circ, r^\circ[},\quad \text{ if } (i,j) = (r^-, r);
    \end{cases}\quad\text{ by definition of } \_^\bullet, \_^\circ \text{
                 and } \xi
    \\ & =
         \begin{cases}
           \cum{w(i,j)},  &\text{ if } (i,j) \neq (r^-, r);
           \\ \cum{w(r^-, r)},& \text{ if } (i,j) = (r^-, r);
         \end{cases} \quad\text{ by Lemma  \ref{9.3}\ref{9.3.2}}
    \\ & = \chi(i,j) \quad\text{ by \ref{m3} for$(\iS, \iM)$}.
  \end{align*}
  To prove that Property~\ref{m4} holds, we first notice that,
  for every $i < j < r$ in $T$, 
  \begin{align}
    w_1'(\xi(i), \xi(j)) =_\DRH w_1'(i^\bullet,
    j^\bullet). \label{eq:3}
  \end{align}
  Now, let $(i,x)$ be a
  box in $\iB'$. Using the definitions of $w'$ and of $\iota'$ we may compute
  \begin{align*}
    w'(i,\dirr(x)) & =
                    \begin{cases}
                      w_1'(\xi(i), \xi(\dirr(x))), \quad \text{ if }
                      \dirr(x) \le r^-;
                      \\  w_1'(\xi(i), \xi(r^-)) \:
                      w_1'((r^-)^\bullet, r^\bullet)
                      \\  \, \cdot \Psi'_1((r^\bullet)^-,
                      r^\bullet, \vt\,'_{q, \mu_0}, \mu'_{q,
                        \mu_0})^{\omega - 1} \Psi'_1(\xi(r)^-, \xi(r),
                      \vs\,'_q, \mu_0'), \,\text{ otherwise;}
                    \end{cases}
    \\ & \kern-5pt \just ={\eqref{eq:3}}
         \begin{cases}
            w_1'(\xi(i), \xi(\dirr(x))),\quad \text{ if }
                      \dirr(x) \le r^-;
           \\ w_1'(i^\bullet, (r^-)^\bullet)\: w_1'((r^-)^\bullet,
           r^\bullet)
           \\  \, \cdot \Psi'_1((r^\bullet)^-, r^\bullet, \vt\,'_{q,
             \mu_0}, \mu'_{q,\mu_0})^{\omega - 1} \Psi'_1(\xi(r)^-,
           \xi(r), \vs\,'_q, \mu_0'), \,\text{ otherwise;}
         \end{cases}
    \\ & \Req
         \begin{cases}
           w_1'(\xi(i),\dirr_1(x)), \quad\text{ if } \dirr(x) \le r^-;
           \\ w_1'(i^\bullet, \dirr_1(x)), \quad\text{ otherwise.}
         \end{cases}
  \end{align*}
  Taking into account the steps~\ref{eq2} and~\ref{eq3} in the
  construction of $\iB_1$, it is now
  easy to deduce that~\ref{m4} holds for all the relations added in
  those steps. It remains to verify that $w'(\ell, r)$ and $w'(\ell^*, r^*)$ are
  $\Req$-equivalent modulo $\DRH$. For that purpose, we show that the
  following relations hold in $\DRH$:
  \begin{align*}
    w'(\ell,r) & = w'(\ell, r^-) w'(r^-, r)
    \\ & \kern-3pt\just={\text{def.}} w_1'(\xi(\ell), \xi(r^-))
    \\ & \quad \cdot w_1'((r^-)^\bullet, r^\bullet)
         \Psi'_1((r^\bullet)^-, r^\bullet, \vt\,'_{q, \mu_0}, \mu'_{q,
         \mu_0})^{\omega - 1} \Psi'_1(\xi(r)^-, \xi(r), \vs\,'_q,
         \mu_0')
    \\ & \kern-7pt\just = {\eqref{eq:3}} w_1'(\ell^\bullet,
         (r^-)^\bullet)
    \\ & \quad \cdot 
         w_1'((r^-)^\bullet, r^\bullet) \Psi'_1((r^\bullet)^-,
         r^\bullet, \vt\,'_{q, \mu_0}, \mu'_{q,\mu_0})^{\omega - 1}
         \Psi'_1(\xi(r)^-, \xi(r), \vs\,'_q, \mu_0')
    \\ & = w_1'(\ell^\bullet, r^\bullet) \Psi'_1((r^\bullet)^-,
         r^\bullet, \vt\,'_{q, \mu_0}, \mu'_{q,\mu_0})^{\omega - 1}
         \Psi'_1(\xi(r)^-, \xi(r), \vs\,'_q, \mu_0')
    \\ & \Req  w_1'(\ell^\bullet, r^\bullet),\quad \text{ because }
         \\ & \hspace{1cm}{\text{$c(\Psi'_1((r^\bullet)^-,
         r^\bullet, \vt\,'_{q, \mu_0}, \mu'_{q,\mu_0})^{\omega - 1}
         \Psi'_1(\xi(r)^-, \xi(r), \vs\,'_q, \mu_0')) \subseteq \cum{ w_1'(\ell^\bullet, r^\bullet)}$}}
    \\ & = w_1'(\iota_1^{-1}(\ell^\circ),
         \iota_1^{-1}(r^\circ)) = w_1'(\iota_1^{-1}(\iota(\ell^*)),
         \iota_1^{-1}(\iota(r^*)))
    \\ & = w_1'(\xi(\ell^*), \xi(r^*)) = w'(\ell^*, r^*).
  \end{align*}
  Finally, since $\xi_{\Lambda}(\iB_\h) \subseteq (\iB_\h)_1 $, in Remark
  \ref{simp} we observed that, in order to prove that Property~\ref{m5}
  is satisfied, it  is enough to prove that $\h$ satisfies
  \begin{align}
    w'(i,j) &= w_1'(\xi(i), \xi(j)) \label{eq:7}
    \\\Psi'(i,j,\vs, \mu) &= \Psi_1'(\xi(j)^-, \xi(j), \vs\,',
    \mu'),
    \label{eq:6}
  \end{align}
  for every $(i,j,\vs, \mu) \in \dom(M) \times
  M(i,j,\vs, \mu)$, where $\Lambda(i,j,\vs,\mu) = ((\ldots, \vs\,'), \mu')$.
  The pseudoidentity~\eqref{eq:7} follows straightforwardly from the definition of
  $w'$, except when $(i,j) = (r^-, r)$. In that case, by computing~\eqref{eq:7} modulo $\h$, we get
  \begin{align*}
    w'(r^-, r) & = w_1'((r^-)^\bullet, r^\bullet) \Psi'_1((r^\bullet)^-,
                 r^\bullet, \vt\,'_{q, \mu_0}, \mu'_{q, \mu_0})^{\omega - 1}
                 \Psi'_1(\xi(r)^-, \xi(r), \vs\,'_q, \mu'_0)
    \\ & = w_1'(\xi(r^-), \xi(r) ),
  \end{align*}
  where the last equality holds because the equation
  \begin{equation*}
    (\xi(r^-) \mid \xi(r)) = ((r^-)^\bullet \mid r^\bullet) \cdot
    \{(r^\bullet)^- \mid r^\bullet\}_{\vt\,'_{q,\mu_0},
      \mu'_{q,\mu_0}}^{\omega-1}\cdot \{\xi(r)^- \mid \xi(r)\}_{\vs\,'_q,
      \mu'_0}
  \end{equation*}
  belongs to $(\iB_\h)_1$ and $\iM_1'$ is a model of $\iS_1$.
  Lastly, the pseudoidentity~\eqref{eq:6} corresponds precisely to the
  definition of $\Theta'$.
  Thus, $\iM'$ is a model of $\iS$ in $\kappa$-words and so,
  Property~\ref{p2} holds for the pair~$(\iS_1, \iM_1)$.
\end{proof}
\subsection{Case 5}
Finally, it remains to consider the case where $\iB$ has a boundary
relation of the form $(i,x,j,\xx)$ with $\dirr(x) = r = \dirr(\xx)$ and
none of the Cases~\ref{caso1}--\ref{caso5} hold. In particular, the
non occurrence of Cases~\ref{caso2},~\ref{caso4} and~\ref{caso5} implies
that all the boundary relations $(i,x,j,\xx)$ verifying $i \le j$ and
$\dirr(\xx) = r$ are such that $i < j$, $\dirr(x) = r$ and the equality $c(w(i,j)) =
c(w(i,r))$ holds.

We consider the index
$$c = \max \{\min(J), \max\{\dirr(x)\colon \dirr(x) < r\}, \max\{i \in J
\colon i < r \text{ and }\nexists \text{ a box }(i,x)\}\}$$
and we let $\iE = \{(i,x,j,\xx) \in \iB \colon i<j;\: \dirr(x) = r =
\dirr(\xx)\}$.
By the auxiliary step, we may assume that all the boundary relations
of $\iE$ are such that $c < i,j < r$. Since the auxiliary step
consists in successively factorizing a boundary relation from
$\iE$ with respect to a pair of ordinals both greater than $\iota(c)$
(recall Fig.~\ref{fig:aux_step2} and Lemma~\ref{l:10}), it follows
that for every index $c < i < r$ there exists a box $(i,x)$ such that
$\dirr(x) = r$.
Observe that the choice of $c$ guarantees that all the indices in the
original set of boundary relations already satisfy this condition.
Moreover, since $\iE$ contains all the boxes ending in
$r$, if $(i,x)$ is a box such that $\dirr(x) = r$, then $c < i < r$.

Now, we let
$\ell = \max\{i \in J \colon \text{ there exists } (i,x,j, \xx) \in
\iE\}$.
Using the construction presented in Subsection~\ref{aux_step} to align the
left of each variable intervening in~$\iE$ (as schematized in
Fig.~\ref{fig:4}),
\begin{figure}[htpb]
  \centering
  \unitlength=0.0035mm \brickbackup=-650\brickheight=1000
  \begin{picture}(33000,7000)(1500,-4000)
    \footnotesize
    \brick(3000,2000,14000,100,200)(\ell, x)
    \brick(5000,1000,12000,100,200)(j, \xx)
    \brick(0,-1000,17000,100,300)(i_e,x_e)
    \brick(6000,-2000,11000,100,300)(j_e, \xx_e)
    \multiput(3000,-4000)(0,1000)6{\dvert(0,0)}\put(3100,-3800){$\iota(\ell)$}
    \multiput(9000,-4000)(0,1000)3{\dvert(0,0)}\put(9100,-3800){$\beta_e
      = \iota(j_e) + (\iota(\ell) - \iota(i_e))$}
    \footnotesize
    \brick(22000,2000,14000,100,200)(\xi(\ell), x)
    \brick(24000,1000,12000,100,200)(\xi(j), \xx)
    \brick(22000,-1000,14000,100,300)(\ell_e,x_e)
    \brick(28000,-2000,8000,100,300)(k_e, \xx_e)
    \scriptsize
    \brick(19000,-4000,3000,20,30)(\xi(i_e),y_e)
    \brick(25000,-4000,3000,20,30)(\xi(j_e), \y_e)
  \end{picture}
  \caption{Aligning a boundary relation on the left with $\ell$.}
  \label{fig:4}
\end{figure}
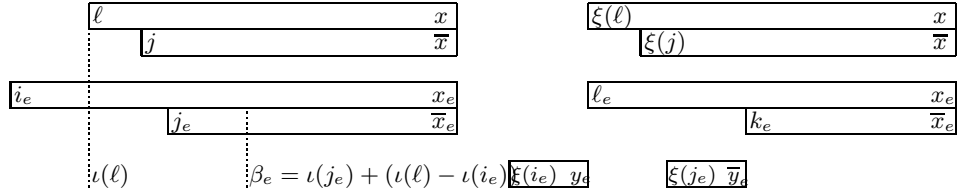%
we may assume, without loss of generality, that the set~$\iE$ defined
above is given by $\iE = \{(\ell, x_1, j_1, \xx_1), \ldots, (\ell, x_n,
j_n, \xx_n)\},$ with $j_1 \le j_2 \le \cdots \le j_n$.
We notice that, by definition
of the index $c$, we have $j_n \prec r$ in~$J$.
Since $\iM$ is a model of $\iS$, $\DRH$ satisfies
$w(\ell, j_m) w(\ell, r) \Req w(\ell, j_m) w(j_m, r) = w(\ell, r),$
for $m = 1, \ldots, n$. Multiplying successively by $w(\ell, j_m)$ on
the left, we get that $\DRH$ satisfies $w(\ell, j_m)^\omega w(\ell, r) \Req w(\ell,r)$.
Since $\cum{w(\ell, j_m)^\omega} = c(w(\ell, j_m)) = c(w(\ell,r))$, it
follows that $\DRH$ satisfies
\begin{equation}
  \label{eq:16}
  w(\ell, r) \Req w(\ell, j_1)^\omega \Req \cdots \Req
  w(\ell, j_n)^\omega.
\end{equation}
But all the pseudowords $w(\ell, j_m)^\omega$ represent the identity
in the same maximal subgroup of $\pseudo A{DRH}$ where they belong
(recall Proposition~\ref{p:3}).
Therefore, all the elements
$w(\ell,j_m)^\omega$ are the same over $\DRH$.
Then, Proposition~\ref{JA5.5} applied
to the elements $w(\ell, j_1), \ldots, w(\ell, j_n)$ guarantees the
existence of pseudowords $u \in \pseudo AS$, $v_1, \ldots, v_n \in
(\pseudo AS)^I$ and of positive integers $h_1, \ldots, h_n$ such that,
for $m = 1, \ldots, n$ we have
\begin{equation}
  \begin{aligned}
    w(\ell, j_m) & =_\DRH u^{h_m}v_m,
    \\ v_m u & =_\DRH u,
  \end{aligned}
\label{eq:15}
\end{equation}
where all the products $u\cdot u$, $u \cdot v_m$ and $v_m \cdot u$ are
reduced. Note that $h_n$ is the maximum of $\{h_1, \ldots, h_n\}$.

We observe that the pseudoidentities in~\eqref{eq:15} imply
that every finite power of $u$ is a prefix of $w(\ell, j_m)^\omega$,
which in turn, by~\eqref{eq:16}, is $\Req$-equivalent to $w(\ell, r)$
modulo $\DRH$. Since the semigroup $S$ where the constraints are
defined is finite, this allows us to find some periodicity on
them. With this in mind, to deal with the constraints, we consider a
big enough direct power of the semigroup $S$, more specifically, the
semigroup $T = S^K$, with $K =  \sum_{\vs \in
  \zeta(j_n,r)} M(j_n,r, \vs)$,
and we take $N = \card T + 2$.
Let us construct a new pair $(\iS_1,
\iM_1)$ as follows:
$$\iS_1 = (\iX_1, J_1, \zeta_1, M_1, \chi_1, \dirr_1, \iB_1, (\iB_\h)_1)
\text{ and } \iM_1 = (w_1, \iota_1, \Theta_1),$$
where
\begin{itemize}
\item the set of variables is $\iX_1 = \iX \uplus \{y_q, \y_q\}_{q = 1}^h
  \uplus \{z_m, \z_m\}_{m = 1}^n \uplus \{f_i, \f_i\}_{i = 1}^N$,
  where variables with different names are assumed to be distinct;
\item the pseudoword in the model is $w_1 = w$;
\item let $O$ be the set containing the following ordinals:
  \begin{itemize}
  \item $\beta_0 = \iota(\ell)$;
  \item $\beta_q = \beta_0 + \alpha_u \cdot q$, for $q = 1, \ldots,
    h_n+1$;
  \item $\gamma_m = \beta_0 + (\iota(j_m) - \beta_{h_m})$, for $m = 1,
    \ldots, n$;
  \item $\delta_p = \beta_0 + \alpha_u \cdot h_np$, for $p = 0, \ldots,
    N$.
  \end{itemize}
  We let $\iC_1 = (J_1, \iota_1, M_1, \Theta_1)$ be a common
  refinement of the factorization schemes $\iC(\iS, \iM)$ and $(O, O
  \hookrightarrow \alpha_w+1, \emptyset, \emptyset)$ for $w$ and
  $$\Lambda:\{(i,j,\vs,\mu)\colon (i,j,\vs) \in \dom(M), \: \mu \in
  M(i,j,\vs)\} \to \bigcup_{k \in \nn} (S \times S^I)^k \times \omega$$
  be a refining function from $\iC(\iS, \iM)$ to $\iC_1$.
  The factorization scheme $\iC_1$ supplies the items $J_1$,
  $\iota_1$, $M_1$ and $\Theta_1$ and the items $\zeta_1$ and $\chi_1$
  by taking $\zeta_1 = \zeta_{w_1, \iC_1}$ and $\chi_1 = \chi_{w_1,
    \iC_1}$ (recall~\eqref{zeta} and~\eqref{chi}).
  We denote $b_q = \iota_1^{-1}(\beta_q)$, $c_m =
  \iota_1^{-1}(\gamma_m)$, $d_p = \iota_1^{-1}(\delta_p)$, and $\xi =
  \iota_1^{-1} \circ \iota$;
\item the function $\dirr_1$ is given by
  $$\dirr_1 (x) =
  \begin{cases}
    \xi(\dirr(x)), \quad\text{ if } x \in \iX;
    \\ b_q, \quad\text{ if } x = y_q;
    \\ b_{q+1}, \quad \text{ if } x = \y_q;
    \\b_{h_m+1}, \quad\text{ if } x \in \{z_m, \z_m\};
    \\ d_p, \quad\text{ if } x = f_p;
    \\d_{p+1}, \quad\text{ if } x = \f_p;
  \end{cases}
  $$
\item in the set $\iB_1$ we include the following boundary relations:
  \begin{itemize}
  \item $(\xi(i), x, \xi(j), \xx)$, if $(i,x,j,\xx) \in \iB \setminus
    (\iE \cup \{\text{dual of } e\colon e \in \iE\})$;
  \item $(b_{q-1}, y_q, b_q, \y_q)$ and $(b_q, \y_q, b_{q-1}, y_q)$,
    for $q = 1, \ldots, h$;
  \item $(b_{h_m}, z_m, \xi(j_m), \z_m)$ and $(\xi(j_m), \z_m,
    b_{h_m}, z_m)$, for $m = 1, \ldots, n$;
  \item $(d_{p-1}, f_p, d_p, \f_p)$ and $(d_p, \f_p, d_{p-1}, f_p)$,
    for $p = 1, \ldots, N-1$;
  \end{itemize}
\item the set $(\iB_\h)_1$ consists of the following equations:
  \begin{itemize}
  \item all the equations of $\xi_\Lambda(\iB_\h)$;
  \item $(b_0 \mid b_1) = (b_1 \mid b_2) = \cdots = (b_{h_n} \mid
    b_{h_n+1})$;
  \item $(b_{h_m} \mid b_{h_m+1}) = (\xi(j_m) \mid b_{h_m+1})$, for $m
    = 1, \ldots, n$;
    \item $(d_0 \mid d_1) = (d_1 \mid d_2) = \cdots = (d_{N-1} \mid d_N)$.
  \end{itemize}
\end{itemize}

We leave to the reader to verify that $\iM_1$ is indeed a model of $\iS_1$.
\begin{prop}
  Let $(\iS_1, \iM_1)$ be the pair defined above. Then, $\iM_1$ is a
  model of $\iS_1$.
\end{prop}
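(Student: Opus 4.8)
The plan is to verify that $\iM_1=(w_1,\iota_1,\Theta_1)$ satisfies Properties~\ref{m1}--\ref{m5} relative to~$\iS_1$, following the pattern of Proposition~\ref{9.4} and of the corresponding step in~\cite[Section~9]{JA}. Properties~\ref{m1}--\ref{m3} are immediate from Lemma~\ref{zetachi}: by construction $w_1=w$, the tuple $(J_1,\iota_1,M_1,\Theta_1)$ is precisely the factorization scheme~$\iC_1$ for~$w$, and $\zeta_1=\zeta_{w_1,\iC_1}$, $\chi_1=\chi_{w_1,\iC_1}$.

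The core of the argument is a short dictionary identifying, modulo~$\DRH$, the factors of~$w$ cut out by the new indices. Combining~\eqref{eq:16} and~\eqref{eq:15} with Corollary~\ref{c:3}, one finds that $w(\ell,r)$ is $\Req$-equivalent modulo~$\DRH$ to the regular pseudoword~$u^\omega$, which has order type $\alpha_u\cdot\omega$. Then Corollary~\ref{c:7} and the reducedness of~$u\cdot u$ in~\eqref{eq:15} yield that $\DRH$ satisfies
\begin{align*}
  w_1(b_{q-1},b_q)&=u \qquad (q=1,\ldots,h_n+1),\\
  w_1(d_{p-1},d_p)&=u^{h_n} \qquad (p=1,\ldots,N).
\end{align*}
A further use of Corollary~\ref{c:7} to cancel the prefix~$u^{h_m}$ gives $w_1(b_{h_m},\xi(j_m))=_\DRH v_m$ for $m=1,\ldots,n$; since $\alpha_{v_m}<\alpha_u$ (from $v_mu=_\DRH u$, the reducedness of~$v_m\cdot u$, and Corollary~\ref{c:3}) one has $b_{h_m}\le\xi(j_m)<b_{h_m+1}$, and from $v_mu=_\DRH u$ together with the reducedness in~\eqref{eq:15} and Corollary~\ref{c:6} one deduces $w_1(\xi(j_m),b_{h_m+1})\Req u$ modulo~$\DRH$.

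With this dictionary, Property~\ref{m4} would be checked one family of relations at a time, each dual relation being symmetric to its mate. For a transported relation $(\xi(i),x,\xi(j),\xx)$ arising from $(i,x,j,\xx)\in\iB\setminus(\iE\cup\{\text{duals of }\iE\})$ one has $\dirr(x),\dirr(\xx)<r$ (recall that $\iE$ contains every box ending in~$r$), so $\dirr_1=\xi\circ\dirr$ at both boxes and $w_1(\xi(i),\dirr_1(x))=w(i,\dirr(x))$, whence~\ref{m4} for~$(\iS,\iM)$ applies; for $(b_{q-1},y_q,b_q,\y_q)$ one needs $w_1(b_{q-1},b_q)\Req w_1(b_q,b_{q+1})$, both sides being $\Req u$; for $(b_{h_m},z_m,\xi(j_m),\z_m)$ one needs $w_1(b_{h_m},b_{h_m+1})\Req w_1(\xi(j_m),b_{h_m+1})$, both sides being $\Req u$; and for $(d_{p-1},f_p,d_p,\f_p)$ one needs $w_1(d_{p-1},d_p)\Req w_1(d_p,d_{p+1})$, both sides being $\Req u^{h_n}$. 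For Property~\ref{m5}, the inclusion $\xi_\Lambda(\iB_\h)\subseteq(\iB_\h)_1$ is handled by Remark~\ref{sec:21}, and each remaining equation of~$(\iB_\h)_1$ is sent by $\delta_{w_1,\iC_1}$ to an identity already valid modulo~$\DRH$, hence modulo~$\h$ since $\h\subseteq\DRH$: $\delta_{w_1,\iC_1}(b_{q-1}\mid b_q)=w_1(b_{q-1},b_q)=_\DRH u$ gives the chain $(b_0\mid b_1)=\cdots=(b_{h_n}\mid b_{h_n+1})$; $\delta_{w_1,\iC_1}(d_{p-1}\mid d_p)=w_1(d_{p-1},d_p)=_\DRH u^{h_n}$ gives $(d_0\mid d_1)=\cdots=(d_{N-1}\mid d_N)$; and $\delta_{w_1,\iC_1}(b_{h_m}\mid b_{h_m+1})=w_1(b_{h_m},b_{h_m+1})=_\DRH u=_\DRH w_1(\xi(j_m),b_{h_m+1})=\delta_{w_1,\iC_1}(\xi(j_m)\mid b_{h_m+1})$ gives $(b_{h_m}\mid b_{h_m+1})=(\xi(j_m)\mid b_{h_m+1})$.

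The step I expect to be the main obstacle is this dictionary, and within it the equality $w_1(\xi(j_m),b_{h_m+1})\Req u$; once one knows, from Proposition~\ref{JA5.5}, that $w(\ell,j_m)=_\DRH u^{h_m}v_m$ with all the listed products reduced and that $w(\ell,r)$ is $\Req$-equivalent to~$u^\omega$, the rest reduces to ordinal arithmetic around the positions $\beta_q,\gamma_m,\delta_p$ and to repeated use of the left-cancellation and prefix-extraction properties of~$\DRH$ collected in Corollaries~\ref{c:5},~\ref{c:6} and~\ref{c:7}, together with Lemma~\ref{sec:16}.
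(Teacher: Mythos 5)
Your proof is correct. The paper gives no argument for this proposition (the authors explicitly leave the verification to the reader), so there is nothing to diverge from; your write-up supplies the missing details in the natural way, parallel to the paper's own proof of Proposition~\ref{9.4} in Case~4: Properties~\ref{m1}--\ref{m3} via Lemma~\ref{zetachi}, and \ref{m4}--\ref{m5} via the identifications $w_1(b_{q-1},b_q)=_\DRH u$, $w_1(d_{p-1},d_p)=_\DRH u^{h_n}$, $w_1(b_{h_m},\xi(j_m))=_\DRH v_m$ obtained from $w(\ell,r)\Req u^\omega$ and Corollary~\ref{c:7}. One small presentational inconsistency: in your ``dictionary'' you only claim $w_1(\xi(j_m),b_{h_m+1})\Req u$ modulo $\DRH$, but the verification of the equation $(b_{h_m}\mid b_{h_m+1})=(\xi(j_m)\mid b_{h_m+1})$ in $(\iB_\h)_1$ needs equality modulo $\h$, for which $\Req$-equivalence modulo $\DRH$ is not enough; the stronger statement $w_1(\xi(j_m),b_{h_m+1})=_\DRH u$ does hold, by the ``Moreover'' clause of Corollary~\ref{c:7} applied to $u=_\DRH v_mu$ at the cut $\alpha_{v_m}$, and that is what your \ref{m5} computation actually uses, so the argument is sound once the dictionary entry is stated as an equality.
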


Since in $\iB_1$ there are no boxes ending at $r$, we decrease the
first component of the induction parameter, and so, Property
\ref{p1} holds. Before proving that Property~\ref{p2} also holds,
we define integers $1 \le H < M < N$ that later play an essential
role.

Recall that, in $J_1$, we have $\xi(j_n) \prec b_{h_n+1} \preceq d_2
\prec d_3 \prec \cdots \prec d_N \prec \xi(r)$, where $b_{h_n+1} =
d_2$ if and only if $h_n = 1$. Therefore, for each $(\vs, \mu) \in
\zeta(j_n, r) \times M(j_n, r, \vs)$, the first component of
$\Lambda(j_n,r, \vs)$ belongs to $(S \times S^I)^N$ if $h_n = 1$ or to
$(S \times S^I)^{N+1}$, otherwise. We assume that $h_n > 1$. The same
argument can be used when $h_n = 1$, simply by working with $N$ instead
of $N+1$. We may write
\begin{equation}
\Lambda(j_n, r, \vs, \mu) = \left(\left(\vt^{\;(\vs, \mu)}_1, \ldots,
    \vt_{N+1}^{\;(\vs,\mu)}\right), \mu_{\vs, \mu}\right),\label{eq:27}
\end{equation}
with $\vt_i^{\;(\vs, \mu)} = \left(t_{i,1}^{(\vs,\mu)}, t_{i,2}^{(\vs,
    \mu)}\right)$. Let $\vt_1, \ldots, \vt_{N+1} \in T$ satisfy the
following properties:
\begin{itemize}
\item each element $\vt_i$ is a tuple whose coordinates are of the
  form $t_{i,1}^{(\vs, \mu)}t_{i,2}^{(\vs, \mu)}$, for certain $\vs
  \in \zeta(j_n,r)$ and $\mu \in M(j_n, r, \vs)$;
\item for $i \in \{1, \ldots, K\}$ and $k_1 \neq k_2$, if the $k_1$-th
  coordinate of $\vt_i$ is $t_{i,1}^{(\vs_1, \mu_1)}t_{i,2}^{(\vs_1,
    \mu_1)}$ and the $k_2$-th coordinate of $\vt_i$ is
  $t_{i,1}^{(\vs_2, \mu_2)}t_{i,2}^{(\vs_2, \mu_2)}$, then $(\vs_1,
  \mu_1) \neq (\vs_2, \mu_2)$;
\item for $i \in \{2, \ldots, K\}$, if the $k$-th coordinate of
  $\vt_1$ is $t_{1,1}^{(\vs, \mu)}t_{1,2}^{(\vs, \mu)}$, then the
  $k$-th coordinate of $\vt_i$ is $t_{i,1}^{(\vs, \mu)}t_{i,2}^{(\vs,
    \mu)}$.
\end{itemize}
Since $N-1 > \card T$, there exist $1 \le H < M < N$ such that
$\vt_1 \cdots \vt_H = \vt_1 \cdots \vt_M$, which implies that
$$\vt_1 \cdots \vt_H \vt_{H+1} \cdots \vt_M = \vt_1 \cdots \vt_H
(\vt_{H+1} \cdots \vt_M)^\omega.$$
In order to ease the notation,  for each $\vs = (s_1,
s_2) \in \zeta(j_n, r)$ and $\mu \in 
M(j_n, r, \vs)$, we define
\begin{equation}
  \begin{aligned}
    & s_1^{(\vs, \mu)} =  t_{1,1}^{(\vs, \mu)} t_{1,2}^{(\vs, \mu)}
    \cdot t_{2,1}^{(\vs, \mu)} t_{2,2}^{(\vs, \mu)} \cdots
    t_{H,1}^{(\vs, \mu)} t_{H,2}^{(\vs, \mu)},
    \\ & s_2^{(\vs, \mu)} = t_{H+1,1}^{(\vs, \mu)}\: t_{H+1,2}^{(\vs,
      \mu)} \cdots t_{M,1}^{(\vs, \mu)} t_{M,2}^{(\vs, \mu)},
    \\ & s_{3,1}^{(\vs, \mu)} =  t_{M+1,1}^{(\vs, \mu)} \:
    t_{M+1,2}^{(\vs, \mu)} \cdots t_{N,1}^{(\vs, \mu)} \:
    t_{N,2}^{(\vs, \mu)} \: t_{N+1,1}^{(\vs, \mu)},
    \\ & s_{3,2}^{(\vs, \mu)} = t_{N+1, 2}^{(\vs, \mu)}.
  \end{aligned}\label{eq:28}
\end{equation}
Then, since $\Lambda$ satisfies~\ref{rr23}, we have $s_1 = s_1^{(\vs, \mu)} \cdot \left(s_2^{(\vs,
    \mu)}\right)^{\omega+1} \cdot s_{3,1}^{(\vs, \mu)}$ and $s_{3,2}^{(\vs, \mu)} =  s_2$.

Next, we verify that Property~\ref{p2} is satisfied, as claimed
before.
\begin{prop}
  Suppose that there exists a model $\iM_1' = (w_1', \iota_1',
  \Theta_1')$ of $\iS_1$ in $\kappa$-words. Then, there is a model of
  $\iS$ in $\kappa$-words as well.
\end{prop}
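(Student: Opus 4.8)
The plan is to invert the Case~5 construction: from the $\kappa$-word model $\iM_1'$ of $\iS_1$ I build a $\kappa$-word model $\iM'=(w',\iota',\Theta')$ of $\iS$ by leaving $\iM_1'$ unchanged outside the segment $[\iota_1'(\xi(j_n)),\iota_1'(\xi(r)){[}$ and reinstalling on that segment a single $\kappa$-word playing the role of the $\omega$-power factor $w(\ell,r)$ that was cut up when $\iS_1$ was formed. As in Cases~3 and~4, Lemma~\ref{l:6} keeps everything inside $\kappa$-words, and the part of $\Theta'$ not meeting the modified segment is taken to be the restriction of $\iC(\iS_1,\iM_1')$ along the refining function $\Lambda$ (Proposition~\ref{8.2}, Remark~\ref{m}).

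First I read off the periodicities already forced on $\iM_1'$. Combining the boundary relations $(b_{q-1},y_q,b_q,\y_q)\in\iB_1$ with the equations $(b_0\mid b_1)=\cdots=(b_{h_n}\mid b_{h_n+1})$ of $(\iB_\h)_1$ and Lemma~\ref{sec:16}, all the factors $w_1'(b_{q-1},b_q)$ ($q=1,\dots,h_n+1$) coincide modulo $\DRH$; write $u'$ for this $\kappa$-word, so $c(u')=c(u)$ and $w_1'(b_0,b_q)=_\DRH(u')^q$. Likewise the relations $(d_{p-1},f_p,d_p,\f_p)$ with $(d_0\mid d_1)=\cdots=(d_{N-1}\mid d_N)$ give $w_1'(d_{p-1},d_p)=_\DRH(u')^{h_n}$ for all $p$, and the relations $(b_{h_m},z_m,\xi(j_m),\z_m)$ with the matching $(\iB_\h)_1$-equations give $w_1'(\xi(j_m),b_{h_m+1})=_\DRH u'$, hence $w_1'(\xi(\ell),\xi(j_m))\cdot u'=_\DRH(u')^{h_m+1}$ --- the $\iM_1'$-counterparts of~\eqref{eq:15} and~\eqref{eq:16}, from which one deduces $w_1'(\xi(\ell),\xi(j_m))\cdot(u')^\omega\Req(u')^\omega$ modulo $\DRH$. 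On the constraint side, the integers $1\le H<M<N$ were chosen precisely so that $\vt_1\cdots\vt_H=\vt_1\cdots\vt_M$ in $T=S^K$, which is what yields the identity $s_1=s_1^{(\vs,\mu)}\,(s_2^{(\vs,\mu)})^{\omega+1}\,s_{3,1}^{(\vs,\mu)}$ (see~\eqref{eq:28}).

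For the construction, fix the chain $\xi(j_n)=i_0\prec i_1\prec\cdots\prec i_{N+1}=\xi(r)$ in $J_1$ that $\Lambda$ associates to $(j_n,r)$ (recall~\eqref{eq:27}), and for a triple $(j_n,r,\vs,\mu)\in\dom(M)\times M(j_n,r,\vs)$ write $\Lambda(j_n,r,\vs,\mu)=((\vt_1^{(\vs,\mu)},\dots,\vt_{N+1}^{(\vs,\mu)}),\mu_{\vs,\mu})$ and $P_k=\Thetap'_1(i_{k-1},i_k,\vt_k^{(\vs,\mu)},0)$. I put
\[
\Phi'(j_n,r,\vs,\mu)=(P_1\cdots P_H)\,(P_{H+1}\cdots P_M)^{\omega+1}\,(P_{M+1}\cdots P_N)\,\Phi_1'(i_N,i_{N+1},\vt_{N+1}^{(\vs,\mu)},\mu_{\vs,\mu})
\]
and $\Psi'(j_n,r,\vs,\mu)=\Psi_1'(i_N,i_{N+1},\vt_{N+1}^{(\vs,\mu)},\mu_{\vs,\mu})$, a $\kappa$-word since $z^{\omega+1}=z^\omega z$. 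The middle segment of $w'$ is set to be $w'(j_n,r):=w_1'(i_0,i_H)\cdot w_1'(i_H,i_M)^{\omega+1}\cdot w_1'(i_M,i_{N+1})$, and $w'$ coincides with $w_1'$ on $[0,\iota_1'(\xi(j_n)){[}$ and on $[\iota_1'(\xi(r)),\alpha_{w_1'}{[}$; accordingly $\iota'(i)=\iota_1'(\xi(i))$ for $i\preceq j_n$, $\iota'(r)=\iota'(j_n)+\alpha_{w'(j_n,r)}$, and $\iota'(i)=\iota'(r)+(\iota_1'(\xi(i))-\iota_1'(\xi(r)))$ for $i\succ r$ (there is nothing of $J$ strictly between $j_n$ and $r$). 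All remaining values of $\Theta'$ are those of the restriction of $\iC(\iS_1,\iM_1')$ along $\Lambda$.

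Finally I check that $\iM'$ is a model of $\iS$ in $\kappa$-words. Away from the modified segment, Properties~\ref{m1}--\ref{m3} are inherited from $\iC(\iS_1,\iM_1')$ being a factorization scheme, exactly as in Case~4; on the straddling triples,~\ref{m2} follows from $s_1=s_1^{(\vs,\mu)}(s_2^{(\vs,\mu)})^{\omega+1}s_{3,1}^{(\vs,\mu)}$ (so $\varphi(\Phi'(j_n,r,\vs,\mu))=s_1$ and $\varphi(\Psi'(j_n,r,\vs,\mu))=s_2$), ~\ref{m1} follows because, by~\ref{fs1} for $\iC(\iS_1,\iM_1')$, the unfolded product reduces modulo $\DRH$ to $w_1'(i_0,i_H)\cdot w_1'(i_H,i_M)^{\omega+1}\cdot w_1'(i_M,i_{N+1})=w'(j_n,r)$, and~\ref{m3} is the content computation of Case~4, the surplus $w_1'(i_M,i_{N+1})$ being absorbed into the cumulative content $c(u')$ of $w_1'(i_H,i_M)^{\omega+1}$. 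Property~\ref{m4} is routine for the relations inherited from $\iS_1$; for the relations $(\ell,x_m,j_m,\xx_m)$ of $\iE$ --- deleted when $\iS_1$ was formed --- one must re-establish $w'(\ell,r)\Req w'(j_m,r)$ modulo $\DRH$, which follows from $w'(j_n,r)\Req(u')^\omega$ (because $w_1'(i_H,i_M)^{\omega+1}=_\DRH(u')^{\omega+e}\Req(u')^\omega$ for the appropriate $e\ge1$, and the surrounding factors are absorbed), together with $w_1'(\xi(\ell),\xi(j_m))\cdot(u')^\omega\Req(u')^\omega$ and the left-cancellation of Corollary~\ref{c:6}. For~\ref{m5}, since $\xi_\Lambda(\iB_\h)\subseteq(\iB_\h)_1$, Remark~\ref{simp} reduces the task to the $\h$-identities $w'(i,j)=w_1'(\xi(i),\xi(j))$ and $\Psi'(i,j,\vs,\mu)=\Psi_1'(\xi(j)^-,\xi(j),\vt_n,\mu')$, which are immediate off the modified segment and, on it, hold because $w_1'(i_H,i_M)^{\omega+1}=_\h w_1'(i_H,i_M)$ in the group pseudovariety $\h$. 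The hard part is this last round of checks at the modified segment, and more precisely~\ref{m4} for the deleted relations of $\iE$: one must show that the single $\kappa$-word $w'(j_n,r)$, which codes $\omega$-many copies of $u'$ followed by a truncated tail, is at once $\DRH$-equal to every unfolded $\Phi'(j_n,r,\vs,\mu)\Psi'(j_n,r,\vs,\mu)$ and $\Req$-equivalent, through the chain $(u')^\omega$, to each $w'(j_m,r)$ --- which forces one to reconcile the algebraic periodicity $w_1'(\xi(\ell),\xi(j_m))u'=_\DRH(u')^{h_m+1}$ inside $\pseudo A{DRH}$ with the finite-semigroup periodicity $\vt_1\cdots\vt_H=\vt_1\cdots\vt_M$ in $T=S^K$, Lemma~\ref{sec:16} and Corollary~\ref{c:6} serving as the essential glue; the remaining bookkeeping of indices under $\iota'$ and of the surviving relations of $\iS$ is then routine.
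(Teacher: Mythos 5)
Your construction is the same as the paper's: the new pseudoword $w'$ pumps the segment between the repeated constraint values $\vt_1\cdots\vt_H=\vt_1\cdots\vt_M$ into an $(\omega+1)$-power, $\Theta'$ on the straddling triple $(j_n,r,\vs,\mu)$ is assembled from the three blocks of $\Lambda$-factors exactly as in the paper's $\Phi_0'$-decomposition, and the verification of \ref{m1}--\ref{m5} proceeds by the same mechanism (periodicity of the $y_q,z_m,f_p$ relations plus Lemma~\ref{sec:16}, with Remark~\ref{simp} handling \ref{m5}). Your argument for \ref{m4} on the deleted $\iE$-relations is phrased through $\Req$-equivalence to $(u')^\omega$ and Corollary~\ref{c:6} where the paper does an explicit power-counting computation, but this is the same idea, so the proposal is correct and essentially identical to the paper's proof.
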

\begin{proof}
  Let $\iM' = (w', \iota', \Theta')$ be constructed as follows.
  The $\kappa$-word $w'$ is set to be
  \begin{align*}
    w' = w_1'[0, \iota_1'(d_M){[} \cdot (w_1'(d_H, d_M))^\omega
    w_1'(d_M, \xi(r))\cdot w_1'[\iota_1'(\xi(r)), \alpha_{w_1'}{[}.
  \end{align*}
  The map  $\iota': J \to \alpha_{w'}+1$ is given by $\iota'(i)
  = \iota_1'( \xi(i))$ if $i < r$, $\iota'(r) =  \alpha_{w_1'[0,\iota_1'(d_M){[} \cdot (w_1'(d_H,
    d_M))^\omega }$, and $\iota'(i) = \iota'(r) + (\iota_1' (\xi(i)) -
  \iota_1'(\xi(r)))$ if $i > r$.
  In order to define $\Theta'$, we first consider the following
  auxiliary pseudowords:
\begin{itemize}
  \item for each $i \prec j \le j_n$ and each $r \le i \prec j$ in
    $J$, each $\vs \in \zeta(i,j)$ and each $\mu \in M(i,j, \vs)$, if
    $\Lambda(i,j,\vs, \mu) = ((\vt_1, \ldots, \vt_k), \mu')$ and $\xi(i) =
    i_0 \prec i_1 \prec \cdots \prec i_k = \xi(j)$, then we take
    \begin{align*}
      \Phi_0'(i,j,\vs, \mu)
      &= \left(\prod_{m = 1}^{k-1}
        \Thetap_1'(i_{m-1}, i_m, \vt_m, 0)\right) \cdot
        \Phi_1'(\xi(j)^-, \xi(j), \vt_k, \mu');
      \\ \Psi_0'(i,j,\vs, \mu)
      &= \Psi_1'(\xi(j)^-, \xi(j), \vt_k, \mu');
    \end{align*}
  \item for each $\vs \in \zeta(j_n, r)$ and $\mu \in M(j_n, r, \vs)$,
    we set (recall the notation in~\eqref{eq:27})
    \begin{align*}
      \Phi_0'(j_n, H, \vs, \mu) &= \Thetap_1'\left(\xi(j_n), b_{h+1},
                                  \vt_1^{\;(\vs, \mu)}, 0\right)
      \\ & \quad \cdot \Thetap_1'\left(b_{h+1},
        d_2, \vt_2^{\;(\vs,\mu)}, 0\right)
       \cdot \prod_{m = 3}^H
        \Theta_1'\left(d_{m-1}, d_m, \vt_m^{\;(\vs, \mu)}, 0\right);
      \\ \Phi_0'(H,M,\vs, \mu) &= \prod_{m = H+1}^M
           \Thetap_1'\left(d_{m-1}, d_m, \vt_m^{\;(\vs,\mu)}, 0\right);
      \\ \Phi_0'(M, r, \vs, \mu) &= \left(\prod_{m = M+1}^N
           \Thetap_1'\left(d_{m-1},d_m, \vt_m^{\;(\vs, \mu)},
                                   0\right)\right)
      \\ & \quad\cdot \Phi_1'\left(d_N, \xi(r), \vt_{N+1}^{\;(\vs,
           \mu)}, \mu_{\vs, \mu}\right);
      \\ \Psi_0'(M, r, \vs, \mu) &= \Psi_1'\left(d_N, \xi(r),
           \vt_{N+1}^{\;(\vs, \mu)}, \mu_{\vs, \mu}\right).
    \end{align*}
  \end{itemize}
  Now, for $i \prec j$ in $J$, $(i,j, \vs) \in \dom(M)$ and $\mu \in
  M(i,j,\vs)$ we define
  \begin{align*}
    &\Theta'(i,j,\vs, \mu) =
      (\Phi_0'(i,j,\vs,\mu),\Psi_0'(i,j,\vs,\mu)), \text{ whenever }j
      \neq r;
    \\ &\Theta'(j_n, r, \vs, \mu) = (\Phi_0'(j_n, H, \vs, \mu)\cdot
         \Phi_0'(H, M, \vs, \mu)^{\omega+1} \cdot \Phi_0'(M, r, \vs,
         \mu),\Psi_0'(M, r, \vs, \mu)).
  \end{align*}
  Now, we verify that $\iM'$ just defined is a model of $\iS$. Let
  $(i,j,\vs) \in \dom(M)$ be such that $\vs = (s_1, s_2)$, and $\mu \in
  M(i,j,\vs)$.
  Suppose that $j \neq r$, write $\Lambda(i,j,\vs, \mu) =
  ((\vt_1, \ldots, \vt_k), \mu')$,  and let $\xi(i) = i_0 \prec i_1
    \prec \cdots \prec i_k = \xi(j)$. Then, using the definition of
  $(\Phi_0', \Psi_0')$, it is easy to derive~\ref{m1} using the same
  property for the pair $(\iS_1, \iM_1')$.
  Similarly, invoking
  Property~\ref{m2} for the pair $(\iS_1, 
  \iM_1')$ and writing $\vt_m = (t_{m,1}, t_{m,1})$, we may deduce the
  equalities $\varphi(\Phi'(i,j,\vs, \mu)) = \left(\prod_{m = 1}^{k-1}
    t_{m,1}t_{m,2} \right) \cdot t_{k,1}$ and
  $\varphi(\Psi'(i,j,\vs, \mu)) = t_{k,2}$. In turn,
  Property~\ref{rr23} for $\Lambda$ yields Property~\ref{m2} for the
  pair $(\iS, \iM')$.
  We justify~\ref{m3} with the following computation:
  \begin{align*}
    \cum{w'(i,j)} & = \cum{w_1'(\xi(i), \xi(j))} \quad\text{by
                    definition of $\iota'$ and $w'$}
    \\ & = \chi_1(\xi(i), \xi(j)) \quad\text{by~\ref{m3} for
         $(\iS_1, \iM_1')$}
    \\ & = \cum{w[\iota_1(\xi(i)), \iota_1(\xi(j)){[}} \quad \text{by
         definition~\eqref{chi} of $\chi_1 = \chi_{w_1, \iC_1}$}
    \\ & = \cum{w(i,j)}
    \\ & = \chi(i,j) \quad\text{by~\ref{m3} for
         $(\iS, \iM)$}.
  \end{align*}
  Now, consider the case where $i = j_n$ and $j=r$.
  Again, we may use Property~\ref{m1} for $(\iS_1, \iM_1')$ to obtain
  the identity $\Thetap'(j_n,r,\vs, \mu) = w'(j_n,r)$ in $\DRH$,
  thereby proving \ref{m1}.
  In order to prove \ref{m2}, we use the same property for the pair
  $(\iS_1, \iM_1')$ to derive the following equalities (recall \eqref{eq:28}):
  \begin{align*}
    \varphi(\Phi'(j_n, r, \vs, \mu'))
    & = s_1^{(\vs, \mu)} \cdot \left (s_2^{(\vs,
         \mu)}\right)^{\omega+1} \cdot  s_{3,1}^{(\vs, \mu)} =
      s_1,
    \\  \varphi(\Psi'(j_n, r, \vs, \mu'))
    &  = s_{3,2}^{(\vs, \mu)} =  s_2.
  \end{align*}
  To establish~\ref{m3}, we observe that,
  since $S$ has a content function and thanks to Property~\ref{m2} for
  both pairs $(\iS, \iM)$ and $(\iS_1, \iM_1')$, the content of the
  corresponding segments in $w$ and in $w_1'$ does not change. Therefore,
  the equalities
  \begin{equation}
    \begin{aligned}
     c(w_1'(\xi(j_n), d_M))  & = c(w[\iota(j_n), \delta_M{[}) = c(w[\beta_0,
      \beta_1{[}),
      \\  c(w_1'(d_H, d_M))  & = c(w[\delta_H, \delta_M{[}) = c(w[\beta_0,
      \beta_1{[}),
      \\ c(w_1'(d_M, \xi(r)))  & = c(w[\delta_M, \iota(r){[}) = c(w[\beta_0,
      \beta_1{[})
    \end{aligned}\label{eq:31}
  \end{equation}
  hold. Thus, we also have
  \begin{align*}
    \cum{w'(j_n, r)} & = \cum{w_1'(\xi(j_n),d_M)\cdot w_1'(d_H, d_M)^
                       {\omega} \cdot w_1'(d_M, \xi(r))}
                       \\ & = c(w[\beta_0, \beta_1{[}) =
    \cum{w(j_n, r)} = \chi(j_n, r).
  \end{align*}
  
  It remains to verify that~\ref{m4} and~\ref{m5} are satisfied. For
  Property~\ref{m4}, all boundary relations but the ones of
  the form $(\ell, x_m, j_m, \xx_m)$ are immediate. For those
  relations, we already observed in~\eqref{eq:31} that $c(w_1'(d_H,
  d_M)) = c(w_1'(d_M, \xi(r)))$, so that,
  $w'(j_m, r)$ and $w_1'(\xi(j_m), d_M)\cdot w_1'(d_H,
  d_M)^\omega $ lie in the same $\Req$-class modulo $\DRH$. Hence, the
  pseudovariety $\DRH$ satisfies
  \begin{align*}
    w'(\ell, r) & = w_1'(d_0, d_M)\cdot w_1'(d_H, d_M)^\omega
                  w_1'(d_M, \xi(r))
    \\ & \Req w_1'(d_0, d_M) \cdot w_1'(d_H, d_M)^\omega
    \\ & = w_1'(b_0, b_1)^{h_nM} \cdot w_1'(d_H, d_M)^\omega
    \\ & \just ={(*)} w_1'(b_{h_m}, b_{h_m+1}) w_1'(b_0,b_1)^{h_nM-1}\cdot
         w_1'(d_H, d_M)^\omega
    \\ & = w_1'(\xi(j_m), b_{h_m+1}) \cdot w_1'(b_0, b_1)^{h_nM-1} \cdot
         w_1'(d_H, d_M)^\omega
    \\ & \Req w_1'(\xi(j_m), d_M) \cdot w_1'(d_H, d_M)^\omega
    \\ & \Req w'(j_m, r).
  \end{align*}
  The validity of step $(*)$ is justified in view of $\iS_1$ having
  $\iM_1'$ as a model. More precisely, it follows from Property~\ref{m4}
  for the relation  $(b_{h_m}, z_m,  \xi(j_m), \z_m)$ and from Property
  \ref{m5} for the equation $(b_{h_m} \mid b_{h_m+1}) = (\xi(j_m) \mid
  b_{h_m+1})$, together with Lemma~\ref{sec:16}.
  Finally, as the inclusion $\xi_\Lambda(\iB_\h) \subseteq (\iB_\h)_1$
  holds, by Remark~\ref{sec:21} it is enough to show that for all $(i,j,\vs)
  \in \dom(M)$ and $\mu \in M(i,j,\vs)$, if $\Lambda(i,j,\vs,
  \mu) = ((\ldots, \vt\:), \mu')$, then the pseudoidentities
  \begin{align*}
    w'(i,j) & = w_1'(\xi(i), \xi(j));
    \\  \Psi'(i,j,\vs, \mu) & = \Psi_1'(\xi(j)^-, \xi(j), \vt, \mu')
  \end{align*}
  are valid in $\h$. Analyzing the construction of $\Psi'$, the second
  pseudoidentity becomes clear, since it is actually an equality of
  pseudowords. The first pseudoidentity $w'(i,j) = w_1'(\xi(i),
  \xi(j))$ is also immediate, whenever $j \neq r$, after noticing
  that $w'(i,j) = w_1'(\xi(i), \xi(j))$. It remains to prove that
  $w'(j_n, r) = w_1'(\xi(j_n), r)$ modulo $\h$. That is made clear
  in the next computation modulo $\h$:
  \begin{align*}
    w'(j_n,r)
    & = w_1'(\xi(j_n), d_M) \cdot w_1'(d_H, d_M)^\omega\cdot w_1'(d_M,
      \xi(r))
    \\ & =  w_1'(\xi(j_n), d_M) \cdot w_1'(d_M, \xi(r)) =
         w_1'(\xi(j_n), \xi(r)).
  \end{align*}
  This completes the proof.
\end{proof}

We have just completed the analysis of all the Cases
\ref{caso1}--\ref{caso6}. Thus, we proved  Theorem~\ref{main}.
The announced result follows from Corollary~\ref{gen7.5}.

\begin{theorem}
  Let $\h$ be a pseudovariety of groups. Then, the pseudovariety
  $\DRH$ is completely $\kappa$-reducible if and only if the
  pseudovariety~$\h$ is completely $\kappa$-reducible.
\end{theorem}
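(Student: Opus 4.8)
The plan is to obtain the theorem by combining the results established in the preceding sections; almost all of the work is already contained there, and what remains is to assemble the two implications. The \emph{only if} direction is precisely Proposition~\ref{p:7}, and requires nothing further: given a completely $\kappa$-reducible $\DRH$ and a finite system of $\kappa$-equations $\{u_i=v_i\}_{i=1}^n$ over $\h$ with a solution modulo $\h$, one adjoins a fresh variable $x_0$ with constraint $\nu'(x_0)=\varphi((a_1\cdots a_k)^\omega)$, passes to the system $\{x_0u_i=x_0v_i\}_{i=1}^n$, and uses Corollary~\ref{c:2} to lift the given solution to one modulo $\DRH$ of the new system; a $\kappa$-word solution of the latter modulo $\DRH$ then restricts to a $\kappa$-word solution of the original system modulo $\h$.

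For the \emph{if} direction, assume $\h$ is completely $\kappa$-reducible. First I would run through the chain of reductions of Section~\ref{section4}: by Proposition~\ref{6.2} it suffices to prove $\kappa$-reducibility of $\DRH$ for a single word equation without parameters, and by Proposition~\ref{6.3} it suffices to treat the structured systems $\iS_{u=v}=\{u'=v'\}\cup\iS_1\cup\iS_2$ together with a reduced solution $\delta$ satisfying $c(\delta(y_{(i)}))\subseteq\cum{\delta(x_{(i)})}$. By Proposition~\ref{7.4} each such system yields a system of boundary relations $\overline{\iS}_{u=v}$ admitting the canonical model $\iM_{u=v}$, and a model of $\overline{\iS}_{u=v}$ in $\kappa$-words produces a solution of $\iS_{u=v}$ in $\kappa$-words; hence, via Corollary~\ref{gen7.5}, everything reduces to Theorem~\ref{main}: every system of boundary relations that has a model has a model in $\kappa$-words.

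I would prove Theorem~\ref{main} by transfinite induction on the parameter $[\iS,\iM]=(\alpha,n)$ of~\eqref{pind}, ordered lexicographically. The base case $[\iS,\iM]=(0,0)$ renders Property~\ref{m4} vacuous, so it amounts to producing $\kappa$-word pairs $(\Phi,\Psi)$ realizing Properties~\ref{m1}--\ref{m3} and~\ref{m5}; this is exactly where the hypothesis on $\h$ enters, through Proposition~\ref{baseind} and Lemma~\ref{cumH}, which let one solve the relevant $\kappa$-equations modulo $\h$ while preserving cumulative contents. For the induction step one attaches to $(\iS,\iM)$ a pair $(\iS_1,\iM_1)$ with $[\iS_1,\iM_1]<[\iS,\iM]$ (Property~\ref{p1}) such that a $\kappa$-word model of $\iS_1$ yields one of $\iS$ (Property~\ref{p2}), splitting into Cases~\ref{caso1}--\ref{caso6} according to the shape of $\iB$: Cases~\ref{caso1} and~\ref{caso2} simply delete trivial boundary relations; Case~\ref{caso4} discards an initial segment of a box via the left basic factorization (Corollary~\ref{c:5}) and then falls into Case~\ref{caso2}; Cases~\ref{caso5} and~\ref{caso6} are preceded by the auxiliary construction of Subsection~\ref{aux_step}, which repeatedly factorizes offending relations so as to align on the left all boxes ending at $r$, and are then handled using the periodicity of $\pseudo A{DRH}$ from Section~\ref{sec:500} (Lemma~\ref{sec:12}, Proposition~\ref{JA5.5}, Corollary~\ref{c:4}) together with the left-cancellation Lemma~\ref{sec:16}.

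The main obstacle, I expect, is Case~\ref{caso5}: there a boundary relation $(\ell,x_0,\ell^*,\xx_0)$ with $\dirr(\xx_0)<\dirr(x_0)=r$ forces one to transfer the whole segment $w(\ell,r)$ onto the parallel segment $w(\ell^*,r^*)$, which destroys the constraint data carried by the box being shortened. The remedy is to first build an auxiliary factorization scheme $\iC_0$ for $w$ recording that data — choosing, for each relevant pair $(\vs,\mu)$, a pseudoword $v_{\vs,\mu}\in(\pseudo AS)^I$ with $w[(r^-)^\circ,r^\circ[\,=_\DRH\Phi(r^-,r,\vs,\mu)\,v_{\vs,\mu}$ and, using Remark~\ref{sec:9} and Lemma~\ref{sec:13}, $c(v_{\vs,\mu})\subseteq\cum{\Phi(r^-,r,\vs,\mu)}$ — then take a common refinement $\iC_1$ of $\iC_0$ with $\iC(\iS,\iM)$, transport everything along the order-preserving injection $\_^\circ\colon T\to\alpha_w+1$ supplied by Lemma~\ref{9.3}, and record the bookkeeping in $(\iB_\h)_1$ by equations of the form $(\xi(r^-)\mid\xi(r))=((r^-)^\bullet\mid r^\bullet)\cdot\{(r^\bullet)^-\mid r^\bullet\}^{\omega-1}_{\vt',\mu'}\cdot\{\xi(r)^-\mid\xi(r)\}_{\vs',\mu'}$. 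Verifying that the resulting $\iM_1$ is a model and that Property~\ref{p2} holds is then a long but essentially mechanical check of~\ref{m1}--\ref{m5}, with Remark~\ref{simp} reducing~\ref{m5} to two identities modulo $\h$ and Lemma~\ref{sec:16} upgrading $\Req$-equivalences to equalities; Case~\ref{caso6} is similar in spirit but replaces the transfer by a pigeonhole argument in a large direct power $T=S^K$ to produce the required periodicity. Once all cases are settled, Theorem~\ref{main} is established, and the theorem follows by Corollary~\ref{gen7.5}.
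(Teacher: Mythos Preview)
Your proposal is correct and follows essentially the same approach as the paper: the only-if direction via Proposition~\ref{p:7}, the if direction via Corollary~\ref{gen7.5} reducing to Theorem~\ref{main}, and the transfinite induction on $[\iS,\iM]$ with the five-case split exactly as carried out in Section~\ref{section7}. One small correction: the periodicity results of Section~\ref{sec:500} (in particular Proposition~\ref{JA5.5}) are invoked only in Case~\ref{caso6}, not in Case~\ref{caso5}, where the argument rests instead on the transfer map $\_^\circ$ of Lemma~\ref{9.3} and the auxiliary factorization scheme $\iC_0$ that you correctly describe.
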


Now, we are able to supply a family of examples of completely $\kappa$-reducible
pseudovarieties.
\begin{cor}
  The pseudovariety $\DRH$ is completely $\kappa$-reducible for every
  locally finite pseudovariety of groups~$\h$.
\end{cor}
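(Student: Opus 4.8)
The plan is to deduce the corollary directly from the theorem just proved, so the only point that still requires an argument is the classical observation --- already alluded to in the introduction --- that every locally finite pseudovariety of groups is completely $\kappa$-reducible. First I would record that when $\h$ is locally finite the free pro-$\h$ semigroup $\pseudo AH$ is finite; since the subsemigroup of $\pseudo AH$ generated by $A$ is dense and $\pseudo AH$ is discrete, this subsemigroup is all of $\pseudo AH$, so in particular $\rho_\h$ maps $A^+$, hence $\pseudok AS$, onto $\pseudo AH$. More generally, as $\pseudok AS$ contains $A^+$ it is dense in $\pseudo AS$, so $\pseudok AS$ maps onto every finite continuous quotient of $\pseudo AS$.

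Next I would take a finite system of $\kappa$-equations $\{u_i = v_i\}_{i=1}^n$ with variables in $X$, parameters in $P$, constraints given by a pair $(\varphi\colon \pseudo AS \to S,\ \nu\colon X \to S)$, a parameter evaluation $\ev\colon P \to \pseudo AS$ in $\kappa$-words, and a solution $\delta\colon \pseudo{X\cup P}S \to \pseudo AS$ modulo $\h$. Consider the continuous homomorphism $\varphi \times \rho_\h\colon \pseudo AS \to S \times \pseudo AH$ into a finite semigroup. For each variable $x \in X$ choose a $\kappa$-word $w_x \in \pseudok AS$ with $(\varphi\times\rho_\h)(w_x) = (\varphi\times\rho_\h)(\delta(x))$, which exists by the previous paragraph, and define $\varepsilon\colon \pseudo{X\cup P}S \to \pseudo AS$ by $\varepsilon(x) = w_x$ for $x \in X$ and $\varepsilon(p) = \ev(p)$ for $p \in P$. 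Then $\varepsilon(X\cup P) \subseteq \pseudok AS$; the constraints are satisfied since $\varphi(\varepsilon(x)) = \varphi(\delta(x)) = \nu(x)$; the parameters are evaluated as prescribed; and $\rho_\h\circ\varepsilon = \rho_\h\circ\delta$ on $X\cup P$, hence everywhere, so $\varepsilon(u_i) =_\h \varepsilon(v_i)$ for every $i$. Thus $\varepsilon$ is a solution in $\kappa$-words, and $\h$ is completely $\kappa$-reducible.

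Finally I would invoke the theorem above: since $\h$ is completely $\kappa$-reducible, so is $\DRH$, which is precisely the statement of the corollary. I do not expect any genuine obstacle here; the single point worth a line of care is that the constraint homomorphism $\varphi$ and the projection $\rho_\h$ can be matched simultaneously by one $\kappa$-word for each variable, and this is exactly the density of $\pseudok AS$ in $\pseudo AS$ together with the resulting surjectivity onto finite continuous quotients.
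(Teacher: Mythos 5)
Your proposal is correct and takes essentially the same route as the paper, which states the corollary as an immediate consequence of the main theorem together with the (unproved, classical) fact that every locally finite pseudovariety of groups is completely $\kappa$-reducible. Your filling-in of that fact --- finiteness of $\pseudo AH$, density of $\pseudok AS$ in $\pseudo AS$, and hence the ability to match $\varphi\times\rho_\h$ on each variable by a $\kappa$-word --- is the standard argument and is sound.
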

\begin{cor}
  The pseudovariety ${\sf DRAb}$ is completely $\kappa$-reducible.
\end{cor}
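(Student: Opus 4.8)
The plan is to instantiate the preceding theorem at $\h = \Ab$. Since $\Ab$ is a pseudovariety of groups, the equivalence established there --- namely that $\DRH$ is completely $\kappa$-reducible if and only if $\h$ is completely $\kappa$-reducible --- applies verbatim with $\h = \Ab$, and the pseudovariety in question is then exactly ${\sf DRAb}$. It therefore suffices to invoke the fact that $\Ab$ itself is completely $\kappa$-reducible. This is precisely the result of Almeida and Delgado~\cite{MR2142087}, already recalled in the Introduction, where it is moreover noted that $\Ab$ is the only presently known non-locally finite pseudovariety enjoying this property. Feeding this external input into the ``if'' direction of the main theorem yields the claim.

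No step in this argument presents any genuine difficulty: the entire substance is carried by the ``if'' direction of the main theorem, which is the content of Sections~\ref{section5}--\ref{section7}, culminating in Theorem~\ref{main} and Corollary~\ref{gen7.5}. The only nontrivial ingredient beyond that is the cited complete $\kappa$-reducibility of $\Ab$, which we take as known. Consequently the proof of the corollary is a one-line deduction, with nothing further to verify.

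\begin{proof}
  Apply the preceding theorem with $\h = \Ab$, which is a pseudovariety of groups. By~\cite{MR2142087} the pseudovariety $\Ab$ of finite abelian groups is completely $\kappa$-reducible; hence, by the theorem, so is ${\sf DRAb}$.
\end{proof}
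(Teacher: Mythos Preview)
Your proof is correct and matches the paper's approach exactly: the corollary is stated without proof in the paper, being an immediate instantiation of the main theorem at $\h = \Ab$ together with the result of~\cite{MR2142087}.
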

\section*{Acknowledgments}
The work of both authors was supported, in part, by CMUP
(UID/MAT/ 00144/2013), which is funded by FCT (Portugal) with national
(MEC) and European structural funds through the programs FEDER, under
the partnership agreement PT2020. The second author was
also partially supported by the FCT doctoral scholarship
(SFRH/BD/75977/ 2011), with national (MEC) and European
  structural funds through the program POCH.
\def\cprime{$'$}
\providecommand{\bysame}{\leavevmode\hbox to3em{\hrulefill}\thinspace}
\providecommand{\MR}{\relax\ifhmode\unskip\space\fi MR }
\providecommand{\MRhref}[2]{%
  \href{http://www.ams.org/mathscinet-getitem?mr=#1}{#2}
}
\providecommand{\href}[2]{#2}

\end{document}